\documentclass[11pt,reqno]{amsart}
\usepackage{fullpage}
\date{}

\usepackage{amsfonts,amsmath,amsthm,amssymb,latexsym,mathrsfs,stmaryrd}
\usepackage{hyperref}
\usepackage{mathtools}
\usepackage{graphicx}
\usepackage{subcaption}
\usepackage{bbm}
\usepackage[capitalize]{cleveref}
\usepackage{soul} 
\usepackage{amsmath,amssymb,amscd}
\usepackage{amsthm}
\usepackage{enumitem}
\usepackage{listings,galois}
\usepackage{geometry}
\usepackage{mathrsfs}
\usepackage{CJK}
\usepackage{amssymb}
\usepackage{tikz-cd}
\usepackage{hyperref}
\usepackage{tikz}
\usepackage{graphicx}

\usepackage{comment}

\usepackage{supertabular}

\title{}
\author{}
\newtheorem{theorem}[subsubsection]{Theorem}
\newtheorem{lemma}[subsubsection]{Lemma}
\newtheorem{proposition}[subsubsection]{Proposition}

\newtheorem{corollary}[subsubsection]{Corollary}
\newtheorem*{theorem*}{Theorem}
\newtheorem*{proposition*}{Proposition}
\newtheorem*{corollary*}{Corollary}

\theoremstyle{definition}
\newtheorem{definition}[subsubsection]{Definition}
\newtheorem{notation}[subsubsection]{Notation}
\newtheorem{assumption}[subsubsection]{Assumption}
\newtheorem{construction}[subsubsection]{Construction}
\newtheorem{setup}[subsubsection]{Setup}

\newtheorem{example}[subsubsection]{Example}
\newtheorem{claim}[subsubsection]{Claim}

\theoremstyle{remark}
\newtheorem{remark}[subsubsection]{Remark}

\newcommand{\cL}{\mathcal{L}}

\newcommand{\bA}{\mathbb{A}}
\newcommand{\bL}{\mathbb{L}}
\newcommand{\bZ}{\mathbb{Z}}
\newcommand{\bQ}{\mathbb{Q}}
\newcommand{\bR}{\mathbb{R}}
\newcommand{\bC}{\mathbb{C}}
\newcommand{\bK}{\mathbb{K}}
\newcommand{\bF}{\mathbb{F}}
\newcommand{\bI}{\mathbb{I}}

\newcommand{\cW}{\mathcal{W}}
\newcommand{\cF}{\mathcal{F}}
\newcommand{\cD}{\mathcal{D}}

\DeclareMathOperator{\Frob}{Frob}
\newcommand{\cris}{\mathrm{cris}}
\DeclareMathOperator{\Spf}{Spf}
\DeclareMathOperator{\Spec}{Spec}

\DeclareMathOperator{\Fil}{Fil}

\newcommand{\mpr}{\textcolor{purple}}
\newcommand{\bfx}{\mathbf{x}}
\newcommand{\bfy}{\mathbf{y}}

\newcommand{\sF}{\mathcal{F}}
\newcommand{\sO}{\mathcal{O}}
\newcommand{\dR}{\mathrm{dR}}
\newcommand{\into}{\hookrightarrow}
\newcommand{\gr}{\mathrm{gr}}
\newcommand{\tor}{\mathrm{tor}}
\def\wt{\widetilde}
\newcommand{\tensor}{\otimes}

\newcommand{\shS}{\mathscr{S}}

\newcommand{\cA}{\mathcal{A}}

\newcommand{\cZ}{\mathcal{Z}}

\newcommand{\bH}{\mathbb{H}}
\newcommand{\bN}{\mathbb{N}}
\newcommand{\End}{\mathrm{End}}

\newcommand{\Sh}{\mathrm{Sh}}

\newcommand{\Cl}{\mathrm{Cl}}

\usepackage{todonotes}

\usepackage{multirow}
\usepackage{float}
\author{Ruofan Jiang, Ananth N. Shankar, Ziquan Yang}
\newcommand{\<}{\langle}
\renewcommand{\>}{\rangle}
\title{Picard rank jumps for families of K3 surfaces in positive characteristic}
\begin{document}
\maketitle
\begin{abstract}
Let $\mathscr{X}/C$ be a non iso-trivial family of K3 surfaces over a curve $C$ defined over characteristic $p>2$ field. We show that if $\mathscr{X}$ avoids a necessary and structural obstruction coming from Frobenius, and satisfies a big monodromy condition, then there are infinitely may geometric fibers that have larger Picard rank than the geometric generic fiber.

\end{abstract}
\tableofcontents
\section{Introduction}

In this paper, we address the question of the variation of Picard ranks in families of K3 surfaces over a base in positive characteristic $p > 2$. This question is very well understood in characteristic zero. Indeed, let $\mathscr{X}\rightarrow \Delta$ be a holomorphic family of K3 surfaces. Define the \emph{Noether--Lefschetz locus} (succinctly, the NL locus) to be the set of points $s\in \Delta$ such that the Picard rank of $\mathscr{X}_s$ is greater than the generic Picard rank of $\mathscr{X}$. Work of Oguiso (\cite{Oguiso}) and Green (\cite[Thm.~3.5]{Voisin}) shows that if the family is not isotrivial, the NL locus is dense in $\Delta$, and in particular is infinite. 

The NL locus is can be defined for families of K3 surfaces over more general bases. Specifically, let $\mathscr{X}/C$ be an algebraic family of K3 surfaces over some irreducible scheme $C$ with generic point $\eta$. Define the NL locus to be the set of points $s\in C$ such that the Picard rank of $\mathscr{X}_{\bar{s}}$ is greater than that of $\mathscr{X}_{\bar{\eta}}$. The works of Green and Oguiso shows that if $C$ is a positive dimensional scheme over $\bQ$ and $\mathscr{X}$ is isotrivial, then the exceptional locus is Zariski-dense in $C$. Studying the NL locus becomes significantly more challenging when the $C$ is a scheme over an arithmetic base. For instance, let $\mathscr{X}/\mathcal{O}_K[1/N]$ be a K3 surface, where $\mathcal{O}_K$ is the ring of integers of a number field $K$. In this setting, the NL locus is a union of points of the form $\Spec \bF_q$. The series of works \cite{Ch18}, \cite{ST}, \cite{SSTT} and \cite{Tay22} shows that the NL locus is infinite, and therefore Zariski dense in $C$.  

\subsection{Exceptional families: counterexamples in positive characteristic}\label{subsec: first counterexample}

The setting where $C$ is defined over a field of positive characteristic is more subtle, and even the answer depends on characteristic $p$ invariants that do not exist in characteristic zero. Note that the question reduces to the case where $C$ is a curve. So in what follows, we will always assume that $C$ is a curve. Moreover, up to replacing $C$ by a finite cover, we assume that line bundles on the geometric generic fiber are already defined on the generic fiber.

The work \cite{MST} (along with \cite{Tay22} to treat the case of non-projective $C$) shows that the NL locus is Zariski dense if the generic K3 surface parameterized by $C$ is ordinary. But as noted in \cite{MST}, the ordinariness condition is very important. For example, the generic Picard rank and the Picard rank of every specialization is 22 for a family of supersingular K3 surfaces, and so the NL locus is empty. This is not the only obstruction to the exceptional locus not being dense. Consider any non-isotrivial family of elliptic curves $\mathscr{E}/C$, and let $E$ be a supersingular elliptic curve. Then, consider the Kummer family $\mathcal{K}/C$ associated to the abelian surface $\mathscr{E}\times E /C$. It is easy to check that the Picard rank of $\mathcal{K}_{\bar{s}}$ is greater than the generic Picard rank if and only if $s\in C$ is a point of supersingular specialization of $\mathscr{E}$. As there are only finitely many such points in $C$, it follows that the NL locus is finite. The above example is not an isolated one, and perists even for families of K3 surfaces with smaller Picard rank. We note that being non-ordinary alone is not sufficient for the NL locus to be finite. For instance, \cite{RJ23} shows that families of non-ordinary abelian surfaces with trivial endomorphism ring are split at infinitely many fibers, a question that is intimately related to the distribution of the NL locus.

 To explain the phenomenon underlying these examples, we will need to work with moduli spaces of lattice-polarized K3 surfaces. These spaces are (open parts of) GSpin Shimura varieties $S_L$ associated to a quadratic lattice $L$ having signature $(b,2)$ with ($b\leq 19$). These Shimura varieties admit families of \emph{special divisors} which are themselves GSpin Shimura varieties associated to sublattices of $L$ having signature $(b-1,2)$. We make the standing assumption that the discriminant of $L$ (or equivalently the discriminant of $\mathrm{Pic}(\mathscr{X}_\eta)$) is prime to $p$. By \cite{Kis10}, this implies the existence of smooth integral models $\mathscr{S}_L$ of $S_L$ at $p$. 
    Under this assumption, there is a minimum-rank lattice $L$ and a map $C\rightarrow \mathscr{S}_L$ that induces $\mathscr{X}$, and the assumption on the Picard lattice of the generic fiber of $\mathscr{X}$ implies that $L$ has prime-to-$p$ discriminant. The mod $p$ special fiber of $\mathscr{S}_L$ admits a Newton stratification, with the open stratum being the ordinary locus and the smallest stratum being the supersingular locus ($\mathscr{S}_{\textrm{ss}}$). We consider the case when $b$ is even -- indeed, the NL locus in the odd case necessarily consists of every $\overline{\bF}_p$-point as the the Picard rank of a K3 surface over a finite field is necessarily even. This is also true in the more general setting of $\textrm{GSpin}$ Shimura varieties and is a consequence of the group-theoretic fact that the rank of $\textrm{SO}_{2n-1}$ equals the rank of $\textrm{SO}_{2n}$. The geometry and combinatorics of the Newton strata only depend on the isomorphism class of $L\otimes \bZ_p$. There are two cases -- when $L\otimes \bZ_p$ is a direct sum of hyperbolic planes, and when it isn't. In the first case, the dimension of $\mathscr{S}_{\textrm{ss}}$ is smaller than in the second case, and has dimension $b/2 -1$ (whereas, in the second case, $\mathscr{S}_{\textrm{ss}}$ has dimension $b/2$). In the first case, the fact that $\mathscr{S}_{\textrm{ss}}$ is smaller than expected has the crucial consequence that \emph{every special divisor} contains the $\mathscr{S}_{\textrm{ss}}$. Further, the unique ``almost supersingular'' Newton stratum, which we call $\mathscr{S}_{\textrm{as}}$ (whose closure $\overline{\mathscr{S}_{\textrm{as}}}$ equals $\mathscr{S}_{\textrm{as}} \cup \mathscr{S}_{\textrm{ss}}$), does not intersect any special divisor! In other words, the intersection of $\overline{\mathscr{S}_{\textrm{as}}}$ with every special divisor is contained in $\mathscr{S}_{\textrm{ss}}$. In particular, if $\mathscr{X}/C$ is induced by a map $C\rightarrow \overline{\mathscr{S}_{\textrm{as}}}$, then the NL locus of $C$ is necessarily contained in the supersingular locus of $C$, and thus must be finite. It is exactly this phenomenon that shows up in the Kummer example. 

\begin{definition}\label{def: exceptional family} We say that the family of K3 surfaces $\mathscr{X}/C$ is \textit{exceptional} either if it is supersingular, or if $L \tensor \bZ_p$ is a direct sum of hyperbolic planes and the classifying map $C \to \mathscr{S}_L$ factors through $\overline{\mathscr{S}}_{\mathrm{as}}$.

\end{definition}
\begin{remark}
\begin{enumerate}
      \item Consider the relative second crystalline cohomology of $ \mathscr{X}/C$. This $F$-crystal has a quadratic pairing that makes it a \textit{K3 crystal}; cf. \cite[Def.~5.1]{NO}. Its transcendental part, i.e., the orthogonal complement of the image of $\mathrm{Pic}(\mathscr{X}_\eta)$, is also a K3 crystal. Let us call it $\bL(\mathscr{X})$. It has at most three slopes at the generic point, and we have that the family $\mathscr{X}/C$ is exceptional if and only if $\bL(\mathscr{X})$ generically has one or two slopes.
      


    \item A generically ordinary non-isotrivial family of K3 surfaces can never be exceptional. Indeed, a generically ordinary family $\mathscr{X} \rightarrow C$ would be exceptional if and only if the associated Shimura variety $S_L$ satisfies $b = 0$, and is therefore zero-dimensional. This implies that $\mathscr{X}\rightarrow C$ is isotrivial.  
\end{enumerate}
\end{remark}

\subsection{Statement of results in the setting of K3 surfaces}
Our first result is in the setting of non-isotrivial families $\mathscr{X}/C$ which are not exceptional, and which satisfy a monodromy result. 

\begin{theorem}\label{main}
    Let $\mathscr{X}\rightarrow C$ be a non iso-trivial non-exceptional family of K3 surfaces in characteristic $p>2$ such that the discriminant of the generic Picard lattice is prime-to-$p$. Let $L$ be the minimal rank lattice with $\mathscr{X}\rightarrow C$ induced by a map $C\rightarrow \mathscr{S}_L$. Suppose that the $\ell$-adic monodromy equals that of $\mathscr{S}_L$. Then the NL locus is dense.
\end{theorem}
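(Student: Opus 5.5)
We reduce Theorem~\ref{main} to an intersection-theoretic statement on the GSpin Shimura variety $\mathscr{S}_L$. Throughout, write $f\colon C\to \mathscr{S}_{L,\overline{\bF}_p}$ for the classifying map. By minimality of $L$, a point $s\in C$ lies in the NL locus exactly when $f(s)$ lies on a special divisor $\mathcal{Z}(m)$ for some $m>0$. So it suffices to show that $\bigcup_m f^{-1}\mathcal{Z}(m)$ is infinite. This forces density, since $C$ is a curve and we may shrink $C$.

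The case where $\mathscr{X}$ is generically ordinary is handled by \cite{MST}, together with \cite{Tay22} when $C$ is not proper. We may therefore assume that $C$ generically lands in a non-ordinary Newton stratum. Since the family is non-exceptional, it is not supersingular. Moreover, either $L\otimes\bZ_p$ is not split, or $f(C)\not\subset\overline{\mathscr{S}}_{\mathrm{as}}$.

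The strategy is the one of \cite{MST} and \cite{Tay22}, run by comparing two quantities as $m$ ranges over integers in a suitable sequence (for instance $m$ prime-to-$p$ represented locally by $L$):
\begin{enumerate}
\item the global intersection number $f^*\mathcal{Z}(m)$, computed via Borcherds/Bruinier modularity of generating series of special divisors on a toroidal compactification;
\item the sum of local contributions $\sum_{s} i_s(C\cdot \mathcal{Z}(m))$.
\end{enumerate}
By modularity, $\deg f^*\mathcal{Z}(m)$ is a Fourier coefficient of a vector-valued modular form of weight $1+b/2$. Its Eisenstein part gives growth of order $m^{b/2}$ times $\deg f^*\omega$, and $\deg f^*\omega>0$ because the family is non-isotrivial.

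Suppose, for contradiction, that only finitely many points $s_1,\dots,s_r$ lie on special divisors. We then need to show that for a positive density of $m$, the local multiplicity at each $s_i$ is $o(m^{b/2})$. The contributions at cusps (non-proper $C$) are controlled as in \cite{Tay22}.

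At each $s_i$, the local intersection with $\mathcal{Z}(m)$ is governed by special endomorphisms of the $p$-divisible group (or K3 crystal $\bL$) over the completed local ring $\overline{\bF}_p[[t]]$. Following \cite{MST}, $i_{s}(C\cdot\mathcal{Z}(m))=\sum_{n}\#\{v\in \mathrm{End}(\mathscr{X}\bmod t^n)\text{ special}: v^2=m\}$. There are two pieces of input to control this count:
\begin{enumerate}
\item the $\ell$-adic monodromy hypothesis, which ensures that the lattices of special endomorphisms at $s_i$ do not extend generically, so that the decay rate in $n$ is nontrivial;
\item an analysis of how fast special endomorphisms of the crystal at $s_i$ stop deforming.
\end{enumerate}

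The main obstacle is this decay estimate in the non-ordinary case. The problem is that the local deformation of $\bL$ along $C$ has slope structure with one, two or three slopes, and supersingular points make the count large. The plan is as follows:
\begin{itemize}
\item Use the slope filtration (Katz) of $\bL$ over $C$. Away from supersingular points, a non-exceptional generic Newton polygon has a nontrivial slope-$1$ part interacting with the other slopes.
\item Via Dieudonné theory (Grothendieck--Messing / Kisin windows), show that a special endomorphism $v$ with $v^2=m$ lifts mod $t^n$ only for $n\ll m^{c}$ with $c<1/2$, or with a saving in the count.
\item At supersingular points $s_i$, the non-exceptional hypothesis is essential: when $L\otimes\bZ_p$ is non-split, or $C\not\subset\overline{\mathscr{S}}_{\mathrm{as}}$, the curve $C$ leaves the supersingular locus in a direction transverse to the almost-supersingular stratum. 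This should give a decay rate strictly better than the trivial bound, yielding a local contribution of $O(m^{b/2-\delta})$. I would try to establish this by an explicit computation with the Dieudonné display of the Rapoport–Zink space of $\mathrm{GSpin}$ type.
\end{itemize}

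Combining these gives local multiplicities smaller than the global degree for infinitely many $m$, which contradicts finiteness.
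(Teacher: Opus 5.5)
Your global input and your treatment of non-supersingular points and cusps agree with the paper (Proposition~\ref{prop: global bound}, Theorem~\ref{thm: bounding nonss contribution}). The supersingular step, however, rests on a false claim.

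At a supersingular point $P$, the lattice $L(\cA_P)$ of special endomorphisms has full rank $b+2$. So already the $r=1$ term $\#\{v\in L_{P,1}: Q(v)=m\}$ of $i_P(C\cdot Z(m))$ has the same order of magnitude as $|q_L(m)|$, i.e.\ as the global intersection number. No decay estimate can make the local contribution $O(m^{b/2-\delta})$.

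What is actually needed is a comparison of leading constants. The global degree is apportioned among the points of $C\cap\mathcal{F}_{a+2}$ as $g_P(m)=\frac{h_P}{p^{a+1}-1}|q_L(m)|$, using that $\mathcal{F}_{a+2}$ is cut out of $\mathcal{F}_{a+1}$ by a section of $\omega^{p^{a+1}-1}$ (Lemma~\ref{lem: Frob and E}). One must then prove $\sum_r q_{L'_{P,r}}(m)<\alpha\, g_P(m)$ for a fixed $\alpha<1$ (Theorem~\ref{thm: main local version}). This requires an essentially exact description of the nested lattices $L_{P,r}$:
\begin{itemize}
\item rapid decay of a saturated sublattice of rank $\ge a_1+a_k+2$ (Theorem~\ref{thm: superspecial rapid decay});
\item the first-order decay pattern, governed by the twisted forms $Q_i$ through maximal line configurations.
\end{itemize}
Even then the estimate is sharp. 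In the tight thin borderline case $2$ the bound for the auxiliary series is exactly $S''(M)\le 1$. Strict inequality holds only because $\cA_{C^{/P}}$ is assumed to carry no saturated rank-$(b-2a)$ lattice of \emph{formal} special endomorphisms.

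This is also where your use of the monodromy hypothesis goes wrong. The statement that special endomorphisms at $s_i$ ``do not extend generically'' is just minimality of $L$. It holds equally for the families of \S\ref{sec: resof scalars}, which are non-isotrivial and non-exceptional yet have finite NL locus. So any argument that uses only non-extension plus decay proves too much.

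Big $\ell$-adic monodromy cannot directly exclude formal endomorphisms over $\overline{\bF}_p[\![t]\!]$. The missing idea is the algebraization Theorem~\ref{T: algebraicC}:
\begin{itemize}
\item a rank-$(b-2a)$ lattice of formal special endomorphisms splits the connected--\'etale sequence of the enlarged formal Brauer group, up to isogeny, over the completion;
\item this splitting spreads out to $U_0$;
\item Construction~\ref{const:vspl} together with de Jong's isogeny theorem then yields an endomorphism of $\cA$ over the generic point of $C$ that is not in $\End(\cA)\otimes\bQ$, contradicting big monodromy.
\end{itemize}

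Likewise, non-exceptionality is not used as a transversality statement; its role is to guarantee $b-2a>0$. In the inert case with $C\subset\overline{\mathscr{S}}_{\mathrm{as}}$ one has $b-2a=0$. The extremal configuration is then automatic, the supersingular local contributions exhaust the global one, and there is nothing for algebraization to contradict.
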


We will elaborate on the big monodromy assumption -- indeed, there are non-exceptional examples where the NL locus is provably finite when the family has smaller monodromy, which we describe in Section \ref{sec: resof scalars}. But first, we state a stronger theorem that implies the above result. 

\begin{theorem}\label{mainbrauer}
    Let $\mathscr{X}\rightarrow C$ be a non iso-trivial non-exceptional family of K3 surfaces in characteristic $p>2$ such that the discriminant of the generic Picard lattice is prime-to-$p$. Let $\eta$ be the generic point of $C$. Suppose that the connected-étale exact sequence of the enlarged formal Brauer group of $\mathscr{X}_\eta$ does not split. Then the NL locus is dense. 
\end{theorem}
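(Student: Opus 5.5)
The strategy is to reduce to a statement about special divisors on $\mathscr{S}_L$ and count intersections, following \cite{MST}. After a finite cover of $C$, we may assume $\mathrm{Pic}(\mathscr{X}_{\bar\eta})$ is defined over $\eta$. Let $L$ be the minimal lattice, of signature $(b,2)$, with classifying map $\iota\colon C\to \mathscr{S}_{L,\overline{\bF}_p}$. Points of the NL locus are then exactly the points of $\iota(C)$ lying on special divisors $\mathcal{Z}(m)$, $m\in\bZ_{>0}$. Minimality of $L$ ensures $\iota(C)\not\subset \mathcal{Z}(m)$ for every $m$. The odd-$b$ case is immediate, since every $\overline{\bF}_p$-point is already in the NL locus. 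So assume $b$ is even.

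It suffices to treat the case where $C$ is proper, since density then follows for any open curve. Indeed, if the NL locus were finite, we could delete it and reduce to the situation below. The global intersection number of $C$ with $\mathcal{Z}(m)$, computed on a toroidal compactification, is $\deg \iota^*\omega\cdot q_m$ plus boundary terms, where $q_m$ is the $m$-th Fourier coefficient of the Borcherds/Kudla modular form. Hence it grows like $m^{b/2}$ for $m$ in a suitable set of representable residue classes. We would then bound the local contribution at each point $P\in C$ separately, and show that a fixed finite set of points cannot absorb the global count. This forces infinitely many distinct intersection points.

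The local analysis splits by Newton stratum of $P$.
\begin{itemize}
\item At non-supersingular points, the local intersection multiplicity with $\mathcal{Z}(m)$ is governed by the number of special endomorphisms lifting to order $n$. Grothendieck--Messing/Serre--Tate theory on the formal neighbourhood bounds this by $O(m^{b/2-\epsilon})$.
\item At supersingular points the local contribution can be comparable to the global term, and here the hypothesis is used. The non-split connected-étale sequence of the enlarged formal Brauer group $\widehat{\Psi}$ of $\mathscr{X}_\eta$ means that the generic $p$-divisible group $\bL(\mathscr{X})$ does not decompose. In particular, the family is not locally, at the supersingular points, deforming inside a sub-Shimura variety or $\overline{\mathscr{S}}_{\mathrm{as}}$ in the hyperbolic case.
\item Using the Dieudonné theory of $\widehat{\Psi}$ (or the display of $\bL$), we would show that the decay rate of special endomorphisms mod $t^n$ at a supersingular point is strictly faster than in the exceptional case. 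The aim is a local bound of the form $o(m^{b/2})$, on average over $m$ in a dyadic range, as in \cite{MST}.
\end{itemize}
Summing over the finitely many supersingular points, together with the non-supersingular bound, then contradicts the global growth unless infinitely many points of $C$ meet $\bigcup_m\mathcal{Z}(m)$.

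The main obstacle is the supersingular local estimate. In \cite{MST} ordinarity gave canonical coordinates, and special endomorphisms were controlled via Serre--Tate. Here, at supersingular points, one must explicitly compute the Frobenius on the local crystal. We would first try to encode the non-splitting of the extension as a nonzero "extension class" $a(t)\in k[[t]]$ of finite $t$-adic order. We would then show that a special endomorphism of norm $m$ survives to order $n$ only if $n\ll \log_p m\cdot v_t(a)$-type bounds hold, which is the decay analysis (``Frobenius-twisted'' recursion). In exceptional cases $a\equiv 0$ and no such bound exists, consistent with the finite NL loci seen above. Finally, we would compare this with the averaged count of lattice vectors to obtain the required $o(m^{b/2})$.
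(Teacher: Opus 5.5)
Your global framework (Borcherds/Tayou asymptotics, negligible contributions at non-supersingular and boundary points, and a contradiction if only finitely many points meet $\bigcup_m \cZ(m)$) is the right one. The supersingular step, however, is aimed at the wrong target. A bound of the form $o(m^{b/2})$, even on average, at a supersingular point $P$ is impossible. The $r=1$ term alone, $\#\{v\in L_{P,1}: Q(v)=m\}$, counts vectors of norm $m$ in a positive definite lattice of full rank $b+2$, so it is $\asymp m^{b/2}$. That is the same order as the global term, and it is also not what \cite{MST} does at supersingular points. What you need instead is a sharp comparison with an explicit share of the global count:
\begin{itemize}
\item Since $\sF_{a+2}$ is cut out of $\sF_{a+1}$ by a section of $\omega^{p^{a+1}-1}$ (Lemma~\ref{lem: Frob and E}), one has $(C\cdot\omega)|q_L(m)|=\sum_{P\in C\cap\sF_{a+2}} g_P(m)$, where $g_P(m)=\frac{h_P}{p^{a+1}-1}|q_L(m)|$.
\item One must then prove $\sum_r q_{L'_{P,r}}(m)<\alpha\, g_P(m)$ with $\alpha<1$ uniform in $m$ prime to $p$.
\item Only then do the supersingular points absorb at most an $\alpha$-fraction of the global count. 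This leaves a main term $\gg X^{(b+2)/2}$, which the finitely many non-supersingular points, contributing $O(X^{b/2}\log X)$, cannot account for.
\end{itemize}
This inequality is genuinely tight: in the tight, thin, borderline-2 configuration the explicit upper bound is exactly $1$. So a qualitative statement that decay is ``strictly faster than in the exceptional case'' does not suffice. You need the precise shape of the lattices $L_{P,r}$: a saturated sublattice of sufficiently large rank that decays rapidly to all orders, together with the exact first-order decay pattern (the line-configuration analysis).

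Second, the hypothesis is never actually connected to the local estimate. Encoding global non-splitting as a nonzero ``extension class $a(t)\in k[\![t]\!]$'' at each supersingular point presupposes that non-splitting over $\eta$ persists in the formal neighbourhood of $P$. That local-from-global implication is exactly the nontrivial algebraization step, and it is missing. In the paper the logic runs as follows:
\begin{itemize}
\item The local analysis gives $\alpha<1$ unless $\cA_{C^{/P}}$ carries a saturated, self-dual lattice of formal special endomorphisms of the maximal rank $b-2a$ that never decays.
\item Theorem~\ref{T: algebraicC} shows that such a lattice over the completion $E$ of the function field makes $\gr_0$ trivial and splits $\Fil_0\to\gr_0$ over $E$.
\item A descent result (\cite[Prop.~5.2]{RJ23}) then spreads this splitting over the whole curve, contradicting the hypothesis at $\eta$.
\end{itemize}
Your proof does not close without both ingredients: the dichotomy identifying the maximal non-decaying lattice as the only obstruction, and this algebraization argument.
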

The notion of enlarged formal Brauer group traces back several years; cf. \cite[\S IV]{Artin-Mazur} and \cite[\S 3]{Nygaard-Ogus}, where the group scheme is defined at least for non-supersingular K3 surfaces over finite fields. We will define the enlarged formal Brauer group for  $\mathscr{X}_\eta$ in Section \ref{sec: algebraization}. Roughly speaking, it is a $p$-divisible group characterized by 
the property that (up to a Tate twist) its Dieudonné crystal equals the quotient of the K3 crystal of $\mathscr{X}_\eta$ by the subcrystal of the smallest slope. Note that when $\mathscr{X}$ is generically ordinary, the connected-étale exact sequence of the enlarged formal Brauer group admits a global splitting if and only if the family $\mathscr{X}\rightarrow C$ is isotrivial. 

 A consequence of the enlarged formal Brauer group admitting such a splitting is that the generic Kuga--Satake abelian scheme admits extra endomorphisms. The above result therefore has the following corollary. 

\begin{corollary}\label{thm: main Kuga}
    Let $\mathscr{X}\rightarrow C$ be a non iso-trivial non-exceptional family of K3 surfaces in characteristic $p>2$ such that the discriminant of the generic Picard lattice is prime-to-$p$. Let $L$ be the minimal rank lattice with $\mathscr{X}\rightarrow C$ induced by a map $C\rightarrow \mathscr{S}_L$, and let $\cA / \mathscr{S}$ be the universal Kuga--Satake abelian scheme over $\mathscr{S}$. Then the NL locus is dense if $\End(\cA|_C)\otimes \bQ = \End(\cA) \otimes \bQ$.  
\end{corollary}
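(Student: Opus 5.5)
The plan is to deduce the corollary from Theorem \ref{mainbrauer} by contraposition. We assume the hypotheses of Corollary \ref{thm: main Kuga}, together with $\End(\cA|_C)\otimes\bQ=\End(\cA)\otimes\bQ$. The goal is to show that the connected-étale sequence of the enlarged formal Brauer group of $\mathscr{X}_\eta$ does not split. So suppose it splits. We will construct an endomorphism of $\cA|_C$, up to isogeny, that does not come from $\End(\cA)\otimes\bQ$, which contradicts the hypothesis.

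\textbf{Step 1: the endomorphism algebra of $\cA$.} The endomorphisms of the universal Kuga--Satake abelian scheme over $\mathscr{S}_L$, up to isogeny, are generated by the right action of the Clifford algebra $\Cl(L)$. Equivalently, these are the endomorphisms commuting with the generic (big) monodromy. In particular, every element of $\End(\cA)\otimes\bQ$ respects the $\Cl(L)$-module structure. It also preserves any crystalline decomposition of the Kuga--Satake Dieudonné crystal coming from the K3 crystal only through this Clifford action.

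\textbf{Step 2: from a splitting to an idempotent.} Write $\bL=\bL(\mathscr{X})$ for the transcendental K3 crystal over $\eta$. Since the family is not exceptional, $\bL$ has three generic slopes: $\bL^{<1}$, the slope-$1$ part $\bL^{1}$, and $\bL^{>1}$. The enlarged formal Brauer group corresponds, up to twist, to $\bL/\bL^{<1}$. Its connected-étale sequence is the extension $0\to\bL^{1}\to\bL/\bL^{<1}\to\bL^{>1}\to 0$ or the corresponding one, depending on the convention. A splitting of this sequence gives a Frobenius-stable decomposition of $\bL/\bL^{<1}$. Using the self-duality of $\bL$, we dualize the splitting. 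This yields a splitting of the slope filtration of $\bL$ into three $F$-subcrystals, each isotropic or orthogonal as the pairing requires. We then pass to the Kuga--Satake crystal $\bH=\Cl(\bL)$, or the even part, with $\bL$ acting by left multiplication. The decomposition of $\bL$ induces a decomposition of $\bH$ into $F$-stable pieces, for example via the image of $\bL^{<1}\cdot\bL^{>1}$ acting on $\bH$. From this we obtain a nontrivial $F$-crystal endomorphism $e$ of $\mathbb{D}(\cA[p^\infty]|_C)$, such as a projector, that commutes with the right $\Cl(L)$-action but is not in $\Cl(L)\otimes\bQ_p$.

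\textbf{Step 3: algebraization, the main obstacle.} We need to promote $e$ to an actual endomorphism of $\cA|_C\otimes\bQ$. First, by de Jong's full faithfulness of the Dieudonné functor over a normal base (after restricting to a smooth open of $C$), $e$ gives an endomorphism of the $p$-divisible group $\cA[p^\infty]|_{C}$ up to isogeny. Next, we need a global argument that turns a $p$-adic endomorphism into an abelian-scheme endomorphism. Here I would use the Tate-type theorem over function fields in characteristic $p$ (Zarhin/de Jong): $\End(\cA_\eta)\otimes\bQ_p\cong\End(\cA_\eta[p^\infty])\otimes\bQ$ when the base field is finitely generated. This requires first descending $C$ to a finite field, and one must check that $e$ is defined over that field after a finite extension. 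The argument presumably corresponds to the algebraization discussion the paper refers to in Section \ref{sec: algebraization}. Granting this, $e$ is a genuine endomorphism outside $\End(\cA)\otimes\bQ$, which contradicts the hypothesis. Theorem \ref{mainbrauer} then gives density of the NL locus. The delicate points are the descent to a finitely generated field and the check that $e\notin\Cl(L)\otimes\bQ$. For the latter, I would compare slopes: elements of $\Cl(L)$ act compatibly on all Newton pieces, whereas $e$ kills some of them.
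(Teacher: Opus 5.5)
Your overall route is the one the paper intends for this corollary. You deduce it from Theorem \ref{mainbrauer} by showing that a splitting of the connected--\'etale sequence of $\mathbf{\Psi}$ forces extra endomorphisms. The final step is de Jong's theorem \cite[Thm.~2.6]{DJ98} over the finitely generated function field $D_0$ of a model $U_0/\bF_q$, exactly as in the proof of Theorem \ref{T: algebraicC}.

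\textbf{The gap is in Step 2.} In the paper's normalization, your $\bL^{<1},\bL^{1},\bL^{>1}$ are $\gr_{-1},\gr_0,\gr_1$. A splitting of $0\to\gr_0\to\bL_{\mathrm{EB}}\to\gr_1\to0$, together with its dual (a section of $\Fil_0\to\gr_0$), only says that $\bL$ \emph{splits in the middle} (Lemma \ref{lm:splitinthemiddle}). That is, $\bL\simeq\gr_0\oplus\mathscr{E}$, where $\mathscr{E}=\gr_0^{\perp}$ sits in an extension $0\to\gr_{-1}\to\mathscr{E}\to\gr_1\to0$. Nothing forces this last extension to split. Over a curve, an extension with the smaller slope as sub and the larger slope as quotient is in general nonsplit, just like the connected--\'etale sequence of a non-isotrivial ordinary family. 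The Kummer family of the introduction has a transcendental crystal of exactly this nonsplit two-slope form. So the three-way decomposition is not available. With it go your construction of $e$ from $\bL^{<1}\cdot\bL^{>1}$, and your non-membership argument based on $e$ killing some Newton pieces.

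\textbf{How to repair it.} The middle splitting is nevertheless enough, and it is what the paper uses in Construction \ref{const:vspl}. Take the volume element $\omega$ of the nondegenerate rank-$(b-2a)$ summand $\gr_0$, i.e.\ the image of $\det\gr_0\hookrightarrow\bigwedge^{b-2a}\bL\hookrightarrow\Cl(\bL)$ from Remark \ref{rmk:embwedge}, acting on $\bH$ by left multiplication.
\begin{itemize}
\item This gives a global section $\nu_{\mathrm{spl}}$ of $\End(\bH_{\cris})[p^{-1}]$ that commutes with the right $\Cl(L)$-action.
\item De Jong's theorem turns $\nu_{\mathrm{spl}}$ into an element of $\End(\cA_{D_0})\otimes\bQ_p$.
\item It does not lie in $\End(\cA)\otimes\bQ_p$, because it remembers the splitting. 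The map $x\mapsto\omega x\omega^{-1}$ acts on $\bL$ with opposite signs on $\gr_0$ and on $\gr_0^{\perp}$. But $\bL_{\cris}[p^{-1}]$ does not split in the middle at the generic point of $\mathscr{S}_{a+1}$.
\end{itemize}
The descent issue you flag does not arise in the paper's version, which works with the model $U_0/\bF_q$ from the outset (Setup \ref{setup:splittingcurve}).
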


\begin{remark}
These theorems are stated for K3 surfaces only for concreteness. By combining a general formulation \cref{mainShimura} below with results from \cite{HYZ, GY25}, they extend verbatim to other varieties with $h^{2,0} = 1$ whose moduli satisfy mild assumptions---for example, elliptic surfaces of height $1$ over genus $1$ curves. We leave the precise formulations to the interested reader.    
\end{remark}

\subsection{Counterexamples with small monodromy}\label{sec: resof scalars}
We note that the case of exceptional K3 surfaces are not the only counterexamples to the NL locus being dense. For example, let $F/\bQ$ be a real quadratic field, and let $V'/F$ be a 4-dimensional vector space along with a bilinear form that is positive definite at one real place and has signature $(2,2)$ at the other. The Shimura variety $S_{V'}$ associated to quadratic space is defined over $F$, and embeds in $S_L$ where $L$ is a rank 8 quadratic $\bZ$-lattice with signature $(6,2)$. By arguments of Geeman and Nikulin (see \cite[Prop.~3.3]{Geeman}), one can arrange that $L$ embeds into the K3 lattice so that $S_L$ parametrizes K3 surfaces with real multiplication by $F$. 

It is easy to see that there are infinitely many primes $p$ such that $p$ splits in $F$ with $v_1,v_2 \mid p$ the places above $p$, and such that $V'_{v_i}$ (for $i = 1,2$) is a direct sum of two hyperbolic planes over $\bQ_p$. Fix such a $p$ and $v = v_i$ for some $i\in \{1,2 \}$. The mod $v$ Shimura variety $\mathscr{S}_{V',v}$ admits three Newton strata: the ordinary stratum, the supersingular stratum, and one intermediate stratum. This intermediate stratum does \emph{not} intersect $\mathscr{S}_{\textrm{as}}\subseteq \mathscr{S}_L $. However, it behaves like the 1-dimensional Newton stratum in the moduli space of a pair of elliptic curves in that the NL locus intersects the closure only along the supersingular locus. Therefore, the NL locus is always empty for a family of K3 surfaces pulled back from this intermediate Newton stratum. It is easy to construct similar examples where the K3 surface in question has smaller Picard ranks.

\subsection{General GSpin Shimura varieties}
In this section, we state the results in the general setting of GSpin Shimura varieties that go beyond the K3 case. We will always fix $p$ to be an odd prime. 

Let $L$ be a lattice of signature $(b,2)$ which is self-dual at $p$, and let $\mathscr{S}_{L}$ be the canonical integral model of the associated GSpin Shimura variety at $p$. Though $\mathscr{S}_L$ does not parametrize K3 surfaces in general, one can still equip $\mathscr{S}_L$ with a universal Kuga--Satake abelian scheme $\mathcal{A}/\mathscr{S}_L$ via a suitable Hodge embedding. Similar to the K3 case, $\mathscr{S}_L$ is equipped with a family of special divisors $\{\mathcal{Z}(m)\}_{m\geq 1}$ which are Zariski closure in $\mathscr{S}$ of GSpin Shimura subvarieties associated to sublattices of $L$ having signature $(b-1,2)$. The special divisors parameterize points with so-called special endomorphisms (see Section \ref{subsec: set up SV} for definitions). The notion of formal Brauer groups also generalizes; cf, \S\ref{sec: algebraization}.

Similar to the case of K3 surfaces, when $b$ is even, there is a dichotomy between $L\otimes \bZ_p$ being a direct sum of hyperbolic planes, and otherwise. In the former case, the supersingular locus has dimension $b/2 - 1$ and $\overline{\mathscr{S}_{\textrm{as}}} \cap \cZ(m)$ is contained in the supersingular locus for every $m$, and thus the NL locus cannot be dense for curves generically mapping to this locus. Now we let $\iota:C\rightarrow \mathscr{S}_{L}$ be a nontrivial morphism, where the discriminant of $L$ is prime to $p$. We further assume that $\mathscr{S}_L$ is the smallest $\textrm{GSpin}$ Shimura varieties containing the image of $\iota$, i.e. $\iota$ does not factor through any $\mathcal{Z}(m)$.  
Similar to the K3 case, we say that $\iota$ is \textit{exceptional}, if its image is supersingular, or $L\otimes \bZ_p$ is a direct sum of hyperbolic planes and $\iota$ factors through $\overline{\mathscr{S}_{\textrm{as}}}\subseteq \mathscr{S}_{L}$.



\begin{theorem}\label{mainShimura}
    Let $C\rightarrow \mathscr{S}_{L}$ be as above, and suppose that it is non-exceptional. Then the NL locus is dense if one of the following is true: 
    \begin{enumerate}[label=\upshape{(\alph*)}]
        \item the connected-étale exact sequence of the enlarged formal Brauer group over the generic point of $C$ does not split. 
        \item $\End(\cA|_C)\otimes \bQ = \End(\cA) \otimes \bQ$.
        \item The $l$-adic monodromy of $\mathcal{A}|_C$ equals that of $\mathcal{A}|_{\mathscr{S}_L}$. 
    \end{enumerate}
\end{theorem}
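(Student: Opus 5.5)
First I reduce (c) and (b) to (a), and then I prove (a).

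\emph{(c) implies (b).} If the $\ell$-adic monodromy of $\cA|_C$ equals that of $\cA|_{\mathscr{S}_L}$, then by Tate's conjecture for abelian varieties over function fields (Zarhin), $\End(\cA|_C)\otimes\bQ_\ell$ is the commutant of the monodromy. Hence it equals $\End(\cA)\otimes\bQ_\ell$, and (b) follows.

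\emph{(b) implies (a).} Suppose the connected-étale sequence of the enlarged formal Brauer group over $\eta$ splits. The Dieudonné crystal of the Kuga--Satake abelian scheme is built from the Clifford algebra of the K3-type crystal $\bL$. A splitting of $\bL/\bL_{\min}$ into slope pieces produces an idempotent-type endomorphism of the $p$-divisible group $\cA[p^\infty]_\eta$ that does not come from $\Cl(L)$. By de Jong's theorem on homomorphisms of $p$-divisible groups over function fields, this endomorphism is an endomorphism of $\cA[p^\infty]|_C$. Combined with the Tate conjecture at $p$ (de Jong), it yields an element of $\End(\cA|_C)\otimes\bQ_p$ not in $\End(\cA)\otimes\bQ_p$, contradicting (b).

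\emph{Proof of (a).} I would follow the strategy of \cite{MST}, adapted from the ordinary case. We may assume $C$ is defined over a finite field, after spreading out and specializing so that the hypotheses are preserved. The aim is to show that for infinitely many $m$, the intersection $C\cdot\cZ(m)$ is not entirely accounted for by a bounded set of points.
\begin{enumerate}
\item \emph{Global intersection.} Use the Borcherds and Bruinier theorem: $\cZ(m)$ is the $m$-th coefficient of a vector-valued modular form of weight $1+b/2$. Hence $C\cdot\cZ(m)$ grows like $m^{b/2}$ for $m$ in a suitable representability class, which is given by the Eisenstein coefficient.
\item \emph{Local contribution at a point $P$.} Suppose the NL locus were finite. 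Then every intersection is concentrated at finitely many points $P$, and we must bound the local multiplicity $\sum_m i_P(C,\cZ(m))$ by $o(m^{b/2})$. This is done by a decay estimate on special endomorphisms of the Kuga--Satake $p$-divisible group over $k[[t]]$. The key input is the Grothendieck--Messing and Dieudonné theory description of the locus where a special endomorphism of $\cA[p^\infty]_P$ deforms to order $t^n$.
\item \emph{Non-ordinary points.} Here the non-splitting hypothesis replaces ordinarity. The crucial local lemma states that when the enlarged formal Brauer group extension does not split generically, the lattice of special endomorphisms lifting modulo $t^n$ has rank dropping at a controlled rate, e.g.\ its covolume grows like $p^{cn}$. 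At supersingular points, the non-exceptional hypothesis, namely $C\not\subset\overline{\mathscr{S}_{\mathrm{as}}}$ in the hyperbolic case, gives the needed extra decay.
\item \emph{Conclusion.} Summing the local bounds and comparing with the global count forces new intersection points for infinitely many $m$. Applying the argument to all finite covers, or to Hecke-translated sublattices, then gives Zariski density.
\end{enumerate}

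The main obstacle is step 3: obtaining a quantitative decay rate for special endomorphisms at non-ordinary points directly from the non-splitting of the connected-étale sequence. I would try to construct the Serre--Tate-type coordinates of \cite{MST}, relative to the slope filtration, on the enlarged formal Brauer group. Non-splitting would then mean that a certain extension class is a nonconstant power series, and the deformation of each special endomorphism would be obstructed by a pairing with this extension class. This gives a bound of the form $i_P\ll \#\{\lambda: Q(\lambda)=m,\ \lambda\in p^{n}\text{-sublattice}\}$, with the sublattice shrinking as $n$ grows.
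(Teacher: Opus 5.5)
Your reductions (c)$\Rightarrow$(b) (via Zarhin) and (b)$\Rightarrow$(a) are fine, and they match what the paper does. A splitting in the middle gives the volume element of $\gr_0$ in $\wedge^{b-2a}\bL_{\cris}[1/p]\subset\End(\bH_{\cris}[1/p])$, and de Jong's crystalline isogeny theorem turns this into a new element of $\End(\cA_{D_0})\otimes\bQ_p$.

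\textbf{The gap is in your proof of (a), and it is structural.} At a supersingular point $P$ the lattice $L_{P,1}$ has full rank $b+2$. Hence $\sum_{m\le X} i_P(C\cdot Z(m))\ge \#\{v\in L_{P,1}: 0<Q(v)\le X\}\gg X^{(b+2)/2}$, which is the same order as the global count $\sum_{m\le X} C\cdot Z(m)$. No decay estimate can make this $o(\cdot)$.

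Non-supersingular points are not where the difficulty lies. There \cite[Prop.~7.11]{MST} and \cite{Tay22} give $O(X^{b/2}\log X)$ with no hypothesis on the Brauer group.

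What is actually needed is an inequality with an explicit constant. Since $\cF_{a+2}$ is cut out of $\cF_{a+1}$ by a section of $\omega^{p^{a+1}-1}$, one has $(C\cdot\omega)|q_L(m)|=\sum_P g_P(m)$ with $g_P(m)=\frac{h_P}{p^{a+1}-1}|q_L(m)|$. One must then show $\sum_r q_{L'_{P,r}}(m)<\alpha\, g_P(m)$ with a uniform $\alpha<1$.

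This inequality is sharp. The paper shows $S'(M)\le 1$ by combining two ingredients:
\begin{itemize}
\item rapid decay of a saturated sublattice of rank $\ge a_1+a_k+2$;
\item an exact description of first-order decay, namely the valuations $c_i=v_t(x_i)$, $d_i=v_t(y_i)$ measured against $h_P$, analysed via line configurations and Moore determinants.
\end{itemize}
The value $1$ is attained only when $\cA_{C^{/P}}[p^\infty]$ carries a saturated self-dual rank $b-2a$ lattice of formal special endomorphisms that never decays. A qualitative bound such as ``covolume grows like $p^{cn}$'' cannot bring the constant below $1$.

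\textbf{The role of non-splitting.} The non-splitting hypothesis does not feed into a decay rate at all. Its only role is to exclude that extremal configuration, and for this you need an algebraization step that your plan lacks.
\begin{itemize}
\item A rank $b-2a$ lattice of formal special endomorphisms over $\Spf k[\![t]\!]$ splits $\Fil_0[1/p]\to\gr_0[1/p]$ over the fraction field $E$ of the completion.
\item One then needs a formal-to-global result (the paper uses \cite[Prop.~5.2]{RJ23}) to propagate this splitting to $U_0$, hence to $\eta$, contradicting (a).
\end{itemize}
Your Serre--Tate idea, with an extension class that is a nonconstant power series at $P$, cannot work as stated. The slope filtration and the enlarged formal Brauer group exist only over the open Newton stratum of $C$. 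They do not exist at the supersingular point $P$, where the Newton polygon jumps. So non-splitting at $\eta$ gives you no local information at $P$ without such an algebraization theorem.

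Likewise, non-exceptionality does not supply ``extra decay''. It excludes the case $b-2a=0$ (hyperbolic $L\otimes\bZ_p$ and $C\subset\overline{\shS_{\mathrm{as}}}$). In that case the extremal configuration holds vacuously, and the supersingular local contributions exhaust the global intersection exactly.
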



\subsection{Outline of proof}
The overarching outline follows \cite{COhypersymmetric}, \cite{MST1} and \cite{MST}, and is a local-global argument. The idea (first used in \cite{COhypersymmetric}) is to use global inputs to estimate the global intersection  $C . \cZ(m)$ as $m$ varies, and to show that the local contribution $i_P(C. \cZ(m))$ (i.e., the intersection multiplicity of $C$ with $\cZ(m)$ at $P$) from any one point $P\in C$ is small relative to $C.\cZ(m)$. The difficulty, of course, is in proving good enough bounds for $i_P(C . \cZ(m))$. Our proof of this consists of three parts having distinct flavors. The first part is mostly global, and in particular shows that $i_P(C.\cZ(m))$ is small enough as long as $P$ is not a supersingular point. For supersingular points, we reduce the main theorem to a purely local statement in the context of a formal curve $\Spec \bF[\![t]\!] \rightarrow \mathscr{S}$. This technique first appeared in \cite{SSTT} and \cite{MST}, and the lack of ordinarity introduces no additional difficulty. Part 2 is purely local, and we establish a combinatorial characterization of ``decaying'' lattices of special endomorphisms, which is needed to control $i_P(C.\cZ(m))$ when $P$ is a supersingular point. The fact that the curve avoids the ordinary locus introduces significant difficulties, which take new ideas to overcome. Part 3 is an algebraization result used to handle an extreme situation occurring in Part 2 where ``maximal amount'' of special endomorphisms resist decay (hence the local intersections can not be effectively controlled). When this happens, we show that the connected-étale exact sequence of the enlarged formal Brauer group over the generic point of $C$ must split. This implies that the generic Kuga--Satake abelian scheme admits extra algebraic endomorphisms. 

We now describe these three parts in more detail.
\subsubsection{Local-global comparison} Borcherd's theory tells us the asymptotic growth rate of the global intersection number $C . Z(m)$ as $m \to \infty$. More precisely, $C . Z(m)$ is of the form $|q_L(m)|(C \cdot \omega) + o(m^{b/2})$ and $q_L(m) \asymp m^{(b + 2) / 2}$. Here, $\omega$ is the Hodge bundle of $\mathscr{S}$. 

Let $P$ be a $\overline{\bF}_p$-point on $C$ and let $t$ be a uniformizer of $C$ at $P$, so that the formal completion of $C$ at $P$ is $\Spf(\overline{\bF}_p[\![t]\!])$. Let $\cA_r$ be the restriction of $\cA$ to $\overline{\bF}_p[t]/ (t^r)$, and let $L_r := L(\cA_r)$. Note that we have a nested sequence of lattices $L(\cA_P) = L_1 \supseteq L_2 \supseteq L_3 \supseteq \cdots$, such that a special endomorphism $v \in L(\cA_P)$ deforms to $\overline{\bF}_p[t]/ (t^r)$ if and only if $v \in L_r$. By the moduli interpretation of $\cZ(m)$, the local intersection number $i_P(C . \cZ(m))$ is computed by $\sum_{r = 1}^\infty \# \{ v \in L_r : v^2 = m \}$. It is proved in \cite{MST} (crucially using inputs from \cite{SSTT}) that if $P$ is non-supersingular, then for large $X$, $\sum_{m \le 2X} i_P(C \cdot Z(m)) = O(X^{b/2} \log X)$, so the growth rate is not on the same order as the global intersection number. By the work of Tayou \cite{Tay22}, a similar statement holds when $P$ is a boundary point. When $P$ is supersingular, we have $i_P(C . \cZ(m)) \asymp m^{(b + 2) / 2}$ as well, so we need to compare it with $|q_L(m)|(C \cdot \omega)$ more carefully, and this is done in Part 2. 




\subsubsection{Supersingular points and decaying lattices of special endomorphisms} Now let $P$ be supersingular. Roughly speaking, to bound $i_P(C . \cZ(m))$ we need $L_r$ to become small quickly as $r$ grows. This is done by examining for each $n$, the ``$n$-th order decay rate'' $d_n(v) = \max \{ r : p^{n - 1} v \in L_r \}$ for $v \in L(\cA_P)$. There are two kinds of results that we need:


\begin{enumerate}
    \item There is a saturated sublattice (having large enough rank) of $L(\cA_P)$ consisting of vectors $v$ that decays rapidly. See sections \ref{sec: decay superspecial} and \ref{sec: supersingular} for the definition of rapid decay, which roughly means that $d_n(v)$'s are subject to certain bounds.
    \item The first time a primitive endomorphism $v$ in this sublattice decays is sufficiently early (i.e., $d_1(v)$ is sufficiently small). See section \ref{sec: first step superspecial}.
\end{enumerate}

We need a lot more control for both steps as compared to the ordinary case. In particular, ``large enough rank'' just means rank 2 in the ordinary case, and we are able to establish this in a comparatively ad-hoc way in \cite{MST} and \cite{MST1}, whereas our treatment in the case of general Newton strata is (and needs to be) more systematic. Indeed, we are able to precisely relate the rates of higher-order decay to the degree of tangency of the curve to deeper \emph{Newton} strata. This takes up all of Section \ref{sec: decay superspecial} and part of \ref{sec: supersingular}.

When $C$ generically maps into the ordinary stratum, one only requires \emph{one} special endomorphism to decay ``early'' to first order, which can be deduced just from the fact that curve mapping non-constantly to $\mathscr{S}$. However, the deeper the Newton stratum into which $C$ generically maps, the more special endomorphisms are required to decay ``sufficiently early'' to first order. Even quantifying what sufficiently ``early'' (i.e., what bounds to put on $d_n(v)$'s) requires a significantly more involved setup. It turns out that the degree of tangency of the curve with the \emph{Artin} strata is what governs the first order decay, and we end up proving that a large-rank isotropic space decays early to first order. Proving this takes up all of Section \ref{sec: first step superspecial} and part of \ref{sec: supersingular}. 


Taking both these steps together, it is possible to obtain a comprehensive description of the lattices $\{ L_r \}_{r \ge 1}$. We find that the decay of special endomorphisms must follow some fascinating stringent patterns, which we were not able to find a satisfactory conceptual explanation. For curious reader, we provide a typical pattern of decay:
\begin{example}\label{exp:Rondo}
   When the point $P$ is superspecial, $L_1\otimes \bZ_p$ admits a (carefully chosen) basis $$\{e',f',e_1,...,e_m,f_1,...,f_m\},$$ such that $\langle e_i,f_j\rangle=1$, $\langle e_i,e_j\rangle=\langle f_i,f_j\rangle=0$ and  $\mathrm{Span}_{\bZ_p}\{e',f'\}$ is orthogonal to $\mathrm{Span}_{\bZ_p}\{e_i,f_i\}_{i=1}^m$. Consider the decaying lattices $L_r\otimes \bZ_p$, $r\geq 1$. The decay obeys the following beautiful pattern (when $r$ increases, the vectors decay in an order following the arrows: $e_1$ first decays, then $e_2$, ..., then $e_m$, then $f_m$, then $f_{m-1}$, \textit{etc}). 
   \begin{center}

\tikzset{every picture/.style={line width=0.75pt}} 

\begin{tikzpicture}[x=0.75pt,y=0.75pt,yscale=-1,xscale=1]

\draw    (104.37,499.93) -- (104.18,542.93) ;
\draw [shift={(104.17,544.93)}, rotate = 270.25] [color={rgb, 255:red, 0; green, 0; blue, 0 }  ][line width=0.75]    (10.93,-3.29) .. controls (6.95,-1.4) and (3.31,-0.3) .. (0,0) .. controls (3.31,0.3) and (6.95,1.4) .. (10.93,3.29)   ;
\draw    (104.05,571.93) -- (103.86,614.93) ;
\draw [shift={(103.85,616.93)}, rotate = 270.25] [color={rgb, 255:red, 0; green, 0; blue, 0 }  ][line width=0.75]    (10.93,-3.29) .. controls (6.95,-1.4) and (3.31,-0.3) .. (0,0) .. controls (3.31,0.3) and (6.95,1.4) .. (10.93,3.29)   ;
\draw    (103.74,642.93) -- (103.55,685.93) ;
\draw [shift={(103.54,687.93)}, rotate = 270.25] [color={rgb, 255:red, 0; green, 0; blue, 0 }  ][line width=0.75]    (10.93,-3.29) .. controls (6.95,-1.4) and (3.31,-0.3) .. (0,0) .. controls (3.31,0.3) and (6.95,1.4) .. (10.93,3.29)   ;
\draw    (164.11,686.24) -- (164.96,644) ;
\draw [shift={(165,642)}, rotate = 91.16] [color={rgb, 255:red, 0; green, 0; blue, 0 }  ][line width=0.75]    (10.93,-3.29) .. controls (6.95,-1.4) and (3.31,-0.3) .. (0,0) .. controls (3.31,0.3) and (6.95,1.4) .. (10.93,3.29)   ;
\draw    (119,698) -- (148,698.94) ;
\draw [shift={(150,699)}, rotate = 181.85] [color={rgb, 255:red, 0; green, 0; blue, 0 }  ][line width=0.75]    (10.93,-3.29) .. controls (6.95,-1.4) and (3.31,-0.3) .. (0,0) .. controls (3.31,0.3) and (6.95,1.4) .. (10.93,3.29)   ;
\draw    (164.23,619.24) -- (164.01,573) ;
\draw [shift={(164,571)}, rotate = 89.73] [color={rgb, 255:red, 0; green, 0; blue, 0 }  ][line width=0.75]    (10.93,-3.29) .. controls (6.95,-1.4) and (3.31,-0.3) .. (0,0) .. controls (3.31,0.3) and (6.95,1.4) .. (10.93,3.29)   ;
\draw    (163.19,545.21) -- (163.01,503) ;
\draw [shift={(163,501)}, rotate = 89.75] [color={rgb, 255:red, 0; green, 0; blue, 0 }  ][line width=0.75]    (10.93,-3.29) .. controls (6.95,-1.4) and (3.31,-0.3) .. (0,0) .. controls (3.31,0.3) and (6.95,1.4) .. (10.93,3.29)   ;
\draw    (175,493) -- (204,493) ;
\draw [shift={(206,493)}, rotate = 180] [color={rgb, 255:red, 0; green, 0; blue, 0 }  ][line width=0.75]    (10.93,-3.29) .. controls (6.95,-1.4) and (3.31,-0.3) .. (0,0) .. controls (3.31,0.3) and (6.95,1.4) .. (10.93,3.29)   ;
\draw    (250,493) -- (280,493) ;
\draw [shift={(282,493)}, rotate = 180] [color={rgb, 255:red, 0; green, 0; blue, 0 }  ][line width=0.75]    (10.93,-3.29) .. controls (6.95,-1.4) and (3.31,-0.3) .. (0,0) .. controls (3.31,0.3) and (6.95,1.4) .. (10.93,3.29)   ;
\draw    (297.37,500.93) -- (297.18,543.93) ;
\draw [shift={(297.17,545.93)}, rotate = 270.25] [color={rgb, 255:red, 0; green, 0; blue, 0 }  ][line width=0.75]    (10.93,-3.29) .. controls (6.95,-1.4) and (3.31,-0.3) .. (0,0) .. controls (3.31,0.3) and (6.95,1.4) .. (10.93,3.29)   ;
\draw    (297.05,572.93) -- (296.86,615.93) ;
\draw [shift={(296.85,617.93)}, rotate = 270.25] [color={rgb, 255:red, 0; green, 0; blue, 0 }  ][line width=0.75]    (10.93,-3.29) .. controls (6.95,-1.4) and (3.31,-0.3) .. (0,0) .. controls (3.31,0.3) and (6.95,1.4) .. (10.93,3.29)   ;
\draw    (296.74,643.93) -- (296.55,686.93) ;
\draw [shift={(296.54,688.93)}, rotate = 270.25] [color={rgb, 255:red, 0; green, 0; blue, 0 }  ][line width=0.75]    (10.93,-3.29) .. controls (6.95,-1.4) and (3.31,-0.3) .. (0,0) .. controls (3.31,0.3) and (6.95,1.4) .. (10.93,3.29)   ;
\draw    (357.11,687.24) -- (357.96,645) ;
\draw [shift={(358,643)}, rotate = 91.16] [color={rgb, 255:red, 0; green, 0; blue, 0 }  ][line width=0.75]    (10.93,-3.29) .. controls (6.95,-1.4) and (3.31,-0.3) .. (0,0) .. controls (3.31,0.3) and (6.95,1.4) .. (10.93,3.29)   ;
\draw    (319,700) -- (338,700) ;
\draw [shift={(340,700)}, rotate = 180] [color={rgb, 255:red, 0; green, 0; blue, 0 }  ][line width=0.75]    (10.93,-3.29) .. controls (6.95,-1.4) and (3.31,-0.3) .. (0,0) .. controls (3.31,0.3) and (6.95,1.4) .. (10.93,3.29)   ;
\draw    (357.23,620.24) -- (357.01,574) ;
\draw [shift={(357,572)}, rotate = 89.73] [color={rgb, 255:red, 0; green, 0; blue, 0 }  ][line width=0.75]    (10.93,-3.29) .. controls (6.95,-1.4) and (3.31,-0.3) .. (0,0) .. controls (3.31,0.3) and (6.95,1.4) .. (10.93,3.29)   ;
\draw    (356.19,546.21) -- (356.01,504) ;
\draw [shift={(356,502)}, rotate = 89.75] [color={rgb, 255:red, 0; green, 0; blue, 0 }  ][line width=0.75]    (10.93,-3.29) .. controls (6.95,-1.4) and (3.31,-0.3) .. (0,0) .. controls (3.31,0.3) and (6.95,1.4) .. (10.93,3.29)   ;
\draw    (376,493) -- (405,493) ;
\draw [shift={(407,493)}, rotate = 180] [color={rgb, 255:red, 0; green, 0; blue, 0 }  ][line width=0.75]    (10.93,-3.29) .. controls (6.95,-1.4) and (3.31,-0.3) .. (0,0) .. controls (3.31,0.3) and (6.95,1.4) .. (10.93,3.29)   ;
\draw    (468,493) -- (498,493) ;
\draw [shift={(500,493)}, rotate = 180] [color={rgb, 255:red, 0; green, 0; blue, 0 }  ][line width=0.75]    (10.93,-3.29) .. controls (6.95,-1.4) and (3.31,-0.3) .. (0,0) .. controls (3.31,0.3) and (6.95,1.4) .. (10.93,3.29)   ;
\draw    (511.37,503.93) -- (511.18,546.93) ;
\draw [shift={(511.17,548.93)}, rotate = 270.25] [color={rgb, 255:red, 0; green, 0; blue, 0 }  ][line width=0.75]    (10.93,-3.29) .. controls (6.95,-1.4) and (3.31,-0.3) .. (0,0) .. controls (3.31,0.3) and (6.95,1.4) .. (10.93,3.29)   ;
\draw    (511.05,575.93) -- (510.86,618.93) ;
\draw [shift={(510.85,620.93)}, rotate = 270.25] [color={rgb, 255:red, 0; green, 0; blue, 0 }  ][line width=0.75]    (10.93,-3.29) .. controls (6.95,-1.4) and (3.31,-0.3) .. (0,0) .. controls (3.31,0.3) and (6.95,1.4) .. (10.93,3.29)   ;
\draw    (510.74,646.93) -- (510.55,689.93) ;
\draw [shift={(510.54,691.93)}, rotate = 270.25] [color={rgb, 255:red, 0; green, 0; blue, 0 }  ][line width=0.75]    (10.93,-3.29) .. controls (6.95,-1.4) and (3.31,-0.3) .. (0,0) .. controls (3.31,0.3) and (6.95,1.4) .. (10.93,3.29)   ;
\draw    (538,700) -- (567,700) ;
\draw [shift={(569,700)}, rotate = 180] [color={rgb, 255:red, 0; green, 0; blue, 0 }  ][line width=0.75]    (10.93,-3.29) .. controls (6.95,-1.4) and (3.31,-0.3) .. (0,0) .. controls (3.31,0.3) and (6.95,1.4) .. (10.93,3.29)   ;

\draw (95.8,488.56) node [anchor=north west][inner sep=0.75pt]  [rotate=0]  {$e_{1} \ $};
\draw (96.88,556.1) node [anchor=north west][inner sep=0.75pt]  [rotate=0]  {$e_{2} \ $};
\draw (106.8,621.79) node [anchor=north west][inner sep=0.75pt]  [rotate=-90]  {$...\ $};
\draw (94.87,692.63) node [anchor=north west][inner sep=0.75pt]  [rotate=0]  {$e_{m} \ $};
\draw (154.31,687.45) node [anchor=north west][inner sep=0.75pt]  [rotate=0]  {$f_{m} \ $};
\draw (153.43,485.16) node [anchor=north west][inner sep=0.75pt]  [rotate=0]  {$f_{1} \ $};
\draw (153.5,552.33) node [anchor=north west][inner sep=0.75pt]  [rotate=0]  {$f_{2} \ $};
\draw (166.28,621.91) node [anchor=north west][inner sep=0.75pt]  [rotate=-90]  {$...\ $};
\draw (211.84,484.03) node [anchor=north west][inner sep=0.75pt]  [rotate=0]  {$e',f'\ $};
\draw (288.8,488.56) node [anchor=north west][inner sep=0.75pt]  [rotate=0]  {$pe_{1} \ $};
\draw (289.88,556.1) node [anchor=north west][inner sep=0.75pt]  [rotate=0]  {$pe_{2} \ $};
\draw (299.8,622.79) node [anchor=north west][inner sep=0.75pt]  [rotate=-90]  {$...\ $};
\draw (287.87,692.63) node [anchor=north west][inner sep=0.75pt]  [rotate=0]  {$pe_{m} \ $};
\draw (347.31,688.45) node [anchor=north west][inner sep=0.75pt]  [rotate=0]  {$pf_{m} \ $};
\draw (346.43,485.16) node [anchor=north west][inner sep=0.75pt]  [rotate=0]  {$pf_{1} \ $};
\draw (346.5,553.33) node [anchor=north west][inner sep=0.75pt]  [rotate=0]  {$pf_{2} \ $};
\draw (358.28,622.91) node [anchor=north west][inner sep=0.75pt]  [rotate=-90]  {$...\ $};
\draw (412.84,484.03) node [anchor=north west][inner sep=0.75pt]  [rotate=0]  {$pe',pf'\ $};
\draw (502.8,483.56) node [anchor=north west][inner sep=0.75pt]  [rotate=0]  {$p^{2} e_{1} \ $};
\draw (503.88,554.1) node [anchor=north west][inner sep=0.75pt]  [rotate=0]  {$p^{2} e_{2} \ $};
\draw (513.8,625.79) node [anchor=north west][inner sep=0.75pt]  [rotate=-90]  {$...\ $};
\draw (501.87,691.63) node [anchor=north west][inner sep=0.75pt]  [rotate=0]  {$p^{2} e_{m} \ $};
\draw (574.87,698.63) node [anchor=north west][inner sep=0.75pt]  [rotate=0]  {$...\ $};

\end{tikzpicture}
   \end{center}
In this particular example, one can also explicitly compute the $n$-th decay rate $d_n(e)$ of a basis vector $e$. These decay rates themselves satisfy intriguing recursive relations, and are closely related to the intersection of $C$ with the Hodge bundle. 

In general situations, the decay may not strictly follow the above pattern. There may exist vectors that never decay, which complicates matters. However, the above pattern always happen for a sufficiently large quotient lattice (say, modulo the sublattice that resist decay). 
\end{example}

\subsubsection{Algebraization} The precise description of decay in Part 2 allows us to find very tight bounds on the local intersection $i_P(C. \cZ(m))$. For most curves $C$, this already yields the desired local-global comparison, hence the denseness of NL locus. The bound only fails when the curve is in an extreme position, where the $p$-divisible group $\mathcal{A}_P[p^\infty]$ admits a ``maximal amount'' of formal special endomorphisms that do not decay (i.e., they deform all the way to the formal neighborhood $C^{/P}$), so $i_P(C. \cZ(m))$ is as large as possible. In fact, when this happens, the main term of the local intersections at supersingular points already equal the main term of the global intersection, hence the local--global comparison breaks down and one cannot conclude that NL locus is dense. 

We note that this is not a byproduct of our decay results being insufficiently strong. Indeed, it is precisely this behaviour that gives rise to the counterexamples with small monodromy discussed in Section \ref{sec: resof scalars}. 

We overcome this structural difficulty by proving that in this extreme case, the connected-étale sequence $\mathbf{\Psi}$ of the enlarged formal Brauer group over the generic point of $C$ must split, cf. section \ref{sec: algebraization}. This is termed as algebraization in the following sense:  let $K(C)$ be the function field of $C$. The existence of this ``maximal amount of'' formal special endomorphisms of $\mathcal{A}_{C^{/P}}[p^\infty]$, which is a $p$-divisible group over a formal base, implies the splitting of $\mathbf{\Psi}$ over $K(C)$, which is a global and algebraic statement. It then follows that
$\cA|_C$ has strictly more endomorphisms than $\cA$, i.e., $\End(\cA|_C) \otimes \bQ\supsetneq \End(\cA) \otimes \bQ$. 

Therefore, if $\mathbf{\Psi}$ does not split over $K(C)$, or if $\End(\cA|_C) \otimes \bQ= \End(\cA) \otimes \bQ$, then we are not in the extreme situation, and the NL locus is dense as we have noted earlier. 


\subsection{Prior and related work}
Our paper fits into the broader context of studying the distribution of algebraic cycles in a family. Over $\bC$, this reduces to studying the \textit{Hodge locus} (at least when assuming the Hodge conjecture), and much recent progress has been made. In \cite{KO21}, \cite{BKU}, and \cite{KU25}, the authors thoroughly studied when the Hodge locus, or a distinguished part of it, is dense. From \cite{KU25}, we typically expect a theorem ensuring density to involve two parts in its assumptions.
\begin{enumerate}[label=\upshape{(\Roman*)}]
    \item The period dimension (i.e., the rank of the Kodaira-Spencer map at a generic point) is large enough for the Hodge locus to have non-negative expected dimension. 
    \item The monodromy group is not ``too degenerate''. 
\end{enumerate}

Note that for a variation of Hodge structures, having infinite monodromy is equivalent to having positive period dimension --- both are equivalent to non-isotriviality. In retrospect, it is somewhat a coincidence that when $h^{2, 0} = 1$, non-isotriviality already ensures a dense NL locus (cf. \cite[Prop.~6.4]{Moo17}). In general, one would expect (I) and (II) to be separate (though related) conditions. In fact, this already happens in positive characteristic even when $h^{2, 0} = 1$: the supersingular locus has positive period dimension and yet finite monodromy. 

In analogy to the complex analytic situation, the hypotheses of our \cref{mainShimura} consist of three parts: Non-isotriviality corresponds to (I). The non-exceptional-ness assumption excludes the obvious obstruction to a dense NL locus coming from Frobenius, while conditions (a), (b), (c) are analogues of (II). In particular, (a) implies the following: if the crystalline monodromy group, (cf. \cite{Crew}) on the open Newton stratum of $C$\footnote{i.e., the group $G(\bL_\cris[p^{-1}]|_U, \bar{\eta})$ if we use the notation of \cite[\S2.2]{DAparabolic}. Here $U$ is the open Newton stratum of $C$, and $\bL_\cris$ is the natural $F$-crystal on $\mathscr{S}$ recalled in \cref{def: special end} below.} does not split as a product over $W(\overline{\bF}_p)[1/p]$, then the NL locus is dense. So (a) is indeed a condition on monodromy. 

Another general theme which our results fit into is the intersection theory on Shimura varieties. During an AIM workshop, several participants (including the second named author) conjectured (\cite[Conj.~5.2]{STexpos}) that for an integral model $\shS$ of a Shimura variety of Hodge type and generically ordinary subvarieties $X, Y \subset \shS_{\bF_p}$ of complementary dimension, the intersection of $X$ with the union of prime-to-$p$ Hecke translates of $Y$ is Zariski dense in $X$. The ordinary Hecke orbit (HO) conjecture (\cite{vH}) is the special case $X = \shS_{\bF_p}$, $Y$ a point; \cite{MST} treats the case where $X$ is a curve in an orthogonal Shimura variety and $Y$ a special divisor; and \cite{AsvinQiaoAnanth} treats the case when $X,Y$ are ordinary curves inside a suitable Hilbert modular surface. These density results have some unexpected geometric applications, e.g., to a good reduction criterion for K3 surfaces (\cite{BY23}). There is more known about this conjecture in characteristic zero - for example, the work \cite{TT} address this question in many cases, including when $X$ is a curve and $Y$ is a divisor, and ongoing work of Baldi-Urbanik addresses this question over the complex numbers whenever $X,Y$ are Hodge-generic in the ambient Shimura variety. 

Beyond the ordinary case, these questions become significantly harder, and even the correct formulation becomes mysterious. For $X = \shS_{\bF_p}$ and $Y$ a point, the full Hecke orbit conjecture (recently proved by van Hoften and D'Addezio \cite{DvH}) roughly says that the obvious Frobenius constraint---that Hecke translates lie in the same central leaf---is the only one. However, as this paper illustrates, to extend \cite{MST} beyond the obvious Frobenius constraint, there are additional conditions whose failure may lead to counterexamples (e.g., \S\ref{sec: resof scalars}). On the other hand, these additional conditions have certain parallels in complex Hodge theory. In \cite{RJ23}, the first named author studied the almost ordinary locus when $b = 3$ (i.e., for abelian surfaces). In that paper, the question considered isn't a Picard-rank jumping question (which would be automatic as $b$ is odd), but the more fine question of showing that the almost-ordinary curve $C$ intersects $\cZ(m)$ where $m$ varies over a fixed square-class. In this paper, we now provide a full generalization for all $b$. In doing so, we hope to further illustrate how the Newton and Artin stratification governs the distribution of algebraic cycles in positive characteristic, and the parallel between the positive characteristic and the complex analytic situations. 

\subsection{Organization of the paper}
In Section \ref{sub:Basics_orthogobal_SV}, we define the various objects in play, including $\textrm{GSpin}$ Shimura varieties, special divisors and endomorphisms, and the Newton stratification. In Section \ref{sec:arithmetic}, we setup the intersection-theoretic framework required to prove our main result, and using results from \cite{SSTT}, \cite{MST} and \cite{Tay22}, we reduce our main theorem to a local statement at supersingular points. In Section \ref{sec: local structure at superspecial point}, we recall work of Kisin and Ogus to describe the universal K3 crystal at a formal neighbourhood of a supersingular point and use this to derive explicit equations for the Newton strata. Sections \ref{sec: decay superspecial} and \ref{sec: first step superspecial} are the technical heart of the paper where we bound local-intersection numbers at superspecial points under a certain local assumption. We generalize this to all supersingular points at \ref{sec: supersingular}. We finally prove the main theorem in Section \ref{sec: algebraization}, where we show that the local assumption always holds under the global hypotheses of our main result. We also have included a glossary with the notation that we use in our paper (especially for Sections \ref{sec: decay superspecial}, \ref{sec: first step superspecial}, and \ref{sec: supersingular}) for the reader's convenience. 

\subsection*{Acknowledgements}
We are very grateful to Davesh Maulik, Arul Shankar, Yunqing Tang, and Salim Tayou for several helpful discussions over the past several years. We are especially grateful to Davesh and Yunqing for invaluable discussions at the start of this project. We also thank David Urbanik for helpful comments. A.S. was partially supported by the NSF grant DMS-2338942, the Institute for Advanced Studies (via the NSF grant DMS-2424441), and a Sloan research fellowship. Z.Y. was supported by the CUHK start up grant, and an ECS grant from the Hong Kong RGC (Project No. 24308225).

\section{Preliminaries on orthogonal Shimura varieties}\label{sub:Basics_orthogobal_SV}

\subsection{Basic setup} \label{subsec: set up SV}
We will set up basic definitions and terminology for our Shimura varieties, special endomorphisms, and special divisors. At first, our exposition and setup will follow \cite[Section 2]{MST} and \cite[Section2]{SSTT}, and we will refer to \cite{AGHMP18} and \cite{MP16}. We will then work modulo $p$ and collect various results (mostly from \cite{Ogu79}, \cite{Ogu01}, and \cite{MST}) that together describe Newton strata. 

Let $L$ be a quadratic lattice with signature $(b,2)$ which is self-dual at $p$. Write $V$ for $L_\bQ$. We may also assume that $L \subset V$ is maximal among those lattices contained in $V$ on which which the bilinear form has integer values. Let $\Cl(L)$ denote the Clifford algebra associated to $L$. Note that we have a natural embedding of free $\bZ_{(p)}$-modules $L\otimes \bZ_{(p)} \hookrightarrow \Cl(L)\otimes \bZ_{(p)}$. Let $G:= \mathrm{GSpin}(L\otimes \bZ_{(p)})$ be the group of spinor similitudes. Note that $G$ is a reductive group over $\bZ_{(p)}$ and is naturally a subgroup of $\Cl(L \otimes \bZ_{(p)})^{\times}$.  Let $\cD $ be the Hermitian symmetric domain $\{ z \in V_\bC | \langle z, \overline{z} \rangle < 0 \}$. Then $(G, \cD)$ defines a spinor Shimura datum with reflex field $\bQ$. For every neat compact open subgroup $\bK \subseteq G(\bA_f)$, we obtain a Shimura variety $\Sh_{\bK}(G, \cD)$ over $\bQ$. 

Set $H = \mathrm{Cl}(L)$, viewed as a $\mathrm{Cl}(L)$-bimodule. Note that $H$ has a natural $\bZ/2 \bZ$-grading. Left multiplication of $\Cl(L)$ on $H$ the induces spin representation $G \to \mathrm{GL}(H)$ and an embedding $L \into \Cl(L) \into \End (H)$. Equip $\End (H)$ with the pairing $(\alpha, \beta) := 2^{-\mathrm{rank\,} L} \mathrm{tr}(\alpha \circ \beta)$. Then the composite embedding $L \into \End(H)$ is isometric and we can form the orthogonal projection $\psi : \End(H) \to L$. The spinor group $G$ can be conversely viewed as the stabilizer of the $\bZ / 2 \bZ$-grading, the right $\Cl(L\otimes \bZ_{(p)})$-action and the idempotent projector $\psi$.

Suppose now that $\bK$ is of the form $\bK_p \bK^p$ for $\bK^p \subseteq G(\bA^p_f)$ and $\bK_p = G(\bZ_p)$, i.e., is hyperspecial at $p$. 
By \cite{Kis10} (cf. \cite{MP16}) there is an canonical integral model $\shS_{\bK}(G)$ over $\bZ_{(p)}$. 
We may endow $H$ with a suitable symplectic form such that the left multiplication by $G$ on $H$ respects the form up to scaling, and indeed induces an embedding of Shimura data $(G, \cD) \into (\mathrm{GSp}, \mathcal{H}^\pm)$, where $\mathcal{H}^\pm$ is the associated Siegel half spaces. This Siegel embedding equips $\shS_{\bK}(G)$ with a universal abelian scheme $\cA$, called the \textit{Kuga-Satake abelian scheme}.\footnote{Technically, in \cite{Kis10} and \cite{MP16}, $\cA$ is only defined as a sheaf of abelian schemes up to prime-to-$p$ quasi-isogeny. 
However, for $\bK^p$ sufficiently small, we can take $\cA$ to be an actual abelian scheme (cf. \cite[(2.1.5)]{Kis10}).}  We use $\cA[p^\infty]$ to denote the $p$-divisible group associated to $\cA$. Throughout, we fix a choice of such level structure, and for brevity let $\Sh$ (resp. $\shS$) denote $\Sh_{\bK}(G,\cD)$ (resp. the integral model $\shS_{\bK}(G,\cD)$).

Define the sheaves $\bH_\mathrm{B}/\Sh_{\bC}$, $\bH_\ell/\shS $ ($\ell \neq p$), $\bH_p/\Sh$, $\bH_\dR/\shS$ and $\bH_\cris /\shS_{\bF_p}$ denote the (first relative) Betti cohomology, $\ell$-adic etale cohomology, $p$-adic etale cohomology, de Rham cohomology, and the crystalline cohomology of the universal abelian scheme. 
The abelian scheme $\cA$ is equipped with a ``CSpin-structure'': a $\bZ / 2 \bZ$-grading, $\mathrm{Cl}(L)$-action and an idempotent projector $\psi_? : \End(\bH_?) \to \End(\bH_?)$ for $?\in \{B, \ell, p, \dR, \cris\}$ on (various applicable fibers of) $\shS$. We use $\bL_?$ to denote the images of $\psi_?$. 

\begin{remark}\label{rmk:embwedge}
The embedding $L\subseteq \End_{\Cl(L)}(H)$ can be generalized to some extent. For every $r\geq 0$, one can construct a natural map $\wedge^r V \to \Cl(V)$ by sending $v_1 \wedge \cdots \wedge v_r$ to $\sum_\sigma \mathrm{sgn}(\sigma) v_{\sigma(1)} v_{\sigma(2)} \cdots v_{\sigma(r)}$.  
This induces an isomorphism between $\bQ$-spaces  $\wedge^{\bullet} V$ and $\mathrm{Cl}(V)$. The natural action of $\wedge^\bullet V$ on $H_{\bQ}$ results in a $G$-invariant embedding $\wedge^\bullet V\hookrightarrow \End_{\mathrm{Cl}(V)}(H_{\bQ})$. As a consequence, there is a natural embedding of rational cohomology sheaves 
 $\wedge^{\bullet} \mathbb{L}_?\otimes\bQ\hookrightarrow \mathrm{End}(\mathbb{H}_?\otimes \bQ)$.  
\end{remark}

\begin{definition}\label{def: special end} We now use the sheaves $\bL_?$ to define the notions of special endomorphisms of points of $\shS$ and special divisors in $\shS$.
    \begin{enumerate}
        \item Given any $\shS$-scheme $T$, an endomorphism $f \in \End(\cA_T)$ is called a \emph{special endomorphism} (\cite[Def.~5.2, see also Lem.~5.4, Cor.~5.22]{MP16}) if all cohomological realizations of $f$ lie in the image $\bL_? \rightarrow \End(\bH_?)$ where $? = \mathrm{B}, \dR, \cris, \ell \neq p, p$. For brevity, we simply drop those subscripts that do not make sense.  For example, if $p$ is invertible in $T$, then $? = \cris$ doesn't make sense and if $p$ is not invertible then $? = p,\mathrm{B}$ do not make sense. 
    
    We denote the submodule of $\End(\cA_T)$ of special endomorphisms by $L(\cA_T)$. By \cite[Lem. 5.2]{MP16}, for $v\in L(\cA_T)$, we have $v\circ v=[Q(v)]$ for
some $Q(v) \in \bZ_{\geq 0}$,  and $Q(v)$ is a positive definite quadratic form on the $\bZ$-lattice $L(\cA_T)$.
\item For $m\in \bZ_{>0}$, the \textit{special divisor} $\mathcal{Z}(m)$ is the Deligne–Mumford stack over $\mathscr{S}$ with
functor of points $\mathcal{Z}(m)(T) = \{v\in L(\cA_T)\, |\,Q(v) = m\}$ for any $\mathscr{S}$-scheme $T$. We use the
same notation for the image of $\mathcal{Z}(m)$ in $\mathscr{S}$. By  \cite[Prop. 4.5.8]{AGHMP18}, $\mathcal{Z}(m)$ is an
effective Cartier divisor flat over $\bZ_{(p)}$ and hence $\mathcal{Z}(m)_{\bF_p}$
is still an effective Cartier divisor of
$\mathscr{S}_{\mathbb{F}_p}$. We denote $\mathcal{Z}(m)_{\bF_p}$ by $Z(m)$.

    \item Suppose that $p$ is not invertible in $T$. We say that $f\in \End(\cA_T[p^{\infty}])$ is a \emph{formal special endomorphism} if its crystalline realization lies in $\bL_{\cris}$. We denote the $\bZ_p$-submodule of $\End(\cA_T[p^{\infty}])$ of formal special endomorphisms by $\cL(\cA_T)$.
\end{enumerate}
     \end{definition}

\begin{remark}
    Consider a connected scheme $T$ and a $T$-valued point of $\shS$. An endomorphism $f\in \End(\cA_T)$ or $\End(\cA_T[p^{\infty}])$ being special can be checked at any geometric point of $T$ (\cite[Proposition 4.3.4]{AGHMP18}). Further if $x\in T_{\bF_p}$ is a geometric point, then $f$ is special if and only if $f_{t,\cris} \in \bL_{\cris}$. 
\end{remark}

\subsection{Newton stratification}
 Let $k$ be any perfect field of characteristic $p \ge 5$ and let $W = W(k)$. We describe the Newton stratification on $\shS_k$ following the scheme-theoretic approach of \cite{Ogu01}. In order to extend the results there without referring to K3 surfaces, we can use the language of orthogonal $F$-zips. This extends the data of Hodge and conjugation filtrations, as well as the Frobenius action on the de Rham cohomology of K3 surfaces. 

For any closed subscheme $T$ of $\shS_k$, we write $\wt{T}$ for the ($p$-complete) PD-envelope of $T$ in $\shS_W$. When $\wt{T}$ is $p$-torsion-free, we say that $T$ is \textit{admissible}, and the restriction $(\bL_\cris|_T, p \Frob)$ is a K3 crystal in the terminology of Nygaard--Ogus (\cite[Def.~5.1]{NO}), ignoring the rank $22$ condition. Recall that a local complete intersection is admissible (\cite[\S2.3.3]{BBM82}). This condition allows us to view $\bL_\cris(\wt{T})$ as a submodule of $\bL_\cris(\wt{T})[1/p]$, on which $\Frob$ natually acts. 

The de Rham cohomology $\bH_\dR$ is equipped with a Hodge and conjugate filtration, these structures are inherited by the direct summand $\bL_\dR$ of $\End(\bH_\dR)$. Note that there is a canonical (reduction mod $p$) map $\bL_\cris(\tilde{T}) \rightarrow \bL_{\dR,T}$, which we will denote in this section by $\pi$. By Mazur--Ogus inequality \cite[Thm.~8.26]{BOBook}, one can recover these structures using the Frobenius structure on $\bL_{\cris}|_T$: 
\begin{align}
    \Fil^i \bL_{\dR, T} &= \pi\left(\Frob^{-1}(p^{i} \bL_{\cris}(\wt{T}))\right) \label{eqn: Mazur Ogus Hodge} \\
    \Fil_c^{i} \bL_{\dR,T} &= \pi\left(p^{-i} \left[\Frob(\bL_\cris(\wt{T})) \cap p^i  \bL_\cris(\wt{T})\right]\right)\label{eqn: Mazur Ogus conjugate}
\end{align}
The first equation breaks into two 
\begin{align}
    \pi^{-1} (\Fil^0 \bL_{\dR, T}) &= \{ \wt{x} \in \bL_\cris(\wt{T}) \mid \Frob(\wt{x}) \in \bL_\cris(\wt{T}) \} \\
    \pi^{-1}(\Fil^1 \bL_{\dR, T}) &\supsetneq \{ \wt{x} \in \bL_\cris(\wt{T}) \mid \Frob(\wt{x}) \in p \bL_\cris(\wt{T}) \}.
\end{align}
Note that the first is an equality because $\Frob(\bL_\cris) \in p^{-1} \bL_\cris$, but the second is not. Technically, \cite[Thm.~8.26]{BOBook} applies to a smooth base. So in order to obtain (\ref{eqn: Mazur Ogus Hodge}) and (\ref{eqn: Mazur Ogus conjugate}) for arbitrary $T$ (with $p$-torsion free $\wt{T}$), one can use that for any closed embedding $T \into T'$, there are commutative diagrams
\begin{equation}
\label{eqn: embed of PD envelopes}
    \begin{tikzcd}
	T & {T'} \\
	{\wt{T}} & {\wt{T}'}
	\arrow[hook, from=1-1, to=1-2]
	\arrow[hook, from=1-1, to=2-1]
	\arrow[hook, from=1-2, to=2-2]
	\arrow[from=2-1, to=2-2]
\end{tikzcd} \Rightarrow
\begin{tikzcd}
	{\bL_\cris(\wt{T}')} & {\bL_\cris(\wt{T})} \\
	{\bL_{\dR, T'}} & {\bL_{\dR, T}}
	\arrow["{\tensor_{\sO_{\wt{T}'}} \sO_{\wt{T}}}", from=1-1, to=1-2]
	\arrow["\pi"', two heads, from=1-1, to=2-1]
	\arrow["\pi"', two heads, from=1-2, to=2-2]
	\arrow["{\tensor_{\sO_{T'}} \sO_T}", from=2-1, to=2-2]
\end{tikzcd}
\end{equation}

We will often implicitly make use of such diagrams. Over $T = \shS_k$ (and hence any $T$), the $F$-zip $\bL_{\dR, T}$ is equipped with two $\sO_T$-linear isomorphisms 
\begin{align}
    \label{eqn: gamma -1} \gamma_{-1} : (F_T)^* (\gr^{-1} \bL_{\dR, T}) \stackrel{\sim}{\to} \Fil^{-1}_c \bL_{\dR, T} \text{ and }
    \gamma_0 : (F_T)^* (\gr^0 \bL_{\dR, T}) \stackrel{\sim}{\to} \gr^{0}_c \bL_{\dR, T}. 
\end{align}
Also, note that $\gr^{-1} \bL_{\dR, T} = \pi(p \Frob(\bL_\cris(\wt{T})))$.

Before we proceed, we briefly recall some basic facts abour K3 crystals over a point (\cite[p.326-328]{Ogu01}). 

\begin{definition}
\begin{enumerate}
    \item A K3 crystal over $k$ is a finite free $W$-module $H$ equipped with a pairing $\langle - , - \rangle$ and a $\sigma$-linear Frobenius automorphism $\Frob$ on $H[1/p]$ such that $p \Frob(H) \subseteq H$ has rank $1$ quotient and $\langle  \Frob(x), \Frob(y) \rangle = \sigma(\langle x, y \rangle)$. 
    \item Define Hodge and conjugation filtrations on $H_k$ using (\ref{eqn: Mazur Ogus Hodge}) and (\ref{eqn: Mazur Ogus conjugate}). 
    \item Define a sequence of subspaces $E_1 \subseteq E_2 \subseteq \cdots \subseteq \Fil^0_c H_k$ as follows. Define $E_1 = \Fil^{-1}_c H_k$ and for each $i \in \bN$, define $E_{i+1}$ recursively as follows: 
    \begin{enumerate}
        \item  if $E_i \subseteq \Fil^0 H_k$, define $E_{i + 1}:=\varpi^{-1}(\gamma_0(E_i))$, where $\varpi$ is the projection $\Fil^0_c H_k \to \gr^0_c H_k$. 
        \item  if $E_i \not\subseteq \Fil^0 H_k$, we stop (meaning that $E_{i+1}$ will not be defined). 
    \end{enumerate}
    \item The height of $(H, \Frob)$ is the maximal $h \in \mathbb{N} $ such that $E_{h }$ is well-defined. If $h=\infty$, then we say that $(H, \Frob)$ is \textit{supersingular}.
\end{enumerate}
\end{definition}
Our definition differs from that in \cite{Ogu01} up to a Tate twist. Namely, we divide the Frobenius operator in the original definition by $p$. Moreover, our conjugation filtration is ascending, which is compatible with the convention for F-zips but is opposite to the convention used in \cite{Ogu01}. In particular, if $\bL_{\dR, T}$ comes from (the primitive part of) the de Rham cohomology of a K3 surface $X$ over $T$ as in \cite{Ogu01}, then our $\Fil^{-1}_c \bL_{\dR, T}$ corresponds to $\Fil^2_c H^2_\dR(X/T)$.   

\begin{lemma}\label{lem: dim E_i} 
Let $(H, \Frob)$ be a K3 crystal over a point $k$. 
    \begin{enumerate}[label=\upshape{(\alph*)}]
        \item $(H, \Frob)$ has finite height $h$ if and only if its slopes contain $\{ - 1/h, 1/ h \}$, if and only if $h$ is the maximal number such that $p \Frob^i(H) \subseteq H$ for every $i \le h$. 
        \item $(H, \Frob)$ is supersingular if and only if its Newton polygon has constant slope $0$, if and only if $p \Frob^i(H) \subseteq H$ for every $i \in \mathbb{N}$.
        \item If $(H, \Frob)$ has finite height, then for every $i \le h$, $\dim E_i = i$ and $\Fil^1 H_k \not\subseteq E_i$. 
        \item If $(H, \Frob)$ is supersingular and has Artin invariant $\sigma_0$, then $\dim E_i = i$ for every $i \le \sigma_0$ and $E_i = E_{\sigma_0}$ for every $i \ge \sigma_0$; moreover, $E_{\sigma_0} = E_{\sigma_0 - 1} + \Fil^1 H_k$ and $\Fil^1 H_k \not\subseteq E_i$ for $i < \sigma_0$. 
    \end{enumerate}
\end{lemma}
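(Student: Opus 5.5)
This lemma is essentially a Tate-twisted and reindexed restatement of Ogus's results on K3 crystals over a perfect field (\cite[p.~326--328]{Ogu01}, going back to \cite{NO}). The plan is to translate our conventions into his and read off each item, reproving directly only what does not transfer verbatim.

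\textbf{Step 1: dictionary.} Set $\Phi = p\Frob$, which makes $(H,\Phi)$ a K3 crystal in the sense of \cite[Def.~5.1]{NO}, with Hodge weights $0,1,2$. Reversing the indexing of the conjugate filtration, our $\Fil^{-1}_c$ becomes Ogus's $\Fil^2_c$ and our $\Fil^0_c$ becomes his $\Fil^1_c$; the maps $\gamma_{-1},\gamma_0$ become his Cartier-type isomorphisms. Under this dictionary our $E_i$ are exactly Ogus's subspaces, whose dimensions he computes, and our slopes are his slopes shifted by $-1$.

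\textbf{Step 2: items (a) and (b).} The Newton polygon of a K3 crystal has slopes either all equal to $1$ (in Ogus's normalization), or of the form $1-1/h,\ 1,\ 1+1/h$ with the first and last of multiplicity $h$. After the twist these become $\{0\}$, respectively $\{-1/h,0,1/h\}$.
\begin{itemize}
\item For the equivalence with the integrality condition, use the slope decomposition over $W(\bar k)$: the slope $-1/h$ part is isoclinic of dimension $h$. A standard lattice argument (Mazur's inequality for the iterates $\Frob^i$) then shows that $p\Frob^i(H)\subseteq H$ holds exactly for $i\le h$. In the supersingular case $\Frob$ has slope $0$, so $p\Frob^i H\subseteq H$ for all $i$. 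Conversely, if $p\Frob^i H\subseteq H$ for all $i$, there is no negative slope.
\item To match this with the height defined via the $E_i$, I would inductively identify $\pi^{-1}(E_i)$ with $\{x : p\Frob^{j}(x)\in H \text{ for } j\le i\}$ modulo $p$, or with a dual description using iterates of $\Frob^{-1}$. I would prove this by unwinding (\ref{eqn: Mazur Ogus Hodge}) and (\ref{eqn: Mazur Ogus conjugate}) together with $\gamma_0$. The stopping condition $E_i\not\subseteq \Fil^0$ is then exactly the failure of $p\Frob^{i+1}H\subseteq H$.
\end{itemize}

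\textbf{Step 3: items (c) and (d).} Both are dimension counts. Since $E_1=\Fil^{-1}_c$ is a line and $\gamma_0$ is an isomorphism, $\dim E_{i+1}\le \dim E_i+1$. The real content is strict growth, and whether $E_{i+1}=E_i$ can occur. In finite height, the identification from Step 2 together with the $h$-dimensional slope $-1/h$ part gives $\dim E_i=i$ for $i\le h$. The claim $\Fil^1\not\subseteq E_i$ follows because $\Fil^1$ pairs nontrivially with $\Fil^{-1}_c$, whereas $E_i$ (for $i<h$) lies in the orthogonal of $E_1$.

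In the supersingular case, use Ogus's description via the characteristic subspace. The $E_i$ stabilize exactly when $\dim E_i$ reaches $\sigma_0$, and this happens when $E_i$ first contains $\Fil^1$. This gives $E_{\sigma_0}=E_{\sigma_0-1}+\Fil^1$, and stabilization thereafter because $\gamma_0$ maps $E_{\sigma_0}$ into itself.

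\textbf{Main obstacle.} I expect the hardest part to be the bookkeeping identifying the recursively defined $E_i$ with Ogus's filtration, or with the iterated-Frobenius lattices, once the twist and the reversal of the conjugate filtration are accounted for. In particular, the supersingular statement that stabilization occurs precisely at the Artin invariant $\sigma_0$ has to be imported carefully from \cite{Ogu01} rather than reproved.
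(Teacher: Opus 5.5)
Your overall plan (fix the dictionary and import everything from Ogus) is what the paper does; its proof is a citation of \cite[Prop.~3, Lem.~5]{Ogu01}. However, the one ingredient the paper flags as \emph{not} verbatim in Ogus, namely $\Fil^1 H_k\not\subseteq E_i$, is exactly where your argument fails. You assert that $\Fil^1$ pairs nontrivially with $\Fil^{-1}_c=E_1$, but this holds only for $h=1$. For $h\ge 2$, the condition that allows $E_2$ to be defined is $E_1\subseteq \Fil^0 H_k=(\Fil^1 H_k)^\perp$. So $\Fil^1\perp E_1$, and your contradiction disappears.

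A correct argument runs as follows. For $i\le h$ one has $E_i\subseteq E_h$. The space $E_h$ is totally isotropic (cf.\ Lemma~\ref{lem: Ogus lem 9}(a)), and $E_h\not\subseteq\Fil^0=(\Fil^1)^\perp$ by definition of the height. If $\Fil^1\subseteq E_h$, then $E_h\subseteq E_h^\perp\subseteq(\Fil^1)^\perp=\Fil^0$, a contradiction.

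It also helps to sharpen your inequality into an equality. Since $\ker\varpi=E_1$ is a line and $\gamma_0$ is an isomorphism, $\dim E_{i+1}=\dim E_i+1-\dim(E_i\cap\Fil^1)$. Thus strict growth at step $i$ is \emph{equivalent} to $\Fil^1\not\subseteq E_i$. This gives the last assertion of (d) from $\dim E_{\sigma_0}=\sigma_0$ (the paper's ``dimension reasons''). It also gives $E_i=E_{\sigma_0}$ for $i\ge\sigma_0$, once $\Fil^1\subseteq E_{\sigma_0}$.

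Second, the identification in Step 2 is false as written. By (a) itself, every $x\in H$ satisfies $p\Frob^j(x)\in H$ for all $j\le i\le h$. So your set is all of $H$ and cannot be $\pi^{-1}$ of an $i$-dimensional subspace. The right description is an image rather than a preimage: $E_i=\pi\bigl(\sum_{j=1}^{i}p\Frob^j(H)\bigr)$, which is the pointwise case of Lemma~\ref{lem: Frob and E}(b). Combined with $\pi^{-1}(\Fil^0H_k)=\{x:\Frob(x)\in H\}$, it shows inductively that $E_i\subseteq\Fil^0$ if and only if $p\Frob^{i+1}(H)\subseteq H$. That is the equivalence in (a) you were after; your stopping-criterion sentence is correct, but it does not follow from the identification you proposed. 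Likewise, deriving $\dim E_i=i$ in (c) from that identification ``together with the slope $-1/h$ part'' is not an argument. That dimension count should simply be imported from \cite{Ogu01}.
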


Recall that the \textit{Artin invariant} $\sigma_0$ of the supersingular K3 crystal $(H, \Frob)$ is the integer such that the Tate module $H^{\Frob = 1}$ has discriminant isomorphic to $(\bZ/ p \bZ)^{2 \sigma_0}$. 


\begin{proof}
    Every statement is extracted directly from \cite[Prop.~3, Lem.~5]{Ogu01}, except the $\Fil^1 H_k \not\subseteq E_i$ statement in (c) and (d). The former can be easily deduced from Lem.~4 and Lem.~5(1) of \textit{loc. cit.}, and the latter holds for $i < \sigma_0$ because $E_{\sigma_0} \neq E_{\sigma_0 - 1}$ for dimension reasons. 
\end{proof}

Now, following \cite[p.~334]{Ogu01}, we define a sequence of closed subschemes $\sF_1 = \shS_k \supseteq \sF_2 \supseteq $ of $\shS_k$ as follows: 

\begin{definition}
\label{def: F_h and E_h}
    Define $E_1(T) := \Fil^{-1}_c \bL_{\dR, T}$. For any $\shS_k$-scheme $T$ that factors through $\sF_h$, $T$ factors through $\sF_{h + 1}$ if and only if $E_h(T) \subseteq \Fil^0 \bL_{\dR, T}$. If this happens, then we define a coherent subsheaf $E_{h + 1}(T)$ of $\Fil^0_c \bL_{\dR, T}$ by $\varpi^{-1}(\gamma_0(E_h(T)))$, where $\varpi$ denotes the projection $\Fil^0_c \bL_\dR \to \gr^0_c \bL_\dR$.
\end{definition}




Meanwhile, let us consider also the naive definition of Newton stratification which is customary in the literature on Shimura varieties. Namely, we simply define $\shS_h$ to be the closed subset consisting of points of height $\ge h$, and then endow it with the reduced subscheme structure (see e.g., \cite[\S2]{SZ22}). We write $\shS_{\infty}$ for the supersingular (or basic) locus on $\shS_k$. We recall the following dimension formula on $\shS_h$'s from Howard--Pappas \cite[Thm.~C]{HP17} (see also Shen--Zhang \cite[Cor.~7.3.4]{SZ22}).

\begin{proposition} For the lattice $L$ of rank $b + 2$, we have the following. 
    \label{prop: Shen-Zhang} 
    \begin{enumerate}
        \item If $b = 2m - 1$ is odd, then for $h = 1, \cdots, m + 1$, $\dim \shS_h = b + 1 - h$; the supersingular locus $\shS_\infty = \shS_{m + 1}$ has dimension $m - 1$. 
        \item If $b = 2m$ is even and $\det(V_{\bQ_p}) = (-1)^{m}$, then for $h = 1, \cdots, m + 1$, $\dim \shS_h = b + 1 - h$; the supersingular locus $\shS_\infty = \shS_{m + 1}$ has dimension $m$. 
        \item If $b = 2m$ is even and $\det(V_{\bQ_p}) \neq (-1)^{m}$, then for $h = 1, \cdots, m + 2$, $\dim \shS_h = b + 1 - h$; the supersingular locus $\shS_\infty = \shS_{m + 2}$ has dimension $m - 1$. 
    \end{enumerate}
    In all cases, a general point on a non-supersingular $\shS_h$ has height exactly $h$, i.e., $\shS_h \neq \shS_{h - 1}$. 
\end{proposition}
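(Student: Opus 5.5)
This proposition is essentially a citation of the dimension formulas of Howard--Pappas \cite[Thm.~C]{HP17} and Shen--Zhang \cite[Cor.~7.3.4]{SZ22}. The plan is therefore to translate our normalization of height into theirs, and to check that the combinatorics of Newton polygons produces exactly the listed ranges of $h$.

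First, classify the possible isocrystals. By \cref{lem: dim E_i}(a),(b), a K3 crystal of rank $b+2$ with our twist has slopes either $\{-1/h,0,1/h\}$, with the slope $\pm 1/h$ parts of dimension $h$ each, or constant slope $0$ in the supersingular case. Hence a non-supersingular point of height $h$ requires $2h\le b+2$.

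Next, pin down the upper bound on $h$ using the quadratic space. The slope-$0$ part has dimension $b+2-2h$ and carries a nondegenerate form whose $\bQ_p$-isomorphism class is forced by $V_{\bQ_p}$ together with the standard form on the $\pm1/h$ parts. When $b+2-2h=0$, the whole space would be a direct sum of hyperbolic-type pieces. This is possible only if $\det(V_{\bQ_p})$ equals that of a split form, i.e. $\det = (-1)^{(b+2)/2}$, or in the twisted normalization $\det(V_{\bQ_p})\neq(-1)^m$ as stated in case (3).
\begin{itemize}
\item For $b=2m-1$, the maximal finite height is $m$, so $\shS_\infty=\shS_{m+1}$.
\item For $b=2m$ with $L_{\bZ_p}$ not split, the maximal finite height is again $m$.
\item For $b=2m$ with $L_{\bZ_p}$ split, height $m+1$ occurs, so $\shS_\infty=\shS_{m+2}$.
\end{itemize}
I will double-check this sign convention against Howard--Pappas' statement, since it is the only place where an error could creep in.

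Then the dimension formula. Howard--Pappas and Shen--Zhang show that the Newton strata of the GSpin Shimura variety are equidimensional, with codimension equal to the length of the corresponding chain in the poset of Newton points (Grothendieck's conjecture / purity, proved via Hamacher--Viehmann). The non-basic Newton points are totally ordered by $h$, so $\shS_h$ has codimension $h-1$ and $\dim\shS_h=b+1-h$. For the basic locus, we quote their computation via the Rapoport--Zink uniformization by Deligne--Lusztig varieties: the dimension is $m-1$ in cases (1) and (3), and $m$ in case (2).

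Finally, $\shS_h\neq\shS_{h+1}$ (which is what ``height exactly $h$'' means) follows from the strict drop in dimension. The only step needing care is matching the determinant condition to our twisted conventions. Everything else is a direct quotation.
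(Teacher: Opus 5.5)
Your proposal is essentially what the paper does: the paper gives no argument beyond quoting \cite[Thm.~C]{HP17} and \cite[Cor.~7.3.4]{SZ22}, and your consistency check on the range of heights matches the split/non-split trichotomy used later in \S\ref{sec: local structure at superspecial point}. On the sign you flagged, no ``twisted normalization'' makes $\det(V_{\bQ_p})=(-1)^{m+1}$ (your correct split determinant) equivalent to $\det(V_{\bQ_p})\neq(-1)^m$ unless $-1$ is a nonsquare in $\bQ_p$, i.e.\ $p\equiv 3 \bmod 4$, so the hypotheses of cases (2) and (3) should be read intrinsically as $L\otimes\bZ_p$ non-split and split, respectively---which is exactly what your bullets use.
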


Within $\shS_\infty$, let us write $\shS_{\infty, \sigma_0}$ for the closed subscheme of points $s$ where $\sigma_0(s) := \sigma_0(\bL_{\cris, s}, p \Frob_s)$ has Artin-invariant $\le \sigma_0$. We recall that:
\begin{proposition}
\label{prop: dim artin strata}
    $\dim \shS_{\infty, \sigma_0} = \sigma_0 - 1$.
\end{proposition}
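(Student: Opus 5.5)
We prove $\dim \shS_{\infty, \sigma_0} = \sigma_0 - 1$ by reducing to the known description of the basic locus of a GSpin Shimura variety via Rapoport--Zink uniformization, and then computing dimensions of Artin strata inside the Deligne--Lusztig description of Howard--Pappas.

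\textbf{Step 1: local reduction.} By Rapoport--Zink uniformization, applied to the Kisin integral model as in \cite{HP17}, $\shS_\infty$ is a finite disjoint union of quotients $I(\bQ)\backslash \mathcal{M}^{\mathrm{red}} \times G(\bA_f^p)/\bK^p$, where $I(\bQ)$ is a discrete cocompact group. It therefore suffices to compute dimensions on $\mathcal{M}^{\mathrm{red}}$. The Artin invariant is defined pointwise from $(\bL_{\cris,s}, p\Frob_s)$, so it is constant along the uniformization and is compatible with these quotients.

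\textbf{Step 2: Bruhat--Tits stratification.} Following \cite{HP17}, $\mathcal{M}^{\mathrm{red}}$ is covered by closed Bruhat--Tits strata $\mathcal{M}_\Lambda$, indexed by vertex lattices $\Lambda$ in the $\bQ_p$-quadratic space $V^{\Frob=1}$ of special quasi-endomorphisms. Each $\mathcal{M}_\Lambda$ is a generalized Deligne--Lusztig variety for an orthogonal group of rank governed by $t_\Lambda = \dim_{\bF_p}(\Lambda^\vee/\Lambda)$, with $\dim \mathcal{M}_\Lambda = t_\Lambda/2 - 1$. The open stratum $\mathcal{M}_\Lambda^\circ$ consists of the points whose minimal attached vertex lattice is exactly $\Lambda$.

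\textbf{Step 3: matching the invariants.} We show that a point $s \in \mathcal{M}_\Lambda^\circ$ has Artin invariant $\sigma_0(s) = t_\Lambda/2$. The plan is to identify the Tate module $\bL_{\cris,s}^{\Frob=1}$ with the lattice $L(\Lambda)$ of formal special endomorphisms at $s$, and to check that its discriminant group is $\Lambda^\vee/\Lambda$ up to the twist convention used here. Ogus's definition of $\sigma_0$, via the discriminant $(\bZ/p\bZ)^{2\sigma_0}$ of the Tate module, then gives $\sigma_0 = t_\Lambda/2$. Consequently $\shS_{\infty,\sigma_0}$ corresponds to the union of the $\mathcal{M}_\Lambda$ with $t_\Lambda \le 2\sigma_0$, whose maximal dimension is $\sigma_0 - 1$, attained whenever such a lattice exists.

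\textbf{Expected obstacle and fallback.} The main obstacle is Step 3: showing that the Tate module at a point in the open Bruhat--Tits stratum is exactly $\Lambda$ and not a larger lattice, since the Tate module is an invariant of the point while $\Lambda$ depends on the chosen stratum. We intend to argue through Ogus's characteristic subspace. By \cref{lem: dim E_i}(d), $E_{\sigma_0}$ has dimension $\sigma_0$ and is the strictly characteristic subspace, which should match the Deligne--Lusztig datum $\Lambda^\vee/\Lambda$ in $\bL_{\dR,s}$.

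If this identification proves hard, the fallback is Ogus's crystalline period map. Supersingular K3 crystals with Artin invariant $\le \sigma_0$ and a fixed Tate module $T_{\sigma_0}$ are parametrized by characteristic subspaces of $T_{\sigma_0}\otimes k$, a smooth projective variety of dimension $\sigma_0 - 1$. Kisin's description of the complete local rings of $\shS$ via deformations of $\bL_\cris$ would then show that the period map restricted to $\shS_{\infty,\sigma_0}$ is locally quasi-finite and dominant onto a union of such components. That gives $\dim \shS_{\infty,\sigma_0} = \sigma_0 - 1$.

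As a sanity check, taking $\sigma_0$ maximal recovers the dimensions of $\shS_\infty$ in \cref{prop: Shen-Zhang}.
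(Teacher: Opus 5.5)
Your proposal is correct and follows essentially the same route as the paper, whose proof simply cites Ogus's result that the space of characteristic subspaces for a Tate module of Artin invariant $\sigma_0$ has dimension $\sigma_0-1$. It combines this with Howard--Pappas's Bruhat--Tits strata of dimension $t_\Lambda/2-1$ and their comparison with the basic locus.

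The step you flag as the main obstacle is in fact immediate. Since $\bL_{\cris,s}$ is self-dual, its Frobenius-stable hull $\sum_{i\ge 0}\Frob^i(\bL_{\cris,s})$ is the dual of its Frobenius-stable core $T\otimes W$, where $T=\bL_{\cris,s}^{\Frob=1}$. So the minimal vertex lattice attached to $s$ is $T$ (up to the duality convention) and $t_\Lambda=2\sigma_0(s)$ on the open stratum, without needing the characteristic-subspace detour or the period-map fallback.
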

\begin{proof}
     This is essentially due to Ogus \cite[Prop.~4.6.1]{Ogu79} (see also \cite[Prop.~5.3.2]{HP17}, the relation to the supersingular locus of orthogonal Shimura varieties is explained in \S7.3 of \textit{loc. cit.}). 
\end{proof}


Most of the arguments and results in \cite{Ogu01} hold verbatim in the context of the general orthogonal Shimura varieties, except for Corollary 16 and Theorem~19, which only extend to the case (1). We shall need a statement for case (2). Therefore, below we rephrase and summarize the key inputs in \cite{Ogu01} which hold regardless of the three cases in our general context. 
\begin{lemma}
\label{lem: Ogus lem 9}
     Suppose that a $\shS_k$-scheme $T$ factors through $\sF_h$, and let $i \le h$ be a positive integer. 
    \begin{enumerate}[label=\upshape{(\alph*)}]
        \item Each $E_i(T)$ is a coherent totally isotropic subsheaf of $\bL_{\dR, T}$ locally generated by $i$ elements, and $E_0(T) \subseteq E_1(T) \subseteq \cdots \subseteq E_h(T)$. In fact, each $E_{i}(T) / E_{i - 1}(T)$ is locally monogenic. 
        \item If $g : T' \to T$ is a morphism of $\shS_k$-schemes, then the natural isomorphism $g^* \bL_{\dR, T} \to \bL_{\dR, T'}$ induces a surjection $g^* E_h(T) \twoheadrightarrow  E_h(T')$, and an isomorphism $g^* Q_i(T) \stackrel{\sim}{\to} Q_i(T')$, where $Q_i(T) := \bL_{\dR, T} / E_i(T)$.  
        \item Suppose that $T$ is of finite type over $k$ and $T \times_{\shS_k} \shS_{\infty, i - 1} = \emptyset$. Then $E_i(T)$ is a local direct summand of $\bL_{\dR, T}$ and its formation commutes with base change.
    \end{enumerate}
\end{lemma}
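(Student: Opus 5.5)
The plan is to follow Ogus's argument for \cite[Lem.~9]{Ogu01} essentially verbatim, replacing the K3 de Rham cohomology by the $F$-zip $\bL_{\dR,T}$ with its isomorphisms $\gamma_{-1},\gamma_0$ from (\ref{eqn: gamma -1}). Throughout I use only the structure available on any orthogonal Shimura variety: the Hodge and conjugate filtrations, the pairing, and the $\sigma$-linear isomorphisms $\gamma_{-1},\gamma_0$ that intertwine the filtrations with $F_T^*$.

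For (a), I would induct on $i$. The base case is $E_1(T)=\Fil^{-1}_c\bL_{\dR,T}$. By $\gamma_{-1}$ this is the image of $F_T^*\gr^{-1}\bL_{\dR,T}$, so it is locally free of rank one; it is isotropic because $\Fil^{-1}_c$ is the orthogonal of $\Fil^0_c$ and has rank one. For the inductive step, suppose $E_i(T)\subseteq \Fil^0\bL_{\dR,T}$. Then $\gamma_0(E_i(T))$ is a coherent subsheaf of $\gr^0_c$ that is locally generated by $i$ elements. Its preimage under $\varpi$ is therefore an extension of it by the rank-one sheaf $\Fil^{-1}_c=E_1(T)$. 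This gives the filtration $E_i(T)\subseteq E_{i+1}(T)$ and shows that $E_{i+1}(T)/E_i(T)$ is locally monogenic. (The chain $E_{i-1}\subseteq E_i$ propagates through the induction once it holds at $i=1$, where $E_0:=0$.) For isotropy, the key point is that $\gamma_0$ is compatible with the pairing on $\gr^0$ and $\gr^0_c$ up to Frobenius twist. This follows from the corresponding property of $\Frob$ on $\bL_\cris$, via (\ref{eqn: Mazur Ogus Hodge}) and (\ref{eqn: Mazur Ogus conjugate}) on a lci $T$, and then for general $T$ by pulling back along the diagrams (\ref{eqn: embed of PD envelopes}). Since $\Fil^{-1}_c$ pairs trivially with $\Fil^0_c$, the pairing on $\varpi^{-1}(\gamma_0 E_i)$ is computed in $\gr^0_c$, where it is the Frobenius twist of the pairing on $E_i$, and that vanishes by induction.

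For (b), I would observe that the formation of the filtrations, $\gamma_{-1}$, $\gamma_0$ and $\varpi$ commutes with pullback, because these are filtrations by local direct summands of an $F$-zip. Pulling back a sheaf generated by given sections yields a sheaf that surjects onto the image sheaf, so the induction gives $g^*E_h(T)\twoheadrightarrow E_h(T')$. The statement about $Q_i$ then follows by right exactness of $g^*$: $g^*Q_i(T)=\mathrm{coker}(g^*E_i(T)\to g^*\bL_{\dR,T})$, and the surjectivity just shown identifies this cokernel with $Q_i(T')$.

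For (c), it suffices by (b) to show that $Q_i(T)$ is locally free of rank $b+2-i$. Because $T$ is of finite type, hence reduced-to-fiber checkable, I would use the fiber dimension criterion: a coherent sheaf whose fiber dimension is constant on a reduced base is locally free, and for nonreduced $T$ the same holds once we know $g^*Q_i$ commutes with base change to points and has constant rank. By (b), the fiber of $Q_i(T)$ at a point $s$ is $\bL_{\dR,s}/E_i(s)$. Lemma~\ref{lem: dim E_i}(c),(d) gives $\dim E_i(s)=i$ whenever $s$ has finite height $\ge i$, or is supersingular with $\sigma_0(s)\ge i$. The hypothesis $T\times\shS_{\infty,i-1}=\emptyset$ says exactly that $\sigma_0(s)\ge i$ at supersingular points, so the fiber rank is constant.

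The main obstacle is passing from constant fiber rank to local freeness over a possibly nonreduced $T$. For this I would argue as Ogus does. The surjection $\mathcal{O}_T^{i}\to E_i(T)$ from local generators (part (a)) composes with $E_i(T)\to\bL_{\dR,T}$ to give a map whose $i\times i$ minors generate the unit ideal at every point, because the fiber map is injective of rank $i$. A map of free modules whose fibers are all injective is a split injection. Hence $E_i(T)$ is a local direct summand of rank $i$, and compatibility with base change follows.
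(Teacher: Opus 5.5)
Your proposal is correct and takes the same route as the paper, which gives no separate proof and instead asserts that Ogus's \cite[Lem.~9]{Ogu01} carries over verbatim to the $F$-zip $\bL_{\dR}$. Your induction via $\gamma_0$ for (a), right-exactness of pullback for (b), and the split-injection argument for (c) is exactly that argument. One caution: your intermediate claim that constant fiber rank gives local freeness over a nonreduced base is false (e.g.\ $\sO_T/(\epsilon)$ on $T=\Spec k[\epsilon]/(\epsilon^2)$). So (c) genuinely rests on your final step, which uses the $i$ local generators from (a) and Nakayama to make $\sO_T^{i}\to\bL_{\dR,T}$ a split injection. Check that step at closed points, whose residue fields are perfect so that Lemma~\ref{lem: dim E_i} applies, and use openness of the split-injective locus to cover all of $T$; this is where the finite-type hypothesis enters.
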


The reader may be concerned that in \cref{def: F_h and E_h}, $F_h$ is a priori only defined as a subfunctor for the functor of the points of $\shS_k$. That they are indeed representable by closed subschemes is justified by the surjectivity statement of (b) above (cf. \cite[Prop.~11]{Ogu01}). Indeed, suppose that this is true for $h$. Then $\sF_{h + 1}$ is the closed subscheme of $\sF_h$ defined by the ideal given by the image of the pairing 
\begin{equation}
\label{eqn: ideal of F h + 1}
    \Fil^1 \bL_{\dR, \sF_h} \tensor \left( E_h(\sF_h) / E_{h - 1}(\sF_h) \right) \to \sO_{\sF_h}. 
\end{equation}
Suppose that $\sF'_{h + 1}$ is defined by this ideal. Then a $\sF_h$-scheme $T$ factors through $\sF'_{h + 1}$ if and only if $E_h(\sF_h) |_T \subseteq \Fil^0 \bL_{\dR, T}$. But there is a commutative diagram 
\[\begin{tikzcd}
	{E_h(\sF_h)|_T} & {E(T)} \\
	{(\gr^{-1} \bL_{\dR, \sF_h})|_T} & {\gr^{-1} \bL_{\dR, T}}
	\arrow[two heads, from=1-1, to=1-2]
	\arrow[from=1-1, to=2-1]
	\arrow[from=1-2, to=2-2]
	\arrow["\sim", from=2-1, to=2-2], 
\end{tikzcd}\]
so the left vertical arrow vanishes if and only if the right one does. Hence $\sF'_{h + 1} = \sF_{h + 1}$. In fact, as $E_{h} / E_{h -1}$ is locally monogenic for $\sF_h$, (\ref{eqn: ideal of F h + 1}) implies that $\sF_{h + 1}$ is locally cut out by a single equation from $\sF_h$. Next, we recall Ogus' characterization of the tangent spaces of $\sF_h$.  
\begin{proposition}
\label{prop: dim of tangent space}
\emph{(\cite[Cor.~13]{Ogu01})}
The tangent space of $\sF_h$ at a closed point $s$ is naturally isomorphic to 
        \begin{equation}
        \gr^{-1} \bL_{\dR, s} \tensor \overline{E}_{h - 1}(s)^\perp \subseteq \gr^{-1} \bL_{\dR, s} \tensor \gr^0 \bL_{\dR, s} = T_s \shS_k, 
    \end{equation}
    where $\overline{E}_{i}(s)$ denotes the image of $E_{i}(s)$ in $\gr^0 \bL_{\dR, s}$. 
\end{proposition}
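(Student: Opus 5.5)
The plan is to prove the statement by induction on $h$, working with first-order deformations. A tangent vector at $s$ is a morphism $t : T_\epsilon := \Spec k[\epsilon]/(\epsilon^2) \to \shS_k$ with closed point $s$. The key identification is the standard Grothendieck--Messing/Kodaira--Spencer fact that $T_s \shS_k \cong \mathrm{Hom}(\Fil^1 \bL_{\dR,s}, \gr^0 \bL_{\dR,s}) \cong \gr^{-1}\bL_{\dR,s} \tensor \gr^0 \bL_{\dR,s}$. Here we use that the pairing identifies $(\Fil^1)^\vee$ with $\gr^{-1}$ and that the Hodge line varies isotropically. Concretely, a tangent vector $\xi$ records how the isotropic line $\Fil^1 \bL_{\dR,T_\epsilon}$ deforms. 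Since $T_\epsilon$ has trivial PD thickening from $s$ (it is a PD thickening of $s$), the crystal $\bL_\cris$ evaluated on $T_\epsilon$ is canonically $\bL_{\dR,s}\tensor k[\epsilon]$. Under this identification $\Fil^1 \bL_{\dR,T_\epsilon}$ is spanned by $\omega + \epsilon\,\xi(\omega)$, where $\omega$ generates $\Fil^1 \bL_{\dR,s}$.

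The crucial input is that the conjugate filtration and the $E_i$ do not move to first order. The reason is that $\Fil_c$ is defined through Frobenius pullback by (\ref{eqn: gamma -1}), and the absolute Frobenius of $T_\epsilon$ factors through $s$, since $\epsilon^p = 0$. Hence $F_{T_\epsilon}^*$ of anything depends only on its value at $s$. Inductively, if $t$ factors through $\sF_i$, then $E_i(T_\epsilon) = E_i(s) \tensor k[\epsilon]$ inside the constant module $\bL_{\dR,s}\tensor k[\epsilon]$. The base case $E_1 = \Fil^{-1}_c = \gamma_{-1}(F^*\gr^{-1})$ holds for this reason. For the inductive step, $E_{i+1} = \varpi^{-1}\gamma_0(F^* E_i)$, and both $\gamma_0\circ F^*$ and $\Fil_c$ are pulled back from $s$. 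I will need \cref{lem: Ogus lem 9}(b) to know that the base change of $E_i$ is computed by pullback, so that this is legitimate.

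Given $t$ factoring through $\sF_h$, it factors through $\sF_{h+1}$ iff $E_h(T_\epsilon)\subseteq \Fil^0\bL_{\dR,T_\epsilon} = (\Fil^1)^\perp$. This holds iff $\langle \omega + \epsilon\xi(\omega), e\rangle = 0$ for all $e \in E_h(s)$. Because $s \in \sF_{h+1}$ gives $\langle \omega, e\rangle = 0$, the condition reduces to $\langle \xi(\omega), e \rangle = 0$, i.e. $\xi(\omega) \in \overline{E}_h(s)^\perp$ in $\gr^0$. Note that $E_h(s)\subseteq \Fil^0$, so $\overline{E}_h(s)$ makes sense. Shifting indices, the tangent space of $\sF_h$ is exactly $\gr^{-1}\tensor \overline{E}_{h-1}(s)^\perp$. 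The induction runs over all $i \le h-1$, and the conditions are nested because $E_1\subseteq\cdots\subseteq E_{h-1}$. The base case $\sF_1 = \shS_k$ holds since $E_0 = 0$.

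The main obstacle is making rigorous the claim that $E_i(T_\epsilon)$ is constant, i.e. that the crystalline identification over $T_\epsilon$ is compatible with $\gamma_0$, $\gamma_{-1}$ and with the Hodge filtration deformation described by Kodaira--Spencer. I would first check this via the Mazur--Ogus description (\ref{eqn: Mazur Ogus conjugate}) applied to the admissible scheme $T_\epsilon$. There the conjugate filtration is read off from $\Frob(\bL_\cris(\wt{T_\epsilon}))$, and $\Frob$ factors through the reduction to $s$.
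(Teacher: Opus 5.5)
Your argument is correct, and it is essentially the deformation-theoretic proof of \cite[Cor.~13]{Ogu01}, which the paper cites without reproducing: on the first-order thickening $T_\epsilon$ each $E_i$ is the constant extension of $E_i(s)$, and Kodaira--Spencer linearizes the condition $E_h(T_\epsilon)\subseteq(\Fil^1\bL_{\dR,T_\epsilon})^\perp$ to $\xi(\omega)\perp\overline{E}_h(s)$. For the step you flag, "Frobenius factors through $s$" on the PD envelope $\wt{T_\epsilon}$ only shows that $\Fil^{-1}_c(T_\epsilon)$ is spanned by the image of a semilinear map out of $\bL_{\dR,s}$. The missing input is that the crystal Frobenius is horizontal (a morphism of crystals), so $\gamma_{-1}$ and $\gamma_0$ over $T_\epsilon$ are the base changes of those at $s$ under the crystalline trivialization $\bL_{\dR,T_\epsilon}\cong\bL_{\dR,s}\otimes k[\epsilon]$.
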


Now we can extend \cite[Cor.~16, Thm.~19]{Ogu01} as follows. 

\begin{corollary}
\label{cor:gen-nonteducdc} 
    In the three cases of \cref{prop: Shen-Zhang}, we have the following. 
    \begin{enumerate}
        \item When $b = 2m - 1$ is odd, for every $h \le m$, $\sF_h$ is reduced and hence is equal to $\shS_h$, $\sF_{m + 1}$ is genercially nonreduced and $\shS_\infty = (\sF_{m + 1})_{\mathrm{red}}$.
        \item When $b = 2m$ is even and $\det(V_{\bQ_p}) =  (-1)^{m}$, for every $h \le m + 1$, $\sF_h$ is reduced and hence is equal to $\shS_h$, and $\sF_{m + 1} = \shS_\infty$. Moreover, for every $h \ge m + 1$, $(\sF_h, E_h) = (\sF_{m + 1}, E_{m + 1})$.
        \item When $b = 2m$ is even and $\det(V_{\bQ_p}) \neq (-1)^{m}$, for every $h \le m + 1$, $\sF_h$ is reduced and hence is equal to $\shS_h$, $\sF_{m + 2}$ is generically non-reduced and $\shS_\infty = (\sF_{m + 2})_{\mathrm{red}}$.
    \end{enumerate}
\end{corollary}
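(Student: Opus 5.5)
We argue by induction on $h$, comparing the scheme-theoretic strata $\sF_h$ with the reduced strata $\shS_h$ through dimension counts. The inputs are: the tangent space description of \cref{prop: dim of tangent space}; the fact that $\sF_{h+1}$ is locally cut out of $\sF_h$ by a single equation (\ref{eqn: ideal of F h + 1}); and the dimension formulas of \cref{prop: Shen-Zhang} and \cref{prop: dim artin strata}. The underlying set of $\sF_h$ is the locus of height $\ge h$, by \cref{lem: dim E_i}(a),(b) applied pointwise together with \cref{lem: Ogus lem 9}(b). Hence $(\sF_h)_{\mathrm{red}} = \shS_h$, and the real content is the reducedness of $\sF_h$ in the stated range.

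\textbf{Reducedness in the range.} Assume $\sF_h$ is reduced of dimension $b+1-h$ and that $h$ is in the range where $\shS_{h+1}$ is not the supersingular locus, or is the supersingular locus of the expected dimension $b-h$. By \cref{prop: Shen-Zhang}, $\dim \shS_{h+1} = b-h$, so $\sF_{h+1}$ is a Cartier divisor in the equidimensional scheme $\sF_h$ with the right dimension. It is therefore Cohen--Macaulay, by induction from the smooth $\shS_k = \sF_1$ via successive single equations, provided each step drops dimension by one. So $\sF_{h+1}$ is reduced once it is generically reduced.

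\textbf{Generic reducedness.} It suffices to find a dense set of closed points $s \in \sF_{h+1}$ with $\dim T_s\sF_{h+1} = b-h$. By \cref{prop: dim of tangent space}, $T_s\sF_{h+1} \cong \gr^{-1}\bL_{\dR,s}\tensor \overline{E}_h(s)^\perp$, with $\gr^{-1}$ a line and $\gr^0\bL_{\dR,s}$ of dimension $b$. So we need $\overline{E}_h(s) \neq 0$, i.e. $\dim \overline{E}_h(s) = 1$, meaning $E_h(s) \not\subseteq \Fil^1$. At a point of exact finite height $\ge h$, this is \cref{lem: dim E_i}(c). At a supersingular point of Artin invariant $\sigma_0 > h$, it is \cref{lem: dim E_i}(d).

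In the non-supersingular case, general points of $\shS_{h+1}$ have height exactly $h+1$ (last sentence of \cref{prop: Shen-Zhang}), so this settles it. When $\shS_{h+1} = \shS_\infty$ (case (2), $h+1 = m+1$), the general point has maximal Artin invariant. By \cref{prop: dim artin strata} that invariant is $\sigma_0 = m+1 > m = h$, so $\overline{E}_m(s)\ne 0$ and $\sF_{m+1}$ is reduced, equal to $\shS_\infty$.

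\textbf{Non-reducedness and stabilization.} In cases (1) and (3), the supersingular locus has dimension $m-1$, one less than $b+1-(m+1)$ (resp. $b+1-(m+2)$ in case (3) with the shifted count). So the general supersingular point has Artin invariant $\sigma_0 = m$, and at such a point $E_m = E_{m-1}+\Fil^1$ by \cref{lem: dim E_i}(d). This makes $\overline{E}$ at the relevant index zero. The tangent space of $\sF_{m+1}$ (resp. $\sF_{m+2}$) is then too large, giving generic non-reducedness, while the underlying set is still $\shS_\infty$.

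For the stabilization in case (2): at a general point of $\sF_{m+1}$ with $\sigma_0 = m+1$, \cref{lem: dim E_i}(d) gives $E_{m+1}\supseteq \Fil^1$, so $E_{m+1}\subseteq \Fil^0$. Then the defining ideal (\ref{eqn: ideal of F h + 1}) vanishes generically, and hence everywhere, since $\sF_{m+1}$ is reduced and the ideal is generated by sections of a coherent sheaf. We check this generic vanishing using \cref{lem: Ogus lem 9}(c) away from the lower Artin strata, which have smaller dimension. Thus $\sF_{m+2} = \sF_{m+1}$. Also $E_{m+2} = \varpi^{-1}\gamma_0(E_{m+1}) = E_{m+1}$, since both are $(m+1)$-dimensional and nested generically by \cref{lem: dim E_i}(d). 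Iterating gives the stabilization for all $h \ge m+1$.

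\textbf{Main obstacle.} The hard step is the last one: showing $E_{m+1}\subseteq\Fil^0$ and $E_{m+2}=E_{m+1}$ as sheaves on all of $\sF_{m+1}$, including at points of smaller Artin invariant where $E_{m+1}$ need not be a direct summand. We would first try to prove the equalities on the dense open set where \cref{lem: Ogus lem 9}(c) applies, then extend by reducedness and torsion-freeness of $\bL_{\dR}/E$ there. Failing that, we would work in explicit local coordinates at superspecial points.
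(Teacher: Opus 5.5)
Your route is the paper's: tangent spaces via \cref{prop: dim of tangent space}, lci hence Cohen--Macaulay so that generic reducedness suffices, and the generic Artin invariant read off from \cref{prop: dim artin strata}. But the tangent-space bookkeeping, which is the whole content of the proof, is wrong as you wrote it.

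Since $T_s\sF_{h+1}\cong\gr^{-1}\bL_{\dR,s}\tensor\overline{E}_h(s)^\perp$ with $\dim\gr^0\bL_{\dR,s}=b$, getting $\dim T_s\sF_{h+1}=b-h$ requires $\dim\overline{E}_h(s)=h$, not $1$. Because $\dim E_h(s)=h$ and $\Fil^1\bL_{\dR,s}$ is a line, this is equivalent to $\Fil^1\bL_{\dR,s}\not\subseteq E_h(s)$, which is the reverse of the containment you stated. Your condition $E_h(s)\not\subseteq\Fil^1$ is automatic for $h\ge 2$ and proves nothing. Worse, $\dim\overline{E}_h(s)=1$ would give $\dim T_s\sF_{h+1}=b-1>b-h$, i.e.\ singularity. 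The lemmas you cite, \cref{lem: dim E_i}(c),(d), state exactly $\dim E_i=i$ and $\Fil^1\not\subseteq E_i$, so the repair is immediate.

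The same slip recurs in cases (1) and (3). At a general supersingular point ($\sigma_0=m$) one has $E_m=E_{m-1}+\Fil^1$ with $\Fil^1\not\subseteq E_{m-1}$. So $\dim\overline{E}_m(s)=m-1$, not $0$, and in case (3) $\overline{E}_{m+1}(s)=\overline{E}_m(s)$. Hence $\dim T_s$ is $m$, resp.\ $m+1$, against dimension $m-1$. Also, the supersingular locus in cases (1) and (3) is not one dimension too small: $m-1=b+1-(m+1)$, resp.\ $b+1-(m+2)$. So $\sF_{m+1}$, resp.\ $\sF_{m+2}$, is a Cohen--Macaulay divisor of the expected dimension. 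What separates case (2) from cases (1) and (3) is only this tangent-space count.

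For the stabilization of $\sF_h$ in case (2), the ideal computation is unnecessary. You already know $\sF_h$ has underlying set the locus of height $\ge h$. So $\sF_{m+2}\subseteq\sF_{m+1}$ is a closed subscheme of a reduced scheme with the same support, hence equal, and one iterates; this is what the paper does.

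For $E$, ``both $(m+1)$-dimensional and nested generically'' only compares fibres. Moreover \cref{lem: Ogus lem 9}(c) does not apply to $E_{m+2}$, since in case (2) $\shS_{\infty,m+1}=\shS_\infty$. So you do not know that $E_{m+2}$ is a subbundle. The paper argues over $\sF'_{m+1}:=\sF_{m+1}\smallsetminus\shS_{\infty,m}$ as follows. There $E_{m+1}$ is a totally isotropic direct summand of rank $m+1=\frac12\mathrm{rk}\,\bL_\dR$, so $E_{m+1}=E_{m+1}^\perp$. Since $E_{m+2}\supseteq E_{m+1}$ is totally isotropic, $E_{m+1}\subseteq E_{m+2}\subseteq E_{m+2}^\perp\subseteq E_{m+1}^\perp=E_{m+1}$.

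Your fibrewise comparison can also be made to work on $\sF'_{m+1}$, using that $\bL_\dR/E_{m+1}$ is locally free on the reduced scheme $\sF'_{m+1}$, together with \cref{lem: Ogus lem 9}(b). Note that the paper's argument is likewise confined to $\sF'_{m+1}$. It does not address the extension across $\shS_{\infty,m}$ that you flag as the main obstacle, and its later use in \cref{NPstrata}(c) only needs $\sF_h=\sF_{m+1}$.
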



\begin{proof}
    All three statements follow from some dimension computations. Define $m' = m + 1$ in cases (1) and (2) and $m + 2$ in case (3). 

    First, assume that $h < m'$. Take a point $s \in \sF_h$ of height exactly $h$, which is in particular non-supersingular. By \cref{lem: dim E_i}(c), $\Fil^1 \bL_{\dR, s} \not\subseteq E_{h}(s)$, and hence $\dim \overline{E}_{h - 1}(s) = \dim E_{h - 1}(s) = h - 1$. By \cref{prop: dim of tangent space}, $\dim T_s \sF_h = \dim \overline{E}_{h - 1}(s)^\perp = b + 1 - h$, which is equal to $\dim \sF_h$. This implies that $\sF_h$ is smooth at $s$. In particular, $\sF_h$ is generically reduced. However, since any local complete intersection is Cohen-Macaulay, generic reducedness implies reducedness. Hence $\sF_h$ is reduced and is equal to $\shS_h$.

    Now assume that $h = m'$. In case (1), $\dim \shS_\infty = m - 1$, so for a general $s \in \shS_\infty$, $\sigma_0(s) = m$. Since a finite height point on $\shS_k$ has height at most $m$, we already know $(\sF_{m + 1})_{\mathrm{red}} = \shS_\infty$. By \cref{lem: dim E_i}(d), $\Fil^1 \bL_{\dR, s} \subseteq E_{h}(s)$, and hence $\dim \overline{E}_{m}(s) = \dim E_{m}(s) - 1 = m - 1$. By \cref{prop: dim of tangent space}, $\dim T_s \sF_{m + 1} = \dim \overline{E}_{m}(s)^\perp = m > \dim \sF_{m + 1}$. Therefore, $\sF_{m + 1}$ is generically singular. The argument for cases (2) and (3) are entirely similar. What happens for case (2) is that, for a general point $s \in \shS_{\infty}$, $\sigma_0(s) = m + 1$. But from \cref{lem: dim E_i}(d) we infer that $\Fil^1 \bL_{\dR, s} \not\subseteq E_{m}(s)$, so the same argument as in case (1) implies that $\dim T_s \sF_{m + 1} = m = \dim \sF_{m + 1}$, so that $\sF_{m + 1}$ is reduced. We remark that case (3) is analogous to case (1), but $\dim T_s \sF_{m + 2} = m + 1$, which is greater than $\dim \sF_{m - 2}$ by $2$, so $\sF_{m'}$ in case (3) is more nonreduced than that in case (1).   
    
    Finally, we show that in case (2), $(\sF_h, E_h)$ stablizes after $h \ge m + 1$. Let us note first that $\sF_{h} = \sF_{m + 1}$ because $\sF_{h}$ is also supported on $\shS_\infty$, but $\sF_{m + 1}$ is already reduced. Consider $\sF'_{m + 1} := \sF_{m + 1} \smallsetminus \shS_{\infty, m}$. By \cref{lem: Ogus lem 9}(c), the formation of $E_{m + 1}$ over $\sF'_{m + 1}$ commutes with base change, and $E_{m + 1}(\sF'_{m + 1})$ is a direct summand of $\bL_{\dR, \sF'_{m + 1}}$. This implies that its orthogonal complement $E_{m + 1}^\perp := \ker(\bL_{\dR} \to E^\vee_{m + 1})$ is also a direct summand, and its formation also commutes with base change. Now, for every closed point $s \in \sF'_{m + 1}$, $E_{m + 1, s}$ is an isotropic subspace of $\bL_{\dR, s}$ of exactly half the dimension, so that $E_{m + 1, s} = E^\perp_{m + 1, s}$. As $E_{m + 1}$ itself is totally isotropic, $E_{m + 1} \subseteq E_{m + 1}^\perp$ and hence they are equal. Since $E_{m + 2}$ on $\sF_{m + 2} = \sF_{m + 1}$ is a totally isotropic subsheaf and contains $E_{m + 1}$, it is sandwiched by $E_{m + 1}$ and $ E_{m + 1}^\perp$, so the claims follows by induction.
\end{proof}

In the next lemma, we extend \cite[Lem.~2, Prop.~3(2)]{Ogu01} to a general base.\footnote{Technically, the lemma below uses that each $\sF_h$ is a local complete intersection. One easily checks that this holds for all cases in \cref{prop: Shen-Zhang}, though later we only make use of case (2). } Recall that $\wt{T}$ denotes the PD-envelope in $\shS_W$ for a closed $\shS_k$-scheme $T$. 
\begin{lemma}\label{lem: Frob and E} For every $h \in \bN$, 
\begin{enumerate}[label=\upshape{(\alph*)}]
    \item $p \Frob^j$ is integral on $\bL_\cris(\wt{\sF}_h)$ for every $j \le h$; 
    \item $E_h = \pi(p \Frob^h(\bL_\cris(\wt{\sF}_h))) + E_{h - 1}$ over $\sF_h$; and
    \item each $\sF_{h + 1}$ is cut out from $\sF_h$ by a section of the line bundle $\omega^{p^h - 1} |_{\sF_h}$, where $\omega$ is the Hodge bundle $\Fil^1 \bL_\dR$. 
\end{enumerate} 
\end{lemma}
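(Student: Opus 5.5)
We argue by induction on $h$, proving (a), (b), (c) together. For $h = 1$: (a) is the K3-crystal condition $p\Frob(\bL_\cris) \subseteq \bL_\cris$. For (b), $E_1 = \Fil^{-1}_c \bL_{\dR}$; by (\ref{eqn: Mazur Ogus conjugate}) with $i = -1$ this is $\pi(p\Frob(\bL_\cris(\wt{\shS}_k)))$, with $E_0 = 0$. For (c), $\sF_2$ is cut out by the pairing (\ref{eqn: ideal of F h + 1}) of $\Fil^1\bL_\dR = \omega$ with $E_1 = \Fil^{-1}_c \cong F^*\gr^{-1} = \omega^{\otimes p}$ via $\gamma_{-1}$. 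Since $\gr^{-1} \cong \omega^{-1}$ under the pairing, this gives a section of $\omega^{-1}\otimes\omega^{-p}$ dual, i.e.\ of $\omega^{p-1}$ after identifying $\Fil^1 \cong \omega$ and $\gr^{-1}\cong \omega^{\vee}$. The exponent bookkeeping is just $\omega^{\vee}\otimes (\omega^{\vee})^{\otimes -p}$.

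For the inductive step, assume the claims for $h$, and work over $\wt{\sF}_{h+1}$. Since $\sF_{h+1}$ is cut out from $\sF_h$ by one equation and $\sF_h$ is lci, $\sF_{h+1}$ is lci, hence admissible. By the diagram (\ref{eqn: embed of PD envelopes}), the statements for $\sF_h$ pull back to $\wt{\sF}_{h+1}$. For (a), let $x \in \bL_\cris(\wt{\sF}_{h+1})$. By induction, $y := p\Frob^h(x)$ is integral, and $\pi(y) \in E_h$ modulo $E_{h-1}$ by (b). Using the inductive statement for $E_{h-1}$ as well, the definition of $\sF_{h+1}$ gives $\pi(y) \in E_h \subseteq \Fil^0\bL_\dR$. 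By (\ref{eqn: Mazur Ogus Hodge}) with $i=0$, i.e.\ $\pi^{-1}(\Fil^0) = \{\tilde x : \Frob(\tilde x)\ \text{integral}\}$, we get that $\Frob(y) = p\Frob^{h+1}(x)$ is integral. There is one subtlety: (b) only controls $\pi(y)$ modulo $\pi$ of lower terms, which are themselves integral by the inductive hypothesis. One must therefore check that the lower-term contributions in $E_{h-1}$ are of the form $\pi(p\Frob^j(\cdot))$ with $j<h$, whose Frobenius is integral by (a) for $j+1\le h$.

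For (b) at level $h+1$, recall $E_{h+1} = \varpi^{-1}(\gamma_0(E_h))$. The map $\gamma_0$ is induced by $\Frob$ on $\pi^{-1}(\Fil^0)$ followed by reduction. Applying it to $E_h = \pi(p\Frob^h\bL_\cris) + E_{h-1}$ gives $\gamma_0(E_h)$ equal to the image of $\pi(p\Frob^{h+1}\bL_\cris)$ plus $\gamma_0(E_{h-1})$ in $\gr^0_c$. Taking the preimage under $\varpi$ adds $\Fil^{-1}_c = E_1$, and by induction $\varpi^{-1}\gamma_0(E_{h-1}) = E_h$. This yields $E_{h+1} = \pi(p\Frob^{h+1}\bL_\cris) + E_h$. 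The key input is the compatibility of $\gamma_0$ with $\Frob$, which I would derive from (\ref{eqn: Mazur Ogus conjugate}) with $i=0$.

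For (c), $\sF_{h+2}$ is cut out from $\sF_{h+1}$ by pairing $\Fil^1 \cong \omega$ with $E_{h+1}/E_h$. By (b), $E_{h+1}/E_h$ is generated by the image of $p\Frob^{h+1}$ applied to a generator, so it is a quotient of $(F^{h+1})^*$ of a line bundle. More precisely, the iterated $\gamma$'s give $E_{h+1}/E_h \leftarrow (F^{h})^*(E_1) \cong \omega^{\vee\otimes p^{h+1}}$ up to sign conventions. The section is therefore of $\omega^{-1}\otimes (\omega^{-1})^{-p^{h+1}}$, which is $\omega^{p^{h+1}-1}$.

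The main obstacle is justifying the identifications over the possibly nonreduced, non-smooth base $\sF_h$: that $\gamma_0$ is literally reduction of $\Frob$ there, and that $E_{h}/E_{h-1}$ is the Frobenius pullback of a line bundle rather than merely a quotient. For the latter, a surjection of the invertible sheaf onto $E_h/E_{h-1}$ still yields the defining section of the ideal, which suffices for (c).
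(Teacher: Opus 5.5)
Your proof is correct and follows the paper's route. (a) is the Mazur--Ogus criterion $\pi^{-1}(\Fil^0\bL_{\dR})=\{\tilde{x} : \Frob(\tilde{x})\ \text{integral}\}$ applied to $p\Frob^h(x)$, whose reduction lies in $E_h\subseteq \Fil^0\bL_{\dR}$ over $\sF_{h+1}$ directly by (b), so the ``lower terms'' subtlety you raise is vacuous. (b) comes from pushing $E_h=\pi(p\Frob^h\bL_\cris)+E_{h-1}$ through $\gamma_0$. Your (c), via the ideal (\ref{eqn: ideal of F h + 1}) and the surjection $(F^{h})^*E_1\twoheadrightarrow E_{h+1}/E_h$ coming from the $\gamma$'s, gives the same section the paper reads off from the map $(F^{h+1})^*\gr^{-1}\bL_{\dR}\to\gr^{-1}\bL_{\dR}$ induced by $p\Frob^{h+1}$.

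The one inaccuracy is your admissibility step: cutting an lci scheme by one equation gives an lci scheme only when that equation is a nonzerodivisor, which here must come from the dimension count in \cref{prop: Shen-Zhang}. The paper does not prove this; it simply takes the lci property of the $\sF_h$ as an input in a footnote.
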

\begin{proof}
    Both parts (a) and (b) hold for $h = 1$. For (c), we may assume that $T = \shS_{\bF_p}$. Note that the non-ordinary locus $\sF_2$ is cut out by the condition that $p \Frob$ kills $\gr^{-1} \bL_\dR$, which has already been observed in \cite[Lem.~4.9]{MST} (it is unnecessary to assume that the point $P$ in \textit{loc. cit.} is supersingular). Let $F_T$ denote the absolute Frobenius morphism on $T$. Then $p\Frob$ induces a linear map $F_T^*(\gr^{-1} \bL_\dR) \to \gr^{-1} \bL_\dR$. But $\gr^{-1} \bL_\dR$ is dual to $\omega$ via the pairing on $\bL_\dR$, so the conclusion follows.

    Now we proceed by induction. Since over $\sF_h$ we already have $E_{h - 1} \subseteq \Fil^0 \bL_\dR$, using that (b) is true for $i = h$, we have that by (\ref{eqn: Mazur Ogus Hodge}) the condition $E_{h} \subseteq \Fil^0 \bL_\dR$ over $\sF_{h + 1}$ is equivalent to 
\begin{equation}
    \label{eqn: apply Frob to F h + 1}
    \Frob(p \Frob^h(\bL_\cris(\wt{\sF}_{h + 1}))) = p \Frob^{h + 1}(\bL_\cris(\wt{\sF}_{h + 1})) \subseteq \bL_\cris(\wt{\sF}_{h + 1}).
\end{equation}
This proves part (a) for $h + 1$. This also proves (c). Indeed, it is easy to see that (\ref{eqn: apply Frob to F h + 1}) is equivalent to $p \Frob^{h + 1}$ kills $\gr^{-1} \bL_\dR$, so we conclude by the same argument as in the preceeding paragraph.   

Now let us prove part (b). Let us set $$E^+_{h + 1} := \pi(p \Frob^{h + 1}(\bL_\cris(\wt{\sF}_{h + 1})))$$ and $E_{h + 1}' = E^+_{h + 1} + E_{h}$. Our goal is to prove that $E_{h + 1} = E'_{h + 1}$. First, we note that $E_{h + 1}^+ $ (and hence also $E'_{h + 1}$) lies in $ \Fil^0_c$ over $\sF_{h + 1}$. By (\ref{eqn: Mazur Ogus conjugate}), it suffices to show that $$p \Frob^{h + 1}(\bL_\cris(\wt{\sF}_{h + 1})) \subseteq \Frob(\bL_\cris(\wt{\sF}_{h + 1})) \cap \bL_\cris(\wt{\sF}_{h + 1}). $$ 
This is true by (\ref{eqn: apply Frob to F h + 1}) and the fact that $p \Frob^h(\bL_\cris(\wt{\sF}_{h + 1})) \subseteq \bL_\cris(\wt{\sF}_{h + 1})$. 
Now we have that over $\sF_{h + 1}$, $E_{h + 1}'$ and $E_{h + 1}$ are both submodules of $\Fil^0_c \bL_\dR$ which contains $E_1 = \Fil^1 \bL_\dR$. Therefore, to check that they are equal it suffices to check that they are equal modulo $E_1$. 

By the inductive hypothesis on $E_{h}$, for every $x \in E_h$, there exist $\wt{x}_1, \wt{x}_2, \cdots, \wt{x}_{h} \in \bL_\cris(\wt{\sF}_{h + 1})$ and $x = \pi(\sum_{\ell=1}^h p \Frob^{\ell}(\wt{x}_\ell))$.
By the definition of $\gamma_0$, its image in $\gr^0_c$ is given by 
$$ \Frob(\sum_{\ell=1}^h p \Frob^{\ell}(\wt{x}_\ell)) = \sum_{\ell=1}^h p \Frob^{\ell + 1}(\wt{x}_\ell). $$
modulo $\Fil^1_c$. The conclusion easily follows.
\end{proof}

\begin{remark}
    Ogus' theory in \cite{Ogu01} has been generalized by Koskivirta, La Porta and Reppen in \cite{KLR25}. Much of the statements about even dimensional orthogonal Shimura varieties (corresponding to the $D_n$-types) above should also follow from the general theory of \textit{loc. cit}. However, for our purposes it is easier to simply extend Ogus' original approach. 
\end{remark}

\section{Arithmetic intersection theory}\label{sec:arithmetic}

In this section, we will set up the intersection-theoretic framework required to prove Theorem \ref{mainShimura}. We will also reduce the proof of Theorem \ref{mainShimura} to Theorem \ref{thm: main local version}, which is a purely local statement at supersingular points. 

\subsection{Global intersection}  We let $L$ as in Section \ref{sub:Basics_orthogobal_SV}.

\subsubsection{The global Eisenstein series}\label{subsub:gEs} Let $L$ be as in Section \ref{sub:Basics_orthogobal_SV}. Set $U = \bC[L^\vee / L]$. Let $\mathrm{Mp}_2(\bZ)$ denote the metaplectic double cover of $\mathrm{SL}_2(\bZ)$, which is equipped with a Weil representation $\rho_L : \mathrm{Mp}_2(\bZ) \to \mathrm{GL}(U)$. Let $\kappa = 1 + \frac{b}{2}$ and $M_{\kappa}$ be the space of vector-valued modular forms of $\mathrm{Mp}_2(\bZ)$ of weight $\kappa$ with respect to $\rho_L$. Let $\{ \mathfrak{e}_\mu : \mu \in L^\vee / L \}$ be the standard basis for $U$. Let $E_0(\tau)\in M_{\kappa}(\rho_L)$ be the vector valued Eisenstein series with constant term $\mathfrak{e}_0$ as in \cite[\S 2.1]{Bru17}.  Denote the Fourier expansion of $E_0(\tau)$ as
\begin{equation}
\label{eqn: Fourier expansion}
    E_0(\tau)= {\sum_{\mu \in L^\vee / L}} \sum_{m\geq 0,\mu\in \mathbb{Z}+Q(\mu)}q_L(m,\mu) q^m\mathfrak{e}_\mu
,\text{ where }q=e^{2\pi i\tau}.
\end{equation}
Let $q_L(m) = q_L(m, 0)$. The theorem \cite[Theorem 11]{BK01} provides an explicit expression of $q_L(m)$. Before stating it, we first recall the notion of local density, which will be used throughout the paper:
\begin{definition}
\label{def: local density}
For a quadratic $\mathbb{Z}$-lattice $(L,Q)$, a prime $l$ and an integer $m$, the local density of $L$ representing $m$ over $\mathbb{Z}_l$ is defined as $$\delta(l,L,m)= \lim_{a\rightarrow \infty} l^{a(1-\mathrm{rk}\,L)}\#\{v\in L/l^aL|Q(v)\equiv m\mod l^a\}.$$
\end{definition}
By \cite[Lem.~5]{BK01}, the limit stablizes when $a \ge 1 + 2 v_\ell(2m)$ (take $d_\mu = 1$ as $\mu = 0$). 

Given $0\neq D\in\mathbb{Z}$ such that $D\equiv 0,1\pmod{4}$, we use $\chi_D$ to denote the Dirichlet character $\chi_D(a)=\left(\frac{D}{a}\right)$, 
where $\left(\frac{\cdot}{\cdot}\right)$ is the Kronecker symbol. For a Dirichlet character $\chi$ we set $\sigma_s(m,\chi)=\sum_{d\mid m}\chi(d)d^{s}$. 
\begin{theorem}[{\cite[Thm.~11]{BK01}}] \label{thm: Eisenstein Coeffs}
In the above setting, we have: 
\begin{enumerate}[label=\upshape{(\alph*)}]
\item For $b$ even, the Fourier coefficient $q_L(m)$ is 
\[-\frac{2^{\kappa}\pi^{\kappa}m^{\frac{b}{2}}\sigma_{1-\kappa}(m,\chi_{(-1)^{\kappa}4\det L})}{\sqrt{|L^\vee/L|}\Gamma(\kappa)L(\kappa,\chi_{(-1)^{\kappa}4\det L})}\prod_{\ell \mid 2\det(L)}\delta(\ell, L, m).\]

\item For $b$ odd, write $m=m_0f^2$, where $\gcd(f,2\det L)=1$ and $v_\ell(m_0)\in \{0,1\}$ for all $\ell\nmid 2\det L$. Then the Fourier coefficient $q_L(m)$ is
\[-\frac{2^{\kappa}\pi^{\kappa}m^{\frac{b}{2}}L(\kappa+1/2,\chi_{\mathcal{D}})}{\Gamma(\kappa)\sqrt{|L^\vee/L|}\zeta(2\kappa)}\left(\sum_{d\mid f}\mu(d)\chi_{\mathcal{D}}(d)d^{-(\kappa+1/2)}\sigma_{1-2\kappa}(f/d)\right)\prod_{\ell\mid 2\det L}\Big( \frac{\delta(\ell,L,m)}{1-\ell^{-2\kappa}}\Big),\]
where $\mu$ is the Mobius function and $\mathcal{D}=(-1)^{\kappa-1/2}2m_0\det L$.
\end{enumerate}
In particular, $|q_L(m)|\asymp m^{\frac{b}{2}}$ for all $m$ representable by $(L,Q)$.
\end{theorem}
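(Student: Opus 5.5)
The displayed formulas in (a) and (b) are exactly \cite[Thm.~11]{BK01}, so I would not reprove them. I would only recall the mechanism and then derive the final assertion $|q_L(m)|\asymp m^{b/2}$ from the explicit shape of the formulas.

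For the formulas, the mechanism is the standard one. One writes $E_0$ as the holomorphic value at the special point of a non-holomorphic Eisenstein series induced from the Weil representation $\rho_L$. Its $m$-th Fourier coefficient factors, by the Siegel--Weil/Kitaoka computation, as an archimedean factor $\frac{(2\pi)^{\kappa} m^{\kappa-1}}{\Gamma(\kappa)}$ times a product of local densities $\prod_\ell \delta(\ell,L,m)$. Note that $m^{\kappa-1}=m^{b/2}$, and $\sqrt{|L^\vee/L|}$ enters through the Weil-representation normalization. At primes $\ell\nmid 2\det L$ the lattice is unimodular, so $\delta(\ell,L,m)$ is given by the classical Gauss-sum formula. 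Taking the Euler product of these factors produces the quotient $\sigma_{1-\kappa}(m,\chi)/L(\kappa,\chi)$ when $\mathrm{rk}\,L$ is even. When $\mathrm{rk}\,L$ is odd it produces $L(\kappa+\tfrac12,\chi_{\mathcal D})/\zeta(2\kappa)$ together with the Möbius correction in the square part $f$. The primes dividing $2\det L$ are left as the explicit finite product.

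For the asymptotic, I would bound each non-archimedean factor above and below by positive constants depending only on $L$. This is uniform over all $m$ that are represented by $L$.
\begin{itemize}
\item In case (a) we have $\kappa\ge 2$. Hence $1\le|\sigma_{1-\kappa}(m,\chi)|$ up to the factor $\prod_{p}(1-p^{1-\kappa})^{-1}$ when $\kappa>2$. The borderline $\kappa=2$ needs the observation that $\sum_{d\mid m}\chi(d)d^{-1}$ equals $\prod_{p\mid m}(\text{local factor})$ with each local factor bounded below by $1-p^{-1}$. Since the local factors at $p\nmid 2\det L$ are exactly the local densities, each of which is $\ge 1-p^{-2}$ or similar, the product is bounded.
\item The value $L(\kappa,\chi)$ is a fixed nonzero constant.
\item In case (b), $L(\kappa+\tfrac12,\chi_{\mathcal D})$ lies between $\zeta(2\kappa+1)/\zeta(\kappa+\tfrac12)$ and $\zeta(\kappa+\tfrac12)$. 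The Möbius sum is bounded above and below, since its terms are dominated by $\sum d^{-(\kappa+1/2)}$ and $\sigma_{1-2\kappa}$ lies between $1$ and $\zeta(2\kappa-1)$.
\item The remaining factors are the finitely many densities $\delta(\ell,L,m)$ with $\ell\mid 2\det L$. These are trivially bounded above by stabilization (\cite[Lem.~5]{BK01}).
\end{itemize}

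The main obstacle is the lower bound on $\delta(\ell,L,m)$ for $\ell\mid 2\det L$ when $v_\ell(m)$ is unbounded. Representability of $m$ only guarantees that this density is nonzero, not that it is uniformly bounded below. I would handle it by decomposing representations according to the exact $\ell$-power dividing the vector, $v=\ell^j v'$ with $v'$ primitive. This gives
\[
\delta(\ell,L,m)=\sum_j \ell^{j(2-\mathrm{rk} L)}\,\delta^{\mathrm{pr}}(\ell,L,m/\ell^{2j}).
\]
By Hensel's lemma, primitive densities depend only on $m$ modulo a fixed power of $\ell$, so they take finitely many values and are uniformly bounded below whenever nonzero.

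One then has to show that some primitive term with bounded $j$ is nonzero. I would argue this by passing to a Jordan splitting of $L\otimes\bZ_\ell$. If $\mathrm{rk}\,L\ge 5$, or if the lattice is isotropic at $\ell$, then $m/\ell^{2j}$ is primitively represented for some $j$ bounded in terms of $L$, which gives the lower bound. The small-rank anisotropic cases are the ones I expect to need care. There I would appeal directly to the formulas for $\delta$ in \cite[\S3]{BK01} rather than argue abstractly.
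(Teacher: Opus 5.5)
Your route is the paper's: both formulas are quoted from \cite[Thm.~11]{BK01}, and the asymptotic is read off from them. The one real input is a uniform two-sided bound on the densities $\delta(\ell,L,m)$ for $\ell\mid 2\det L$. The paper simply asserts this (citing \cite[\S4.3.1]{MST}); you try to prove it. Your primitive decomposition plus Hensel does prove it when $L\otimes\bQ_\ell$ is isotropic. A primitive isotropic vector makes $\delta^{\mathrm{pr}}(\ell,L,m)$ a fixed positive constant once $v_\ell(m)$ exceeds the Hensel exponent, and the remaining $m$ fall into finitely many residue classes.

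The gap is in low rank, and it cannot be closed there, because the bound you are after is false.
\begin{itemize}
\item Your $\kappa=2$ bullet is wrong. For $b=2$ one has $\sigma_{-1}(m,\chi)=\prod_{p^e\| m}\sum_{i=0}^{e}(\chi(p)/p)^i$, and for $e=1$ the local factor is $1+\chi(p)p^{-1}$. Correspondingly, the unramified density at $p$ with $\chi(p)=-1$ and $v_p(m)=1$ is $(1+p^{-2})(1-p^{-1})$, not $\ge 1-p^{-2}$. Since $\sum p^{-1}$ diverges over primes with $\chi(p)=1$ (and, for nontrivial $\chi$, over those with $\chi(p)=-1$), $\sigma_{-1}(m,\chi)$ is unbounded above, and for nontrivial $\chi$ it is not bounded away from $0$. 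Representability does not exclude these $m$: a rank-$4$ lattice unimodular at odd $p$ represents all of $\bZ_p$.
\item The anisotropic cases you defer to \cite[\S3]{BK01} are genuine counterexamples. Suppose $\mathrm{rk}\,L\in\{3,4\}$ and $L\otimes\bQ_\ell$ is anisotropic; examples are lattices from an indefinite quaternion algebra ramified at $\ell$, of signature $(1,2)$ or $(2,2)$. By compactness there is $N$ with $Q(v)\equiv 0\bmod \ell^N\Rightarrow v\in\ell L$. Then only $j\ge (v_\ell(m)-N)/2$ survive in your sum, so $\delta(\ell,L,m)\ll \ell^{(2-\mathrm{rk}\,L)\,v_\ell(m)/2}$. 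This tends to $0$ along the represented values $m=\ell^{2k}m_0=Q(\ell^k v_0)$.
\end{itemize}
Hence $|q_L(m)|\asymp m^{b/2}$ fails for $b=2$. It also fails for $b=1$ whenever $L$ is anisotropic at some $\ell\mid 2\det L$.

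What is true, and what your argument proves, is the assertion for $b\ge 3$.
\begin{itemize}
\item Then $\mathrm{rk}\,L\ge 5$, so $L\otimes\bQ_\ell$ is isotropic for every $\ell$, and your primitive-density argument gives $\delta(\ell,L,m)\asymp 1$.
\item For even $b\ge 4$, $\kappa\ge 3$ and $\zeta(\kappa-1)^{-1}\le \sigma_{1-\kappa}(m,\chi)\le\zeta(\kappa-1)$.
\item For odd $b\ge 3$, your bounds on $L(\kappa+\tfrac12,\chi_{\mathcal D})$ hold. The M\"obius sum is bounded below because its $d=1$ term is $\ge 1$, while the remaining terms total at most $(\zeta(\kappa+\tfrac12)-1)\,\zeta(2\kappa-1)<1$ for $\kappa\ge\tfrac52$.
\end{itemize}
The paper's one-line justification (the explicit formulas plus $\delta(\ell,L,m)\asymp 1$) is subject to exactly the same restriction.
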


Here the last assertion is a direct consequence of the above explicit formulae and the fact $\delta(\ell,L,m)\asymp 1$ (see also \cite[\S 4.3.1]{MST}).
It plays an important role in arithmetic intersection theory.

\subsubsection{Arithmetic intersection theory}\label{subsub: arithmetic intersection theory} Let $k=\overline{\bF}_p$. In the following we review the arithmetic intersection theory on $\mathscr{S}^\tor_k$. Let the setup be as in \S\ref{subsub:gEs}. We 
will use the notation $\overline{\mathcal{Z}(\mu, m)}$ to denote the closure of the special divisor $\mathcal{Z}(\mu, m)$ in $\mathscr{S}^{\text{tor}}$. To every $\overline{\mathcal{Z}(\mu, m)}$, there is a carefully chosen associated divisor $$\mathcal{Z}^{\text{tor}}(\mu, m)= \overline{\mathcal{Z}(\mu, m)}+ \mathcal{B}\in \mathrm{Pic}(\mathscr{S}^\tor)_{\bR},$$
where $\mathcal{B}$ is a certain $\bR$-linear combinations of boundary divisors; see \cite[(3.1.1)]{Tay22}. Write $Z^{\tor}(\mu,m)$ for the mod $p$ reduction of $\mathcal{Z}^{\text{tor}}(\mu, m)$, and let $Z^\tor(m)$ be $Z^{\tor}(0,m)$. Let $\overline{Z(m)}$ be the Zariski closure of $Z(m)$ in $\mathscr{S}_k^\tor$. Consider the following generating series 
$$\Phi^{\text{tor}}_L:=\mathcal{L}^{-1}\mathfrak{e}_0+\sum_{\mu\in L^\vee/L}\sum_{m\in Q(\mu)+\mathbb{Z}}{Z}^{\text{tor}}(\mu, m) q^m\mathfrak{e}_\mu
,\text{ where }q=e^{2\pi i\tau}.$$
It is known that $\Phi^{\tor}_L$ is modular, i.e.,  $\Phi^{\tor}_L\in M_{\kappa}(\rho_L)\otimes \mathrm{Pic}(\mathscr{S}_k^\tor)_\mathbb{Q}$; cf.  \cite[Theorem 3.1]{Tay22}.  

Let $C$ be a proper smooth connected  curve with a non-constant map $C\rightarrow \mathscr{S}_{k}^\tor$ whose image lies generically in $\mathscr{S}_{k}$. The modularity of $\Phi^{\tor}_L$ enables us to relate the intersection number $(\overline{Z(m)}\cdot C)$ and the coefficients $q_L(m)$ of the Eisenstein series $E_0(\tau)$: 
\begin{proposition}[\cite{Tay22}, Proposition 4.10]\label{prop: global bound}
 We have $\overline{Z(m)}\cdot C =|q_L(m)|(C\cdot \omega) + o(m^{\frac{b}{2}}).$
\end{proposition}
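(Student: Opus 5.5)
We intend to read this off from the modularity of $\Phi^{\tor}_L$, with the main term supplied by the Eisenstein series. Pairing the generating series with the curve $C$ gives a scalar vector-valued form
\[
(\Phi^{\tor}_L\cdot C) \;=\; -(C\cdot\omega)\,\mathfrak{e}_0 \;+\; \sum_{\mu}\sum_{m} \bigl(Z^{\tor}(\mu,m)\cdot C\bigr)\, q^m\mathfrak{e}_\mu \;\in\; M_\kappa(\rho_L),
\]
using $\cL^{-1}=\omega^{-1}$ and the fact that $C\to\shS^\tor_k$ is a map from a proper curve, so all degrees are defined.

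\textbf{Step 1 (decomposition).} Write $M_\kappa(\rho_L)=\bC E_0\oplus S_\kappa(\rho_L)$. This is valid for $\kappa=1+b/2\ge 2$ when $b\ge 2$; the small-$b$ cases are handled as in Bruinier's setup. The $\mathfrak e_0$ constant term of the form above is $-(C\cdot\omega)$. The other constant terms vanish, since $E_0$ has constant term $\mathfrak e_0$ only and is the unique Eisenstein series with that property after the usual isotropic-vector bookkeeping. Hence
\[
(\Phi^{\tor}_L\cdot C) = -(C\cdot\omega)E_0 + g \qquad\text{with } g \text{ a cusp form.}
\]
Comparing $q^m\mathfrak e_0$ coefficients, and noting that $q_L(m)<0$ by Theorem \ref{thm: Eisenstein Coeffs}, we get
\[
Z^{\tor}(m)\cdot C=|q_L(m)|(C\cdot\omega)+a_g(m).
\]

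\textbf{Step 2 (cusp-form error).} The Hecke/Deligne-type bound gives $a_g(m)=O(m^{\kappa/2-1/4+\epsilon})$, or at least the trivial bound $O(m^{\kappa/2})=O(m^{1/2+b/4})$. Both are $o(m^{b/2})$ once $b\ge 3$. In the low-rank cases one uses the sharper bound $O(m^{(\kappa-1)/2+\epsilon})$.

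\textbf{Step 3 (boundary correction).} We pass from $Z^\tor(m)$ to $\overline{Z(m)}$. Their difference is the boundary combination $\mathcal B_m$, and $C$ meets the boundary in finitely many points, since $C$ generically lies in $\shS_k$. Following Tayou's computation of the coefficients of $\mathcal B$, which come from Fourier coefficients of Eisenstein series for lower-rank lattices $K\subset L$ attached to the cusps, these multiplicities grow like $O(m^{(b-1)/2+\epsilon})$, hence are $o(m^{b/2})$.

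\textbf{Main obstacle.} The delicate point is Step 3, namely controlling the boundary multiplicities $\mathcal B_m\cdot C$ uniformly in $m$. This needs the explicit shape of the boundary coefficients in \cite[(3.1.1)]{Tay22} together with the local intersection of $C$ with the toroidal boundary divisors. I would first try to bound each boundary coefficient by the Fourier coefficients of a weight $\kappa-1/2$ or $\kappa-1$ Eisenstein series for a lattice of signature $(b-1,1)$ or $(b-2,0)$, whose coefficients are $O(m^{(b-1)/2+\epsilon})$.
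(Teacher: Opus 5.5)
Your outline is the natural argument behind the result the paper quotes without proof from \cite[Prop.~4.10]{Tay22}, and the route is sound: pair the modular series $\Phi^{\tor}_L$ with $C$, read off the $\mathfrak{e}_0$-coefficient, take the main term from $E_0$, bound the cuspidal remainder by the Hecke bound $O(m^{\kappa/2})=o(m^{b/2})$ (valid once $b\ge 3$), and show that the boundary correction $\mathcal{B}_m\cdot C$ is of lower order. There is one small slip. The decomposition $M_\kappa(\rho_L)=\bC E_0\oplus S_\kappa(\rho_L)$ is false in general, because there is an Eisenstein series $E_\beta$ for every isotropic $\beta\in L^\vee/L$. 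You do not need it, however. The sum defining $\Phi^{\tor}_L$ runs over $m>0$, so $\langle\Phi^{\tor}_L,C\rangle+(C\cdot\omega)E_0$ has vanishing constant term in every component and is therefore cuspidal, which is all Step 1 uses. (The Deligne-type bound you invoke for small $b$ is only known in integral weight, but it is not needed once $b\ge 3$.)

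The real issue is Step 3. It is the only non-formal input, and you assert it rather than prove it: the sentence ``these multiplicities grow like $O(m^{(b-1)/2+\epsilon})$'' is exactly the content that must be supplied. It does hold for $b\ge 3$, via the explicit Bruinier--Zemel boundary multiplicities underlying \cite[(3.1.1)]{Tay22}.

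For example, the coefficient of the type II boundary divisor attached to an isotropic plane $J$ is, up to normalisation, $\frac{m}{b-2}$ times the number of vectors of norm $m$ in (a coset of) the positive definite rank-$(b-2)$ lattice $J^\perp/J$. This is $O(m^{b/2-1+\epsilon})$, or $O(m)$ when $b=3$; the factor $b-2$ in the denominator is one reason $b\ge 3$ is needed here as well. The type III coefficients over an isotropic line $I$ can be bounded by a similar lattice-point count: one counts norm-$m$ vectors of the Lorentzian lattice $I^\perp/I$ whose orthogonal hyperplanes cut a fixed set of cones of the fan, each contributing $O(m^{1/2})$. This gives $O(m^{(b-1)/2+\epsilon})$.

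Since $C$ meets only finitely many boundary divisors, with intersection numbers independent of $m$, these bounds give $\mathcal{B}_m\cdot C=o(m^{b/2})$ and complete your argument.
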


\begin{definition}
\label{def: intersect at P}
We call a point $P\in C$ a \emph{boundary point} if it does not map to $\mathscr{S}_k$, and an \emph{interior point} otherwise. We call an interior point $P\in C$ a \emph{supersingular point} if it maps to the supersingular locus. We write $i_{P}(C \cdot \overline{Z(m)})$ for the intersection multiplicity of $C$ and $\overline{Z(m)}$ at $P$. 
\end{definition}

We note that $q_L(m) \asymp m^{\frac{b+2}{2}}$. \cite{MST} and \cite{Tay22} control the local intersection numbers at non-supersingular points. For $X$ some integer, let $S_X$ denote the set of positive integers up to $X$ which are relatively prime to $p$. 
\begin{theorem}[\cite{MST},\cite{Tay22}]\label{thm: bounding nonss contribution}
~\begin{enumerate}[label=\upshape{(\alph*)}] \item ({\cite[Proposition 7.11]{MST}}) Let $P\in C$ be an interior point which doesn't map to the superinsginular locus. Then, we have $$\sum_{m\in S_X} i_P(C\cdot \overline{Z(m)}) = O(X^{\frac{b}{2}} \log X).$$
        \item (\cite[Proposition 4.12]{Tay22}) Let $P\in C$ be a boundary point. Then we have $$\sum_{m\in S_X} i_P(C\cdot \overline{Z(m)}) = O(X^{\frac{b}{2}} \log X).$$
    \end{enumerate}
\end{theorem}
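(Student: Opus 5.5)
The statement is a citation of two results: part (a) is \cite[Proposition 7.11]{MST} and part (b) is \cite[Proposition 4.12]{Tay22}. The plan is therefore to check that their hypotheses hold in the present setting, and to recall the mechanism of each argument so that it is clear nothing uses generic ordinarity of $C$.

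\emph{Step 1 (local reduction).} Fix an interior non-supersingular point $P$ and a uniformizer $t$, so that $C^{/P}=\Spf \overline{\bF}_p[\![t]\!]$. By the moduli description of $\cZ(m)$ in \cref{def: special end}, the local multiplicity is
\[ i_P(C\cdot \overline{Z(m)})=\sum_{r\ge1}\#\{v\in L_r: Q(v)=m\}, \]
where $L_r=L(\cA_{\overline{\bF}_p[t]/(t^r)})$. Summing over $m\in S_X$ turns the problem into counting lattice points of norm at most $X$ in the decreasing chain $L_1\supseteq L_2\supseteq\cdots$.

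\emph{Step 2 (the non-supersingular point).} Since $P$ is not supersingular, $L(\cA_P)$ has rank at most $b$; by \cref{lem: dim E_i}, its $p$-adic part is orthogonal to the slope-nonzero part. The input from \cite{SSTT}, as used in \cite{MST}, is a decay estimate of the following form. Because $C$ is non-constant, there is a $p$-primitive vector whose crystalline realization fails to stay horizontal in $\Fil^0$ to some finite order. Moreover, $p^n v$ lifts only to order of size about $p^n$ times a constant, at least modulo a rank $\le b-2$ sublattice, and this sublattice cannot persist at full rank by a Serre--Tate/Grothendieck--Messing argument. Granting this, the lattice points of norm at most $X$ in $L_r$ with $r\asymp p^n$ lie in a sublattice of covolume growing like $p^{2n}$ in suitable directions. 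Summing over $r$ (a geometric sum in $n$, weighted by the number of $r$ in each dyadic range) gives $O(X^{b/2})$ per scale. The at most $\log X$ scales then give $O(X^{b/2}\log X)$. The restriction $m\in S_X$ removes the contribution of $p$-divisible norms, which would otherwise spoil the count.

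\emph{Step 3 (boundary points).} At a boundary point, \cite{Tay22} uses the toroidal compactification and the Fourier--Jacobi description of $\mathcal{Z}^{\tor}(m)$ near a boundary component. The local intersection is controlled by counting vectors in the isotropic-quotient lattice of signature $(b-1,1)$ or $(b-2,0)$, and the same divisor-sum counting gives $O(X^{b/2}\log X)$.

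The main obstacle is Step 2. One must confirm that the proof of \cite[Prop.~7.11]{MST} uses only that $P$ is non-supersingular, and not that $C$ is generically ordinary. I would check this by observing that the decay estimate there is derived from the slope decomposition at $P$ alone, via the integrality of $p\Frob^j$ in \cref{lem: Frob and E}. It never refers to the generic Newton stratum, since at a non-supersingular point the crystal splits off a nonzero-slope part regardless of which stratum $P$ lies in.
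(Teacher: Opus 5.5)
Your plan is what the paper does: it imports the statement from \cite[Prop.~7.11]{MST} and \cite[Prop.~4.12]{Tay22} and gives no further argument. In particular, the paper never checks that the non-supersingular estimate of \cite{MST}, proved there for generically ordinary curves, survives when $C$ lies generically in $\sF_{a+1}\smallsetminus\sF_{a+2}$ with $a\ge 1$. You rightly single this out as the real issue, but the check you offer in Step~2 rests on a false claim. The decay rate along $C$ is governed by the generic Newton stratum of $C$.

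Take a non-supersingular $P\in C$ of height exactly $a+1$; all but finitely many points of $C$ are of this kind. Then the whole germ $C^{/P}$ has constant height $a+1$. For a family of K3 surfaces, the obstruction to deforming $v\in L(\cA_P)$ along $C^{/P}$ is an additive map $v\mapsto\beta(v)$ into the Artin--Mazur formal Brauer group, and presumably into $\mathbf{\Psi}_{\mathrm{con}}$ of \S\ref{sec: algebraization} in the GSpin case. That group is one-dimensional of constant height $a+1$ over $\overline{\bF}_p[\![t]\!]$. Since $[p](x)=u\,x^{p^{a+1}}+(\text{higher order})$ with $u$ a unit, one gets $v_t([p^n]\beta(v))=p^{(a+1)n}v_t(\beta(v))$. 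So $p^nv$ lifts to order $p^{(a+1)n}$ times the order to which $v$ lifts, not to order $\asymp p^n$. The same dependence on $a$ appears in the paper's supersingular analysis (\cref{lm:specialendoprepair}(c), and $H_j=h_P(1+p^{a+1}+\cdots)$). Thus ``it never refers to the generic Newton stratum'' is not correct, and the integrality of $p\Frob^j$ in \cref{lem: Frob and E} has no bearing on the question.

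As a result, in your Step~2 each $p$-adic scale $n$ contains $\asymp p^{(a+1)n}$ values of $r$ instead of $\asymp p^n$. An index growth that only offsets $p^n$ therefore no longer gives $O(X^{b/2})$ per scale. Closing the count needs more input. One ingredient is the rank saving $\mathrm{rk}\,L(\cA_P)\le b+2-2h\le b-2a$ at points of height $h\ge a+1$. Another is a lower bound on the corank of the part of $L(\cA_P)\otimes\bZ_p$ that never decays along $C^{/P}$. The rank saving alone is not enough: with a corank-one non-decaying part, a volume count of the kind you sketch only gives a bound of order $X^{(a+1)(b-2a)/2}$, which exceeds $X^{b/2}$ once $b-2a>2$.

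So one must either verify that the argument of \cite{MST} is insensitive to the decay rate, or redo the count with the correct rate; your proposal does neither. A minor point: the orthogonality of the slope-$0$ part to the slope-$\pm 1/h$ parts comes from the compatibility of $\Frob$ with the pairing, not from \cref{lem: dim E_i}.
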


In order to prove the main theorem, we will need the following result that treats supersingular points. 

\begin{theorem}\label{thm: main version 2}
    Let the setup be as above. Suppose $C$ is as in Theorem \ref{mainShimura}. There exists a constant $\alpha < 1$ independent of $X$ such that we have $$\sum_{m\in S_X} \sum_{P\in C_{\mathrm{ss}}} i_P(C\cdot Z(m)) < \alpha (C\cdot\omega) \sum_{m\in S_X} |q_L(m)|+O(X^{\frac{b+1}{2}}).$$
\end{theorem}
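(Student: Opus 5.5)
The plan is to reduce the left-hand side to a lattice-point count at each supersingular point and then compare it with the Eisenstein coefficients through local densities.

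\textbf{Setup and lattice-point count.} Since $C$ is non-exceptional, it is in particular not supersingular, so $C_{\mathrm{ss}}$ is a finite set of points. It therefore suffices to bound each $P \in C_{\mathrm{ss}}$ separately and add up. Fix such a $P$ and a uniformizer $t$ of $C$ at $P$. As recalled in the outline, we have
\[ i_P(C\cdot Z(m)) = \sum_{r\ge1} \#\{v\in L_r : Q(v)=m\}, \]
where $L_1=L(\cA_P)\supseteq L_2\supseteq\cdots$ is the decaying filtration. Here $L(\cA_P)$ is a positive definite lattice of rank $b+2$. Summing over $m\in S_X$ turns the local contribution into
\[ \sum_{r\ge1}\#\{v\in L_r : Q(v)\le X,\ p\nmid Q(v)\}. \]
For each fixed $r$, I would evaluate this count in the same way one evaluates $\sum_{m\in S_X}|q_L(m)|$, namely via Siegel--Weil and local densities. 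The prime-to-$p$ condition is what lets one compare the count on $L_r$ with the count on $L_r\otimes\bZ_p$ only through the local density at $p$. The outcome should be a main term of the form
\[ c_L\, X^{(b+2)/2}\,\frac{\delta_p(L_r)}{\delta_p(L)}, \]
where $c_L X^{(b+2)/2}$ is the main term of $\sum_{m\in S_X}|q_L(m)|$ given by \cref{thm: Eisenstein Coeffs}. The error term should be $O(X^{(b+1)/2})$ uniformly, as long as $L_r$ is not too skew relative to $X$.

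\textbf{Truncation in $r$.} I would split the sum over $r$ at $r\approx X^{1/2}$. For large $r$, the rapid-decay statements give that $L_r\subseteq p^{n}L_1$ with $n$ growing linearly in $\log r$ (more precisely, through the $d_n$ bounds). A crude count of vectors of norm $\le X$ in $p^nL_1$, summed over this tail, then contributes only $O(X^{(b+1)/2})$. For small $r$, the main terms add up to
\[ c_L X^{(b+2)/2}\sum_r \frac{\delta_p(L_r)}{\delta_p(L)}. \]
After this, the theorem reduces to a purely local inequality:
\[ \sum_{P\in C_{\mathrm{ss}}}\ \sum_{r\ge1}\frac{\delta_p(L_{r,P}\otimes\bZ_p)}{\delta_p(L\otimes\bZ_p)} \;\le\; \alpha\,(C\cdot\omega), \qquad \alpha<1. \]
This is the local statement (\cref{thm: main local version}) that the paper promises.

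\textbf{The local inequality.} To prove it, I would express $\delta_p(L_r)$ in terms of the decay rates $d_n(v)$ of an adapted basis, like the superspecial basis $\{e',f',e_i,f_i\}$ in \cref{exp:Rondo}. Then I would bound the $d_n$ by the tangency orders of $C$ at $P$ with the Newton strata $\sF_h$ and the Artin strata. The global input is \cref{lem: Frob and E}(c): $\sF_{h+1}$ is cut out of $\sF_h$ by a section of $\omega^{p^h-1}$. Summing these tangency orders over all $P$ is therefore bounded by $(p^h-1)(C\cdot\omega)$, and this should produce the factor $C\cdot\omega$. The bookkeeping should show that the total is at most $C\cdot\omega$ times a geometric series in $p^{-1}$, with a strict saving.

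\textbf{Main obstacle.} The obstacle, and the place where hypotheses (a)--(c) of \cref{mainShimura} must enter, is the extreme case: a maximal-rank isotropic sublattice of formal special endomorphisms never decays, and then the inequality becomes an equality with $\alpha=1$. I would first prove the bound with $\alpha<1$ whenever the non-decaying part of $\cL(\cA_{C^{/P}})$ has less than maximal rank. Separately, I would show that if maximal non-decay occurs at some $P$, then the connected-étale sequence of the enlarged formal Brauer group splits over $K(C)$. I would attempt this by algebraizing the non-decaying formal endomorphisms, comparing with the slope filtration of $\bL_\cris$ on the generic Newton stratum. That contradicts (a). Finally, (b) and (c) would each be shown to imply (a): a splitting would give extra endomorphisms of $\cA|_C$, and hence smaller $\ell$-adic monodromy.
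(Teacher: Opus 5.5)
Your architecture matches the paper's: lattice counts at supersingular points, Siegel--Weil to reduce to Theorem~\ref{thm: main local version}, and algebraization to rule out the extremal case. However, several of the concrete steps would fail as stated.

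\textbf{The analytic reduction.} The claim $L_r\subseteq p^nL_1$ for large $r$ is false in general. The decay lemmas (Theorems~\ref{thm: superspecial rapid decay} and \ref{thm: supersingular rapid decay}) give rapid decay only on a saturated sublattice of rank $\ge a_1+a_k+2$ (resp.\ $\ge 2a_1+2$), not on all of $\cL$. The redundant and borderline directions $\cL_{\mathrm{brd}}$ may decay late or never, and non-decaying formal special endomorphisms of rank $<b-2a$ are allowed even in the non-extremal case.

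Had the claim held, those $r$ would contribute nothing for $p\nmid m$, since $Q(p^nv)\equiv 0\bmod p^{2n}$. In reality the range $r>h_P$ contributes at main-term size: it is the sum $S''_{>1}(M)$, which is what brings the paper's bound on $S''(M)$ to exactly $1$ in the tight thin borderline-2 case. Your bookkeeping also fails on its own terms: an error $O(X^{(b+1)/2})$ per $r$, summed over $\asymp X^{1/2}$ values of $r$, is $O(X^{(b+2)/2})$.

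The paper does not truncate the main term. It bounds $i_P(C\cdot Z(m))$ by counts in $L'_{P,r}$ and uses \cite[\S7]{MST} to replace them by $\sum_{r\ge1}q_{L'_{P,r}}(m)$ over all $r$, up to admissible error. Each such term carries the factor $[L'^{\vee}_{P,r}\otimes\bZ_p:L'_{P,r}\otimes\bZ_p]^{-1/2}$ (Remark~\ref{rmk:explicit q_L'}). With $\delta_p$ normalized as in Definition~\ref{def: local density}, the ratios $\delta_p(L_r)/\delta_p(L)$ need not tend to $0$ (e.g.\ when a nondegenerate piece never decays), so your local inequality as written can fail. The covolume is exactly where decay and the Artin invariant enter.

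\textbf{The local inequality and the extremal case.} You do not say how an adapted basis is found, or why the decay rates of its vectors determine $L_r$; the paper can do this only on the rapidly decaying sublattice. For the complement it needs the first-order analysis of \S\ref{sub:lineconfig}--\S\ref{sub:patternfirstorderdecay}. That analysis uses line configurations of the $v_t(x_i),v_t(y_i)$ and maximality in the $Q$-isometry class, leading to $h_{a_k}\ge p^{a_k}c_1+d_1$ and Proposition~\ref{prop:L+thin}. It is followed by the comparison $S'\le S''$ with explicit lattices $L''_{P,r}$ in which $\cL_{\mathrm{brd}}$ never decays. Your ``bookkeeping'' step is the whole of Sections~\ref{sec: decay superspecial}--\ref{sec: supersingular}.

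More seriously, the extremal configuration is not an isotropic lattice. A lattice isotropic mod $p$ contributes nothing for $p\nmid m$ and could never force $\alpha=1$. The obstruction is a saturated rank $b-2a$ lattice of non-decaying formal special endomorphisms whose reduction is nondegenerate: the inert plane $\overline{\cL}_{\mathrm{bd}}$ plus the hyperbolic planes of $\overline{\cL}_{\mathrm{rd}}$. Isotropy enters the paper in the opposite direction: a large isotropic subspace of $\cL_1$ must contain a vector that decays early (Corollary~\ref{cor: isotropic subspace first order decay}).

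It is this nondegenerate lattice that trivializes $\gr_0$ of the slope filtration over the completed function field $E$, and its self-duality makes the splitting integral. You must then still spread the splitting from $E$ to the whole curve (\cite[Prop.~5.2]{RJ23}). You must also turn $\nu_{\mathrm{spl}}$ from Construction~\ref{const:vspl} into a genuine endomorphism via the crystalline isogeny theorem, as in Theorem~\ref{T: algebraicC}. Your step ``algebraize the formal endomorphisms'' names neither of these.
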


We now prove Theorem \ref{mainShimura} assuming Theorem \ref{thm: main version 2}. As the deduction of Theorem \ref{mainShimura} from the above stated local and global results follows the same contours as the proofs in \cite{COhypersymmetric}, \cite{Ch18},\cite{ST}, \cite{MST1}, \cite{SSTT}, and \cite{MST}, we will be content with a brief account.
\begin{proof}[Proof of Theorem \ref{mainShimura} assuming Theorem \ref{thm: main version 2}]

We will obtain a contradiction by assuming that there are only finitely many points $P_1\hdots P_n$ of $C$ contained in $\bigcup_{m \in \bZ_{\geq 1}}\overline{Z(m)}$. We may assume that the first $r$ of these points are supersingular. Then, we have that $C\cdot \overline{Z(m)} = \sum_{i=1}^ni_{P_i}(C\cdot \overline{Z(m)})$ for every positive $m$. For $X$ large enough, summing both sides of the equality for $m\in S_X$ and applying Proposition \ref{prop: global bound}, we obtain $$\sum_{m\in S_X} \left[|q_L(m)|(C\cdot \omega) + o(m^{\frac{b}{2}})\right] = \sum_{m\in S_X} \sum_{i=1}^ni_{P_i}(C\cdot \overline{Z(m)}).$$ By Theorem \ref{thm: main version 2}, we have $$(1-\alpha)(C\cdot \omega)\sum_{m\in S_X} |q_L(m)| + O(X^{\frac{b+1}{2}}) < \sum_{m\in S_X} \sum_{i=r+1}^n i_{P_i}(C\cdot \overline{Z(m)}).$$ As $P_i$ is non-supersingular for $i >r$, we apply Theorem \ref{thm: bounding nonss contribution} to see that the RHS has order of magnitude $O(X^{\frac{b}{2}}\log X)$. The LHS has order of magnitude ${X^{\frac{b+2}{2}}}$, which yields the required contradiction.  
\end{proof}

In the next subsection, we will reduce Theorem \ref{thm: main version 2} (using \cite{MST}) to purely local statement for formal curves passing through supersingular points.


\subsection{The local and global intersection numbers at a supersingular point}\label{sub: intersection numbers}
Recall that $a$ is the largest integer such that $C$ maps to the locus $\cF_{a+1} \subseteq \shS$. Set $h_P = i_P(C\cdot\cF_{a+2})$, the local intersection multiplicity of $C$ and $\cF_{a + 2}$ at $P$. 

\begin{definition}
\label{def: global int num}
    For an integer $m>0$,  define \textbf{global intersection number of $(C\cdot Z(m))$ at $P$} to be
$$g_P(m)=  \frac{h_P}{p^{a+1}-1}|q_L(m)|.$$
\end{definition} 

Since $\cF_{a+2}$ is the vanishing locus of a section of $\omega^{p^{a+1}-1}|_{\cF_{a+1}}$; cf. Lemma~\ref{lem: Frob and E}, we immediately get
\begin{equation}\label{eq:g_Pandglobal}
(C\cdot\omega)|q_L(m)| = \sum_{P\in C\cap \cF_{a+2}}g_P(m).
\end{equation}

 Now, let $P\in C_{\mathrm{ss}}$ be a supersingular point, and let $t$ be a local parameter at $P$. Let $\cA_r := \cA/\Spf k[\![t]\!]/(t^r)$ denote the pullback of $\cA$ to the $r$th infinitesimal neighbourhood of $C$ at $P$. 
\begin{definition}
\label{def: L_P, r}
We let $L_{P,r} = L(\cA_r)$. Let $L_{P,1} \subset L'_{P,1} \subset L_{P,1}\otimes \bZ_{(p)}$ be a lattice that is maximal at all primes $\ell \neq p$, and let $L'_{P,r} = L_{P,r} \cap L'_{P,1}$. 
\end{definition}

By construction, we have that $L_{P,r} \subset L_{P,r-1}$. The equation below follows directly from the moduli interpretation of $Z(m)$. 
\begin{equation}\label{eq:local intersection defn}
    i_P(C\cdot Z(m)) = \sum_{r=1}^{\infty} \#\{v\in L_{P,r}: Q(v) = m \}. 
\end{equation}

For the rest of this section, we will reduce Theorem \ref{thm: main version 2} to the purely local result stated just below: 
\begin{theorem}\label{thm: main local version} Notation as above. Let $q_{L'_{P,r}}(m)$ denote the $m$th Fourier  coefficient of the Eisenstein series associated to the lattice $L'_{P,r}$, and define the series $$S'(m):=\sum_{r=1}^{\infty} \frac{q_{L'_{P,r}}(m)}{g_P(m)}.$$
   There is a constant $\alpha <1$ such that for any $m$ not a multiple of $p$, we have $S'(m) < \alpha$. 
\end{theorem}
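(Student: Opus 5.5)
The plan is to turn $S'(m)$ into a purely $p$-adic sum via the Siegel--Weil type formula of \cref{thm: Eisenstein Coeffs}, and then to control that sum using the decay of the lattices $L_{P,r}$ near $P$.

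\textbf{Step 1: reduction to $p$-adic data.} At a supersingular point the lattice $L_{P,1}$ is positive definite of rank $b+2$. All $L'_{P,r}$ have finite index in $L'_{P,1}$, so they share its rank, and by construction they agree with $L'_{P,1}$ (and are maximal) away from $p$. I will use the formula for positive definite lattices analogous to \cref{thm: Eisenstein Coeffs}: $q_M(m)$ is $m^{\mathrm{rk}(M)/2-1}$ times an $L$-value factor times local densities, divided by $\sqrt{|M^\vee/M|}$.
\begin{itemize}
\item Comparing $L'_{P,r}$ with $L'_{P,1}$, every factor cancels except the $p$-adic one. This gives
\[ \frac{q_{L'_{P,r}}(m)}{q_{L'_{P,1}}(m)}=[L'_{P,1}:L'_{P,r}]^{-1}\,\frac{\delta(p,L'_{P,r},m)}{\delta(p,L'_{P,1},m)}. \]
\item Comparing $L'_{P,1}$ with $L$, the two lattices have the same rank and isometric completions at $\ell\ne p$. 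This is because $L_{P,1}\otimes\bQ$ is $V$ with the signature and the invariant at $p$ changed, as in the standard supersingular description.
\item Hence $q_{L'_{P,1}}(m)/|q_L(m)|$ is an explicit constant times $\delta(p,L'_{P,1},m)/\delta(p,L,m)$. Since $p\nmid m$, these densities are computable from the reductions mod $p$ and are bounded uniformly.
\end{itemize}
Thus $S'(m)$ equals $\frac{p^{a+1}-1}{h_P}$ times an explicit $p$-adic quantity. That quantity is essentially
\[ \sum_{r\ge1}[L_{P,1}\otimes\bZ_p:L_{P,r}\otimes\bZ_p]^{-1}\,\delta(p,L_{P,r}\otimes\bZ_p,m), \]
normalized by a density attached to $L\otimes\bZ_p$.

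\textbf{Step 2: using the decay structure.} I will bound each term through the decay rates $d_n(v)$. I first choose an adapted $\bZ_p$-basis of $L_{P,1}\otimes\bZ_p$, of the shape in \cref{exp:Rondo}: hyperbolic pairs $e_i,f_i$ together with an orthogonal complement. The decay results of the later sections will then describe each $L_{P,r}\otimes\bZ_p$ as a lattice spanned by $p^{k_j(r)}$ times the basis vectors. The exponents $k_j(r)$ are governed by $d_n$, which is tied to the tangency of $C$ with the Newton strata $\cF_{a+2}$ and with the Artin strata. This gives an explicit formula for the index $p^{\sum_j k_j(r)}$. Because $p\nmid m$, the density $\delta(p,\cdot,m)$ depends only on the unimodular part of the lattice, which is the span of the vectors that have not yet decayed.

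Summing over $r$ then produces a geometric-type series in $p^{-1}$. The tangency data enter through $h_P=i_P(C\cdot\cF_{a+2})$. The aim is to show the total is at most $\alpha h_P/(p^{a+1}-1)$, with the $p^{a+1}-1$ arising from the successive $p$-powers on an isotropic block of rank $a+1$. That block must decay early to first order, and this should be guaranteed by the Artin-strata tangency.

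\textbf{Main obstacle.} The hard step is the case where a large isotropic sublattice resists decay. In that case the index grows too slowly and the sum approaches $h_P/(p^{a+1}-1)$ itself, so that $\alpha=1$. I will first prove the strict inequality under the local assumption that the formal special endomorphisms that deform to all of $C^{/P}$ do not form a maximal isotropic piece. In that case at least one extra direction decays at a definite rate, which yields a uniform saving factor $\alpha<1$. The extreme configuration must then be excluded globally, by the algebraization argument. There one shows that it forces the connected-étale sequence of the enlarged formal Brauer group to split. This contradicts hypothesis (a) of \cref{mainShimura}, and hypotheses (b) and (c) reduce to (a). I expect this exclusion to be the crux.
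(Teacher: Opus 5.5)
Your Step~1 is fine: it is Remark~\ref{rmk:explicit q_L'}, imported from \cite{MST}. Your overall architecture is also the paper's: bound $S'$ away from an extreme configuration, then exclude that configuration via \cref{T: algebraicC}.

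\textbf{Step~2 is not an argument, and the claim it rests on is false.} The decay results do not exhibit $L_{P,r}\otimes\bZ_p$ as a diagonal lattice $\bigoplus_j p^{k_j(r)}\bZ_p\epsilon_j$ in a fixed adapted basis. Three things go wrong:
\begin{itemize}
\item Vectors typically decay only through linear combinations, via cancellation among minimal-valuation words. This is handled by the Moore-determinant argument in \cref{thm: superspecial rapid decay}.
\item Some vectors never decay at all.
\item The unimodular part is not ``the span of the undecayed vectors''. For instance $e',f'$ have norm divisible by $p$, and an undecayed isotropic vector whose partner has decayed lies in the radical mod $p$.
\end{itemize}

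The paper never determines $L'_{P,r}$ exactly. Instead it proceeds in three steps.
\begin{enumerate}
\item[(i)] The Decay Lemma gives a saturated sublattice of rank $\ge a_1+a_k+2\ge 2(a+1)$ that decays rapidly. This bounds $[L'_{P,1}:L'_{P,r}]$ from below.
\item[(ii)] A first-order analysis is run on a maximal $Q$-isometry representative of $(x_i(t),y_i(t))$, via line configurations (Propositions~\ref{prop:thincase!!!} and~\ref{prop:L+thin}). It pins down $\overline{L'_{P,r}\otimes\bZ_p}$ up to a piece embedding isometrically into $\overline{\cL}_{\mathrm{brd}}$, and it yields $h_{a_k}\ge p^{a_k}c_1+d_1$.
\item[(iii)] These feed auxiliary lattices $L''_{P,r}$ with $S'\le S''$. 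Then $S''$ is computed case by case (tight/loose, thin/thick, borderline $0,1,2$) using $D_\delta\le p^{-a_k}h_{a_k}$.
\end{enumerate}
Non-superspecial supersingular points need separate treatment (\S\ref{sec: supersingular}); for $a<n-1$ a coarse bound using the larger discriminant suffices.

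``Should be guaranteed by the Artin-strata tangency'' replaces none of this. Your heuristic for $p^{a+1}-1$ is also off. That factor is built into $g_P$, because $\cF_{a+2}$ is cut out of $\cF_{a+1}$ by a section of $\omega^{p^{a+1}-1}$. Beating the growth $p^{a+1}$ of the decay periods needs a rapidly decaying lattice of rank at least $2(a+1)$ together with the first-order analysis, not a rank-$(a+1)$ isotropic block.

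\textbf{You misidentify the extreme configuration, which breaks the hand-off to algebraization.} By \cref{cor: isotropic subspace first order decay}, every saturated isotropic sublattice of $\cL_1$ of rank $m-a_k$ already contains a vector decaying to first order. So your local assumption (``the surviving endomorphisms do not form a maximal isotropic piece'') holds automatically, yet $S'$ is not bounded away from $1$ in general.

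The real obstruction arises in the tight, thin, borderline-$2$ case, where the bound for $S''(M)$ is exactly $1$. It is a saturated \emph{self-dual} lattice of rank $b-2a$ that never decays. At a superspecial point it is an inert plane plus $m-a-1$ hyperbolic planes, i.e.\ a lattice of Tate classes in the unit-root piece $\gr_0$. If it survives, then $L'_{P,r}=L''_{P,r}$ (Remark~\ref{rmk:bigremarkonL'L''}) and the local main term matches the global one. This is the mechanism behind \S\ref{sec: resof scalars}.

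The paper handles this case as follows:
\begin{itemize}
\item Strictness $S'<S''$ comes from a limit argument: $S'=S''$ would produce such a lattice over $C^{/P}$.
\item Uniformity of $\alpha$ comes from the fact that $\delta(p,L'_{P,r},M)$ depends only on $\left(\frac{M}{p}\right)$.
\item In \cref{T: algebraicC}, full rank $b-2a$ is what forces $\gr_{0,E}[p^{-1}]$ to be trivial.
\item Self-duality is what makes the splitting of $\mathbf{\Psi}$ integral rather than up to isogeny, as hypothesis~(a) requires.
\end{itemize}
An isotropic lattice has at most half that rank and triggers neither.
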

\begin{remark}\label{rmk:explicit q_L'}
    By the first displayed equation in the proof of Lemma 7.16 of \cite{MST}, we have that $$q_{L'_{P,r}}(m) = \frac{q_L(m)\delta(p,L'_{P,r},m)}{\sqrt{[L^{\prime,\vee}_{P,r}\otimes \bZ_p\ :\ L^{\prime}_{P,r}\otimes \bZ_p]} \left(1-\chi_{(-1)^{1+b/2}4\det L}(p)p^{-1-b/2}\right)}. $$
    So each term of the sum $S'(m)$ can be explicitly written as 
    $$ \frac{q_{L'_{P,r}}(m)}{g_P(m)}=\frac{p^{a+1}-1}{h_P}\left(1-\chi_{(-1)^{1+b/2}4\det L}(p)p^{-1-b/2}\right)^{-1} \frac{\delta(p,L'_{P,r},m)}{\sqrt{[L^{\prime,\vee}_{P,r}\otimes \bZ_p\ :\ L^{\prime}_{P,r}\otimes \bZ_p]} }.$$
\end{remark}

\begin{proof}[Proof of Theorem \ref{thm: main version 2} assuming Theorem \ref{thm: main local version}]
    The implication follows directly from \cite[Section 7]{MST} so we will be brief. Clearly, we have that $i_P(C\cdot Z(m)) \leq \sum_{r=1}^{\infty} \#\{v\in L'_{P,r}: q(v) = m\}$. An analysis identical to the one in \cite[\S 7.12, Proposition 7.13, \S 7.14, Theorem 7.18]{MST} allows us to replace $i_P(C\cdot Z(m))$ in Theorem \ref{thm: main version 2} with $ \sum_{r=1}^{\infty} q_{L'_{P,r}}(m)$ up to an error term.
    
     Assuming  Theorem~\ref{thm: main local version}, we have $ \sum_{r=1}^{\infty} q_{L'_{P,r}}(m)<\alpha g_P(m)$ for some $\alpha<1$. Using (\ref{eq:g_Pandglobal}), we have $$\sum_{P\in C_{\mathrm{ss}}} g_{P}(m) \leq (C\cdot \omega)|q_L(m)|. $$
    Summing up over $P\in C_{\mathrm{ss}}$ and $m\in S_X$, and possibly enlarging $\alpha$, we immediately find that $$\sum_{m\in S_X} \sum_{r=1}^{\infty} q_{L'_{P,r}}(m)<\alpha \sum_{m\in S_X} \sum_{P\in C_{\mathrm{ss}}} g_P(m)\leq \alpha (C\cdot \omega)\sum_{m\in S_X}|q_L(m)|.$$
This implies Theorem \ref{thm: main version 2} as required. 
\end{proof}

\section{Local structure of Shimura varieties at a superspecial point}\label{sec: local structure at superspecial point}
\subsection{Local structure in characteristic $p$}\label{subsec: loc supersingular}

Let $k$ be a perfect field of characterisic $p$. We recall the group-theoretic description of \cite{Kis10} of the formal neighborhood of a closed point $P$ in $\shS$, which we shall denote by $\widehat{\shS}_P$. 

Let us write the K3 crystal $\bL_{\cris, P}$ simply as $\bL$. Then recall that $\bL \tensor_W k = \bL_{\dR, P}$ is equipped with a Hodge filtration. Let $\mu : \mathbb{G}_{m,W} \to \text{SO}(\bL)$ be a co-character (referred to as the ``Hodge co-character") whose reduction modulo $p$ splits the Hodge filtration. Define $U$ to be the opposite unipotent subgroup in $\mathrm{SO}(\bL)$ relative to $\mu$, and let $\Spf(R)$ denote the completion of $U$ at the identity section. Let $\sigma$ be a lift of the Frobenius endomorphism on $R \tensor_W k$, and let $u \in U(R)$ be the tautological point. Then, according to \cite[\S\S 1.4, 1.5]{Kis10} (see also \cite[Proposition 4.7]{KLSS}), there exists an isomorphism $\shS^{/P}_W \simeq \Spf(R)$ through which $\bL_\cris(R)$ can be identified with $\bL \tensor_W R$ such that the Frobenius action is given by $\Frob = u \circ (\varphi \tensor \sigma)$. Let us fix these identifications.

Using the above description, together with Ogus' results on supersingular K3 crystals (\cite{Ogu79}), we can express F-crystal $\bL_\cris$ over $\shS^{/P}_W$ in terms of local coordinates. This has been done in \cite{MST} so we briefly recap the main points for readers' convenience and set up some notations. First, the Tate module $\cL := \bL^{\varphi = 1} = $ orthogonally splits into two parts $\cL_0 \oplus \cL_1$ such that , if we denote by $\<-, -\>_i$ the symmetric pairing on $\cL_i$, then $\<-, -\>_1$ is self-dual over $\bZ_p$ whereas $\<-, -\>_0$ is divisible by $p$ and $p^{-1} \<-, -\>_0$ is self-dual. Define $\bL_1 = \cL_1 \tensor W$. Then $\bL_1$ is a direct summand of $\bL$, which splits into $\bL_0 \oplus \bL_1$. The $W$-lattice $\bL_0$ is self-dual such that $\bL_0^{\varphi = 1} = \cL_0$ and $\cL_0 \tensor W \subseteq \bL_0 \subseteq p^{-1}(\cL_0 \tensor W)$. Set $t_P := \mathrm{rk}_{\bZ_p} \cL_0$. Then $2 \mid t_P$ and $a_P := t_P / 2$ is called the \textit{Artin-invariant} of $\bL$. The point $P$ is defined to be \emph{superspecial} if $a_P = 1$. In this section, we address the case when $P$ is a superspecial point, and will deal with more general supersingular points in a later section. 

As in \cite[Section 4.7]{MST}, we have the following trichotomy. 
\begin{enumerate}\label{trichotomy}
    \item $\textrm{rank } \cL_1$ is odd. We call this the \emph{odd case}, and it corresponds to $b$ odd.
    
    \item $\textrm{rank } \cL_1 = 2m$ and $\cL_1$ is a split quadratic space. We call this the \emph{split case}, and it corresponds to $b$ even and $L\otimes \bZ_p$ \emph{not} being split. 

    \item $\textrm{rank } \cL_1 = 2m$ and $\cL_1$ is \emph{not} a split quadratic space. We call this the \emph{inert case}, and it corresponds to $b$ even and $L\otimes \bZ_p$ \emph{being split}.
\end{enumerate}

\begin{remark}
    The cases (1), (2), (3) above match up with the cases (1), (2), (3) in \cref{prop: Shen-Zhang}. This is essentially a consequence of the Hasse–Minkowski theorem, and the reader may refer to the proof of \cite[Thm.~7.2.4]{HP17} for this argument. 
\end{remark}

\begin{remark}
   We note that we are overloading the symbol $m$.
 Earlier, $m$ indexed special divisors, whereas we are now using $m$ to denote half the rank of $\cL_1$ . We will explicitly alert the reader before reverting to the former usage.

\end{remark}

\begin{remark}\label{rmk:onlyinert}
    As in \cite{MST}, the main theorem in the setting of the inert case follows from the main theorem in the setting of the split case by embedding inside a larger dimensional Shimura variety. Indeed, in the inert setting (under the assumptions of Theorem \ref{mainShimura}), the local calculations needed to establish the inequality in Theorem \ref{thm: main local version} follow by embedding the global lattice $L$ inside a higher-rank maximal lattice $L'$ which is \emph{self-dual but not} split at $p$ (which therefore corresponds to the split case), calculating the quantities $q_{L'}(m)$, and using this to calculate the quantities $q_L(m)$ to obtain the main theorem. Henceforth, we will constrain ourselves to the split case. In this case, the term with the character $\chi$ equals $\frac{1}{(1+p^{-(m+1)})}$.
\end{remark}

We will briefly need the odd case (despite the statement of Theorem \ref{mainShimura} being trivial) to analyze singularities of Newton strata. \textit{For the rest of this section, and in Sections \ref{sec: decay superspecial}, \ref{sec: first step superspecial}, and \ref{sec: supersingular}, we will work in the split case unless explicitly stated otherwise.}

\subsection{Equations for Newton strata}
\subsubsection{}
\label{subsub:explicitcoordinatheonL}
In order to compute with coordinates, we need to introduce bases to $\bL$. For $\bL_1$, we just take any basis $\{ e_i, f_i \}$ for $\cL_1$ such that the intersection matrix is given by $\left[
\begin{array}{c|c}
0 & I \\
\hline
I & 0
\end{array}
\right]$. Using work of Ogus as in \cite{MST1} and \cite{MST}, $\cL_0$ has a basis $e',f'$ such that $\< e', e' \>_0 = 2p, \< f', f' \>_0 = -2 \lambda^2 p$ and $\< e', f' \>_0 = 0$, where $\lambda \in W(\bF_{p^2})^{\times}$ satisfies $\sigma(\lambda) = -\lambda$ (cf. \cite[Lem.~4.3]{MST}). Then, a basis of $\bL_0$ is given by $w' = \frac{1}{2\lambda p}(\lambda e' + f'), v' = \frac{1}{2\lambda}(\lambda e' - f')$. 
From \cite[\S4.8]{MST}, we can extract that for some choice of coordinates 
\begin{equation}
    \label{eqn: coords of R} 
    \shS^{/P}_W = R \simeq W[\![x_1, \cdots, x_m, y_1, \cdots, y_m]\!]
\end{equation}
the operator $\Frob$ on $\bL \tensor R$ (identified with $\bL_\cris(R)$) is given by the matrix $(u B) \circ \sigma$
where 
\begin{equation}
\label{eqn: u and B}
    u = \left[
\begin{array}{cc|cc}
1 & Q & \mathbf{x} & \mathbf{y} \\
 & 1 & & \\
\hline
& - \mathbf{y}^t & I_m &  \\
& - \mathbf{x}^t & & I_m
\end{array}
\right] \text{ and } 
B = \left[
\begin{array}{cc|cc}
 & p &  &  \\
 p^{-1} &  & & \\
\hline
&  & I_m &  \\
&  & & I_m
\end{array}
\right] \Rightarrow 
uB = \left[
\begin{array}{cc|cc}
p^{-1} Q & p & \mathbf{x} & \mathbf{y}  \\
p^{-1} &  & & \\
\hline
- p^{-1} \mathbf{y}^t & & I_m &  \\
- p^{-1} \mathbf{x}^t &  & & I_m
\end{array}
\right]
\end{equation}
with respect to the basis $\{ v', w', e_1, \cdots, e_m, f_1, \cdots, f_m \}$. 
Note that compared to \textit{loc. cit.}, we switched the role played by the letters $e_i, f_i, x_i, y_i$'s with $e'_i, f'_i, x'_i, y'_i$, so that now basis vectors or coordinates related to $\bL_0$ are labelled with a ($'$). In the above, $\mathbf{x}$ and $\mathbf{y}$ are the row vectors $[x_1, \cdots, x_m]$ and $[y_1, \cdots, y_m]$ respectively. Here, $Q=-\sum_{k=1}^m x_ky_k = - \bfx \bfy^t$. 

For a matrix $M$ we denote $M^{(i)}$ the matrix whose entries are twisted by $\sigma^{i}$, set
\begin{equation}
\label{eqn: define F infty and Q}
    Q_{j}=-\sum_{k=1}^m\left( x_ky^{(j)}_k+x^{(j)}_ky_k \right) = - \left( \bfx^{(j)} \bfy^t + \bfx (\bfy^{(j)})^t \right) \in W[\![x_i,y_i]\!].
\end{equation}

In particular, we have that $Q =\frac{1}{2} Q_0$.



\begin{remark}\label{rmk:abuseofnotation_Qi}By abuse of notation, we will also use $Q_i$ to denote the image of $Q_i$ in the ring $\mathbb{F}[\![x_i,y_i]\!]$, just as what we do in the next proposition. 
\end{remark}

\begin{proposition}\label{NPstrata} In case (2) of \cref{prop: Shen-Zhang}, we have the following. 
\begin{enumerate}[label=\upshape{(\alph*)}]
    \item for $0\leq h< m-1$, the NP strata $\sF_{h + 2}$ of height $(h+2)$ is cut out by $Q_0, Q_{1},...,Q_{h}$;
    \item the supersingular strata $\shS_\infty = \sF_{m + 1}$ is cut out by $Q_{0}, Q_{1},...,Q_{m-1}$
    \item  for $h \geq m$, $Q_{h}$ is contained in the ideal $(Q_{0}, Q_{1},...,Q_{m-1})$
\end{enumerate}
\end{proposition}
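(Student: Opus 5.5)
Our plan is to compute everything in the explicit coordinates \eqref{eqn: coords of R}, using Lemma~\ref{lem: Frob and E}(c) and its proof: over $\sF_{h+1}$, the stratum $\sF_{h+2}$ is cut out by the condition that $p\Frob^{h+1}$ kills $\gr^{-1}\bL_{\dR}$. Equivalently, the pairing of $p\Frob^{h+1}(\tilde x)$ with a generator of $\Fil^1$ vanishes modulo the ideal of $\sF_{h+1}$. At $P$, $\Fil^1\bL_{\dR,P}$ is spanned by the reduction of $v'$, the image of $p\,uB$, and $\gr^{-1}$ is detected by $w'$. So we will iterate the matrix $uB\circ\sigma$ on the vector $w'$ (or $pw'$) and track its $v'$-coordinate modulo $p$.

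\textbf{Step 1: compute $p\Frob(w')$ and pair it.} From \eqref{eqn: u and B}, $p\,uB\,w' = p\cdot(p v') = p^2 v'$ up to column conventions. We read off which column gives $\gr^{-1}$, and the first nontrivial condition becomes the $v'$/$w'$ entry $p^{-1}Q\cdot p = Q = \tfrac12 Q_0$. This shows $\sF_2=V(Q_0)$ and matches the non-ordinary locus.

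\textbf{Step 2: induction on $h$.} Write $\Frob^{j} = (uB)(uB)^{(1)}\cdots(uB)^{(j-1)}\sigma^j$. We will show by induction that, modulo the ideal $I_h=(Q_0,\dots,Q_{h-1})$ and modulo $p$, the relevant vector $p\Frob^{h}(w')$ (suitably normalized) has the shape
\[
c\,\big(\tfrac{1}{p}Q^{(h)}\text{-type term},\ *,\ -\mathbf y^{(h)t},\ -\mathbf x^{(h)t}\big),
\]
with its components in $\bL_1$ given by $\mathbf x^{(h)},\mathbf y^{(h)}$ up to sign. Applying $uB$ once more, the $\bL_0$-coordinate picks up $\mathbf x(\mathbf y^{(h)})^t+\mathbf y(\mathbf x^{(h)})^t=-Q_h$, together with terms divisible by $p$ or lying in $I_h$. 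The key point is that integrality of $p\Frob^{h+1}$ (Lemma~\ref{lem: Frob and E}(a)) forces the $p^{-1}$-terms to cancel modulo $I_h$. This cancellation is where the twisted identities $\sigma(Q_{j})$ versus $Q_{j+1}$, combined with $\sigma(x)\equiv x^p$ mod $p$, are used. We expect this bookkeeping to be the main obstacle. In particular, it needs care because $Q_j$ for $j\ge1$ is not a $\sigma$-twist of $Q_0$. We would first verify the cases $h=1,2$ by hand to fix the precise normal form, and then propagate it. Once the normal form is established, this proves (a), using that each $\sF_{h+2}$ is cut out by one further equation by \eqref{eqn: ideal of F h + 1}.

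\textbf{Step 3: parts (b) and (c).} Part (b) follows from (a) with $h=m-1$ together with Corollary~\ref{cor:gen-nonteducdc}(2), which gives $\sF_{m+1}=\shS_\infty$. For (c), Corollary~\ref{cor:gen-nonteducdc}(2) gives $\sF_h=\sF_{m+1}$ for all $h\ge m+1$. The inductive description from Step 2, which shows that $\sF_{h+2}$ is cut out from $\sF_{h+1}$ by $Q_h$, remains valid for all $h$, since the computation never used $h<m-1$. Hence $Q_h$ vanishes on $\sF_{h+1}=\sF_{m+1}=V(Q_0,\dots,Q_{m-1})$ scheme-theoretically, so $Q_h\in(Q_0,\dots,Q_{m-1})$.

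As an independent check of the reducedness input, we can note that $V(Q_0,\dots,Q_{m-1})$ is a complete intersection of dimension $m$, by Proposition~\ref{prop: Shen-Zhang}(2), and is generically smooth.
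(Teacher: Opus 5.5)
Your overall plan matches the paper's proof. You work in the coordinates of \eqref{eqn: u and B}, establish a normal form for iterated $\Frob$ by induction, read off one entry mod $p$, and get (b) and (c) from the stabilization in Corollary~\ref{cor:gen-nonteducdc}(2); your Step~3 is exactly the paper's argument.

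\textbf{Steps 1--2 use the wrong vector.} Since $\Frob(w')=uB\,w'=pv'\in p\bL$, the reduction of $w'$ spans $\Fil^1$. The columns of $uB$ for $e_i,f_i$ are integral while the first column is not, so $\Fil^0=\langle w',e_i,f_i\rangle$ and it is $v'$ that lifts $\gr^{-1}$. Also, at $P$ the image of $p\,uB$ is spanned by $w'$, not $v'$. Iterating on $w'$ gives no condition at all: $p\Frob^{h+1}(w')=p\cdot p\Frob^{h}(v')\equiv 0 \bmod p$ over $\wt\sF_{h+1}$ by Lemma~\ref{lem: Frob and E}(a). Your own computation $p\,uB\,w'=p^2v'$ already shows this. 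The $Q=\tfrac12 Q_0$ you extract is really the $v'$-coordinate of $p\,uB\,v'=(Q,1,-\mathbf y,-\mathbf x)^t$. The quantity to track is $\langle p\Frob^{h+1}(v'),w'\rangle$, i.e.\ the $(1,1)$-entry of $p(uB)(uB)^{(1)}\cdots(uB)^{(h)}$. You should also record why this single entry suffices. For a lift $x$ of an element of $\Fil^0$ one has $\Frob(x)\in\bL_\cris$, so $\pi(p\Frob^{h+1}(x))\in\pi(p\Frob^h(\bL_\cris))\subseteq E_h\subseteq\Fil^0$ over $\sF_{h+1}$.

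\textbf{The induction in Step 2 is left undone, and the mechanism you propose would not work.} This induction is the whole content of (a). Integrality of $p\Frob^{h+1}$ only says the entries lie in $\sO_{\wt\sF_{h+1}}$; it says nothing about their images in $\sO_{\sF_{h+1}}$. The $p^{-1}$-terms do not ``cancel modulo $I_h$''. For example, $\sigma(Q_j)/p$ is integral in the PD envelope, since $\sigma(Q_j)\equiv Q_j^p \bmod p$ and $Q_j^p=p!\,\gamma_p(Q_j)$, but its reduction need not vanish.

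What is needed, and what the paper proves, is an explicit normal form
$\Frob^i(v')=\bigl(a+p^{-1}Q_{i-1},\,b,\,-\sum_{\ell\le i-2}c_\ell(\mathbf y^t)^{(\ell)}-p^{-1}(\mathbf y^t)^{(i-1)},\,\dots\bigr)^t$,
with $a\in\Delta_{i-2}(0)$ and $b,c_\ell\in\Delta_{i-2}(-1)$ built from words in Frobenius twists of the $Q_j$, $j\le i-2$. Feed this into $\Frob^{i+1}(v')=uB\cdot\Frob^i(v')^{(1)}$. In the first entry of $p\Frob^{i+1}(v')$, every term other than $Q_i$ is then either $p$ times an integral element, or an untwisted $Q$ or $Q_j$ with $j\le i-1$ (hence in the PD ideal) times an integral element. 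Either way it dies in $\sO_{\sF_{i+1}}$. Without such a normal form, you have not shown that the cutting equation is $Q_h$ itself rather than $Q_h$ plus an uncontrolled correction.
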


\begin{proof}
Let us first show that $\sF_{h + 1}$ is cut out from $\sF_h$ by $Q_{h - 1} = 0$. By \cref{lem: Frob and E}, $\pi (p \Frob^h)$ defines $\sigma$-semilinear endormorphism on $\bL_\dR |_{\sF_h}$. Then $E_h = \pi(p \Frob^h)(\bL_\dR) + E_{h - 1}$. By definition, $\sF_{h + 1}$ is cut out from $\sF_h$ by the condition that $E_h \subseteq \Fil^0 \bL_\dR$, which is equivalent to 
\begin{equation}
\label{eqn: cut h + 1 from h}
     \pi(p \Frob^h) = 0 \textrm{ on } \gr^{-1} \bL_\dR,
\end{equation}
since $E_{h - 1}$ is already in $\Fil^0 \bL_\dR$ over $\sF_h$. 

Let $A_i$ be the $R[1/p]$-matrix such that $p \Frob^i = A_i \circ \sigma^i$. Via the base change $R \to \sO_{\wt{\sF}_i}$, $A_i$ defines a matrix with $\sO_{\wt{\sF}_i}[1/p]$-coefficients, which by the integrality condition in fact has coefficients in $\sO_{\sF_i}$. Reducing mod $p$, we obtain a matrix $\overline{A}_i$ such that $\pi(p \Frob^i) = \overline{A}_i \circ \sigma^i$ on $\bL_\dR|_{\sF_i}$. By (\ref{eqn: cut h + 1 from h}), we are only concerned with the first column of $\overline{A}_i$, i.e., $\pi(p \Frob^i)([1, 0, \cdots, 0]^t)$.

We make the following definitions for all $i, n \in \mathbb{N}$.  
\begin{itemize}
    \item Let $W_i(n)$ denote the length $n$ words in $\{ Q_j^{(\beta)} | 0 \le j \le i, \beta \in \mathbb{N} \}$. 
    \item Let $\Delta_i(d)$ denote the set of sums of the elements of the form $p^{-r} w$ for $w \in W_i(n)$ and $n - r = d$. 
\end{itemize}
For book-keeping set them to be empty when $i = -1$. One can easily prove by induction that for every $i \in \mathbb{N}$, $\Frob^i(e_1) \in R[1/p]^{2 + 2m}$ is of the following form: 
\begin{equation}
    \begin{bmatrix} a + p^{-1} Q_{i - 1} \\ b \\ - \left(\sum_{\ell = 0}^{i - 2} c_\ell (\bfy^t)^{(\ell)} \right) - p^{-1} (\bfy^t)^{(i - 1)} \\ - \left(\sum_{\ell = 0}^{i - 2} c_\ell (\bfx^t)^{(\ell)} \right) - p^{-1} (\bfx^t)^{(i - 1)} \end{bmatrix} \text{ such that } a \in \Delta_{i - 2}(0) \text{ and } b, c_0, \cdots, c_{i-2} \in \Delta_{i-2}(-1). 
\end{equation}
For example, the first entry of $\Frob^{i + 1}(e_1) = uB \cdot \Frob^i(e_1)^{(1)}$ is given by 
\begin{equation}
    [p^{-1} Q a^{(1)} + p^{-2} Q Q_{i - 1}^{(1)} + p b^{(1)} - \sum_{\ell = 0}^{i - 2} c_\ell^{(1)} Q_{\ell + 1}] + p^{-1} Q_i, 
\end{equation}
and the term in the bracket is the new $a$ for $\Frob^{i + 1}(e_1)$, which indeed lies in $\Delta_{i - 1}(0)$. We leave the checking of the other entries to the reader. 

Note that for every $i \le h$, since $\sF_i = R_0 / (Q_0, \cdots, Q_{i - 2})$, we have $p \mid Q_j$ inside $\wt{\sF}_i$ for every $j \le i - 2$, and hence $p^{-d} \Delta_i (d)$ defines an element in $\sO_{\wt{\sF}_i}$ for all $d \in \mathbb{Z}$. Using this observation, one checks that over $\wt{\sF}_h$, $p \Frob^h(e_1)$ indeed has coefficients in $\sO_{\wt{\sF}_h}$ and modulo $p$, its first entry is just $Q_{h - 1}$. Therefore, the condition (\ref{eqn: cut h + 1 from h}) amounts to $Q_{h -1} = 0$, or equivalently, $\sF_{h + 1}$ is cut out from $\sF_h$ by $Q_{h - 1}$, as desired. 

Note that the above argument has no restriction on $h$, including those $h \ge m + 1$. Parts (2) follows from the above when $h = m + 1$, and part (3) follow from the fact that $(\sF_h, E_h) = (\sF_{m + 1}, E_{m + 1})$ for all $h \ge m + 1$. 
\end{proof}

\begin{remark}
\label{rmk: summarize loc eqn}
We now summarize the procedure above, since we need to use it again but do not wish to repeat the details. First, we select a basis for \(\mathbf{L}_{\mathrm{cris}, P}(W)\) such that, upon reduction modulo \(p\), all but the first basis vector lie in \(\mathrm{Fil}^0 \mathbf{L}_{\mathrm{dR}}\). We then express the Frobenius operator as \(b\sigma\) for an explicit matrix \(b\). Next, we write the tautological point \(u\) in the local coordinates of the opposite unipotent subgroup for a chosen cocharacter $\mu$ that splits the Hodge filtration mod $p$. The remainder of the argument is essentially formalizing the following: the condition that the \((1,1)\)-entries of the matrices \(p(ub), \dots, p(ub)(ub)^{(h)}\) vanish modulo \(p\) is equivalent to the vanishing of the polynomials \(Q_0, \dots, Q_h\).
\end{remark}

\section{Decay of special endomorphisms: superspecial points.}
\label{sec: decay superspecial}

Let us keep using the notations in the preceeding section. In particular, $P$ is a superspecial point on $\shS_\bF$ and assume that we are in the split case. Suppose that $v \in \cL = \cL(\cA_P)$ is a formal special endomorphism and $C$ is a formal curve at $P$ on $\shS_\bF$ which is not generically ordinary. In this section, we study the decay rate of $v$, which measures how far $v$ extends along $C$.  

To formulate this notion, let us make a preliminary definition. 
\begin{definition}\label{eq:v_defspace}
    Given a formal special endomorphism $v \in \cL$, we let $x_v\in \mathbb{F}[\![x_i,y_i]\!]$ denote the equation whose vanishing is the locus to which $v$ extends. 
\end{definition}
For the fact that the locus to which $v$ extends is indeed cut out by a single equation, the reader may refer to \cite[Cor.~5.17]{MP16}. Technically, $x_v$ is only defined up to $\bF^\times$. For convenience we fix one by asking its leading term to be $1$ with respect to the lexicographic order of $x_1, \cdots, x_m, y_1, \cdots, y_m$. 
Note that then the coordinates we have chosen for $\shS^{/P}_W$ can be re-interpreted by $x_i = x_{e_i}, y_i = x_{f_i}$.

For the purpose of this section, we shall express the Frobenius operator $\Frob$ on $\bL[1/p] \tensor R$ with respect to the Frobenius-invariant basis $\{e', f', e_i, f_i \}_{i = 1, \cdots, m}$ of $\bL[1/p]$, as opposed to $\{ v', w', e_i, f_i \}_{i = 1, \cdots, m}$ used in the preceeding section.\footnote{We again remark that the letter $m$ is used in relation to the rank of $\bL$, unlike in Section \ref{sec:arithmetic}. We will alert the reader if we resume using $m$ in the context of special divisors.} Now, with respect to $\{e', f', e_i, f_i \}_{i = 1, \cdots, m}$, $\Frob=(1+F)\comp \sigma$, where 
\begin{equation}
\label{eq:Fmatrix}
\renewcommand{\arraystretch}{1.5}
    F = \left[
    \begin{array}{cc|cc}
         \frac{Q}{2p} & \frac{-\lambda Q}{2p} & \frac{\bfx}{2p} & \frac{\bfy}{2p} \\
         \frac{Q}{2p\lambda} & \frac{-Q}{2p} & \frac{\bfx}{2p \lambda} & \frac{\bfy}{2p \lambda} \\ \hline
         -\bfy^t & \lambda \bfy^t & & \\
         -\bfx^t & \lambda \bfx^t & & 
    \end{array}\right] 
\end{equation}
Set $F_\infty=\prod_{i=0}^\infty (I+F^{(i)})$. Recall that $\mathbf{x}=[x_1,...,x_m]$, $\mathbf{y}=[y_1,...,y_m]$ and we defined $Q_i$'s in (\ref{eqn: define F infty and Q}). 

\subsection{Explicit computation of $F_\infty$}\label{subsec: F infty} In this subsection, we explicitly compute $F_\infty$ over the non-ordinary Newton strata. The assumption implies that $Q=0$. So the upper left block of $F$ in (\ref{eq:Fmatrix}) is 0. Then the upper right, and lower left blocks of $F$ can be written as \begin{equation}\label{eq:blocks}
    \frac{1}{2p}\begin{bmatrix}
    1\\
   \lambda^{-1}
\end{bmatrix}\begin{bmatrix}
    \mathbf{x} & \mathbf{y}
\end{bmatrix},\text{ and   }\begin{bmatrix}
    -\mathbf{y}^t\\
    -\mathbf{x}^t
\end{bmatrix}\begin{bmatrix}
    1 & -\lambda
\end{bmatrix},
\end{equation} respectively. The following are some basic observations: 
\begin{lemma}\label{lm:easythinglemma}Let $0\leq i<j$, then \begin{enumerate}[label=\upshape{(\alph*)}]
    \item $\begin{bmatrix}
    1 & -\lambda
\end{bmatrix}^{(i)}\begin{bmatrix}
    1\\
   \lambda^{-1}
\end{bmatrix}^{(j)}=1-(-1)^{j-i}{= 2 \delta_{i < j}}$.
\item $\begin{bmatrix}
    \mathbf{x} & \mathbf{y}
\end{bmatrix}^{(i)}\begin{bmatrix}
    -\mathbf{y}^t\\
    -\mathbf{x}^t
\end{bmatrix}^{(j)} = Q_{j-i}^{(i)}$. 
\end{enumerate}
\end{lemma}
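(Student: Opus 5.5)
Both parts are direct computations. The plan is to expand the products entrywise, using only two inputs: the defining property $\sigma(\lambda)=-\lambda$ of $\lambda\in W(\bF_{p^2})^\times$ (recalled before (\ref{eqn: coords of R})), and the definition (\ref{eqn: define F infty and Q}) of $Q_j$ together with $\sigma^i$ being a ring endomorphism.

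For (a), the $\sigma$-twists of $\lambda$ are $\lambda^{(i)}=(-1)^i\lambda$, and hence $(\lambda^{-1})^{(j)}=(-1)^j\lambda^{-1}$. Multiplying the row vector by the column vector gives
\[
\begin{bmatrix} 1 & -\lambda \end{bmatrix}^{(i)}\begin{bmatrix} 1\\ \lambda^{-1}\end{bmatrix}^{(j)} = 1-(-1)^i\lambda\cdot(-1)^j\lambda^{-1} = 1-(-1)^{i+j}.
\]
Since $(-1)^{i+j}=(-1)^{j-i}$, this is $1-(-1)^{j-i}$. It equals $2$ or $0$ according as $j-i$ is odd or even. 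So the quantity in the statement is really $2\delta_{2\nmid (j-i)}$, and I would record it in that form; the equality with $2\delta_{i<j}$ is only literally right when $j-i$ is odd.

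For (b), the row vector $[\bfx\ \bfy]^{(i)}$ times the column vector $[-\bfy^t;-\bfx^t]^{(j)}$ is
\[
-\bfx^{(i)}(\bfy^{(j)})^t-\bfy^{(i)}(\bfx^{(j)})^t .
\]
Write $\bfx^{(j)}=(\bfx^{(j-i)})^{(i)}$ and $\bfy^{(j)}=(\bfy^{(j-i)})^{(i)}$, and pull $\sigma^i$ out of the whole expression. What remains inside is $-\bigl(\bfx(\bfy^{(j-i)})^t+\bfx^{(j-i)}\bfy^t\bigr)$. The scalar $\bfy(\bfx^{(j-i)})^t$ is symmetric, i.e. equal to $\bfx^{(j-i)}\bfy^t$, so the inner expression is exactly $Q_{j-i}$. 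Applying $\sigma^i$ then gives $Q_{j-i}^{(i)}$.

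Neither part has a real obstacle. The only care needed is the sign bookkeeping of $\lambda$ under $\sigma$, and noting the parity caveat in (a) so that later uses of (a) only invoke the value $2$ when $j-i$ is odd.
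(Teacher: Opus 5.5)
Your computation is correct in both parts, and it is exactly the direct computation the paper intends: its proof is only ``follows by a direct computation''. One correction to your remark on (a): in the paper, $\delta_{i<j}$ is not the indicator of $i<j$ but the parity symbol defined in the Notation immediately after this lemma, $\delta_{l_1<l_2}=\frac{1}{2}\bigl[1-(-1)^{l_2-l_1}\bigr]$. Hence $1-(-1)^{j-i}=2\delta_{i<j}$ holds identically for all $i<j$, your parity caveat is unnecessary, and your $\delta_{2\nmid(j-i)}$ is exactly the paper's $\delta_{i<j}$.
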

\begin{proof}
    Follows by a direct computation. 
\end{proof}
\begin{notation}
We make several definitions, whose importance will be clear in a short while.
For $n\geq 0$, let $l_1<l_2<...<l_{2n}$ be an increasing chain of $2n$ nonnegative integers. For $n>0$, define \begin{align}
    &Q_{l_1<l_2<...<l_{2n}}=\prod_{k=1}^n Q_{l_{2k}-l_{2k-1}}^{(l_{2k-1})}. \label{eqn: define Q <} \\
&\delta_{l_1<l_2<...<l_{2n}}= \prod_{k=1}^n\frac{1}{2}[{1-(-1)^{l_{2k}-l_{2k-1}}}] \label{eqn: define delta <}.
\end{align}
When $n=0$, we also define by default $$Q_{\varnothing}=1, \delta_{\varnothing}=1.$$

It is immediate from the definition that $\delta_{l_1<l_2<...<l_{2n}}=\prod_{k=1}^n \delta_{l_{2k-1}<l_{2k}}$ can only take value in $\{0,1\}$. It is 1 only when $l_{2k}+l_{2k-1}\equiv 1(\mod 2)$ for all $k=1,2,...,n$. Otherwise it is 0.  
\end{notation}

We now give a thorough description of $F_\infty$:\begin{lemma}\label{lm:Finftyexplicit} Notation as above. We have
 \begin{align*}
    F_\infty= I+\sum_{n\geq 1}p^{-n}F_n,
\end{align*}
where $F_n=\begin{bmatrix}
X_n & Y_n\\
Z_n & W_n
\end{bmatrix}$, and $X_n$ is the upper left $2\times 2$ block, $Y_n$ is the upper right $2\times 2m$ block, $Z_n$ is the lower left $2m\times 2$ block, and $W_n$ is the lower left $2m\times 2m$ block, with explicit description as follows:
\begin{align*}
    X_n&=\sum_{0\leq i_1<i_2<...<i_{2n}} \frac{1}{2} \delta_{i_2<i_3<...<i_{2n-2}<i_{2n-1}}Q_{i_1<i_2<...<i_{2n}}\begin{bmatrix}
    1\\
   (-1)^{i_1}\lambda^{-1}
\end{bmatrix}\begin{bmatrix}
    1 & (-1)^{1+i_{2n}}\lambda
\end{bmatrix},\\
   Y_n&= \sum_{0\leq  i_1<i_2<...<i_{2n-1}} \frac{1}{2} \delta_{i_2<i_3<...<i_{2n-2}<i_{2n-1}}Q_{i_1<i_2<...<i_{2n-2}}\begin{bmatrix}
    1\\
   (-1)^{i_1}\lambda^{-1}
\end{bmatrix}\begin{bmatrix}
   \mathbf{x} & \mathbf{y}
\end{bmatrix}^{(i_{2n-1})},\\
  Z_n&= \sum_{0\leq  i_0<i_1<i_2<...<i_{2n}} \delta_{i_0<i_1<...<i_{2n-2}<i_{2n-1}}Q_{i_1<i_2<...<i_{2n}}\begin{bmatrix}
    -\mathbf{y}^t\\
    -\mathbf{x}^t
\end{bmatrix}^{(i_0)}\begin{bmatrix}
   1 & (-1)^{1+i_{2n}}\lambda
\end{bmatrix},\\
  W_n&= \sum_{0\leq  i_0<i_1<i_2<...<i_{2n-1}} \delta_{i_0<i_1<...<i_{2n-2}<i_{2n-1}}Q_{i_1<i_2<...<i_{2n-2}}\begin{bmatrix}
    -\mathbf{y}^t\\
    -\mathbf{x}^t
\end{bmatrix}^{(i_0)}\begin{bmatrix}
   \mathbf{x} & \mathbf{y}
\end{bmatrix}^{(i_{2n-1})}.
\end{align*}
\end{lemma}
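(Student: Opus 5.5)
We expand the infinite product $F_\infty=\prod_{i\ge 0}(I+F^{(i)})$ (ordered with increasing $i$ from left to right) as a sum over finite increasing index sets: $F_\infty=I+\sum_{k\ge1}\sum_{j_1<\cdots<j_k}F^{(j_1)}F^{(j_2)}\cdots F^{(j_k)}$. Convergence is not an issue in the $(x_i,y_i)$-adic topology on $W[\![x_i,y_i]\!][1/p]$, since every entry of $F$ lies in the maximal ideal and each product of $k$ factors has degree at least $k$. Over the non-ordinary locus $Q=0$, so by (\ref{eq:blocks}) we can write $F=\frac{1}{2p}\,\alpha\,\beta+\gamma\,\epsilon$. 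Here $\alpha=\begin{bmatrix}1\\ \lambda^{-1}\end{bmatrix}$ sits in the upper block, $\beta=[\mathbf{x}\ \mathbf{y}]$ sits in the right block, $\gamma=\begin{bmatrix}-\mathbf{y}^t\\ -\mathbf{x}^t\end{bmatrix}$ sits in the lower block, and $\epsilon=[1\ \ -\lambda]$ sits in the left block. So $F$ is a sum of two rank-one block pieces, an ``up'' piece $U=\frac{1}{2p}\alpha\beta$ (mapping the $2m$-block to the $2$-block) and a ``down'' piece $D=\gamma\epsilon$ (mapping the $2$-block to the $2m$-block).

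In a product $F^{(j_1)}\cdots F^{(j_k)}$ we choose $U$ or $D$ in each factor. Because $U$ has image in the first block and source the second, and $D$ the reverse, a nonzero product must alternate between $U$ and $D$. Each adjacent pair collapses to a scalar via \cref{lm:easythinglemma}. A pair $U^{(i)}D^{(j)}$ with $i<j$ contributes the middle factor $\beta^{(i)}\gamma^{(j)}=Q_{j-i}^{(i)}$. A pair $D^{(i)}U^{(j)}$ contributes $\epsilon^{(i)}\alpha^{(j)}=1-(-1)^{j-i}=2\delta_{i<j}$. We also use $\sigma$-semilinearity: twisting by $\sigma^i$ sends $\lambda\mapsto(-1)^i\lambda$, which gives the outer vectors $\begin{bmatrix}1\\(-1)^{i_1}\lambda^{-1}\end{bmatrix}$ and $[1\ \ (-1)^{1+i_{2n}}\lambda]$, and $Q_j$ has $\mathbb{Z}_p$-coefficients so $\sigma$ acts only on variables. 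The four blocks then correspond to the four choices of starting and ending piece.

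For $X_n$ (start with $U$, end with $D$) the word is $U^{(i_1)}D^{(i_2)}U^{(i_3)}\cdots D^{(i_{2n})}$. It contains $n$ copies of $U$, giving $p^{-n}2^{-n}$, and $n-1$ inner $DU$ pairs, giving $2^{n-1}\delta_{i_2<\cdots<i_{2n-1}}$. The net constant is $\tfrac12$, and together with the $Q_{i_1<\cdots<i_{2n}}$ factors this gives the formula. $Y_n$ is identical except that it ends with $U^{(i_{2n-1})}$, leaving $\beta^{(i_{2n-1})}$ on the right. For $Z_n$ and $W_n$ the word starts with $D^{(i_0)}$, which adds one more $DU$ pair; its factor of $2$ cancels the $\tfrac12$, so the constant is $1$ and $\delta$ now includes the pair $i_0<i_1$.

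The only real obstacle is bookkeeping: keeping the order of the product consistent with the definition of $\Frob^i$, so that indices increase left to right, and tracking the $\sigma$-twists of $\lambda$ and the powers of $2$ and $p$. One should also check that every surviving word has exactly $n$ $U$-factors, so that it lands in $p^{-n}F_n$ with no other powers of $p$ (indeed $D$ carries none). I would verify the cases $n=1,2$ by hand and then write the general identity as an induction on word length, multiplying by one further factor $F^{(j)}$ on the right.
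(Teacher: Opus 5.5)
Your expansion is the ``direct computation'' the paper has in mind, and your bookkeeping for $n\ge 1$ is correct. The points you check all hold:
\begin{itemize}
\item $U$ and $D$ must alternate;
\item $U^{(i)}D^{(j)}$ collapses to $Q^{(i)}_{j-i}$ and $D^{(i)}U^{(j)}$ to $2\delta_{i<j}$;
\item the constants are $2^{-n}\cdot 2^{n-1}=\tfrac12$ for $X_n,Y_n$ and $2^{-n}\cdot 2^{n}=1$ for $Z_n,W_n$;
\item the $\sigma$-twisted outer vectors come out right;
\item the product is ordered with increasing index from left to right.
\end{itemize}

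\textbf{Gap: the one-letter words $D^{(j)}$.} Your claim that every surviving word has $n\ge 1$ factors $U$, and so lands in some $p^{-n}F_n$, misses these words, which contain no $U$ at all. Their total contribution is
\[
\sum_{j\ge 0} D^{(j)}=\begin{bmatrix}0&0\\ Z_0&0\end{bmatrix},\qquad Z_0=\sum_{i_0\ge 0}\begin{bmatrix}-\mathbf{y}^t\\ -\mathbf{x}^t\end{bmatrix}^{(i_0)}\begin{bmatrix}1&(-1)^{1+i_0}\lambda\end{bmatrix}.
\]
This is formally the $n=0$ case of the formula for $Z_n$. It is not zero in general; for instance its $(3,1)$-entry is $-\sum_{j\ge 0}y_1^{(j)}$.

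So the computation you describe proves
\[
F_\infty=I+\begin{bmatrix}0&0\\ Z_0&0\end{bmatrix}+\sum_{n\ge1}p^{-n}F_n,
\]
not the identity as displayed. The statement itself needs this correction; equivalently, it holds only modulo matrices with entries in $W[\![x_i,y_i]\!]$. The correction is harmless for the paper, because $Z_0$ is integral and is absorbed into the $O(1)$ in Lemma~\ref{lm;recursive2}(c). Your write-up should nevertheless record this term rather than assert that no other powers of $p$ occur.

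\textbf{Smaller point: convergence.} Your argument is insufficient as stated. There are infinitely many index sets of each fixed size (e.g.\ all singletons $\{j\}$), so ``degree at least $k$'' does not give finiteness in each degree. What you need is that the entries of $F^{(j)}$ lie in $(x_i,y_i)^{p^j}$, which holds for the Frobenius lift $x_i\mapsto x_i^p$, $y_i\mapsto y_i^p$. Then only finitely many index sets contribute below any given degree. Also, the product lives in $K[\![x_i,y_i]\!]$, not in $W[\![x_i,y_i]\!][1/p]$, since the denominators $p^{-n}$ are unbounded.
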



\begin{proof}
This follows easily from a direct computation.    
\end{proof}
\begin{lemma}[Recursion formula]\label{lm:Finftyrecursive}
     Notation as above. Let $X_n^+$ (resp. $Y_n^+$) be the sub-sum of $X_n$ (resp. $Y_n$) with $i_1$ even. For $n\geq 1$, define $\mathbf{U}_n$ to be the first row of $\begin{bmatrix}
     X_n^+& Y_n^+
     \end{bmatrix}$. 
 Then for $r>0$, we have
\begin{equation}
\label{eq: split U}
    \mathbf{U}_{n+r}= \sum_{\substack{0\leq i_1<i_2<...<i_{2r}\\i_1 \text{ even}}}\delta_{i_2<i_3<...<i_{2r-1}}Q_{i_1<i_2<...<i_{2r}}\mathbf{U}^{(1+i_{2r})}_n.
\end{equation}
Moreover, the matrix $F_{n}$ can be recovered from $\mathbf{U}_n$ as follows: 
\begin{align}
    \begin{bmatrix}
    X_{n}&Y_{n}
\end{bmatrix}&= \begin{bmatrix}
    1\\
    \lambda^{-1}
\end{bmatrix}\mathbf{U}_n+ \begin{bmatrix}
    1\\
    -\lambda^{-1}
\end{bmatrix}\mathbf{U}_n^{(1)}, \label{eq: X+ Y+} \\
\begin{bmatrix}
    Z_{n}&W_{n}
\end{bmatrix}&=\sum_{0\leq i_0}2 
\begin{bmatrix}
    -\mathbf{y}^t\\
    -\mathbf{x}^t
\end{bmatrix}^{(i_0)}
    \mathbf{U}^{(1+i_{0})}_n. \label{eq: Z+ W+}
\end{align}

\end{lemma}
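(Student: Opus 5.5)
The plan is to argue directly from the explicit formulas of \cref{lm:Finftyexplicit}: every identity in the lemma is a regrouping of the index chains appearing there. Write $u=\begin{bmatrix}1\\ \lambda^{-1}\end{bmatrix}$. Since $\sigma(\lambda)=-\lambda$, we have $u^{(i)}=\begin{bmatrix}1\\ (-1)^i\lambda^{-1}\end{bmatrix}$.

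The first step is the recovery formula (\ref{eq: X+ Y+}). In the formulas for $X_n$ and $Y_n$, the left column factor is $u^{(i_1)}$, which depends only on the parity of $i_1$. We split each sum according to this parity. The terms with $i_1$ even give $u\cdot(\text{first row of }[X_n^+\ Y_n^+])=u\,\mathbf{U}_n$, because the first entry of $u^{(i_1)}$ is $1$.

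For the terms with $i_1$ odd, we substitute $i_k\mapsto i_k+1$ throughout. We then check three things:
\begin{itemize}
\item the factor $\delta$ depends only on differences of indices, so it is unchanged;
\item $Q_{i_1<\cdots}$ acquires a $\sigma$-twist, since $Q^{(l+1)}_{d}=(Q^{(l)}_d)^{(1)}$;
\item the right row vectors $\begin{bmatrix}1&(-1)^{1+i_{2n}}\lambda\end{bmatrix}$ and $[\mathbf{x}\ \mathbf{y}]^{(i_{2n-1})}$ are exactly $\sigma$ applied to their shifted versions, again using $\sigma(\lambda)=-\lambda$.
\end{itemize}
Hence the odd part equals $u^{(1)}\,\mathbf{U}_n^{(1)}$. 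This gives (\ref{eq: X+ Y+}), up to the normalization $\tfrac12$ versus $1$, which I will track carefully by comparing with the definition of $\mathbf{U}_n$.

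The second step is (\ref{eq: Z+ W+}). In $Z_n$ and $W_n$ the index chain starts at $i_0$, and $\delta_{i_0<i_1}$ forces $i_1-i_0$ to be odd. For fixed $i_0$, we factor out the column $\begin{bmatrix}-\mathbf{y}^t\\-\mathbf{x}^t\end{bmatrix}^{(i_0)}$. The remaining sum over $i_1<\cdots$ with $i_1\equiv i_0+1\pmod 2$ and $i_1>i_0$ should be a shift by $1+i_0$ of the even-$i_1$ sum defining $\mathbf{U}_n$. Here I expect a subtlety. The shift sends $i_1\mapsto i_1-(1+i_0)\ge 0$, which is even, so the range matches $\mathbf{U}_n$ exactly. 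The factor $2$ then compensates for the $\tfrac12$ built into $X_n$ and $Y_n$.

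The last step is the recursion (\ref{eq: split U}). In the chain $i_1<\cdots<i_{2(n+r)}$ defining $\mathbf{U}_{n+r}$, we cut after $i_{2r}$. Since $Q_{i_1<\cdots}$ is a product over consecutive pairs, it factors as $Q_{i_1<\cdots<i_{2r}}$ times the $Q$ of the tail chain. The factor $\delta_{i_2<\cdots<i_{2(n+r)-1}}$ factors as
\[
\delta_{i_2<\cdots<i_{2r-1}}\cdot\delta_{i_{2r}<i_{2r+1}}\cdot\delta_{\text{tail interior}}.
\]
The middle factor forces $i_{2r+1}-(1+i_{2r})$ to be even. Shifting the tail by $1+i_{2r}$ therefore produces a chain whose first index is even, which is exactly $\mathbf{U}_n^{(1+i_{2r})}$; the twist comes once more from the shift-equals-$\sigma$ observation. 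For $n$ in $X$-type terms the final row vector also shifts compatibly.

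The main obstacle is bookkeeping: matching the constants $\tfrac12$, the $\delta$ boundary factors, and the sign $(-1)^{i}$ of $\lambda$ under each shift. I would verify these first on $n=1$, $r=1$, where \cref{lm:easythinglemma} gives everything directly, and then run the general reindexing.
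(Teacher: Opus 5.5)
Your proposal is correct and follows the same route as the paper, whose proof just says the lemma follows from \cref{lm:Finftyexplicit} by direct computation: splitting by the parity of $i_1$ (resp.\ of $i_1-i_0$, resp.\ of $i_{2r+1}-i_{2r}$) and shifting indices using $\sigma(\lambda)=-\lambda$ is exactly that computation. The normalization you flag causes no discrepancy. $\mathbf{U}_n$ already carries the $\tfrac12$ from $X_n^+,Y_n^+$, so (\ref{eq: X+ Y+}) and (\ref{eq: split U}) hold exactly (the prefix in (\ref{eq: split U}) has no $\tfrac12$), and the factor $2$ in (\ref{eq: Z+ W+}) is there precisely because $Z_n,W_n$ carry no $\tfrac12$.
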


\begin{proof}
This follows from Lemma~\ref{lm:Finftyexplicit} and a direct computation. 
\end{proof}

We shall now use $F_\infty$ to study the deformation theory of a special endomorphism. 
Recall that $\{e', f', e_i, f_i\}$ is a $\bZ_p$-basis for $\cL$. 
\begin{notation} \label{not: v tilde +}
    For an element $v\in \cL$, we define 
\begin{equation}
    \label{eqn: def tilde v}
    \tilde{v}:=F_\infty v\in \mathbb{L}(W)\otimes K[\![x_i,y_i]\!].
\end{equation}
There exists a rank $2m+2$ column vector $\mathbf{c}$ (resp. $\tilde{\mathbf{c}}$) valued in $\bZ_p$ (resp. $K[\![x_i,y_i]\!]$) such that
\begin{equation}
\label{eqn: define c and c tilde}
\begin{aligned}
     v&=(e',f';e_i,f_i)\mathbf{c},\\\tilde{v}&=(e',f';e_i,f_i)\tilde{\mathbf{c}}.
\end{aligned}  
\end{equation} 
Express $\mathbf{c}$ as $\sum_{k\geq 0} \mathbf{c}_kp^k$, where each entry of $\mathbf{c}_k$ is a Teichmüller lift of some element in $\mathbb{F}_p$. Let $\mathbf{U}_n$ be as in Lemma~\ref{lm:Finftyrecursive}. Define
\begin{equation}
\label{eqn: defined D_n's}
\begin{aligned}
     D_n(v)&:= 2\sum_{k\geq 0}\mathbf{U}_{n+k}\mathbf{c}_k \in W[\![x_i,y_i]\!].\\
    \overline{D}_n(v)&:={D}_n(v)\bmod p\in \mathbb{F}[\![x_i,y_i]\!].
\end{aligned}  
\end{equation} 
\end{notation}
\begin{lemma}\label{lm;recursive2}
    The following are true: 
    \begin{enumerate}[label=\upshape{(\alph*)}]
    \item For $l\geq 0$, $D_n(v)^{(l)}-D_n(v)^{p^l}\in p^lW[\![x_i,y_i]\!]$. 
        \item For $r>0$, we have: $$D_{n+r}(v)=\sum_{\substack{0\leq i_1<i_2<...<i_{2r}\\i_1 \text{ even}}}\delta_{i_2<i_3<...<i_{2r-1}}Q_{i_1<i_2<...<i_{2r}}D_n(v)^{(1+i_{2r})}.$$ 
\item         The first two entries of $\tilde{\mathbf{c}}$ are 
$$\frac{1}{2}\sum_{n\geq 1}p^{-n}\left(\begin{bmatrix}
    1\\
    \lambda^{-1}
\end{bmatrix}D_{n}(v)+ \begin{bmatrix}
    1\\
    -\lambda^{-1}
\end{bmatrix}D_n(v)^{(1)}\right)+ O(1).$$
Here by $O(1)$ we mean that the entries have coefficients in $W$. The last $2m$ entries of $\tilde{\mathbf{c}}$ are 
$$\sum_{n\geq 1}p^{-n}\left(\sum_{0\leq i_0} 
\begin{bmatrix}
    -\mathbf{y}^t\\
    -\mathbf{x}^t
\end{bmatrix}^{(i_0)}
   D_n(v)^{(1+i_{0})}\right)+O(1).$$
   \item If we write $\tilde{v}=(v',w'; e_i,f_i)\tilde{\mathbf{c}}'$, then the first two entries of $\tilde{\mathbf{c}}'$ are $ \sum_{n\geq 1}p^{-n} D_n(v)^{(1)}+O(1)$ and $ \sum_{n\geq 1}p^{-n} D_{n+1}(v)+O(1)$, respectively. The last $2m$ entries of $\tilde{\mathbf{c}}'$ coincide with that of $\tilde{\mathbf{c}}$. 
    \end{enumerate}
\end{lemma}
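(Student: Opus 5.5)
The plan is to derive all four parts from the explicit shape of $F_\infty$ in Lemma~\ref{lm:Finftyexplicit} and the recursion in Lemma~\ref{lm:Finftyrecursive}, together with two elementary facts. First, $\sigma$ acts on $W[\![x_i,y_i]\!]$ coefficientwise on $W$ and by $x_i\mapsto x_i^p$, $y_i\mapsto y_i^p$. Second, the entries of each $\mathbf{c}_k$ are Teichmüller lifts of elements of $\mathbb{F}_p$, hence $\sigma$-fixed.

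For (a), I would first show that $D_n(v)$ has coefficients in $W$. By Lemma~\ref{lm:Finftyexplicit}, $\mathbf{U}_n$ is a $W$-combination of products of the $Q_j^{(\beta)}$ and $\mathbf{x}^{(\beta)},\mathbf{y}^{(\beta)}$. The factor $\frac12$ is cancelled by the $2$ in the definition of $D_n$, and $\lambda\in W(\mathbb{F}_{p^2})^\times$ causes no trouble. Convergence in the $(x,y)$-adic topology holds because $Q_{i_1<\dots}$ has degree growing with the indices. Given integrality, (a) is the standard fact that for any $g\in W[\![x_i,y_i]\!]$ one has $\sigma(g)\equiv g^p \pmod p$. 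Iterating this with the usual lemma ($a\equiv b \bmod p^j$ implies $a^p\equiv b^p \bmod p^{j+1}$) gives $g^{(l)}\equiv g^{p^l}\pmod{p^l}$ by induction on $l$.

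For (b), I would multiply \eqref{eq: split U} for $\mathbf{U}_{n+k+r}$ by $2\mathbf{c}_k$ and sum over $k$. Since $\mathbf{c}_k^{(1+i_{2r})}=\mathbf{c}_k$, the twist passes through $\mathbf{c}_k$. The coefficient $\delta Q$ does not depend on $k$, so interchanging sums gives the formula. The only thing to check is that the interchange is legitimate, which is clear since both sums converge $(x,y)$-adically.

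For (c), I would write $\tilde{\mathbf{c}}=F_\infty\mathbf{c}=\mathbf{c}+\sum_{n\ge1}p^{-n}F_n\sum_k p^k\mathbf{c}_k$ and regroup by total exponent $N=n-k$. Terms with $N\le 0$ have $W$-coefficients, so they go into $O(1)$; this uses integrality of $F_n$, which I expect to be routine from Lemma~\ref{lm:Finftyexplicit}. For fixed $N\ge1$, the first two rows use \eqref{eq: X+ Y+}, and $\sum_k [X_{N+k}\,Y_{N+k}]\mathbf{c}_k$ equals
\[
\begin{bmatrix}1\\ \lambda^{-1}\end{bmatrix}\sum_k\mathbf{U}_{N+k}\mathbf{c}_k+\begin{bmatrix}1\\ -\lambda^{-1}\end{bmatrix}\Big(\sum_k\mathbf{U}_{N+k}\mathbf{c}_k\Big)^{(1)} .
\]
By definition this is $\frac12$ of the claimed expression in terms of $D_N$ and $D_N^{(1)}$, again using $\sigma$-invariance of $\mathbf{c}_k$. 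The last $2m$ rows follow from \eqref{eq: Z+ W+} in the same way; there the factor $2$ combines with $\frac12 D_N$.

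For (d), I would perform the change of basis from $\{e',f'\}$ to $\{v',w'\}$. Inverting $w'=\frac{1}{2\lambda p}(\lambda e'+f')$ and $v'=\frac1{2\lambda}(\lambda e'-f')$ gives $e'=v'+pw'$ and $f'=\lambda(pw'-v')$. So the coefficient pair $(\alpha,\beta)$ on $(e',f')$ becomes $\alpha-\lambda\beta$ on $v'$ and $p(\alpha+\lambda\beta)$ on $w'$. Substituting (c), the $v'$-coefficient is $\sum_N p^{-N}D_N^{(1)}+O(1)$. The $w'$-coefficient is $\sum_N p^{-N+1}D_N+O(p)$, which after reindexing is $D_1+\sum_{N\ge1}p^{-N}D_{N+1}$, and $D_1$ is absorbed into $O(1)$. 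The last $2m$ entries are unchanged because the basis vectors $e_i,f_i$ are the same.

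The main obstacle I expect is the bookkeeping in (c): confirming the integrality of the $F_n$ entries and of the $N\le0$ remainder, and showing that the reindexing $n-k=N$ is compatible with the absolute convergence needed to rearrange sums.
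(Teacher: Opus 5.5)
\textbf{The gap is in part (a).} Your ``standard fact'' that $g^{(l)}\equiv g^{p^l}\pmod{p^l}$ for every $g\in W[\![x_i,y_i]\!]$ is false once $l\ge 2$. For example, take $g=x_1+y_1$. Then $g^{(2)}-g^{p^2}=-\sum_{0<k<p^2}\binom{p^2}{k}x_1^ky_1^{p^2-k}$, and $\binom{p^2}{p}$ has $p$-adic valuation $1$. The induction breaks because the lemma ``$a\equiv b \bmod p^j\Rightarrow a^p\equiv b^p\bmod p^{j+1}$'' improves congruences under $p$-th powers, but applying $\sigma$ does not improve them. Starting from $\sigma(g)\equiv g^p\pmod p$, you only get $\sigma(g)^{p^{l-1}}\equiv g^{p^l}\pmod{p^l}$. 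For $g^{(l)}$ itself, the congruence $g^{(l)}\equiv g^{p^l}$ holds only modulo $p$.

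\textbf{Part (a) cannot be proved as stated, because it fails for $D_n(v)$ itself.} By Lemma~\ref{lm:Finftyexplicit}, only $Y_1^+$ contributes to the $e_1$-entry of $2\mathbf{U}_1$. The first row of $Y_1^+$ is $\frac12\sum_{i\text{ even}}[\mathbf{x}\ \mathbf{y}]^{(i)}$. So for $v=e_1$ (where $\mathbf{c}=\mathbf{c}_0$) you get $D_1(v)=\sum_{j\ge 0}x_1^{p^{2j}}$.

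\begin{itemize}
\item On one side, $D_1(v)^{(2)}=\sum_{j\ge1}x_1^{p^{2j}}$.
\item On the other side, the only contribution to the monomial $x_1^{p^4-p^3+p}$ in $D_1(v)^{p^2}$ is $\binom{p^2}{p}x_1^{p^4-p^3+p}$.
\item Hence $D_1(v)^{(2)}-D_1(v)^{p^2}\notin p^2W[\![x_i,y_i]\!]$. This holds even modulo $(Q)$: setting $\mathbf{y}=0$ kills $Q$.
\end{itemize}

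The paper's one-line justification, which reduces to entries $u$ of $2\mathbf{U}_n$, has the same defect. Those entries are exactly sums of this kind, and in any case the congruence is not additive.

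What your argument does establish is the mod-$p$ statement $D_n(v)^{(l)}\equiv D_n(v)^{p^l}\pmod p$ for all $l$, together with $(D_n(v)^{(1)})^{p^{l-1}}\equiv D_n(v)^{p^l}\pmod{p^l}$. Be aware that the $O(p^{2n-2})$ error term in the proof of Corollary~\ref{cor:defloci} uses the full mod-$p^l$ strength of (a), so that step needs a separate justification.

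\textbf{Parts (b), (c) and (d) are correct and match the paper's route.}
\begin{itemize}
\item For (b), you apply \eqref{eq: split U} to $\mathbf{U}_{n+k+r}$ and sum against the $\sigma$-fixed vectors $2\mathbf{c}_k$.
\item For (c), you regroup $F_\infty\mathbf{c}$ by $N=n-k$, using integrality of the $F_n$ and the two formulas recovering $F_n$ from $\mathbf{U}_n$ in Lemma~\ref{lm:Finftyrecursive}.
\item For (d), you substitute $e'=v'+pw'$ and $f'=\lambda(pw'-v')$, and absorb $D_1$ into $O(1)$.
\end{itemize}
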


\begin{proof}
Note that $a^{(l)}=a^{p^l}$ for a Teichmüller lift. For part (a),  we can reduce to showing that if $u$ is an entry of $2\mathbf{U}_{n}$, then $u^{(l)}\equiv u^{p^l}(\bmod p^l)$. This follows from the explicit formulae in Lemma~\ref{lm:Finftyexplicit}. Part (b) and part (c) are direct consequences of Lemma~\ref{lm:Finftyrecursive}. Part (d) follows from (c) and the change of basis $w' = \frac{1}{2\lambda p}(\lambda e' + f'), v' = \frac{1}{2\lambda}(\lambda e' - f')$.
\end{proof}
Next we will show that $D_n(v)$ controls the deformation of $v$. 
\begin{lemma}\label{T:PDlemma} Consider a map $\iota: \Spf\mathbb{F}[\![t]\!]\rightarrow \Spf\mathbb{F}[\![x_i,y_i]\!]$. Lift $\iota$ to a map $\Spf W[\![t]\!]\rightarrow \Spf W[\![x_i,y_i]\!]$ and by abuse of notation, still call it $\iota$. Let $I=(t^s), 0\leq s\leq \infty$ be an ideal of $\mathbb{F}[\![t]\!]$, and $\tilde{I}=(p,t^s)$ be the preimage of $I$ in $W[\![t]\!]$. A special endomorphism $v$ deforms to $\mathrm{Spf}(\mathbb{F}[\![t]\!]/I)$ if and only if ${\iota}^*\tilde{v}\in\mathbb{L}(W)\otimes \hat{D}_{I}$, where $\hat{D}_I=\hat{D}_{W[\![t]\!]}(\tilde{I})\subseteq K[\![t]\!]$ is the $p$-adic completion of the PD-envelope of $\tilde{I}$.
\end{lemma}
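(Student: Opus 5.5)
We will deduce the lemma from Grothendieck--Messing / crystalline Dieudonné theory applied to the PD-thickening $\Spf(\mathbb{F}[\![t]\!]/I)\hookrightarrow \Spf W[\![t]\!]$, in the form already used in \cite{MST} and \cite{SSTT}. The quotient $W[\![t]\!]\to \mathbb{F}[\![t]\!]/I$ has kernel $\tilde I$, and $\hat D_I$ is the $p$-adically completed PD-envelope of $\tilde I$. Since $\tilde I$ is generated by the regular sequence $(p,t^s)$, the envelope is $p$-torsion free, so $\hat D_I\subseteq K[\![t]\!]$ makes sense. Moreover $(\hat D_I,\sigma)$ is a frame for $\mathbb{F}[\![t]\!]/I$, so $\bL_\cris(\mathbb{F}[\![t]\!]/I)=\bL(W)\otimes \hat D_I$. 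Here the Frobenius is the pullback under $\iota$ of $(1+F)\circ\sigma$, using a Frobenius lift on $W[\![t]\!]$ compatible with $\iota$; the lemma is insensitive to this choice because crystals are, up to canonical isomorphism.

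By \cite[Cor.~5.17]{MP16}, or by Dieudonné theory of $p$-divisible groups over the Noetherian base together with the Remark after \cref{def: special end}, a formal special endomorphism $v$ of $\cA_P[p^\infty]$ extends to $\mathbb{F}[\![t]\!]/I$ if and only if its crystalline realization extends to a horizontal, Frobenius-invariant section of $\bL_\cris(\hat D_I)$. This extended section must also lie in $\Fil^0$ of the Hodge filtration on the de Rham quotient $\bL_{\dR}\otimes \mathbb{F}[\![t]\!]/I$. For $s<\infty$ one handles finite thickenings inductively, square-zero step by step, each step being nilpotent PD by divided powers of $t^s$ and $p$. For $s=\infty$ one passes to the limit.

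The key step is to identify the unique Frobenius-invariant horizontal extension of $v$ as $\iota^*\tilde v$. Since $\Frob=(1+F)\sigma$ and $v\in\cL$ is $\sigma$-fixed, the vector $F_\infty v=\prod_i(1+F^{(i)})v$ satisfies $\Frob(F_\infty v)=F_\infty v$. This is a formal identity once we know convergence, and convergence holds because the entries of $F^{(i)}$ involve $p^{-1}$ times terms with growing Frobenius twist, hence $t$-adically small. So $\tilde v$ is the unique $\Frob$-invariant element of $\bL(W)\otimes K[\![t]\!]$ reducing to $v$ at $t=0$. Uniqueness follows from a contraction argument: a $\Frob$-invariant element vanishing at $t=0$ is $\sigma$-iterated into arbitrarily high $t$-adic order. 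It is automatically horizontal, since horizontal sections over $K[\![t]\!]$ are determined by their value at $t=0$ and $\Frob$ commutes with $\nabla$.

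Hence, if $v$ deforms, its realization is a $\Frob$-invariant element of $\bL(W)\otimes\hat D_I$ extending $v$, so it equals $\iota^*\tilde v$. Conversely, suppose $\iota^*\tilde v\in\bL(W)\otimes\hat D_I$. It is then a horizontal Frobenius-fixed section with coefficients in $\hat D_I$. Fixedness under $\Frob$ forces its image mod the PD ideal to lie in $\Fil^0$, by \eqref{eqn: Mazur Ogus Hodge} applied on $\hat D_I$. Grothendieck--Messing then gives the deformation.

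The main obstacle is rigor in identifying $\bL_\cris(\mathbb{F}[\![t]\!]/I)$ with $\bL(W)\otimes\hat D_I$ compatibly with $\Frob$ when $\iota$ is merely lifted, not Frobenius-equivariant. I would handle this by comparing with the Frobenius on $W[\![x_i,y_i]\!]$ through the crystal's transition isomorphisms, noting that they preserve integrality over $\hat D_I$. The other delicate point is the $s=\infty$ case, which I would treat by taking the intersection over all finite $s$ and using $\bigcap_s\hat D_{(p,t^s)}=W[\![t]\!]$.
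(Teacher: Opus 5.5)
Your forward direction matches the paper's: functoriality of the Dieudonn\'e crystal plus uniqueness of horizontal sections over $K[\![t]\!]$. Your converse, however, takes a genuinely different route.

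The paper does not use Grothendieck--Messing or the Hodge filtration at all. It invokes de Jong's Dieudonn\'e theory over $\hat D_I$ \cite{DJ95} and the full faithfulness of the crystalline Dieudonn\'e functor over the complete intersection $\mathbb{F}[\![t]\!]/I$ \cite[Theorem 4.6]{DJ99}. Hence ``$v$ deforms'' is immediately equivalent to ``some horizontal section of $\bL(W)\otimes\hat D_I$ extends $v$''. The lemma then follows for all $s$, including $s=\infty$, from $\hat D_I\subseteq K[\![t]\!]$ and uniqueness of horizontal sections. Frobenius-compatibility and the filtration condition are absorbed into full faithfulness.

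Your route needs only Messing's classical theorem, using Frobenius-fixedness and the Mazur--Ogus criterion to check the filtration condition. It costs more bookkeeping:
\begin{enumerate}
\item You must genuinely induct on $j\le s$. This uses $\hat D_{(p,t^s)}\subseteq\hat D_{(p,t^j)}$, and the fact that the composite $\hat D_{(p,t^{j+1})}\subseteq\hat D_{(p,t^j)}\to\mathbb{F}[t]/(t^{j+1})$ is the canonical projection. Here the second map is the PD morphism for the trivial divided powers on $(t^j)/(t^{j+1})$.
\item You must identify the crystal Frobenius on $\bL(W)\otimes\hat D_I$. It is not literally $\iota^*((1+F)\circ\sigma)$, because the Teichm\"uller-coefficient lift $\iota$ need not be compatible with any Frobenius lift on $W[\![t]\!]$. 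Your transition-isomorphism fix works, since it preserves horizontal sections.
\item You must justify the Mazur--Ogus criterion over $\hat D_{(p,t^j)}$, which the paper only asserts for admissible closed subschemes of $\shS_k$. For $\Fil^0$ this can be checked by hand: $\gr^{-1}$ has rank one, and $\sigma(c)\in p\hat D_I$ forces $c$ into the PD ideal.
\end{enumerate}

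In exchange, your contraction argument actually proves that $F_\infty v$ is the unique Frobenius-fixed, hence horizontal, extension of $v$, which the paper merely asserts.

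A minor point: \cite[Cor.~5.17]{MP16} says the deformation locus is a divisor. It is not the crystalline criterion you attribute to it.
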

\proof We use the standard terminology ``MIC'' for module with integrable connection. Recall that $\widehat{\bL}:=\bL_{\mathscr{S}^{/P}_{\mathbb{F}}}$ can be identified with a filtered Frobenius MIC over  $W[\![x_i,y_i]\!]$, i.e., a quadruple
$(\mathbb{L}(W)\otimes W[\![x_i,y_i]\!],  \nabla,\Fil^\bullet\mathbb{L}(W)\otimes W[\![x_i,y_i]\!], F)$. Let $\widehat{\mathbb{L}}_I$ be the base change of $\bL_{\mathscr{S}^{/P}_{\mathbb{F}}}$ to $\hat{D}_I$ along the map $W[\![x_i,y_i]\!]\rightarrow W[\![t]\!]\hookrightarrow \hat{D}_I$, which is again a quadruple
$(\mathbb{L}(W)\otimes \hat{D}_I, \nabla\otimes \hat{D}_I, \Fil^\bullet\mathbb{L}(W)\otimes \hat{D}_I,  F)$.

By \cite[\S 2.2, \S 2.3]{DJ95}, there is a natural functor from the category of $p$-divisible groups over $\Spf(\mathbb{F}[\![t]\!]/I)$ to Dieudonné modules over $\hat{D}_I$. Since $\mathbb{F}[\![t]\!]/I$ is a complete intersection scheme, \cite[Theorem 4.6]{DJ99}
 further implies that the aforementioned natural functor is fully faithful. As a result, there exists a formal special endomorphism of $\mathcal{A}_{\Spf(\mathbb{F}[\![t]\!]/I)}[p^\infty]$ deforming $v$, if and only if there exists an endomorphism of the corresponding Dieudonné modules over $\hat{D}_I$ deforming $v$, if and only if there is a horizontal section of $\widehat{\mathbb{L}}_I$ extending $v$. 

Recall that $\tilde{v}$ is the unique horizontal section of the MIC $(\mathbb{L}(W)\otimes K[\![x_i,y_i]\!],\nabla\otimes K[\![x_i,y_i]\!])$ extending $v$, and ${\iota}^*\tilde{v}$ is the unique horizontal section of the MIC $(\mathbb{L}(W)\otimes K[\![t]\!],\nabla\otimes K[\![t]\!])$ extending $v$. If $v$ deforms to $\Spf(\mathbb{F}[\![t]\!]/I)$, then there is a horizontal section $v_I$ of $\widehat{\mathbb{L}}_I$  extending $v$. Then the base change of $v_I$ along the natural embedding $\hat{D}_I\hookrightarrow K[\![t]\!]$ identifies with  ${\iota}^*\tilde{v}$. In other words, ${\iota}^*\tilde{v}=v_I\in \mathbb{L}(W)\otimes \hat{D}_I$. Conversely, if 
${\iota}^*\tilde{v}\in \mathbb{L}(W)\otimes \hat{D}_I$, the same argument shows that it must be the  horizontal section of $\widehat{\mathbb{L}}_I$  extending $v$.  $\hfill\square$
\begin{corollary}\label{cor:defloci}Consider a map $\iota: \Spf \mathbb{F}[\![t]\!]\rightarrow \Spf \mathbb{F}[\![x_i,y_i]\!]$ that lies in the non-ordinary stratum. Let $\alpha$ be the degree of the series $\iota^*\overline{D}_1(v)\in \mathbb{F}[\![t]\!]$. Then a formal special endomorphism $v$ deforms to  $\Spf(\mathbb{F}[\![t]\!]/t^{\alpha})$ but not $\Spf(\mathbb{F}[\![t]\!]/t^{\alpha+1})$.
\end{corollary}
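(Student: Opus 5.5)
The plan is to combine \cref{T:PDlemma} with the explicit description of $\tilde v$ in \cref{lm;recursive2}. By \cref{T:PDlemma}, $v$ deforms to $\Spf(\bF[\![t]\!]/t^s)$ if and only if $\iota^*\tilde v \in \bL(W)\otimes \hat D_{(p,t^s)}$. So the statement reduces to deciding membership of the coordinates of $\iota^*\tilde{\mathbf{c}}'$ in $\hat D_{(p,t^s)}$, for $s=\alpha$ and $s=\alpha+1$. We work in the basis $\{v',w',e_i,f_i\}$ of $\bL(W)$, using part (d) of \cref{lm;recursive2}. The coordinates are then, up to $O(1)$ terms (harmless since they lie in $W[\![t]\!]\subseteq\hat D_I$):
\[
\sum_{n\ge1}p^{-n}D_n(v)^{(1)},\qquad \sum_{n\ge1}p^{-n}D_{n+1}(v),
\]
together with the last $2m$ entries $\sum_{n\ge 1}p^{-n}\sum_{i_0}(-\bfy^t,-\bfx^t)^{(i_0)}D_n(v)^{(1+i_0)}$.

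\textbf{Step 1: reduce to the $D_n$.} Since $C$ lies in the non-ordinary stratum, $\iota^*Q_0\equiv 0 \bmod p$, so after lifting we may arrange $p\mid \iota^*Q_0$. Using the recursion in \cref{lm;recursive2}(b), each $D_{n+r}$ is a sum of products of $Q_j^{(\ell)}$'s times Frobenius twists $D_n^{(1+i_{2r})}$. Also, by \cref{lm;recursive2}(a), $D_n^{(l)}\equiv D_n^{p^l}$ mod $p^l$. I would therefore rewrite every coordinate as a $W[\![t]\!]$-combination of terms of the form $p^{-n}D_1^{p^{l}}\cdot(\text{stuff})$, plus integral error.

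\textbf{Step 2: sufficiency ($s=\alpha$).} Write $\iota^*D_1 = t^\alpha u + p g$ with $u$ a unit. In $\hat D_{(p,t^\alpha)}$ the element $\gamma=t^\alpha u + pg$ has divided powers $\gamma^{k}/k!$, so $p^{-n}\gamma^{p^{l}}$ is integral whenever $v_p((p^l)!)=(p^l-1)/(p-1)\ge n$. I expect the structure of the recursion to give exactly this: the $p^{-n}$ term involves $D_1$ raised to at least the $p^{n-1}$-th power, or is multiplied by enough factors of $Q_j$ that each contribute $p$ or a $t^\alpha$-power. Checking this exponent bookkeeping, namely that every denominator $p^{-n}$ is matched by a $p^{n-1}$-th power of an element of the PD ideal, is the main obstacle. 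I would handle it first for the $v'$-coordinate $\sum p^{-n}D_n^{(1)}$, by induction on $n$ using (b) with $r=n-1$, and then note that the other coordinates have the same shape, with extra integral factors $\bfx^{(i_0)},\bfy^{(i_0)}$.

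\textbf{Step 3: necessity ($s=\alpha+1$).} Look at the $n=1$ term of the $v'$-coordinate, $p^{-1}D_1^{(1)}\equiv p^{-1}D_1^{p}$ modulo integral terms. Modulo $\hat D_{(p,t^{\alpha+1})}$ and $W[\![t]\!]$, the higher-$n$ terms are shown to be integral as in Step 2. This remaining term is $p^{-1}t^{p\alpha}u^p$, up to integral corrections. Since $t^{\alpha}\notin (p,t^{\alpha+1})$, the element $t^{p\alpha}/p$ is not in $\hat D_{(p,t^{\alpha+1})}$: the divided powers there are generated by $t^{\alpha+1}$, and $t^{p\alpha}/p$ would need $t^{p(\alpha+1)}$. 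Hence $\iota^*\tilde v\notin \bL(W)\otimes\hat D_{(p,t^{\alpha+1})}$, and \cref{T:PDlemma} gives the non-deformation.
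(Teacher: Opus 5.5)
Your route is the same as the paper's: reduce via Lemma~\ref{T:PDlemma} to membership of the coordinates of $\iota^*\tilde{\mathbf{c}}'$ in $\hat D_I$, expand them with Lemma~\ref{lm;recursive2}(a),(b),(d), and use divided powers of $\gamma=\iota^*D_1(v)$.

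\textbf{Step 2.} The step you defer as ``the main obstacle'' is the whole content of the sufficiency direction, and the estimate you sketch for it would not work. Write $\gamma^{[k]}=\gamma^k/k!$. If the $p^{-n}$ term only carried $D_1^{p^{n-1}}$, then at $n=2$ you would need $p^{-2}\gamma^{p}=\tfrac{(p-1)!}{p}\gamma^{[p]}\in\hat D_I$. This fails, since $v_p(p!)=1$. Also, trading $D_1^{(n-1)}$ for $D_1^{p^{n-1}}$ via (a) leaves an error in $p^{n-1}W[\![x_i,y_i]\!]$, which $p^{-n}$ does not absorb.

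The $Q_j$-factors are merely integral and cannot be counted on to supply powers of $p$. For instance, when $i_1=0$ an untwisted $Q_{i_2}$ occurs, and $\iota^*Q_{i_2}$ need not even lie in $(p,t^\alpha)$.

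The missing observation is the index count in (b) with $r=n-1$. Since $0\le i_1<\dots<i_{2n-2}$ strictly increase, $l:=1+i_{2n-2}\ge 2n-2$. Together with (a) this gives
\[
D_n(v)=\sum_{\substack{0\le i_1<\cdots<i_{2n-2}\\ i_1\ \mathrm{even}}}\delta_{i_2<\cdots<i_{2n-3}}\,Q_{i_1<\cdots<i_{2n-2}}\,D_1(v)^{p^{1+i_{2n-2}}}+O(p^{2n-2}).
\]
For $n\ge2$ the error contributes only $p^{n-2}W[\![t]\!]$. The main term is fine because $p^{-n}\gamma^{p^l}=\tfrac{(p^l)!}{p^n}\gamma^{[p^l]}\in\hat D_I$, as $v_p((p^l)!)=\tfrac{p^l-1}{p-1}\ge n$. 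The other coordinates only increase $l$: the $v'$-coordinate and the last $2m$ coordinates carry an extra twist, and the $w'$-coordinate shifts to $D_{n+1}$. The $n=1$ terms reduce to $p^{-1}D_1^{(j)}$ with $j\ge1$, which are handled by $\gamma^{p}/p=(p-1)!\,\gamma^{[p]}$.

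\textbf{Step 3.} Here the $n\ge2$ terms cannot be dismissed ``as in Step 2'', because now $\gamma\notin(p,t^{\alpha+1})$. A clean fix is to compare a single coefficient.

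First, every element of $\hat D_{(p,t^{\alpha+1})}$ has $W$-integral coefficients in all degrees $<p(\alpha+1)$. Only the generators $t^{k(\alpha+1)}/k!$ with $k<p$ reach those degrees, and their denominators are units.

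Second, in the $v'$-coordinate the $n=1$ term satisfies $p^{-1}\iota^*\bigl(D_1(v)^{(1)}\bigr)\in p^{-1}(t^\alpha u+pg)^p+W[\![t]\!]$. Its $t^{p\alpha}$-coefficient therefore lies in $p^{-1}u(0)^p+W$.

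Third, for $n\ge2$ the term is, up to an integral error, a $W[\![t]\!]$-combination of $p^{-n}(t^\alpha u+pg)^{p^l}$ with $l\ge 2n-1$. Note that $\alpha\ge1$, since $\overline{D}_1(v)$ has no constant term. So only the binomial terms $p^{-n}\binom{p^l}{j}(t^\alpha u)^j(pg)^{p^l-j}$ with $j\le p$ reach degrees $\le p\alpha$. Each of these has $p$-adic valuation at least $l-1-n\ge0$.

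Hence the $t^{p\alpha}$-coefficient of the $v'$-coordinate has valuation exactly $-1$. This turns your observation $t^{p\alpha}/p\notin\hat D_{(p,t^{\alpha+1})}$ into a proof of non-deformation.
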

\begin{proof}
Let $I=(t^s)$ be an ideal of $\mathbb{F}[\![t]\!]$. It suffices to show that $v$ lifts to $\Spf(\mathbb{F}[\![t]\!]/I)$ if and only if $\Spf (\mathbb{F}[\![t]\!]/I)$ factors through $\Spf(\mathbb{F}[\![x_i,y_i]\!]/\overline{D}_1(v))$.

Let $n\geq 2$. Combining Lemma~\ref{lm;recursive2} (a) and (b), and note that $i_{2n-2}\geq 2n-3$, we have: 
\begin{equation}\label{eq:3434}
D_{n}(v)=\sum_{\substack{0\leq i_1<i_2<...<i_{2n-2}\\i_1 \text{ even}}}\delta_{i_2<i_3<...<i_{2n-3}}Q_{i_1<i_2<...<i_{2n-2}}[D_1(v)]^{p^{1+i_{2n-2}}}+O(p^{2n-2}).   
\end{equation}
Lift $\iota$ to a map $\Spf W[\![t]\!]\rightarrow \Spf W[\![x_i,y_i]\!]$ and still call it $\iota$. If $\overline{D}_1(v)\in I$, then $\iota^*D_1(v)\in (p,I)=(p,t^s)$. Take $\iota^*$ on both sides of (\ref{eq:3434}), use the fact that $i_{2n-2}\geq 2n-3$, then plug in Lemma~\ref{lm;recursive2}(d). We find that all entries of $\iota^*\tilde{\mathbf{c}}'$ lie in the PD envelope $\hat{D}_{I}$. This shows that $\iota^*\tilde{v}\in\bL(W)\otimes \hat{D}_{I}$. Apply Lemma~\ref{T:PDlemma} to conclude that $v$ lifts to $\Spf (\mathbb{F}[\![t]\!]/I)$. Conversely, suppose that $\overline{D}_1(v)\notin I$. A similar argument shows that the pullback under $\iota$ of the first entry of $\tilde{\mathbf{c}}$ in Lemma~\ref{lm;recursive2}(d) already forces $\iota^*\tilde{v}$ to lie outside of $\bL(W)\otimes\hat{D}_{I}$. So $v$ does not lift to $\Spf (\mathbb{F}[\![t]\!]/I)$.
\end{proof}
\begin{remark}\label{rmk:exlicitx_v}
    The corollary shows, in particular, that we can take $x_v=\overline{D}_1(v)$, where $x_v$ is the series in Remark~\ref{eq:v_defspace}. This provides us with an explicit recipe of computing $x_v$.
\end{remark}

\subsection{Setting up the local curve}\label{sub:setuplocalcurve} 

Throughout, our curve $C$ will be contained in the non-ordinary Newton strata. Therefore, the running assumption will be that $Q_0 = 0$. For the decay calculation, we will choose a uniformizer so that $C=\Spf \bF[\![t]\!]$. The equation of the local curve is give by \begin{equation}\label{eq:localequation1}
    C:t\rightarrow (x_1(t),...,x_m(t);y_1(t),...,y_m(t)),\;\;x_i(t),y_i(t)\in \mathbb{F}[\![t]\!].
\end{equation}

As in \cite{MST1}, we will also use  $x_i(t), y_i(t)$ to denote the power series in  $W[\![t]\!]$ where all the coefficients are Teichmuller lifts of the coefficients of $x_i(t), y_i(t)$ in (\ref{eq:localequation1}). This abuse of notation won't cause confusion if one keeps track of which base ring they work in. To ease notation, we often just write $x_i,y_i$ for the corresponding power series in $t$. 

\begin{notation}
\label{not: define h_i and a_i}
    We will assume that $Q_0(t), \hdots Q_{a-1}(t) = 0$ and that $Q_a(t)$ is the first non-zero function of $t$. Let $h_i := v_t(Q_i(t))$. Then $h_a$ matches up with $h_P$ as per \S\ref{sub: intersection numbers}. Note that we have $h_i = \infty$ for $i< a$. Recursively define a sequence of integers $a_1<a_2<...$, with $a_1=a$, and $a_{i+1}$ is the smallest integer bigger than $a_{i}$ such that $h_{a_{i+1}}\leq h_{a_{i}}$. Let $a_k$ be the biggest number in the sequence that is smaller than $m$ (in fact, the sequence terminates at $a_k$). 
    Throughout this section, the notation $a_i$ will exclusively refer to the numbers listed above.
\end{notation}

As the $a_i$'s play a pivotal rule in our analysis, we draw a picture for readers' convenience. If we draw a bar chart for $h_{a}, h_{a + 1}, \cdots$, then the $a_i$'s look like the following. The dashed lines will be ``thrown away'' in our analysis. 

\tikzset{every picture/.style={line width=0.75pt}} 

\begin{figure}[H]
    \centering
    \begin{tikzpicture}[x=0.75pt,y=0.75pt,yscale=-1,xscale=1]

\draw    (165,202.38) -- (471,203.39) ;
\draw [shift={(473,203.4)}, rotate = 180.19] [color={rgb, 255:red, 0; green, 0; blue, 0 }  ][line width=0.75]    (10.93,-3.29) .. controls (6.95,-1.4) and (3.31,-0.3) .. (0,0) .. controls (3.31,0.3) and (6.95,1.4) .. (10.93,3.29)   ;
\draw  [line width=1.5]  (185,99.6) -- (185,201.35) ;
\draw  [line width=1.5]  (227,120.65) -- (227,201.35) ;
\draw  [line width=1.5, dash pattern={on 4.5pt off 4.5pt}]  (264.49,107.36) -- (263.77,202.9) ;
\draw   [line width=1.5] (308,144.14) -- (308,202.21) ;
\draw  [line width=1.5, dash pattern={on 4.5pt off 4.5pt}]  (348.98,117.58) -- (348.98,202.38) ;
\draw   [line width=1.5] (390,171.73) -- (390,203.4) ;
\draw  [line width=1.5, dash pattern={on 4.5pt off 4.5pt}]  (430.75,133.93) -- (432.12,202.38) ;

\draw (174.75,203) node [anchor=north west][inner sep=0.75pt]   [align=left] {$a_1$};
\draw (216.17,203) node [anchor=north west][inner sep=0.75pt]   [align=left] {$a_2$};
\draw (295.82,203) node [anchor=north west][inner sep=0.75pt]   [align=left] {$a_3$};
\draw (377.6,203) node [anchor=north west][inner sep=0.75pt]   [align=left] {$a_4$};

\end{tikzpicture}
    \caption{A picture for the $a_i$'s}
    \label{fig: the a sequence}
\end{figure}
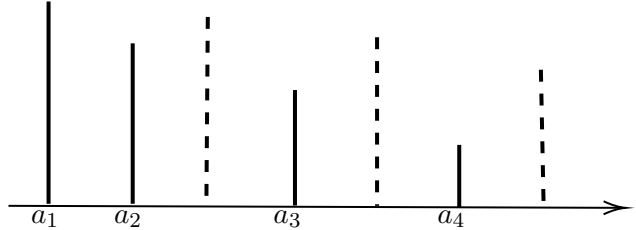

\subsubsection{Setup with special endomorphisms}\label{subsub:setupspecialendo}
In this section, we always assume that $v\in\cL$ is a primitive formal special endomorphism, i.e., $p^{-1} v\notin \cL$. Write $\tilde{v}=F_\infty v$. 

\begin{definition}
\label{def: nth order rate of decay}
For every $n\geq 1$, let $d_n(v)$ be the smallest integer such that the coefficients of $1=t^0,...,t^{d_n(v)}$ in the power series $p^{n-1}\tilde{v}$ with respect to the basis $\{e',f';e_i,f_i\}$ do not all lie in $W$ (if such an integer does not exist, we define $d_n(v)=\infty$).  We call $d_n(v)$ the \textit{$n$-th order rate of decay} for $v$.   
\end{definition}

The concrete meaning of $d_n(v)$ is given in the following lemma. The quantity turns out to be equal to $v_t(x_{p^{n-1}v})$ and $v_t(\overline{D}_n(v))$. However, as compared to $v_t(x_{p^{n-1}v})$, its description is more concrete; as compared to $v_t(\overline{D}_n(v))$, its definition is more conceptual. This makes it easier to use in many situations. 

\begin{lemma}\label{lm:specialendoprepair} \begin{enumerate}[label=\upshape{(\alph*)}]
 \item $p^{n-1}v$ lifts to $\Spf \mathbb{F}[\![t]\!]/(t^{d_n(v)})$ but not $\Spf \mathbb{F}[\![t]\!]/(t^{d_n(v)+ 1})$. 
 \item $d_n(v)=v_t(x_{p^{n-1}v})=v_t(\overline{D}_n(v))$. 
\item  $d_{n+1}(v)\geq 
\min_{i\geq a}\{h_i+p^{i+1}d_n(v)\}$.
    \end{enumerate}
\end{lemma}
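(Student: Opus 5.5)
We plan to prove the three parts of Lemma~\ref{lm:specialendoprepair} in the order (b), then (a), then (c), because (a) follows once (b) is combined with Corollary~\ref{cor:defloci}.

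For (b), we apply Lemma~\ref{lm;recursive2}(d) to $p^{n-1}\tilde v$. In the basis $\{v',w';e_i,f_i\}$, the polar parts of the coordinates of $\tilde v$ are $\sum_{k\ge1}p^{-k}D_k(v)^{(1)}$, $\sum_{k\ge 1}p^{-k}D_{k+1}(v)$, and $p^{-k}$ times linear combinations of $D_k(v)^{(1+i_0)}$ with coefficients $\bfx,\bfy$. Multiplying by $p^{n-1}$, the terms with $k<n$ still carry negative powers of $p$. The first point to check is that these lower terms are integral along $C$ in low $t$-degree, so that they do not create earlier non-integrality. Here we use Lemma~\ref{lm;recursive2}(a),(b): $D_k(v)$ for $k\ge n$ is a combination of $Q$-words times Frobenius twists of $D_n(v)$, and $D_k(v)\equiv D_k(v)^{p^l}$ modulo $p^l$ up to twists. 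The precise claim to verify is that the $t^j$-coefficients of $p^{n-1}\tilde v$ are in $W$ for $j<v_t(\overline D_n(v))$, and that the $t^{v_t(\overline D_n)}$ coefficient of the first $(v')$-entry, $p^{-1}D_n(v)^{(1)}$, is non-integral. The latter holds because $D_n(v)^{(1)}\bmod p=\overline D_n(v)^{p}$ has $t$-valuation $p\,v_t(\overline D_n)$. So we must compare carefully: the entry $p^{-1}D_{n}(v)$ (from $D_{k+1}$ with $k=n-1$, in the $w'$ coordinate) gives valuation exactly $v_t(\overline D_n)$. Definition~\ref{def: nth order rate of decay} uses the basis $\{e',f',\dots\}$, so we transport along the change of basis using Lemma~\ref{lm;recursive2}(c), whose leading terms involve $D_n$ and $D_n^{(1)}$. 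Since $D_n^{(1)}$ has larger $t$-valuation mod $p$, the minimum is $v_t(\overline D_n(v))$.

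The equality $d_n(v)=v_t(x_{p^{n-1}v})$ comes from Remark~\ref{rmk:exlicitx_v} applied to $p^{n-1}v$. We note that $\overline D_1(p^{n-1}v)=\overline D_n(v)$ directly from \eqref{eqn: defined D_n's}, since the digit expansion of $p^{n-1}v$ shifts $\mathbf c_k$ by $n-1$. Corollary~\ref{cor:defloci} then gives (a) and the identification with $x_{p^{n-1}v}$ simultaneously. This shortcut, $D_1(p^{n-1}v)=D_n(v)$, is the cleanest route to (a) and (b), and I would lead with it. The coefficient-integrality characterization of $d_n$ then follows from the proof of Corollary~\ref{cor:defloci}, which shows that the obstruction is exactly the first non-integral $t$-coefficient.

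For (c), we reduce Lemma~\ref{lm;recursive2}(b) with $r=1$ modulo $p$:
\[\overline D_{n+1}(v)=\sum_{i_1<i_2,\ i_1\text{ even}}\overline{Q_{i_2-i_1}}^{\,p^{i_1}}\,\overline D_n(v)^{p^{1+i_2}}.\]
A summand with $i_2-i_1=i$ has valuation $p^{i_1}h_i+p^{1+i_2}d_n\ge h_i+p^{i+1}d_n$, and $Q_i=0$ along $C$ for $i<a$. Taking the minimum over the summands gives the bound. The main obstacle is in (b): justifying that the lower-order terms ($k<n$) and the $O(1)$ corrections in Lemma~\ref{lm;recursive2}(c),(d) cannot produce non-integral coefficients before degree $d_n$. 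The reduction to Corollary~\ref{cor:defloci} via $D_1(p^{n-1}v)=D_n(v)$ bypasses this, provided the corollary's argument is read for non-primitive vectors, which it is since it never uses primitivity.
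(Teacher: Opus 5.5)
Your reduction $D_1(p^{n-1}v)=D_n(v)$, the appeal to Corollary~\ref{cor:defloci} and Remark~\ref{rmk:exlicitx_v}, and your proof of (c) from the $r=1$ case of Lemma~\ref{lm;recursive2}(b) reduced mod $p$ all match the paper. The gap is the one non-formal step, the equality $d_n(v)=v_t(\overline{D}_n(v))$. Corollary~\ref{cor:defloci} only says that $p^{n-1}v$ deforms to order exactly $\alpha:=v_t(\overline{D}_n(v))$. However, $d_n(v)$ is defined in Definition~\ref{def: nth order rate of decay} by integrality of $t$-coefficients in the basis $\{e',f',e_i,f_i\}$. Until you know $d_n(v)=\alpha$, you have proved neither (a) as stated nor the first equality in (b).

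Your justification that this ``follows from the proof of Corollary~\ref{cor:defloci}, which shows that the obstruction is exactly the first non-integral $t$-coefficient'' is not correct. That proof tests membership in $\bL(W)\otimes\hat{D}_I$ using the coordinates $\tilde{\mathbf{c}}'$ in the $W$-basis $\{v',w',e_i,f_i\}$. PD-envelope membership is not detected by the first non-integral coefficient (e.g.\ $t^{ps}/p\in\hat{D}_{(t^s)}$). Indeed, in those coordinates $\iota^*(p^{n-1}\tilde v)$ is integral in every degree $\le\alpha$: for example, the $k=1$ term of the $v'$-entry is $p^{-1}D_n(v)^{(1)}$, which is integral below degree $p\alpha$. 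The span of $\{e',f',e_i,f_i\}$ is $\cL\otimes W$, which has index $p$ in $\bL(W)$. The non-integrality in degree $\alpha$ is produced precisely by the $p^{-1}$ in $w'=\frac{1}{2\lambda p}(\lambda e'+f')$.

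What is needed is the paper's direct computation, which your first paragraph gestures at but garbles. Apply Lemma~\ref{lm;recursive2}(c) to $u:=p^{n-1}v$, so that $D_k(u)=D_{k+n-1}(v)$. Then the $e'$-coordinate of $\tilde u=p^{n-1}\tilde v$ is $\frac12\sum_{k\ge1}p^{-k}\bigl(D_k(u)+D_k(u)^{(1)}\bigr)+O(1)$. Since every $D_k$ vanishes at the origin, $\alpha\ge 1$.
\begin{itemize}
\item Below degree $\alpha$, every coordinate of $\iota^*\tilde u$ is integral. For $k=1$ this holds because $\iota^*D_1(u)$, resp.\ $\iota^*D_1(u)^{(1)}$, has coefficients in $pW$ below degree $\alpha$, resp.\ $p\alpha$, and likewise for the $e_i,f_i$-coordinates. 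For $k\ge2$ it follows from the expansion \eqref{eq:3434} applied to $u$.
\item In degree $\alpha$, the $e'$-coordinate has coefficient $\frac12 p^{-1}$ times a unit, coming from $p^{-1}D_1(u)$ alone.
\end{itemize}
Hence $d_n(v)=\alpha$.

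Three statements in your first paragraph should also be corrected:
\begin{itemize}
\item After multiplying by $p^{n-1}$, the terms with $k<n$ carry $p^{n-1-k}$ with $n-1-k\ge 0$. They are integral, not negative powers.
\item The $w'$-entry of $p^{n-1}\tilde v$ contains $D_n(v)$ with coefficient $p^{0}$, not $p^{-1}$. The $p^{-1}$ only appears after passing to $e',f'$.
\item $p^{-1}D_n(v)^{(1)}$ is integral, not non-integral, in degree $v_t(\overline{D}_n(v))$, for exactly the reason you give.
\end{itemize}
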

\begin{proof} By definition, we have $D_1(p^{n-1}v)=D_n(v)$. So we can reduce part (a) and (b) to the case where $n=1$. For this, we can assume that $d_1(v)<\infty$. From the explicit formulae in Lemma~\ref{lm;recursive2}(c), the $t^{d_1(v)}$ term in the first entry of $\tilde{\mathbf{c}}$ has coefficient $p^{-1}u,u\in W^*$. While for any $d<d_1(v)$, the term $t^d$ has integral coefficient. This implies that $v_t(\overline{D}_1(v))=d_1(v)$. Given this, part (a) follows from  Corollary~\ref{cor:defloci}, and part (b) follows from Remark~\ref{rmk:exlicitx_v}. Part (c) follows from the $r=1$ case in Lemma~\ref{lm;recursive2} (b). 
\end{proof}
Suppose that $n\geq 2$.  It is helpful to get a feeling of what $\overline{D}_n(v)$ should be, and how large $d_n(v)$ can get. By recursion formula Lemma~\ref{lm;recursive2}(b) and Remark~\ref{rmk:exlicitx_v}, $\overline{D}_n(v)$ is a huge linear combination of terms of the following form
\begin{equation}\label{eq:term2}
  w = Q_{i_1<i_2<...<i_{2r}}\,x_v^{(1+i_{2r})}, \,\,\,\,\,\,r\geq n-1 \text{ and }2|i_1
\end{equation}
We can compute the $v_t$-valuation of a term in (\ref{eq:term2}) as \begin{equation}\label{eq:termdeg3}
     v_t(w(t)) = \sum_{k=1}^{r} p^{i_{2k-1}}h_{i_{2k}-i_{2k-1}}+p^{1+i_{2r}}v_t(x_v).
\end{equation} 
Let $H$ be the minimal $v_t$-valuation among all the terms in (\ref{eq:term2}). To estimate how large $d_n(v)$ is, the first step is to look at all possible terms in (\ref{eq:term2}) with $v_t$-valuation $H$. 

 \begin{lemma}
 \label{lm:wordtruncate}
Every term $w$ as in (\ref{eq:term2}) that achieves $v_t(w) = H$ satisfies the following conditions.
    \begin{enumerate}[label=\upshape{(\alph*)}]
        \item  $r=n-1$, $i_1=0$.
         \item  for $k\geq 1$, 
                $i_{2k+1}-i_{2k}=1$, 
        \item for $k\geq 1$, $i_{2k}-i_{2k-1} \in \{ a_1, \cdots, a_{k} \}$.
    \end{enumerate}
 \end{lemma}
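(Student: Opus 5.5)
The plan is an exchange argument. We show that any term $w=Q_{i_1<\cdots<i_{2r}}x_v^{(1+i_{2r})}$ violating (a), (b) or (c) can be replaced by another admissible term $w'$ of the form (\ref{eq:term2}) with $v_t(w')<v_t(w)$. Hence such a $w$ cannot attain the minimum $H$. Everything is computed from the valuation formula (\ref{eq:termdeg3}),
\[
v_t(w)=\sum_{k=1}^r p^{i_{2k-1}}h_{i_{2k}-i_{2k-1}}+p^{1+i_{2r}}v_t(x_v).
\]
We use two facts from Notation~\ref{not: define h_i and a_i}: $h_j=\infty$ for $j<a$, and the defining property of the $a_k$. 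Admissibility of $w'$ means three things: the indices stay strictly increasing, $i_1$ stays even, and the parity constraint $\delta_{i_2<\cdots<i_{2r-1}}=1$ holds. The last says that each gap $i_{2k+1}-i_{2k}$ is odd.

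\emph{Step 1: condition (a).} Suppose $r>n-1$. Delete the last pair $(i_{2r-1},i_{2r})$ and keep the remaining indices. Every remaining term keeps its weight. The last term changes from
\[
p^{i_{2r-1}}h_{i_{2r}-i_{2r-1}}+p^{1+i_{2r}}v_t(x_v)\quad\text{to}\quad p^{1+i_{2r-2}}v_t(x_v).
\]
The new term is strictly smaller, because $i_{2r-2}<i_{2r}$ and $h\geq 1$. So minimizers have $r=n-1$.

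Next suppose $i_1>0$. Shift every index down by $i_1$; this preserves all gaps and all parities, and makes $i_1=0$. Each weight $p^{i_{2k-1}}$ and $p^{1+i_{2r}}$ then drops by the factor $p^{-i_1}$, so the valuation strictly decreases. The same shifting device will be reused in the later steps.

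\emph{Step 2: condition (b).} Each gap $i_{2k+1}-i_{2k}$ is odd and at least $1$. If some gap exceeds $1$, shrink it to $1$ by shifting all later indices down. Odd gaps remain odd, and every later weight $p^{i_j}$ strictly drops, so the valuation strictly decreases. Thus minimizers have every such gap equal to $1$.

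\emph{Step 3: condition (c), the main obstacle.} By Steps 1 and 2, the indices are determined by the inner gaps $g_k=i_{2k}-i_{2k-1}$, each at least $a$. Moreover $i_{2k-1}=\sum_{j<k}(g_j+1)$. Suppose some $g_k$ is not among $a_1,\dots,a_k$. Take the largest $a_j\le g_k$ with $j\le k$; we claim that replacing $g_k$ by $a_j$ strictly decreases $v_t$.

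Two effects must be compared.

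\begin{enumerate}
\item The local term $p^{i_{2k-1}}h_{g_k}$ becomes $p^{i_{2k-1}}h_{a_j}$. This does not increase, since $h_{a_j}\le h_{g_k}$. That inequality holds for $a_j<g_k<a_{j+1}$ by minimality in the definition of $a_{j+1}$, because the intermediate $h$'s exceed $h_{a_j}$.
\item All later weights shrink by a factor of at least $p^{-1}$, since the shift $g_k-a_j$ is at least $1$.
\end{enumerate}

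The delicate case is $g_k\ge a_{k+1}$, which is why the constraint reads $j\le k$. Here we must show it is still cheaper to use an earlier $a_j$. The argument is an induction on $k$, comparing the total contribution of the tail, which is scaled by $p^{g_k-a_j}$. We exploit that $h_{a_{j}}$ decreases in $j$ while the positions grow geometrically in $p$. I expect this bookkeeping to be the hardest part: it must handle the infinite values $h_j$, respect strict monotonicity, and use the termination of the sequence at $a_k<m$ together with Proposition~\ref{NPstrata}(c) to discard gaps $\ge m$. If the direct comparison fails, I would instead argue by an iterative extremal choice. Among minimizers, take the lexicographically smallest gap vector, and show that it must satisfy (c).
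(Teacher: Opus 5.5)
Steps 1 and 2 are fine and coincide with the paper's argument: drop the last pair, translate everything by the even number $i_1$, and shrink each odd gap $i_{2k+1}-i_{2k}$ to $1$.

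\textbf{The gap is in Step 3.} You read the $k$ in $\{a_1,\dots,a_k\}$ as the position of the pair. Under that reading the claim is false, so no bookkeeping can close your ``delicate case'' $g_k\ge a_{k+1}$. Already for $n=2$ the relevant terms are $Q_{0<g}\,x_v^{(1+g)}$, with $v_t=h_g+p^{g+1}v_t(x_v)$. Then $g=a_2$ beats $g=a_1$ whenever $h_{a_1}-h_{a_2}>(p^{a_2+1}-p^{a_1+1})v_t(x_v)$, and nothing in the valuation formula or in the definition of the $a_j$ rules this out. It does occur. Take $m=3$, $x_i=\alpha_i t$ with $\alpha_1,\alpha_2,\alpha_3$ linearly independent over $\mathbb{F}_p$, and $(y_1,y_2,y_3)=t^{d}\,(\alpha\times\alpha^{(1)})$, the cross product of $(\alpha_i)$ and $(\alpha_i^p)$. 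Then $Q_0(t)=0$, $h_1=1+pd$ and $h_2=p^2+d$, since both leading coefficients are the Moore determinant of the $\alpha_i$. So $a_1=1$ and $a_2=2$. For $v=e_1$ and $d>p^2+p+1$, the unique minimizer has $i_2-i_1=2\notin\{a_1\}$. Your lexicographic fallback cannot rescue this either, because (c) is a statement about \emph{every} minimizer.

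\textbf{The fix.} The paper overloads $k$: in $\{a_1,\dots,a_k\}$, $a_k$ is the last term of the sequence below $m$ from Notation~\ref{not: define h_i and a_i}, not the pair index. This is how the lemma is used later. In the proof of Theorem~\ref{thm: superspecial rapid decay} all intervals $I_s(v)$ lie in $[a_1,a_k]$, and Remark~\ref{rem: ak is a} says $\bI^{\min}_r(v)$ is a singleton when $a_k=a$. With this reading your items (1) and (2) already finish the proof, with no positional constraint:
\begin{enumerate}
\item If a gap $g_k\ge a$ is not in $\{a_1,\dots,a_k\}$, let $a'$ be the largest element of that set below $g_k$.
\item Then $h_{a'}\le h_{g_k}$. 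If $g_k<m$, this is the minimality in the definition of the sequence. If $g_k\ge m$, Proposition~\ref{NPstrata}(c) gives $h_{g_k}\ge\min_{a\le j<m}h_j=h_{a_k}$.
\item Shifting $i_{2k},\dots,i_{2r}$ down by $g_k-a'\ge 1$ keeps all parities and strictly lowers $v_t$.
\end{enumerate}
This is exactly the paper's bar-chart argument. You should drop the ``delicate case'' and the induction on the tail entirely.
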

 
 \begin{proof}
Consider a term  $w=Q_{i_1<i_2<...<i_{2r}}\,x_v^{(1+i_{2r})}$ in (\ref{eq:term2}). If $r>n-1$, then by the explicit computation (\ref{eq:termdeg3}), $w'=Q_{i_1<i_2<...<i_{2(r-1)}}\,x_v^{(1+i_{2(r-1)})}$ has strictly smaller $v_t$-valuation than $w$. So $w$ can not have $v_t$-valuation $H$. If $r=n-1$ but $i_1>0$, then the term $w''=Q_{0<i_2-i_1<...<i_{2r-i_1)}}\,x_v^{(1+i_{2r}-i_1)}$ has strictly smaller $v_t$-valuation than $w$. Again, $w$ can not have $v_t$-valuation $H$. This proves (a). Part (b) can be proved similarly. 

For part (c), we first show that $a \le i_{2k}-i_{2k-1}$. Since we assumed that $Q_i(t)=0$ for $i<a$, if $w$ is such that $i_{2k}-i_{2k-1}<a$ for some $k$, then $v_t$-valuation of $w$ is $\infty$. To see that $i_{2k}-i_{2k-1} \in \{ a_1, \cdots, a_{k} \}$, let us assume that this is false for some $k$. Then from \cref{fig: the a sequence} one easily sees that for some $a' \in \{ a_1, \cdots, a_{k} \}$, $a' < i_{2k}-i_{2k-1}$ and $h_{a'} \le h_{i_{2k}-i_{2k-1}}$. Set $\delta := (i_{2k}-i_{2k-1}) - a' > 0$. Now we set a new sequence $i'_1 < i'_2 < \cdots < i'_{2r}$ by 
$$ i_j' = \begin{cases}
    i_j &\text{ for } j \le 2k - 1 \\
    i_j - \delta &\text{ for } j \ge 2k. 
\end{cases}$$
Then one readily sees that the new $w'$ defined by $i'_1 < i'_2 < \cdots < i'_{2r}$ achieves smaller $v_t$. 
 \end{proof}

\begin{remark}\label{rmk:H}
Clearly, $d_n(v)\geq H$. In generic cases, we even have $d_n(v)=H$. However, since $\overline{D}_n(v)$ is a linear combination of the terms in (\ref{eq:term2}), it can happen that multiple terms with minimal $v_t$-valuation cancel out, leaving us $d_n(v)> H$. This makes the estimation of $d_n(v)$ much more subtle. Nevertheless, this essentially a complicated combinatorial problem which we solve in the next section.
\end{remark}

\subsection{Precise definition of rapid decay}\label{Sec: word properties decay defn}
In this section, we study the terms in (\ref{eq:term2}) that achieve the minimal $v_t$-valuation $H$; cf. Lemma~\ref{lm:wordtruncate}. In order to be able to work with the combinatorics in a flexible way, we introduce the following notions: 

\begin{definition}
\label{def: bI_r's}
    \begin{enumerate}[label=\upshape{(\roman*)}]
\item Let $\bI_r$ be the set of all words in $Q_a, Q_{a+1},..., Q_{m-1}$ of length $r$.

\item Let $v\in \cL$ be a primitive formal special endomorphism. 
\begin{equation}
    \bI_r(v) = \begin{cases}
        \{ \textrm{words of the form } \mathcal{W}x_v : \mathcal{W}\in \bI_{r-1} \} &\textrm{ if } v \bmod p \in \cL_1/p \\
        \qquad \qquad \qquad \qquad  \bI_r &\textrm{ if } v \bmod p \in \cL_0 /p
    \end{cases}
\end{equation}

 \item Given $\cW\in \bI_r$, $\cW[r']$ is the sub-word consisting of the last $r'$ letters of $\cW$. $[r']\cW$ is the sub-word consisting of the first $r'$ letters of $\cW$. $\cW_i$ is the $i$th letter (from the left) of $\cW$.

\item Define a function $\nu : \bI_r \to \bN$ recursively by setting 
\begin{equation}
    \nu(Q_{i}) = h_i \textrm{ and }  \nu(Q_i\cW)= h_{i}+p^{i+1}\nu(\cW).
\end{equation}
 For $\bI_r(v)$ defined in (ii), define $\nu : \bI_r(v) \to \bN$ by adding the rule $\nu(x_v) = v_t(x_v)$. 
    \item Let $\bI^{\min}_r$ (resp. $\bI^{\min}(v)$) be the set of all words in $\bI_r$ (resp. $\bI_r(v)$ in (ii)) having the minimal valuation. Write $\nu^{\min}_r$ (resp. $\nu_r^{\min}(v)$) for this minimal valuation.  
\end{enumerate}
\end{definition}

\begin{lemma}
\label{lem: w and W}
    There is a bijective correspondence between the terms in $\mathcal{I}^{\min}(v)$ and the words in $\bI^{\min}(v)$ given by 
    \begin{equation}
        \label{eqn: w and W}
        w = Q_{i_1<i_2<...<i_{2r}}\,x_v^{(1+i_{2r})} \Leftrightarrow \cW = Q_{u_1}Q_{u_2}...Q_{u_{n-1}}x_v \textrm{ where } u_i = i_{2k} - i_{2k - 1}.
    \end{equation}
\end{lemma}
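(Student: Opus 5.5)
The plan is to show that the map in (\ref{eqn: w and W}) is well defined from $\mathcal{I}^{\min}(v)$ to $\bI^{\min}(v)$, that it preserves valuations (so minimality matches on both sides), and that it is invertible. Here $\mathcal{I}^{\min}(v)$ denotes the terms in (\ref{eq:term2}) with $v_t$-valuation $H$.

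\emph{Well-definedness.} By \cref{lm:wordtruncate}, any term $w$ achieving $H$ has $r=n-1$, $i_1=0$, $i_{2k+1}=i_{2k}+1$, and each gap $u_k:=i_{2k}-i_{2k-1}$ lies in $\{a_1,\dots,a_k\}\subseteq\{a,\dots,m-1\}$. So $\cW=Q_{u_1}\cdots Q_{u_{n-1}}x_v$ is a genuine word in $\bI_n(v)$. Conversely, the tuple $(i_1,\dots,i_{2r})$ is recovered from $(u_1,\dots,u_{n-1})$ by the recursion
\[
i_1=0,\qquad i_{2k}=i_{2k-1}+u_k,\qquad i_{2k+1}=i_{2k}+1 .
\]
Hence the map is injective on such terms.

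\emph{Valuations agree.} I would prove by induction on $n$ that for $w$ and $\cW$ as above, $v_t(w)=\nu(\cW)$. From (\ref{eq:termdeg3}),
\[
v_t(w)=\sum_k p^{i_{2k-1}}h_{u_k}+p^{1+i_{2n-2}}v_t(x_v).
\]
With $i_{2k-1}=\sum_{j<k}(u_j+1)$, this is exactly the unfolding of the recursion $\nu(Q_{u_1}\cW')=h_{u_1}+p^{u_1+1}\nu(\cW')$. The key observation is that the shift of indices in $\cW'$ multiplies its valuation by $p^{u_1+1}$, matching the Frobenius twists $(\cdot)^{(i)}$.

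\emph{Minimality and surjectivity.} Conversely, every word $\cW=Q_{u_1}\cdots Q_{u_{n-1}}x_v\in\bI_n(v)$ arises from the ``normalized'' tuple ($i_1=0$, consecutive gaps $1$), which is a legitimate term of (\ref{eq:term2}) with the same valuation. Therefore
\[
\min_{\cW\in\bI_n(v)}\nu(\cW)\ \ge\ H .
\]
For the reverse inequality, the proof of \cref{lm:wordtruncate} shows that every term can be modified to a normalized one without increasing valuation, so $H$ is attained by a normalized term and $\nu^{\min}_n(v)=H$. Combining this with the valuation identity, normalized terms of valuation $H$ correspond exactly to words of minimal $\nu$.

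\emph{Two points to handle carefully.} First, the parity constraint: the normalized tuple must have nonzero $\delta_{i_2<\cdots<i_{2r-1}}$. This holds because each relevant difference $i_{2k+1}-i_{2k}=1$ is odd. Second, the case $v\bmod p\in\cL_0/p$, where $\bI_r(v)=\bI_r$ has no trailing $x_v$. Here I expect the main obstacle. One must check that $x_v$ is a unit in this case, i.e.\ $v_t(x_v)=0$, by \cref{lm;recursive2}(c) applied to the $e',f'$ coordinates, so that dropping it from the word is harmless. The indexing convention for $r$ versus $n$ also has to be reconciled in this case.
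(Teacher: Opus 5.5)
Your argument for the lemma as displayed, with $\cW$ ending in $x_v$ (the case $v \bmod p\in\cL_1/p$), is correct and is the paper's argument. \cref{lm:wordtruncate} forces every term of valuation $H$ to be normalized. The tuple is then recovered from $(u_1,\dots,u_{n-1})$ by exactly your recursion $i_1=0$, $i_{2k}=i_{2k-1}+u_k$, $i_{2k+1}=i_{2k}+1$, which is the recipe the paper writes down. Your extra checks are what the paper leaves implicit, and they are fine: the identity $v_t(w)=\nu(\cW)$ from (\ref{eq:termdeg3}), the facts $2\mid i_1$ and $\delta=1$ for normalized tuples, and hence $\nu^{\min}_n(v)=H$, so minimality matches on both sides.

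The step that fails is your proposed handling of $v\bmod p\in\cL_0/p$, because $x_v$ is never a unit.

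\emph{Why it fails.} By \cref{cor:defloci} and \cref{rmk:exlicitx_v}, $x_v=\overline{D}_1(v)$ cuts out the locus to which $v$ deforms, and this locus contains $P$. Concretely, every entry of $\mathbf{U}_n$ is a power series in the $x_i,y_i$ without constant term, so $v_t(x_v)\ge 1$. For $v\bmod p\in\cL_0/p$ one even has $v_t(x_v)\ge h_{a_k}$. Indeed, the $\mathbf{c}_0$-part of $D_1(v)$ only sees the first row of $X_1^+$, and every other monomial also contains a twisted $Q_j$; this is the check made in the proof of \cref{prop: lower bound for d_r}. So dropping $x_v$ is not harmless. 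It would also leave words with $n-1$ letters, whereas $\bI_n(v)=\bI_n$ consists of words with $n$ letters.

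\emph{The correct picture.} In this case the $n$-th letter is a genuine $Q$-letter, coming from expanding $x_v$. Equivalently, read $D_n(v)$ off the $2n$-index monomials $Q_{i_1<\cdots<i_{2n}}$ of $X_n^+$ in \cref{lm:Finftyexplicit}. Running the truncation argument of \cref{lm:wordtruncate} on those gives the same recipe with $n$ letters and no trailing $x_v$.

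The lemma's displayed formula, with $\cW$ ending in $x_v$, only makes sense in the $\cL_1$ case, and that is the only case the paper's proof treats. For the $\cL_0$ case the paper later uses only the inequality $H\ge\nu^{\min}_n$. This follows from $v_t(x_v)\ge h_{a_k}$ by replacing the final $x_v$ with $Q_{a_k}$.
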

\begin{proof}
    One can recover $w \in \mathcal{I}^{\min}(v)$ from $\cW$ by setting 
    \begin{equation}
    \label{eqn: W -> w}
        i_1 = 0, i_2 = u_1, i_{2k-1} = \big(\sum_{j = 1}^{k - 1} u_j \big) + (k - 1), i_{2k} = \big(\sum_{j = 1}^k u_j \big) + (k - 1) \text{ for } k \ge 2. 
    \end{equation}
    by \cref{lm:wordtruncate}. 
\end{proof}

\begin{notation}
    For every $\cW \in \bI_r$ or $\bI_r(v)$, we define $\cW(t) := w(t) \in K[\![t]\!]$ for $w$ defined by (\ref{eqn: W -> w}). Then $\nu(\cW) = v_t(\cW(t))$. The $H$ in \cref{lm:wordtruncate} is simply $\nu^{\min}(v)$. For every $\cW \in \bI_r$ or $\bI_{r + 1}(v)$ and $j \le r$, we write $a(\cW_j)$ for the index such that $\cW_j = Q_{a(\cW_j)}$. 

    \end{notation}

\begin{proposition}
\label{prop: lower bound for d_r}
    For any primitive formal special endomorphism $v$, we have $d_r(v)\geq \nu_r^{\min}(v)$.
\end{proposition}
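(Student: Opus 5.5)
The plan is to reduce the inequality to a valuation estimate for $\overline{D}_r(v)$, via \cref{lm:specialendoprepair}(b), which says $d_r(v)=v_t(\overline{D}_r(v))$. So it suffices to show that every term of $\overline{D}_r(v)$ has $t$-adic valuation at least $\nu^{\min}_r(v)$. Valuations only increase under taking sums, so a lower bound on each term gives a lower bound on $v_t$ of the sum. Cancellation among minimal terms can only help here, as noted in \cref{rmk:H}.

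\textbf{Case $v \bmod p \in \cL_1/p$.} Here we use the recursion \cref{lm;recursive2}(b) with $n=1$ and $r-1$ in place of $r$. Together with \cref{lm;recursive2}(a), this expresses $D_r(v)$ modulo $p$ as a sum of terms $Q_{i_1<\cdots<i_{2s}}\,D_1(v)^{(1+i_{2s})}$. The Frobenius twist becomes a $p$-power modulo $p$, and \cref{rmk:exlicitx_v} gives $\overline{D}_1(v)=x_v$. This yields exactly the expansion (\ref{eq:term2}) with $s\ge r-1$. The valuation of each such term is computed by (\ref{eq:termdeg3}), and by definition $H$ is the minimum over these terms. \cref{lm:wordtruncate} then shows that any term attaining $H$ has the shape of \cref{lem: w and W}, so it corresponds to a word $\cW=Q_{u_1}\cdots Q_{u_{r-1}}x_v\in\bI_r(v)$. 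Moreover, $\nu(\cW)$ computed recursively agrees with (\ref{eq:termdeg3}) once $i_{2k-1},i_{2k}$ are substituted via (\ref{eqn: W -> w}). I would check this by a short induction on word length: peeling off the first letter $Q_{u_1}$ multiplies the remaining valuation by $p^{u_1+1}$, since $i_3=u_1+1$. It follows that $H\ge\nu^{\min}_r(v)$, and therefore $d_r(v)\ge H\ge \nu^{\min}_r(v)$.

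\textbf{Case $v \bmod p\in\cL_0/p$.} Now $\bI_r(v)=\bI_r$, and $x_v$ carries no information. I would instead start from the explicit $D_r(v)=2\sum_k \mathbf{U}_{r+k}\mathbf{c}_k$. The leading coefficient vector $\mathbf{c}_0$ is supported on $e',f'$, and the $X^+_n$ entries in \cref{lm:Finftyexplicit} consist of words $Q_{i_1<\cdots<i_{2n}}$ with $n\ge r$ factors. Terms with $k\ge1$ come from $\mathbf{U}_{r+k}$, which by (\ref{eq: split U}) are $r$-fold $Q$-words times something integral, so their valuation is at least that of some length-$r$ word. The same truncation argument as in \cref{lm:wordtruncate} then bounds every term below by $\nu^{\min}_r$, the minimum over $\bI_r$.

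The main obstacle I expect is the bookkeeping in this second case. The $Y^+$ entries pair with $\mathbf{c}_k$ for $k\ge1$ and bring in $\bfx,\bfy$ factors and powers of $p$. I would handle these by noting that reduction modulo $p$ kills every term carrying $p^k$ with $k\ge1$, unless it is compensated by an extra $Q$ factor. Each such extra $Q$ factor only raises the valuation, because the assumption $Q_i(t)=0$ for $i<a$ forces all surviving indices to be at least $a$, and every $h_i$ is positive.
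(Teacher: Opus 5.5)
Your argument is correct. The $\cL_1$ case is exactly the paper's argument: the first sentence of Remark~\ref{rmk:H} combined with Lemma~\ref{lm:wordtruncate}.

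In the $\cL_0$ case you take a somewhat different route.
\begin{itemize}
\item The paper checks only the first-order estimate $d_1(v)=v_t(x_v)\ge h_{a_k}=\min_j h_j$, by inspecting $D_1(v)$. It then reuses the same $x_v$-word expansion as in the $\cL_1$ case. Since $\nu$ is monotone in the valuation of the last letter, $\nu(\cW x_v)\ge \nu(\cW Q_{a_k})\ge\nu^{\min}_r$ for every $\cW\in\bI_{r-1}$.
\item You instead bound $\overline{D}_r(v)$ directly from $D_r(v)=2\sum_k\mathbf{U}_{r+k}\mathbf{c}_k$. You observe that every entry that can pair nontrivially with some $\mathbf{c}_k$ carries at least $r$ $Q$-factors, and then compress $r$-fold $Q$-words into $\bI_r$.
\end{itemize}
The paper's route is more economical: it needs only the order-one inspection and then recycles the $\cL_1$ machinery. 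Yours avoids the detour through $x_v$. It also explicitly handles the $k\ge 1$ contributions when $v\notin\cL_0$ but $v\bmod p\in\cL_0/p$. The paper's phrase ``only the first two entries of $\mathbf{c}$ are possibly nonzero'' glosses over this case, since the hypothesis only constrains $\mathbf{c}_0$.

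One correction to your last paragraph. $D_r(v)$ contains no powers of $p$: the factor $p^k$ in $\mathbf{c}$ is already absorbed into the index shift to $\mathbf{U}_{r+k}$. So nothing is killed modulo $p$, and the $k\ge1$ terms survive. They are controlled only because, by \eqref{eq: split U} with $n=k\ge 1$, every entry of $\mathbf{U}_{r+k}$ (including the $Y^+$ part, which carries $r+k-1$ factors) is an $r$-fold $Q$-word times an integral factor. That is the argument you already gave in the preceding paragraph, so the ``$p^k$ is killed unless compensated'' remark should simply be dropped.
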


\begin{proof}
    For $v \bmod p \in \cL_1/p$, this is just a rephrase of the first sentence in Remark~\ref{rmk:H}. For $v \bmod p \in \cL_0/p$, we additionally need to check that $d_1(v) \ge h_{a_k}$. Recall that $h_{a_k} = \min \{ v_t(Q_j) : j \in \bN \}$. The conclusion follows by inspecting the formula for $D_1(v)$ in \cref{not: v tilde +} when only the first two entries of $\mathbf{c}$ are possibly nonzero. 
\end{proof}

The argument above also explains why in the above definition we treated the cases $v \bmod p$ lie in $\cL_0 / p$ or $\cL_1/ p$ separately.

\begin{definition}
\label{def: rapid decay}
    A primitive special endomorphism $v\in \cL$ is said to \textit{decay rapidly to $r$th order} provided that 
    \begin{equation*}
        \begin{cases}
         d_r(v) = \nu_r^{\min}(v) \text{ and } v_t(x_v) < h_{a_k} & \text{ when } v \bmod p \in \cL_1 / p \\
         d_r(v) = \nu_r^{\min} (\text{in particular, } v_t(x_v) = h_{a_k}.) & \text{ when } v \bmod p \in \cL_0 / p 
    \end{cases}
    \end{equation*}
     We say that a saturated submodule $\Lambda\subseteq \cL$ \textit{decays rapidly to $r$th order} if every primitive special endomorphism $v\in \Lambda$ decays  rapidly to $r$th order.
\end{definition}

The lemma below is a basic reality check for the definition above. 
\begin{lemma}
\label{lem: order s < r}
    If a primitive formal endomorphism $v \in \cL$ decays rapidly to $r$th order, then it decays rapidly to $s$th order for any $s \le r$. 
\end{lemma}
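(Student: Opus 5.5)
The plan is to show directly that $d_s(v)=\nu_s^{\min}(v)$ for each $s\le r$. The side conditions ($v_t(x_v)<h_{a_k}$ in the $\cL_1$ case, $d_1(v)=h_{a_k}$ in the $\cL_0$ case) do not depend on the order, so they carry over unchanged. By Proposition~\ref{prop: lower bound for d_s} we already have $d_s(v)\ge \nu_s^{\min}(v)$, so only the reverse inequality $d_s(v)\le \nu_s^{\min}(v)$ needs proof. The natural tool is the recursion of Lemma~\ref{lm;recursive2}(b) together with Lemma~\ref{lm:specialendoprepair}(b),(c), which link the valuations of $\overline{D}_s(v)$ and $\overline{D}_{s+1}(v)$. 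It suffices to prove the case $s=r-1$ and then induct downward.

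First I would record the recursive structure of the minimal valuations. Let $\cW\in \bI_{s+1}(v)$ be a word realizing $\nu^{\min}_{s+1}(v)$. By Lemma~\ref{lm:wordtruncate}, the first index is $i_1=0$ and the consecutive gaps are $1$. Hence $\cW=Q_{u}\cW'$ with $\nu(\cW)=h_u+p^{u+1}\nu(\cW')$, and therefore
\[
\nu^{\min}_{s+1}(v)=\min_u\big(h_u+p^{u+1}\nu^{\min}_s(v)\big),
\]
where $u$ ranges over the allowed $a_j$. Whether the new letter is appended on the left or on the right needs care, since $\nu$ is defined recursively from the left. I expect the identity to hold as displayed with minimal words peeled off from the left, using the shape of the sequence $a_1<a_2<\cdots$ from Figure~\ref{fig: the a sequence}.

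The next step is to take $D_{s+1}(v)$ from Lemma~\ref{lm;recursive2}(b) with $n=s$ and $r=1$:
\[
D_{s+1}(v)=\sum_{i_1\ \text{even},\,i_1<i_2} Q^{(i_1)}_{i_2-i_1}\,D_s(v)^{(1+i_2)}.
\]
Suppose, for contradiction, that $d_s(v)>\nu^{\min}_s(v)$. Reduce mod $p$ and use Lemma~\ref{lm;recursive2}(a) to replace twists by $p$-powers. Every term then has valuation at least $\min_u\big(h_u+p^{u+1}d_s(v)\big)$. Because $d_s(v)\ge \nu_s^{\min}(v)+1$, this quantity is strictly larger than $\nu^{\min}_{s+1}(v)$, as in Lemma~\ref{lm:specialendoprepair}(c). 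So $d_{s+1}(v)>\nu^{\min}_{s+1}(v)$, contradicting rapid decay at order $s+1$.

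The main obstacle is this mod-$p$ reduction. The $O(p^{\cdot})$ correction terms from replacing $\sigma$-twists by Frobenius powers, and from the higher $p$-adic digits $\mathbf{c}_k$ inside $D_s$, must not reintroduce terms of valuation $\nu^{\min}_{s+1}(v)$ after reduction. I would handle this exactly as in equation~(\ref{eq:3434}), using $i_2\ge 1$ so that the correction terms are divisible by $p$. For the $\cL_0$ case I would also check that the base step $s=1$ is consistent, namely that $\nu^{\min}_1=h_{a_k}$ matches $d_1(v)$.
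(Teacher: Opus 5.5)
Your proposal is correct and is the paper's argument: the paper also proves the contrapositive one order at a time, using Lemma~\ref{lm:specialendoprepair}(c) to get $d_{r+1}(v)\ge h_i+p^{i+1}d_r(v)>h_i+p^{i+1}\nu_r^{\min}\ge\nu_{r+1}^{\min}$, where $i$ attains the minimum in that lemma. Two simplifications. The mod-$p$ bookkeeping you flag as the main obstacle is already absorbed into Lemma~\ref{lm:specialendoprepair}(c), because the underlying recursion is an exact identity. Also, you only need the inequality $\nu^{\min}_{s+1}(v)\le h_u+p^{u+1}\nu^{\min}_s(v)$, which follows directly from the definition of $\nu$ by prepending $Q_u$ to a minimal word, so Lemma~\ref{lm:wordtruncate} and your left/right concerns are unnecessary.
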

\begin{proof}
    It suffices to show that if $v$ does not decay rapidly to $r$th order, then it does not decay rapidly to $(r+1)$th order. Suppose that $v \bmod p \in \cL_0 / p$ and $v$ does not decay rapidly to $r$th order, so $d_r(v)> {\nu_r^{\min}}$. Then by Lemma~\ref{lm:specialendoprepair}, there exists $i$ such that  
    $$d_{r+1}(v)\geq 
    h_i+p^{i+1}d_r(v)> h_i+p^{i+1}\nu_r^{\min}\geq {\nu_{r+1}^{\min}}.$$
    So $v$ does not decay rapidly to $(r+1)$th order. The proof for $v \bmod p \in \cL_1 / p$ is similar. 
\end{proof}

\begin{remark}\label{rem: ak is a}
    Suppose that $a_k = a$. In this setting, $\bI_r^{\min}(v)$ is a singleton, and therefore $d_r(v) = \nu_r^{\min}(v)$. It follows that in this special case, every vector $v$ that satisfies $v \bmod p \in \cL_1 / p $ that decays rapidly to first order must decay rapidly to every order. 
\end{remark}

Recall that we have used the symbol ``\,$v'$\,'' for an element in the basis $\{v',w';e_i,f_i\}$ of $\bL(W)$. As we won't use this basis again, in the rest of this section, we will abuse the symbol ``\,$v'$\,'' to mean a special endomorphism in $\cL$.

\begin{lemma}
\label{lem: compare words}
    Let $v,v'$ be primitive formal special endomorphisms in $\cL$. Let $\cW \in \bI^{\min}_{r+1}(v)$ and $\cW' \in \bI_{r+1}^{\min}(v')$. 
    \begin{enumerate}[label=\upshape{(\alph*)}]
        \item\label{lem: compare words 1} If $v_t(x_v) > v_t(x_{v'})$, then $\nu(\cW) > \nu(\cW')$. 
        \item\label{lem: compare words 2} If both $v$ and $v'$ decay rapidly to first order, then for every $i < j \le r$, $a(\cW_i) \le a(\cW'_j)$. 
        \item\label{lem: compare words 3} If both $v$ and $v'$ decay rapidly to first order and $v_t(x_v) > v_t(x_{v'})$, then for every $i \le r$, $a(\cW_i) \le a(\cW'_i)$. 
    \end{enumerate}
\end{lemma}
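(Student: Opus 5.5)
The idea is to reduce all three parts to one fact: $\nu$ is computed from the right by composing increasing affine maps. By Lemma~\ref{lm:wordtruncate} and Lemma~\ref{lem: w and W}, a word in $\bI_{r+1}^{\min}(v)$ has the form $\cW=Q_{u_1}\cdots Q_{u_r}\,x_v$ with $u_i\in\{a_1,\dots,a_i\}$. When $v\bmod p\in\cL_0/p$, the last letter is a $Q$ of valuation $\ge h_{a_k}$, and rapid decay forces it to play the role of a terminal letter of valuation $h_{a_k}=v_t(x_v)$. Writing $\phi_s(T):=h_{a_s}+p^{a_s+1}T$, we get
\[
\nu(\cW)=\phi_{s_1}\circ\cdots\circ\phi_{s_r}(v_t(x_v)), \qquad u_i=a_{s_i}.
\]
Each $\phi_s$ is strictly increasing. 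For the tail $\cW[r-i+1]$ (the letters after position $i$, including $x_v$), set $T_i(\cW):=\nu(\cW[r-i+1])$. In a minimal word every tail is itself minimal among words of that length with the same allowed letter sets. Otherwise, replacing the tail by a smaller one would strictly lower $\nu(\cW)$, because the outer maps are strictly increasing.

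\textbf{Part (a).} Take $\cW'\in\bI^{\min}_{r+1}(v')$ and $\cW\in\bI^{\min}_{r+1}(v)$. Let $\cW''$ be the word with the $Q$-prefix of $\cW$ but ending in $x_{v'}$; it is a legal element of $\bI_{r+1}(v')$. By strict monotonicity of the composite map,
\[
\nu(\cW)=\Phi(v_t(x_v))>\Phi(v_t(x_{v'}))=\nu(\cW'')\ge\nu(\cW'),
\]
where $\Phi$ is the composite of the $\phi_{s_i}$ for $\cW$. In the $\cL_0$ case the same computation goes through once the last letter is replaced by its valuation.

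\textbf{Single-crossing property.} For $s<s'$, the difference $\phi_s(T)-\phi_{s'}(T)=(h_{a_s}-h_{a_{s'}})-(p^{a_{s'}+1}-p^{a_s+1})T$ is strictly decreasing in $T$. Hence the optimal letter at a position is a non-increasing function of the tail value $T$. Precisely: if $s'$ is optimal at $T$ and $s$ is optimal at some $T'>T$, and both letters are allowed at both positions, then $s\le s'$. The usual exchange argument makes this precise. Suppose $a(\cW_i)>a(\cW'_j)$. Swap the two letters; this is legal because the allowed sets $\{a_1,\dots,a_i\}\subseteq\{a_1,\dots,a_j\}$ are initial segments. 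The sum of the two resulting changes is
\[
\bigl[\phi_{s}(T)-\phi_{s'}(T)\bigr]-\bigl[\phi_{s}(T')-\phi_{s'}(T')\bigr]>0,
\]
with $s<s'$ and $T'>T$. So one of the two swaps strictly lowers $\nu$, contradicting minimality. (The affine factors in front are positive, so the signs carry through.)

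\textbf{Parts (b) and (c).} For (c), positions $i$ are compared at equal tail length. Applying (a) to the tails shows $T_i(\cW)>T_i(\cW')$, and single crossing gives $a(\cW_i)\le a(\cW'_i)$. For (b), the task is to show $T_i(\cW)>T_j(\cW')$ whenever $i<j$; here the tail of $\cW$ is strictly longer. This is where rapid decay to first order enters. Both $v_t(x_v)$ and $v_t(x_{v'})$ are $<h_{a_k}$, and $h_{a_k}\le h_{a_s}$ for all $s$. So a tail with at least one $Q$ satisfies
\[
T\ge h_{a_k}+p^{a+1}>\max\{v_t(x_v),v_t(x_{v'})\}.
\]
From this I will prove by induction on length that any minimal tail of length $\ell+1$ (for $v$) has valuation strictly greater than any minimal tail of length $\ell$ (for $v'$). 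The induction step uses monotonicity of $\phi_s$ together with the option of reusing letters.

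The main obstacle is this length-comparison induction in (b), because the allowed letter sets differ between positions. I will handle it by comparing the length-$(\ell+1)$ tail against the length-$\ell$ tail obtained by deleting the first letter. Deleting a letter only shrinks the value, since $\phi_s(T)>T$ for $T\ge 0$, and it keeps the letter constraints valid after shifting positions, because the allowed sets are initial segments that grow with position.
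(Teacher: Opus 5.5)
Your proposal is correct and is essentially the paper's argument: (a) follows from monotonicity, and (b), (c) follow by swapping the offending letters between the two minimal words and comparing tail valuations. You differ in two respects: you run the exchange at arbitrary positions $i<j$ rather than reducing to $i=1$, $j=2$, and you prove the \emph{strict} comparison $T_i(\cW)>T_j(\cW')$, which the paper's last step ($\nu-\nu'\ge 0\Rightarrow\alpha'\ge\alpha$) implicitly needs.

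One correction: Lemma~\ref{lm:wordtruncate}(c) does not impose a position-dependent constraint $u_i\in\{a_1,\dots,a_i\}$ (there $a_k$ is the last term of the sequence), and $\bI_r$ consists of all words in $Q_a,\dots,Q_{m-1}$, with minimality taken over all of $\bI_r(v)$. Consequently your swaps, deletions and re-attachments are automatically legal, and the ``initial segment'' justifications should simply be dropped; under your reading they would actually fail for deletion, since shifting letters left moves them into smaller allowed sets.
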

\begin{proof}
    Part~\ref{lem: compare words 1} is clear. The proofs for part \ref{lem: compare words 2} and \ref{lem: compare words 3} are entirely similar so we only explain the former. As $\cW[r] \in \bI^{\min}_r(v)$, $\cW'[r] \in \bI^{\min}_r(v')$ and we allow $\cW = \cW'$, by induction we may always assume that $i = 1$ and $j = 2$. The base case $r = 2$ follows from the assumption that $\nu(x_{v'}) \le h_{a_{k}} \le h_{a(\cW_2)}$. Define $\alpha = a(\cW_1)$, $\alpha' = a(\cW'_1)$, $\nu = \nu(\cW[r])$ and $\nu' = \nu(\cW'[r - 1])$. Note that by induction we have $\nu' \le \nu$. By the minimality of $\cW = Q_{\alpha} \cdot (\cW[r])$ and $\cW'[r] = Q_{\alpha'} \cdot (\cW'[r -1])$, if we swap $Q_{\alpha}$ with $Q_{\alpha'}$, the $\nu$-value may only increase. This leads to inequalities:
    \begin{align*}
        h_{\alpha'} + p^{\alpha'} \nu' \le h_{\alpha} + p^{\alpha} \nu' \le h_\alpha + p^\alpha \nu \le h_{\alpha'} + p^{\alpha'} \nu 
    \end{align*}
    Therefore, $p^{\alpha'}(\nu - \nu') \ge p^\alpha (\nu - \nu')$. As $\nu - \nu' \ge 0$, we obtain $\alpha' \ge \alpha$ as desired. 
\end{proof}

We have the following result.

\begin{proposition}
\label{prop: prelim decay results}
    Let $v,v' \in \cL$ be two primitive special endomorphisms. 
    \begin{enumerate}[label=\upshape{(\alph*)}]
        \item\label{prop: prelim decay results 1} If $v$ decays rapidly to $r$th order and $v \equiv v' \bmod p$, then $d_r(v) = d_r(v')$. In particular, $v_t(x_v) = v_t(x_{v'})$ and $v'$ also decays rapidly to $r$th order. 
        \item\label{prop: prelim decay results 2} If both $v$ and $v'$ decays rapidly to $r$th order, and $v_t(x_v) \neq v_t(x_{v'})$, then so does any primitive $v'' \in \cL$ that is a linear combination of $v, v'$. 
    \end{enumerate}
\end{proposition}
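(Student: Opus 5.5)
Both parts use the same structure: write $\overline{D}_r$ of the relevant vector as a sum of words, separate the words of minimal valuation $\nu_r^{\min}$, and check that this minimal part cannot cancel. The tools are the linearity of $D_n$ in $\mathbf{c}$ (\cref{not: v tilde +}), the recursion of \cref{lm;recursive2}(b), the identity $d_n(v)=v_t(\overline{D}_n(v))$ from \cref{lm:specialendoprepair}, and the word bookkeeping of \cref{lem: w and W} and \cref{lem: compare words}.

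\textbf{Part (a).} Write $v'=v+pu$ with $u\in\cL$. Since $D_r$ is $\bZ_p$-linear in $\mathbf{c}$, we get $D_r(v')=D_r(v)+pD_r(u)$. Reducing mod $p$ gives $\overline{D}_r(v')=\overline{D}_r(v)$, so $d_r(v')=d_r(v)$ by \cref{lm:specialendoprepair}(b). The same argument at $r=1$ gives $x_{v'}=x_v$ via \cref{rmk:exlicitx_v}, so $v_t(x_v)=v_t(x_{v'})$.

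Since $v\equiv v'\bmod p$, both vectors fall into the same case $\cL_0/p$ or $\cL_1/p$, so $\bI_r(v)$ and $\bI_r(v')$ have the same shape. Because $\nu$ on these words depends only on $v_t(x_v)$, we get $\nu_r^{\min}(v')=\nu_r^{\min}(v)$. The side conditions in \cref{def: rapid decay} (namely $v_t(x_v)<h_{a_k}$ or $=h_{a_k}$) transfer directly, so $v'$ decays rapidly to $r$th order.

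\textbf{Part (b).} Write $v''=\alpha v+\beta v'$ with $\alpha,\beta\in\bZ_p$. Primitivity of $v''$ means some coefficient is a unit, and after using part (a) we may reduce $\alpha,\beta$ to Teichmüller representatives. By linearity, $\overline{D}_r(v'')=\bar\alpha\,\overline{D}_r(v)+\bar\beta\,\overline{D}_r(v')$, and $v_t(\overline{D}_r(v))=\nu_r^{\min}(v)$, $v_t(\overline{D}_r(v'))=\nu_r^{\min}(v')$. Assume without loss of generality that $v_t(x_v)>v_t(x_{v'})$. Then \cref{lem: compare words}(a) gives $\nu_r^{\min}(v)>\nu_r^{\min}(v')$.

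There are two cases. If $\bar\beta\neq 0$, the two valuations differ, so $v_t(\overline{D}_r(v''))=\nu_r^{\min}(v')$. If $\bar\beta=0$, then $\alpha$ is a unit, $v''\equiv \alpha v\bmod p$, and part (a) applies directly.

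It remains to show that in the case $\bar\beta\neq0$ we have $\nu_r^{\min}(v'')=\nu_r^{\min}(v')$ and that the side condition holds. Taking $r=1$, we get $x_{v''}=\bar\alpha x_v+\bar\beta x_{v'}$ up to a unit, so $v_t(x_{v''})=v_t(x_{v'})$, and hence $\nu_r^{\min}(v'')=\nu_r^{\min}(v')$ provided $v''$ and $v'$ lie in the same $\cL_0$ versus $\cL_1$ case. When the mod-$p$ classes fall in different cases, the distinction matters: if $v'\bmod p\in\cL_0/p$, then $v_t(x_{v'})=h_{a_k}$, and this exceeds $v_t(x_v)$ when $v\bmod p\in\cL_1/p$, which contradicts the ordering assumed above. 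So the vector with the smaller valuation determines the case, and a short check on the condition $v_t(x)<h_{a_k}$ versus $=h_{a_k}$ finishes the argument. Finally, combining $d_r(v'')\geq\nu_r^{\min}(v'')$ from \cref{prop: lower bound for d_r} with the equality $d_r(v'')=\nu_r^{\min}(v')=\nu_r^{\min}(v'')$ obtained above gives rapid decay of $v''$.

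\textbf{Main obstacle.} The hardest step is this case bookkeeping when the mod-$p$ classes of $v''$ and $v'$ lie in different summands $\cL_0/p$ and $\cL_1/p$, where $\bI_r$ is defined differently; I would resolve it using \cref{prop: lower bound for d_r} as above.
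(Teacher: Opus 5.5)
\textbf{Gap in part (a).} The step ``$D_r$ is $\bZ_p$-linear in $\mathbf{c}$, so $D_r(v+pu)=D_r(v)+pD_r(u)$'' is false. There are two problems with it.
\begin{enumerate}
\item $D_n$ is built from the Teichm\"uller digits $\mathbf{c}_k$ of $\mathbf{c}$, and these are not additive.
\item More importantly, multiplication by $p$ shifts the index rather than scaling it. As noted in the proof of \cref{lm:specialendoprepair}, $D_r(pu)=D_1(p^ru)=D_{r+1}(u)$, not $pD_r(u)$.
\end{enumerate}
So the contribution of $pu$ to $\overline{D}_r(v')$ does not vanish mod $p$. In general neither $\overline{D}_r(v')=\overline{D}_r(v)$ nor $x_{v'}=x_v$ holds.

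A warning sign is that your argument never uses the hypothesis that $v$ decays rapidly. It would show that $d_r$ depends only on $v \bmod p$, which is false. Since $\widetilde{v+pu}=\tilde v+p\tilde u$, whenever $d_1(v)\neq d_2(u)$ one has $d_1(v+pu)=\min\{d_1(v),d_2(u)\}$. This differs from $d_1(v)$ as soon as $d_2(u)<d_1(v)$, for instance when $v$ deforms over all of $C^{/P}$. The real obstacle is therefore not the $\cL_0/\cL_1$ bookkeeping you flagged, but bounding the $t$-valuation of this perturbation.

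\textbf{How to repair (a).} Work with the genuinely linear object $p^{r-1}\tilde v$. Write $pu=p^{\gamma}u'$ with $u'$ primitive and $\gamma\ge 1$. Then
\[
p^{r-1}\widetilde{v'}=p^{r-1}\tilde v-p^{r+\gamma-1}\widetilde{u'},
\]
and the last term has integral coefficients in all degrees $<d_{r+\gamma}(u')$. By \cref{prop: lower bound for d_r}, $d_{r+\gamma}(u')\ge\nu^{\min}_{r+\gamma}(u')$. Now take $\cW\in\bI^{\min}_{r+\gamma}(u')$ and truncate it to its first $r$ letters; this strictly lowers $\nu$. Hence
\[
\nu^{\min}_{r+\gamma}(u')>\nu([r]\cW)\ge\nu^{\min}_r\ge\nu^{\min}_r(v)=d_r(v).
\]
The last two relations are exactly where rapid decay of $v$ is used, including the side condition $v_t(x_v)\le h_{a_k}=\min_j h_j$. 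This gives $d_r(v')=d_r(v)$. Applying the same argument at $r=1$ (via \cref{lem: order s < r}) gives $v_t(x_{v'})=v_t(x_v)$, and then rapid decay of $v'$ follows.

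\textbf{Part (b).} Your part (b) rests on the same false linearity: $\overline{D}_r(v'')=\bar\alpha\overline{D}_r(v)+\bar\beta\overline{D}_r(v')$ and $x_{v''}=\bar\alpha x_v+\bar\beta x_{v'}$ both fail. The conclusion survives if you argue with $p^{r-1}\widetilde{v''}=\alpha\,p^{r-1}\tilde v+\beta\,p^{r-1}\widetilde{v'}$ instead. Suppose $v_t(x_{v'})<v_t(x_v)$ and $\beta$ is a unit. Then \cref{lem: compare words}(a) gives $d_r(v')=\nu^{\min}_r(v')<\nu^{\min}_r(v)=d_r(v)$. So the first non-integral coefficient of $p^{r-1}\widetilde{v'}$ cannot be cancelled, and $d_r(v'')=d_r(v')$. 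The case $\beta\in p\bZ_p$ reduces to (a), which must first be proved as above. This is essentially the paper's argument.
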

\begin{proof}
    \ref{prop: prelim decay results 1} Suppose that $v = v' + p u$ for $u \in \cL$. It suffices to show that the coefficients of $t^0,...,t^{\nu_r^{\min}}$ in the power series $p^r\tilde{u}$ all lie in $W$. Let $pu=p^{\gamma}u'$, where $\gamma\geq 1$ and $u'$ is primitive. It suffices to show that  $\nu_r^{\min}<w_{r+\gamma}^{\min}(u')$. Let $\cW\in\bI_{r+\gamma}^{\min}(u')$, then $[r]\cW\in \bI_r$. We have $$\nu_r^{\min}\leq \nu([r]\cW)< \nu(\cW) = \nu_{r+\gamma}^{\min}(u')$$ 
    as desired. In particular, when $r = 1$, we have $v_t(x_v) = v_t(x_{v'})$. The statement that $v'$ also decays rapidly to $r$th order is clear from definition. 

    \ref{prop: prelim decay results 2} Let $v'' =\alpha v+ \beta v'$ be a primitive vector. Then at least one of $\alpha,\beta$ lies in $\bZ_p^*$. If one of $\alpha$ and $\beta$ lies in $p\bZ_p$, we reduce to \ref{prop: prelim decay results 1}. So we can assume that $\alpha,\beta \in \bZ_p^*$. Without loss of generality, assume that $v_t(x_v) = d_1(v) < v_t(x_{v'}) = d_1(v')$. By \cref{def: rapid decay}, this can happen only when $v \bmod p \in \cL_1 / p$. No matter what $v'$ is, we have $d_r(v)=\nu^{\min}_r(v)$ and $d_r(v')=\nu^{\min}_r(v')$ by definition. It follows from \cref{lem: compare words} that 
    $$d_r(v)=\nu^{\min}_r(v)<\nu^{\min}_r(v')=d_r(v').$$
    So the coefficients of $t^0,...,t^{d_r(v)}$ in the power series $p^{r-1}\widetilde{v}$ do not all lie in $W$, while the coefficients of $t^0,...,t^{d_r(v)}$ in the power series $p^{r-1}\widetilde{v'}$ all lie in $W$. It follows that the  coefficients of $t^0,...,t^{d_r(v)}$ in the power series $p^{r-1}\widetilde{v''}$ do not all lie in $W$. Therefore, $d_r(v'') = d_r(v)$. So the claim holds.
\end{proof}
 
Part~\ref{prop: prelim decay results 2} of the above has the following consequence. 
\begin{corollary}\label{cor: decay linear combination L1 L1}
    If $v_0\in \cL_0$ and $v_1 \in \cL_1$ are primitive vectors that decay rapidly to $r$th order, then so does every linear combination of $v_0$ and $v_1$ that is a primitive vector in $\cL$. 
\end{corollary}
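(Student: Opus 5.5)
The plan is to deduce the corollary from \cref{prop: prelim decay results}\ref{prop: prelim decay results 2}. The only extra input needed is that $v_t(x_{v_0}) \neq v_t(x_{v_1})$.

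Let $v'' = \alpha v_0 + \beta v_1$ be primitive in $\cL$. If one of $\alpha, \beta$ lies in $p\bZ_p$, then the other is a unit. In that case $v''$ is congruent modulo $p$ to a unit multiple of $v_0$ or of $v_1$, and unit multiples of a rapidly decaying vector clearly still decay rapidly (scaling by a unit does not change any $d_n$ or $x_v$). So \cref{prop: prelim decay results}\ref{prop: prelim decay results 1} gives the claim. We may therefore assume $\alpha, \beta \in \bZ_p^\times$.

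Now compare valuations. Since $v_0 \in \cL_0$ is primitive, $v_0 \bmod p$ lies in $\cL_0/p$. By \cref{def: rapid decay}, rapid decay of $v_0$ to $r$th order forces $v_t(x_{v_0}) = h_{a_k}$. Since $v_1 \in \cL_1$ is primitive, $v_1 \bmod p \in \cL_1/p$, and the same definition forces $v_t(x_{v_1}) < h_{a_k}$. Hence $v_t(x_{v_0}) \neq v_t(x_{v_1})$.

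The hypotheses of \cref{prop: prelim decay results}\ref{prop: prelim decay results 2} are then satisfied, so $v''$ decays rapidly to $r$th order. The one point to watch is that the proof of that proposition picks out the vector with smaller $v_t(x)$, here $v_1 \bmod p \in \cL_1/p$, and uses \cref{lem: compare words}. That comparison is meant across the two cases of the definition of $\bI_r(v)$. It should still give $\nu_r^{\min}(v_1) < \nu_r^{\min}(v_0)$ once one notes that a word in $\bI_r(v_0) = \bI_r$ ends in a letter $Q_j$ of valuation at least $h_{a_k} > v_t(x_{v_1})$. This is essentially the only thing to double-check.

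Finally, we must confirm that $v''$ satisfies the side condition for its own case. Since $\beta$ is a unit, $v''$ is nonzero modulo $\cL_0/p$, so $v'' \bmod p \notin \cL_0/p$, and $v''$ falls under the $\cL_1$-type clause of \cref{def: rapid decay}. That clause requires $d_1(v'') = d_1(v_1) < h_{a_k}$, which the argument above provides.
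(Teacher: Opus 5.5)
Your proposal is correct and is essentially the paper's argument. The paper deduces the corollary directly from \cref{prop: prelim decay results}\ref{prop: prelim decay results 2}, the key point being exactly that rapid decay forces $v_t(x_{v_0}) = h_{a_k}$ while $v_t(x_{v_1}) < h_{a_k}$.

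Your extra checks are details the paper leaves implicit, and they go through as you describe. These are the cross-type comparison $\nu_r^{\min}(v_1) < \nu_r^{\min}(v_0)$, obtained by replacing the final letter $Q_j$ of a minimal word by $x_{v_1}$, and the observation that $v''$ falls under the $\cL_1$-type clause with $d_r(v'') = d_r(v_1)$.
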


\begin{definition}
\label{def: intervals I_r(v)}
    Let $v$ be a primitive formal special endomorphism that decays rapidly up to first order. We define $I_r(v) \subseteq \bN$ to be the interval 
    $$ I_r(v) = \Big[\min_{\cW \in \bI_r^{\min}(v)} a(\cW_1)  , \max_{\cW \in \bI_r^{\min}(v)} a(\cW_1) \Big], $$
    and write $|I_r(v)|$ for the length of this interval. 
\end{definition}
Note that for any $r \ge s$, 
$$ I_s(v) = \Big[\min_{\cW \in \bI_r^{\min}(v)} a(\cW_{r - s + 1})  , \max_{\cW \in \bI_r^{\min}(v)} a(\cW_{r - s + 1}) \Big]. $$

We have the following results about intervals.

\begin{proposition}\label{prop: intervals}
    Let $v$ and $v'$ be primitive formal endomorphisms decay rapidly up to first order. 
    \begin{enumerate}[label=\upshape{(\alph*)}]
        \item\label{prop: intervals 1} If $v_t(x_v) \neq v_t(x_{v'})$, $I_r(v) \cap I_s(v')$ is either empty or a singleton for any $r,s \in \bN$. 

        \item\label{prop: intervals 2} If $v_t(x_v) = v_t(x_{v'})$, $I_r(v) \cap I_s(v')$ is either empty or a singleton for any distinct $r\neq s \in \bN$.
    \end{enumerate}
\end{proposition}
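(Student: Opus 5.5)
Both parts reduce to \cref{lem: compare words}, used through the description of $I_s$ as the range of the $(r-s+1)$th letter of minimal words of length $r$. If $I_r(v)\cap I_s(v')$ contained two distinct integers, we would have $\min I_r(v) < \max I_s(v')$ and $\min I_s(v') < \max I_r(v)$. So it suffices to show that one of the two intervals lies weakly to the left of the other: $\max I_r(v)\le \min I_s(v')$ or $\max I_s(v')\le \min I_r(v)$. Then the intersection has at most one point. The plan is to compare letters of minimal words by embedding both intervals into positions of suitably long words.

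For part (a), suppose $v_t(x_v) > v_t(x_{v'})$; the other case is symmetric. Take $N\ge \max(r,s)$ large and minimal words $\cW\in\bI^{\min}_{N+1}(v)$ and $\cW'\in \bI^{\min}_{N+1}(v')$. Then $I_r(v)$ is the range of $a(\cW_{N-r+1})$, and $I_s(v')$ is the range of $a(\cW'_{N-s+1})$, as $\cW,\cW'$ vary. The suffixes of minimal words are minimal (as used in the proof of \cref{lem: compare words}), so this identification is legitimate. We then split into cases.
\begin{enumerate}
\item If $r>s$, the position $i=N-r+1$ is strictly before $j=N-s+1$. \Cref{lem: compare words}\ref{lem: compare words 2} gives $a(\cW_i)\le a(\cW'_j)$ for all choices, hence $\max I_r(v)\le\min I_s(v')$.
\item If $r<s$, apply the same lemma with the roles of $v,v'$ swapped (part (b) needs no valuation hypothesis), giving $\max I_s(v')\le \min I_r(v)$.
\item If $r=s$, \cref{lem: compare words}\ref{lem: compare words 3} gives $a(\cW_i)\le a(\cW'_i)$ at the same position $i$, again yielding $\max I_r(v)\le \min I_r(v')$.
\end{enumerate}

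For part (b), with $r\ne s$, the same argument as cases (1) and (2) applies verbatim, since part (b) of \cref{lem: compare words} does not use the valuation comparison. The equal-index case is exactly where part (c) of the lemma would be needed, which explains why it is excluded.

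The main obstacle is bookkeeping rather than new ideas. First, we must check that \cref{lem: compare words}\ref{lem: compare words 2} really allows $\cW$ and $\cW'$ to be arbitrary independent minimal words of the same length $N+1$, including $v=v'$. Second, we must check that the positions line up correctly when we reindex from the end of the word, which uses the remark after \cref{def: intervals I_r(v)}. Some care is also needed for $v\bmod p\in\cL_0/p$, where words lack the trailing $x_v$. In that case I would treat $\bI_r$ words as words ending in a virtual letter of valuation $h_{a_k}$, consistent with \cref{def: rapid decay}, so that \cref{lem: compare words} still applies.
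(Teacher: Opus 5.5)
Your proof is correct and takes the paper's route: the paper also derives everything from \cref{lem: compare words}, but writes out only the case $r=s$ of (a), using part \ref{lem: compare words 3}, and says that (b) is identical with part \ref{lem: compare words 2} in its place. Your separate treatment of $r\neq s$ in (a) via part \ref{lem: compare words 2} fills in what the paper leaves implicit.

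One indexing correction: in a word of length $N+1$, the suffix of length $r$ starts at position $N-r+2$, not $N-r+1$. This does not affect the argument, since only the relative order of the positions matters. It does show that the positions fall in the range $\le N$ required by \cref{lem: compare words}\ref{lem: compare words 2} exactly when $r,s\ge 2$. Those are the only cases used later, in $T_r(i)=\sum_{1<s\le r}|I_s(v_i)|$.
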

\begin{proof}
    For \ref{prop: intervals 1}, suppose that $v_t(x_v) > v_t(x_{v'})$. Let $\cW \in \bI_{r}^{\min}(v)$, $\cW' \in \bI_s^{\min}(v')$. Suppose that $r=s$. Then, by \cref{lem: compare words}\ref{lem: compare words 3}, we have that $a(\cW_1) \le a(\cW'_1)$. That is, every integer in $I_r(v)$ is bounded above by every integer in $I_r(v')$. Hence the intervals are either disjoint, or intersect only at a singleton. For \ref{prop: intervals 2} identical argument goes through, except that one replaces \ref{lem: compare words 3} of \cref{lem: compare words} by \ref{lem: compare words 2}. 
\end{proof}

\subsection{Isotropic subspaces}\label{sec: superspecial isotropics}

We have the following result. 
\begin{lemma}\label{lemma: height valuation}
    The number of distinct $i$ such that $v_t(x_i) \geq h_{a_k}$ is at most $ m-1-a_k$. 
\end{lemma}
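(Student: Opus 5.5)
The plan is to read $x_i$ as the coordinates $x_1,\dots,x_m$. Suppose a set $S$ of $s$ indices has $v_t(x_i)\ge h_{a_k}$ for $i\in S$; we aim to show $a_k\le m-1-s$. Throughout, write $h:=h_{a_k}$. By construction of the sequence $a_1<a_2<\cdots$ in \cref{not: define h_i and a_i}, together with the fact that the sequence terminates at $a_k$ (this termination must be checked as part of the argument), $h=\min_j v_t(Q_j(t))$. In particular $v_t(Q_j(t))\ge h$ for every $j$.

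\emph{Step 1: truncate.} Let $u'$ be the formal curve obtained from $C$ by setting $x_i=0$ for $i\in S$, keeping all other coordinates. Mod $p$,
\[
Q_j=-\sum_\ell\bigl(x_\ell y_\ell^{p^j}+x_\ell^{p^j}y_\ell\bigr).
\]
Every summand with $\ell\in S$ has $t$-valuation $>h$, since $v_t(x_\ell)\ge h$ and $y_\ell(0)=0$ because the curve passes through $P$. Hence $Q_j(u')\equiv Q_j(t)\bmod t^{h+1}$ for all $j$. So $v_t(Q_j(u'))\ge h$ for all $j$, and $v_t(Q_{a_k}(u'))=h$ exactly.

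\emph{Step 2: reduce to a smaller split space.} The polynomials $Q_j(u')$ involve only the pairs $(x_\ell,y_\ell)$ with $\ell\notin S$, because the unpaired $y_\ell$ for $\ell\in S$ drop out. So they are the polynomials $Q_j$ attached to a split quadratic space of rank $2m'$, where $m'=m-s$. Apply \cref{NPstrata}(c) in this smaller split case; the computation there is purely formal in the polynomials $Q_j$ and works for any $m$. It gives $Q_j\in(Q_0,\dots,Q_{m'-1})$ for all $j\ge m'$, in $\bF[\![x_\ell,y_\ell]\!]_{\ell\notin S}$.

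\emph{Step 3: conclude.} Suppose for contradiction $a_k\ge m'$. Pulling the ideal membership back along $u'$ gives
\[
Q_{a_k}(u')=\sum_{i<m'}f_i(t)\,Q_i(u').
\]
If each $Q_i(u')$ with $i<m'\le a_k$ had valuation $>h$, this would contradict $v_t(Q_{a_k}(u'))=h$. For $i<a_k$ the definition of the $a_i$'s gives $h_i\ge h$, but only with strict inequality when $i$ lies strictly between records. \textbf{The main obstacle is the possible equality $h_{a_{k-1}}=h_{a_k}$.}

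To handle it I would try two things. First, I would sharpen \cref{NPstrata}(c) by tracking the inductive computation of $\Frob^i(e_1)$ in its proof. I expect that for $j\ge m'$, $Q_j$ lies in $\mathfrak m\cdot(Q_0,\dots,Q_{m'-1})$ plus $\sigma$-twists $Q_i^{(\beta)}$ with $\beta\ge1$. Both kinds of terms have valuation $>h$ along the curve, since $p^\beta h>h$ and the coefficient in $\mathfrak m$ adds positive valuation. That would force $v_t(Q_{a_k}(u'))>h$, the desired contradiction.

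Failing that, the fallback is the following. Replace $a_k$ by the smallest index $a'$ with $h_{a'}=h$; it is one of the $a_i$ and satisfies $a'\le a_k$. Then $v_t(Q_i(u'))>h$ for all $i<a'$, which rules out $a'\ge m'$ and gives $a'\le m'-1$. The remaining gap is to upgrade this bound to $a_k$ itself, which needs the sharpened ideal membership (or an iteration of the argument through the successive records with the same valuation $h$).
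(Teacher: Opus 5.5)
Your Steps 1 and 2 are sound and amount to the paper's reduction. The paper passes to a sub-Shimura variety and records this as $Q_{a_k}\in(Q_0,\dots,Q_{a_k-1},x_{a_k+1},\dots,x_m)$, while you instead set $x_\ell=0$ for $\ell\in S$ along the curve; the two are equivalent. (A small correction: \cref{NPstrata}(c) is not purely formal, since it rests on the stabilization in \cref{cor:gen-nonteducdc}(2). To use it with $m'$ pairs you need an auxiliary orthogonal Shimura variety of split type with $b=2m'$, or a sub-Shimura variety as in the paper. Since the conclusion is an identity among explicit polynomials, any one such instance suffices.)

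The genuine gap is Step 3. You correctly observe that $Q_{a_k}(u')=\sum_{i<m'}f_i(t)Q_i(u')$ gives no contradiction if some $f_i$ is a unit and $h_i=h$, which can only occur when some earlier $a_j$ has $h_{a_j}=h_{a_k}$. But neither of your remedies is carried out. The first is stated only as an expectation. The fallback, as you say yourself, only gives the bound with some $a'\le a_k$ in place of $a_k$, which is strictly weaker than the lemma.

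The missing idea is a one-line grading argument; no further analysis of $\Frob^i(e_1)$ is needed.
\begin{itemize}
\item Each $Q_j$ is homogeneous of degree $p^j+1$ in the variables $x_\ell,y_\ell$.
\item So in any relation $Q_{a_k}=\sum_{i<m'}f_iQ_i$ in $\bF[\![x_\ell,y_\ell]\!]_{\ell\notin S}$, compare homogeneous components of degree $p^{a_k}+1$. You may then replace each $f_i$ by its homogeneous part of degree $p^{a_k}-p^i\ge 1$.
\item This is exactly the sharpened membership $Q_{a_k}\in\mathfrak m\cdot(Q_0,\dots,Q_{m'-1})$ you wanted.
\item The truncated curve still passes through the origin, so $v_t(f_i(u'))\ge p^{a_k}-p^i\ge 1$.
\item Also $v_t(Q_i(u'))\ge h$ for $i<m'\le a_k$, by Step 1 and because $h_i\ge h_{a_k}$ for all $i<a_k$.
\item Hence every term on the right has valuation at least $h+1$, contradicting $v_t(Q_{a_k}(u'))=h$, and your proof is complete.
\end{itemize}

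The paper's own write-up passes over the same point, and the same degree count repairs it. In $Q_{a_k}=\sum f_iQ_i+\sum g_ix_i$ one can take $f_i\in\mathfrak m$ and $g_i$ homogeneous of degree $p^{a_k}$. In particular the $g_i$ necessarily vanish at the origin, rather than having nonzero constant term as the paper states. This yields some $i>a_k$ with $v_t(x_i)\le h_{a_k}-p^{a_k}$, which is what gives the version of the lemma with $\ge$.
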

\begin{proof}
    The key observation is that we can find a suitable sublattice $L' \subseteq L$ defining an orthogonal Shimura subvariety $\mathscr{S}'$ of $\mathcal{S}$ that is cut out by the ideal $(x_{a_k+1} \dots x_m, \, y_{a_k+1} \dots y_m)$. Then, by applying \cref{NPstrata} to $\mathscr{S}'$, we deduce that $Q_{a_k} \in (Q_0, Q_1, \hdots Q_{a_k-1}, x_{a_k+1}, \cdots x_m)$. It follows that at least one of the numbers $v_t(x_{a_k+1}) $,..., $v_t(x_m)$ must be smaller or equal to $h_{a_k}$. We may reshuffle indicies to deduce that the number of distinct $i$ such that $v_t(x_i) > h_{a_k}$ is at most $m-1-a_k$.

    We have therefore proved that the number of distinct $i$ such that $v_t(x_i) > h_{a_k}$ is at most $m-1-a_k$. In order to prove the claim with $>$ replaced with $\geq$, write $Q_{a_k} = \sum f_i Q_i + \sum g_i x_i$ in the ring $\bF[\![x_1 \hdots x_m, y_1 \hdots y_m]\!]$. It suffices to prove that elements $g_i$ have non-zero constant term. But this is true by just comparing coefficients of each monomial on the LHS and RHS. 
    \end{proof}

Lemma \ref{lemma: height valuation} yields the following corollary. 

\begin{corollary}\label{cor: isotropic subspace first order decay}
    \begin{enumerate}[label=\upshape{(\alph*)}]
        \item Let $\cL'\subset \cL $ be a saturated sub-lattice having rank $m-a_k$ such that $\cL'/p\cL \subseteq \cL_1 / p\cL$, and such that $\cL' /p\cL$ is isotropic. Then there is at least one primitive vector $v$ that decays rapidly to first order, i.e., $v_t(x_v) < h_{a_k}$.


        \item There exists a saturated sublattice $\cL_1' \subset \cL_1$ having rank $2a_k$ such that every primitive vector decays rapidly to first order. 
    \end{enumerate}
     
\end{corollary}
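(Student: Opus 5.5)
The plan is to reduce both parts to linear algebra over $\bF_p$ on $\cL_1/p$ by making $x_v$ explicit for $v$ with $v \bmod p \in \cL_1/p$, and then to feed \cref{lemma: height valuation} into Witt-type counting. Throughout I assume that \cref{lemma: height valuation} counts all $2m$ coordinates $x_i, y_i$ (indexed by $e_i, f_i$); if it only counts the $x_i$, the counts below must be redone.

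\emph{Step 1: $x_v$ is essentially a linear form.} By \cref{rmk:exlicitx_v}, $x_v = \overline{D}_1(v)$. The first row of $Y_1^+$ in \cref{lm:Finftyexplicit} is $\tfrac12\sum_{i \text{ even}} [\bfx\ \bfy]^{(i)}$, since $Q_\varnothing = \delta_\varnothing = 1$ and there is no $Q$-factor. Write $v = (e',f';e_i,f_i)\mathbf{c}$ and let $\ell_{\mathbf{c}} := \sum_i (c_i x_i + c'_i y_i) \in \bF[\![t]\!]$, where $c_i, c'_i$ are the $\cL_1$-coordinates of $\mathbf{c}_0$. Then modulo $p$,
\[ x_v = \sum_{i \text{ even}} \ell_{\mathbf{c}}^{\,p^i} + (\text{contributions of the } \cL_0\text{-coordinates and of } \mathbf{c}_k,\ k\ge 1). \]
The $\cL_0$-part of $D_1$ only involves $X_1^+$, which is divisible by some $Q_j$. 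Its valuation is therefore at least $h_{a_k}$, by the argument in \cref{prop: lower bound for d_r}. The terms $\mathbf{U}_{1+k}\mathbf{c}_k$ with $k\ge1$ also have valuation at least $\min_j h_j \ge h_{a_k}$. Hence
\[ v_t(x_v) < h_{a_k} \iff v_t(\ell_{\mathbf{c}}) < h_{a_k}, \]
and this condition depends only on the image $\bar v$ of $v$ in $\cL_1/p$. For such $v$, \cref{def: rapid decay} at first order only asks for $v_t(x_v) < h_{a_k}$, because $d_1(v) = v_t(x_v) = \nu_1^{\min}(v)$ automatically. So define the $\bF_p$-subspace
\[ V := \{ \bar{\mathbf{c}} \in \cL_1/p : v_t(\ell_{\mathbf{c}}) \ge h_{a_k}\}. \]
A primitive $v$ with $v \bmod p \in \cL_1/p$ decays rapidly to first order if and only if $\bar v \notin V$.

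\emph{Step 2: counting lemma.} The proof of \cref{lemma: height valuation} does not depend on the chosen hyperbolic basis of $\cL_1$. Any other $\bZ_p$-hyperbolic basis differs by an element of $\mathrm{O}(\cL_1)(\bZ_p)$, and the sub-Shimura-variety argument goes through after the corresponding linear change of the coordinates $x_i, y_i$. So for every hyperbolic basis of $\cL_1$, at most $m-1-a_k$ of its $2m$ vectors reduce into $V$. Since every hyperbolic basis of $\cL_1/p$ lifts to $\bZ_p$, two consequences follow.

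\emph{Part (a).} If $V$ contained an isotropic subspace of dimension $m-a_k$, we could complete it to a hyperbolic basis having $m-a_k$ vectors in $V$, which is a contradiction. Hence $\cL'/p \not\subseteq V$. Choose $v \in \cL'$ with $\bar v \notin V$; it is primitive in $\cL$ because $\cL'$ is saturated and $\bar v \neq 0$, and by Step 1 it decays rapidly to first order.

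\emph{Part (b).} First bound $\dim V =: d$. Let $r = \dim(V\cap V^\perp)$. The nondegenerate part of $V$ has dimension $d-r$ and Witt index at least $(d-r-2)/2$, so it contains at least $(d-r-2)/2$ hyperbolic pairs. Combined with a basis of the radical (which is isotropic and orthogonal to those pairs) and completed to a hyperbolic basis of $\cL_1/p$, this gives at least $d-2$ basis vectors lying in $V$. Hence $d - 2 \le m-1-a_k$, so $d \le m-a_k+1 \le 2m-2a_k$ because $a_k \le m-1$.

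Now choose an $\bF_p$-complement $U$ with $U \cap V = 0$ and $\dim U = 2a_k$. Let $\cL'_1 \subseteq \cL_1$ be a saturated lift of $U$ (lift a basis; the result is saturated since the reduction is injective). Every primitive $v \in \cL'_1$ has $\bar v \in U\setminus\{0\}$, so $\bar v \notin V$, and by Step 1 it decays rapidly to first order.

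The main obstacle is Step 1, namely verifying rigorously that the $\cL_0$-components and the higher $p$-adic digits of $v$ cannot lower $v_t(x_v)$ below $h_{a_k}$ or cancel $\ell_{\mathbf{c}}$. I would settle it using the explicit shape of $\mathbf{U}_n$ from \cref{lm:Finftyrecursive} together with \cref{prop: prelim decay results}(a). A secondary point needing care is the basis-independence of \cref{lemma: height valuation}.
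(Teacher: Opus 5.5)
Your part (a) is correct and is the paper's argument. The paper chooses the hyperbolic basis so that $\cL'/p$ is spanned by $\bar e_1,\dots,\bar e_{m-a_k}$ and applies \cref{lemma: height valuation} to $x_1,\dots,x_{m-a_k}$. Your Step~1 is also correct: for $v \bmod p\in\cL_1/p$, every contribution to $\overline{D}_1(v)$ other than $\sum_{i\ \mathrm{even}}\ell_{\mathbf c}^{p^i}$ carries a twisted $Q_j$ factor, so it has valuation at least $h_{a_k}$. That step makes explicit something the paper's proof of (b) uses tacitly, namely that the classes resisting first-order decay form an $\bF_p$-subspace $V$ of $\cL_1/p$.

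\textbf{The gap is in part (b): you resolved your own caveat in the wrong direction.} \cref{lemma: height valuation} counts only the $x_i$. Applied over all hyperbolic bases, it says exactly that every isotropic subspace of $V$ has dimension at most $m-1-a_k$. A hyperbolic pair $\bar e_i,\bar f_i\in V$ therefore costs only one, not two. The $2m$-coordinate version you assume is false, and so is the bound $\dim V\le m-a_k+1$ you derive from it.

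Here is an example with $m=3$. Take the formal curve $x_1=y_1=0$, $x_2=y_2=t$, $x_3=\alpha t$, $y_3=-\alpha^{-1}t$, with $\alpha\in\bF_{p^2}\smallsetminus\bF_p$.
\begin{itemize}
\item $Q_0=0$, $Q_1=(2-\alpha^{p-1}-\alpha^{1-p})t^{p+1}\neq 0$ and $Q_2=0$. Hence $a_k=1$ and $m-1-a_k=1$.
\item Yet $x_1$ and $y_1$ both vanish, so two of the $2m$ coordinates have valuation at least $h_{a_k}$.
\item Moreover $V=\{\ell_{\mathbf c}=0\}=\langle\bar e_1,\bar f_1\rangle\oplus W$, where $W=\{c_2\bar e_2+c'_2\bar f_2+c_3\bar e_3+c'_3\bar f_3:\ c_2+c'_2+\alpha c_3-\alpha^{-1}c'_3=0\}$ is an anisotropic plane. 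So $\dim V=4>3=m-a_k+1$.
\end{itemize}

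The repair is to count isotropic dimension rather than basis vectors. A $d$-dimensional quadratic space over $\bF_p$ contains an isotropic subspace of dimension at least $\lfloor (d-1)/2\rfloor$: the Witt index of the nondegenerate part gives this, and the radical only adds. Hence $\lfloor (d-1)/2\rfloor\le m-1-a_k$, i.e.\ $d\le 2(m-a_k)$. This leaves a complement of dimension exactly $2a_k$, with no slack; the example above attains the bound. With this count your argument becomes the paper's proof of (b): (a) plus the classification of quadratic forms over $\bF_p$, then take a saturated complement.
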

\begin{proof}
    (a) As observed earlier, we may replace $\cL'$ by a sublattice that has the same mod $p$ reduction. Therefore, we will assume throughout that $\cL' \subset \cL_1$ and that $\cL'$ is already isotropic. Recall that in \S\ref{subsub:explicitcoordinatheonL}, the basis $\{e_i, f_i \}_{1 \le i \le m}$ on $\bL_1$ can be chosen to be any basis for which the Gram matrix is $\begin{bmatrix}
        & I_m \\ I_m & 
    \end{bmatrix}$. 
    Therefore, by Witt cancellation, we are allowed to assume that the basis was chosen such that $\cL'$ is the span of $e_1 \hdots e
    _{m-a_k}$. Recall that $x_i = x_{e_i}$. Then we conclude by \cref{lemma: height valuation}. 


    
    (b) Let $\cL_1'' \subseteq \cL_1$ be the saturated lattice spanned by all primitive vectors which resist decay up to first order and let $N := \dim \cL''_1$.  
    By (a), $\cL_1''$ cannot contain an isotropic subspace of dimension $\geq m-a_k$. By the classification of quadratic forms over a field in odd characteristic, $N \leq 2(m-a_k)$. Therefore, we may take $\cL_1'$ to be any saturated sublattice of $\cL_1$ that is complementary to $\cL_1''$. 
\end{proof}

\subsection{Proof of the decay Lemma}
We are now ready to state and prove the decay Lemma.
\begin{theorem}\label{thm: superspecial rapid decay}
    There is a saturated sublattice $\Lambda \subset \cL$ having rank $\geq a_1+a_k + 2$ that decays rapidly.
\end{theorem}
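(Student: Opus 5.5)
I will build the sublattice $\Lambda$ from two pieces: a rank-$2$ piece coming from $\cL_0$ and a rank-$(a_1+a_k)$ piece inside $\cL_1$. The plan is to show that each piece decays rapidly to every order, and then to glue the pieces using \cref{cor: decay linear combination L1 L1} and \cref{prop: prelim decay results}.

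\textbf{Step 1: the $\cL_0$ part.} For a primitive $v$ with $v \bmod p \in \cL_0/p$, the formula for $D_1(v)$ in \cref{not: v tilde +} involves only $\mathbf{U}_1$ applied to the first two coordinates. I will compute the leading term of $\overline{D}_1(v)$ and check that it equals the contribution of $Q_{a_k}$ (up to a unit coming from $\lambda$), so that $d_1(v)=h_{a_k}$. For the higher orders I will use the recursion in \cref{lm;recursive2}(b): the terms of minimal valuation are indexed by $\bI_r^{\min}$. Here I will argue that their leading coefficients cannot cancel. The reason is that distinct minimal words have distinct Frobenius-twist patterns $i_1<\cdots<i_{2r}$, and the leading coefficients of the corresponding terms are $p$-power twists of the leading coefficients of the $Q_{a_j}$. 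I expect to separate these via the factors $(-1)^{i_1}\lambda^{\pm1}$ together with \cref{lem: compare words}; if that separation fails, I will fall back on a genericity argument for the leading coefficients. This would make the full rank-$2$ lattice $\cL_0$ decay rapidly.

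\textbf{Step 2: the $\cL_1$ part.} \cref{cor: isotropic subspace first order decay}(b) gives a saturated $\cL_1'\subset\cL_1$ of rank $2a_k$ in which every primitive vector decays rapidly to first order. I then need to upgrade first-order rapid decay to all orders on a sublattice of rank $a_1+a_k$. This is where I will use the intervals $I_r(v)$ of \cref{def: intervals I_r(v)} and \cref{prop: intervals}.

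Cancellation in $\overline{D}_r(v)$ can only occur if two minimal words give terms with the same valuation. For fixed first-order valuation $v_t(x_v)$, the minimal words are governed by $I_r(v)$. By \cref{prop: intervals}, different valuations, or different orders, produce intervals meeting in at most one point, and these intervals all live inside $[a_1,a_k]$. So I will stratify $\cL_1'$ by the valuation $v_t(x_v)$, and within each stratum keep only those vectors for which the relevant intervals are singletons, so that no cancellation is possible.

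A counting argument should then show that the number of strata that can carry non-singleton intervals is bounded by $a_k-a_1$. This would leave a sublattice of rank at least $2a_k-(a_k-a_1)=a_1+a_k$. Each piece of it is handled by \cref{prop: prelim decay results}(b), which lets me combine vectors of distinct first-order valuation, and by part (a), which lets me work modulo $p$.

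\textbf{Step 3: gluing.} Take $\Lambda$ to be the saturation of the $\cL_0$ piece together with the $\cL_1$ piece. Rapid decay of arbitrary primitive vectors in $\Lambda$ follows from \cref{cor: decay linear combination L1 L1}, and $\Lambda$ has rank $\geq 2+a_1+a_k$.

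The main obstacle is the no-cancellation statement in Step 2, namely the exact rank count $a_1+a_k$. I will attack it by induction on $r$. \cref{lem: order s < r} shows that failure of rapid decay at one order persists to all higher orders. Combined with the nesting of intervals for fixed $v$, this means each interval endpoint $a_j\in[a_1,a_k]$ can be "spent" on at most one dimension of vectors that fail to decay rapidly.
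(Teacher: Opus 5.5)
There is a genuine gap. It sits exactly at the step you flag as the main obstacle, and in addition Step 1 overclaims.

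\textbf{Step 2.} ``Keeping within each stratum only the vectors whose intervals are singletons'' is not an available selection.
\begin{itemize}
\item $\bI^{\min}_r(v)$, and hence every $I_s(v)$, depends on $v$ only through $v_t(x_v)$. So all vectors of a stratum share the same intervals.
\item Discarding a stratum with a non-singleton interval therefore costs its entire rank, and that rank is not controlled by your count. Bounding the \emph{number} of such strata by $a_k-a_1$ does not give rank $2a_k-(a_k-a_1)$.
\end{itemize}
What is needed is a per-stratum estimate: a stratum $\cL^i$ of fixed first-order valuation, with total interval length $T_r(i)=\sum_{1<s\le r}|I_s|$, loses at most $T_r(i)$ dimensions. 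The paper gets this from a Moore-determinant argument, which is the missing idea.
\begin{itemize}
\item Group the minimal words by the total Frobenius twist $M_{\cW}$ falling on $x_v$. The coefficient of $t^{\nu^{\min}_r(v)}$ in $\overline{D}_r(v)$ then becomes $\sum_{M=M_{\min}}^{M_{\max}}\beta_M\alpha_v^{(M)}$.
\item Here $\alpha_v$ is the leading coefficient of $x_v$, $M_{\max}-M_{\min}=T_r(i)$, and $\beta_{M_{\min}},\beta_{M_{\max}}\neq 0$ because the extremal words are unique.
\item First-order rapid decay on the stratum means $v\mapsto\alpha_v$ is an injective $\bF_p$-linear map on $\cL^i/p$.
\item Hence the vectors failing $r$th-order rapid decay form an $\bF_p$-subspace of dimension at most $T_r(i)$, since the Moore matrix of $T_r(i)+1$ independent $\alpha$'s is invertible.
\end{itemize}
Combined with \cref{prop: intervals}, which gives $\sum_i T_r(i)\le a_k-a_1$ over all strata, this yields the count. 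Your closing heuristic, that each unit of $[a_1,a_k]$ is ``spent'' on at most one dimension, is the right intuition. But induction on $r$ via \cref{lem: order s < r} cannot supply it; it comes from this linear-independence input.

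\textbf{Step 1.} Nothing forces all of $\cL_0$ to decay rapidly, and no genericity argument is available since $C$ is fixed.
\begin{itemize}
\item In $\mathbf{U}_n$ the sign attached to $\lambda$ is $(-1)^{1+i_{2n}}$. The factor $(-1)^{i_1}$ plays no role, since $i_1$ is even there.
\item When the minimal words realize two values of $i_{2r}$, the leading coefficient for $v=c_1e'+c_2f'$ has the shape $\beta_1(c_1+\lambda c_2)+\beta_2(c_1-\lambda c_2)$, with $\beta_1,\beta_2$ determined by $C$.
\item The opposite signs only prevent this from vanishing on all of $\bF_p^2\smallsetminus\{0\}$. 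They do not prevent it from vanishing on an $\bF_p$-line.
\end{itemize}
So one only gets a sublattice of rank $2-T_r(h_{a_k})$. $\cL_0$ must be treated as the top stratum ($v_t(x_v)=h_{a_k}$) of the same filtration and charged against the same budget $a_k-a_1$. The correct count is $\sum_i\max\{0,\mathrm{rk}\,\cL^i-T_r(i)\}\ge(2a_k+2)-(a_k-a_1)$, not ``$2$ from $\cL_0$ plus $a_1+a_k$ from $\cL_1'$''.

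\textbf{Passing to all orders.} The sublattice produced this way depends on $r$, so you also need a closing finiteness step. Some subspace of $\cL/p$ recurs for infinitely many $r$, and then \cref{lem: order s < r} together with \cref{prop: prelim decay results}(a) gives rapid decay to all orders.
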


\begin{proof}
    Let $\cL'_1 \subset \cL_1$ be the saturated sub-lattice produced in part (b) of Corollary \ref{cor: isotropic subspace first order decay}. We will show the existence of a saturated rank-$(a_1+a_k + 2)$ sublattice $\Lambda = \Lambda_0 \oplus \Lambda_1 \subset \cL_0 \oplus \cL'_1 =: \cL'$ that decays rapidly. We first define a filtration on $\cL'/p\cL$. For $i < h_{a_k}$, let $$\overline{\Fil^i} = \{\bar{v}\in (\cL'/p\cL)\setminus \{\mathbf{0}\}: v_t(x_v) \geq i \} \cup \{ \mathbf{0} \}.$$ Define $\overline{\Fil^{h_{a_k}}} = \cL_0$, and define higher filtered pieces to be just the zero vector. Intersecting this filtration with $\cL'_1/p\cL$, we get an exhaustive filtration with the $h_{a_k}$ piece equal to zero by the hypothesis that every primitive vector in $\cL'_1$ decays to first order. We pick a splitting of this filtration and lift it to $\cL'$ such that the splitting respects the direct sum decomposition $\cL' = \cL_0 \oplus \cL'_1$. Therefore, we have written $\cL' = \oplus_{0<i \leq h_{a_k}} \cL^i$ where for $i< h_{a_k}$, every primitive vector $v\in \cL^i$ satisfies $v_t(x_v) = i$, and $\cL^{h_{a_k}} = \cL_0$.

    Fix some $\cL^i$, some integer $r$, and consider $T_r(i) = \sum_{1 < s \leq r} |I_s(v_i)|$, where $v_i$ is any primitive vector in $\cL^i$ (by definition the intervals do not depend on the choice of $v_i$). By Proposition \ref{prop: intervals}, we have that $$T_r(1) + T_r(2)+ \hdots +T_r(h_{a_k}) \leq a_k - a_1.$$ It follows that $$\sum_{i = 1}^{h_{a_k}} \max \{0, \dim \cL^i - T_r(i) \} \geq 2 a_k + 2 - a_k + a_1 = a_k + a_1 + 2.$$ Together with Proposition \ref{prop: prelim decay results}, the two claims below implies the existence of a saturated rank-$a_k + a_1 + 2$ sublattice that decays rapidly to $r$th order. 
    \begin{claim}\label{claim 1}
        Let $i< h_{a_k}$, and suppose that $\cL^i$ has rank $d_i$ with $T_r(i)< d_i$. Then, there is a rank $d_i-T_r(i)$ sublattice of $\cL^i$ that decays rapidly to $r$th order. 
    \end{claim}
    \begin{proof}
      It suffices to prove the claim with $d_i-T_r(i) = 1$ -- indeed, proving that any saturated rank-$T_r(i)+1$ subspace contains a primitive vector that decays rapidly up to $r$th order would imply that any saturated rank $T_r(i)+2$ subspace must contain a saturated rank-2 subspace that decays rapidly up to $r$-th order, etc. 
      
      We may assume that $\cL^i$ is spanned by $v_1, ..., v_{d_i}$. Let $\alpha_1, \hdots, \alpha_r \in \bF_q$ be the coefficients of the leading order terms of $x_{v_j}(t)$ for $j = 1\hdots d_i$.  We now consider the power-series expansion for the vector $\tilde{v_j}$ - specifically, the first two entries of this vector. Consider the sum of the $\cW(t)$ where $\cW$ ranges over all words in $\bI_r^{\min}(v_j)$. It suffices to prove that this sum has $t$-adic valuation equal to $\nu_r^{\min}(v_j)$ for some $1\leq j \leq d_i$. Note that as $v_t(x_{v_j})$ is the same for all $j$, $\bI_r^{\min}(v_j)$ does not depend on $j$. Let $\cW \in \bI_r^{\min}(v_j)$, and let $M_\cW$ be the number such that the function $\cW(t)$ equals $([r-1]\cW(t)) (x_{v_j})^{(M_{\cW})}(t)$. In other words, under the equivalence of \cref{lem: w and W}, $M_\cW = 1 + i_{2r} = r + \sum_{j = 1}^r a(\cW_j)$. There is a unique word $\cW_{\min} \in \bI_r^{\min}(v_j)$ (resp. $\cW_{\max} \in \bI_r^{\min}(v_j)$) which minimizes (resp. maximizes) the integer $M_{\cW}$. Indeed, they are constructed by minimizing or maximizing each of $a(\cW_s)$, i.e., 
      $$  \bI_{r - s + 1} = \Big[\min_{\cW \in \bI_r^{\min}(v)} a(\cW_{s})  , \max_{\cW \in \bI_r^{\min}(v)} a(\cW_{s}) \Big] = \big[ a(\cW^{\min}_s) , a(\cW^{\max}_s)\big] $$
      for each $1 < s \le r$. Note therefore that $T_r(i) = M_{\cW^{\max}} - M_{\cW^{\min}}$. 


Now, for every $M$ in the range $M_{\cW^{\min}} \leq M \leq M_{\cW^{\max}}$ consider all words with the property $M_\cW = M$. The sum of these words equals $(\sum [r-1]\cW)(t) (x_{v_j})^{(M)}(t)$. We write the sum $(\sum [r-1]\cW)(t)$ as $\beta_Mt^{N_M} + t^{N_M+1}(...)$, where $\beta_M$ might equal zero. Note that $N_M$ is just the valuation of $[r-1]\cW$. We see that the vector $v_j$ decays rapidly to $r$th order if and only if $\sum_{M=M_{\cW^{\min}}}^{M_{\cW}^{\max}} \beta_M \alpha_j^{(M)} \neq 0$. 

We now use the fact that the rank of $\cL'$ (i.e. $d_i$) is one greater than $T_r(i) = M_{\cW^{\max}} - M_{\cW^{\min}}$. Suppose that every $v_j$ fails to decays rapidly to $r$th order. Then, it follows that $\sum_{M=M_{\cW^{\min}}}^{M_{\cW}^{\max}} \beta_M \alpha_j^{(M)} = 0$ for every $j$. Now, consider the matrix whose $jth$ row is $\alpha_j^{(M_{\cW^{\min}})}, \alpha_j^{(M_{\cW^{\min}}+1)}, \hdots \alpha_j^{(M_{\cW^{\max}})}$. The matrix has $d_{i}$ rows and $M_{\cW^{\max}} - M_{\cW^{\min}} +1 = T_r(i)+1= d_i$ columns. This is (the Frobenius twist of) the so-called Moore matrix, whose determinant is non-zero precisely when the $\alpha_j$ are $\bF_p$-linearly independent. As every vector in $\cL'$ decays rapidly up to first order, we have that the $\alpha_i$ are $\bF_p$-linearly independent! Therefore, the determinant of the matrix is non-zero, and hence the only way we have $\sum_{M=M_{\cW^{\min}}}^{M_{\cW}^{\max}} \beta_M \alpha_j^{(M)} = 0$ is for every $\beta_M = 0$. But $\beta_{\cW^{\min}} \neq 0, \beta_{\cW^{\max}} \neq 0$! We have therefore proved that for every integer $r$, there has to be at least one vector that decays rapidly to $r$th order. As we have already reduced the claim to the case $T_r(i)+1 = d_i$, the proof is finished. \end{proof}

    \begin{claim}\label{claim 2}
        Suppose that $T_{r}(h_{a_k}) < \textrm{rank} \cL_0$. Then, there is a rank $(\textrm{rank} \cL_0-T_r(h_{a_k}))$ sublattice of $\cL_0$ that decays rapidly.
    \end{claim}
\begin{proof}
    The setting of this claim is somewhat degenerate, as $\cL_0$ is only a rank 2 lattice. There are only two cases -- $T_{r}(h_{a_k})$ is 0 or 1. The case of $T_r(h_{a_k}) = 0$ is easy -- indeed, in the sum-expansion of the matrix $X_r(t)$ in Lemma \ref{lm:Finftyexplicit} has a unique term with minimal $t$-adic valuation and the claim holds. In the case where $T_{r}(h_{a_k}) = 1$, there are exactly two terms  in the sum-expansion of $X_r$ with minimal $t$-adic valuation. The sum of these two matrices (without counting powers of $p$) will be of the form $\cW_1(t)A + \cW_2(t)B$, where the first row of $A$ is $[1 \phantom{a}\lambda]$ and of $B$ is $[1 \phantom{a} -\lambda]$. Either $\cW_1(t) +\cW_2(t)$ or $\cW_1(t) - \cW_2(t)$ has the form $\alpha t^{\nu_r^{\min}} + O(t^{\nu_r^{\min}+1})$ where $\alpha$ is a $p$-adic unit, in which case either $e + f$ or $e-f$ decays rapidly to $r$th order. The claim follows.
\end{proof}

By \cref{prop: lower bound for d_r} and \cref{def: rapid decay}, we have proved that there exists a saturated sublattice $\Lambda_r \subset\cL$ of rank at least $a_k + a_1 + 2$ that decays rapidly to $r$th order for any given $r$. As whether a saturated lattice $\Lambda_r$ decays rapidly or not (to any fixed order) depends only on its mod $p$ reduction, and as there are only finitely many sublattices of any rank of a finite-dimensional $\bF_p$ vector space, we may pass to a subsequence $r_j$ of the set of all positive integers such that $\Lambda_{r_j}/ p\cL \subseteq \cL / p\cL$ doesn't depend on $r_j$. Then by \cref{lem: order s < r} and \cref{prop: prelim decay results}(a), the sublattice $\Lambda_{r_j}$ for any value of $r_j$ decays rapidly and the theorem is proved. 
\end{proof}

\section{First order decay and Eisentein series: superspecial points} 
\label{sec: first step superspecial}
In this section we prove the following main result on the Eisenstein series, which is a major middle step towards Theorem~\ref{thm: main local version}:
\begin{theorem}\label{lm:localglobalEisensteinSSP}
 Let $C$ be a curve as in \S\ref{sub:setuplocalcurve}, and let $P$ be a superspecial point. If $\mathcal{A}_{C^{/P}}$ does not admit a saturated lattice of formal special endomorphisms of rank $2(m-a)$, then there exists a constant $\alpha< 1$, depending only on $C$ and $P$, such that for $M\in \bN$ coprime to $p$, we have  $$S'(M)=\sum_{r=1}^\infty \frac{{q_{L_{r,P}'}(M)}}{g_P(M)}<\alpha .$$
\end{theorem}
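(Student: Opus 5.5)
We use the explicit formula of \cref{rmk:explicit q_L'}. Each term of $S'(M)$ equals
\[
\frac{p^{a+1}-1}{h_P}\,\bigl(1+p^{-(m+1)}\bigr)^{-1}\,\frac{\delta(p,L'_{P,r},M)}{\sqrt{[L'^{\vee}_{P,r}\otimes\bZ_p : L'_{P,r}\otimes\bZ_p]}},
\]
and since $p\nmid M$ the local density $\delta(p,\cdot,M)$ is bounded above uniformly in terms of the $\bZ_p$-isometry class. So everything reduces to controlling the discriminant growth of $L_{P,r}\otimes\bZ_p$ as $r$ grows. By Lemma~\ref{lm:specialendoprepair}(a), a vector $p^{n-1}v$ (with $v$ primitive) lies in $L_{P,r}\otimes\bZ_p$ exactly when $r\le d_n(v)$. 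Hence $L_{P,r}\otimes\bZ_p$ is determined by the decay rates $d_n$ of vectors in $\cL$.

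Step 1: fix a basis adapted to rapid decay. By Theorem~\ref{thm: superspecial rapid decay} there is a saturated $\Lambda\subset\cL$ of rank $\ge a_1+a_k+2$ decaying rapidly. For $v\in\Lambda$ primitive we therefore know $d_n(v)=\nu_n^{\min}(v)$ exactly. By Lemma~\ref{lm:wordtruncate} and the description of $\bI_n^{\min}$, this is an explicit quantity growing like $p^{(n-1)(a+1)+\cdots}\cdot d_1(v)$, built from the $h_{a_i}$ and the $a_i$.

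The hypothesis that there is no saturated rank $2(m-a)$ lattice of formal special endomorphisms over $C^{/P}$ should be used as follows. Let $N$ be the saturated sublattice of vectors that never decay ($d_1=\infty$). It is totally isotropic modulo the relevant part and has rank $<2(m-a)$. Combined with Corollary~\ref{cor: isotropic subspace first order decay}, this will show that a complement of $N$ has enough vectors with finite, controlled $d_1$ (bounded by $h_{a_k}$ and in terms of $h_P=h_a$).

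Step 2: bound $[L'^{\vee}_{P,r}:L'_{P,r}]$ from below. For $r$ in the window $(\nu^{\min}_{n}, \nu^{\min}_{n+1}]$, every vector of $\Lambda$ is divisible by $p^n$ in $L_{P,r}$. This gives a discriminant at least $p^{2n\cdot\mathrm{rk}\Lambda}$, up to a bounded correction from the pairing between $\Lambda$ and the non-decaying part. Summing over $r$ turns $S'(M)$ into a series of the form
\[
\sum_{n\ge 0}\bigl(\nu^{\min}_{n+1}-\nu^{\min}_n\bigr)\,p^{-n\,\mathrm{rk}\Lambda}.
\]
Each window length is dominated by $h_a\,p^{n(a+1)}$-type terms together with corrections from $h_{a_i}$ with $a_i>a$, whose weights are smaller. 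Because $\mathrm{rk}\Lambda\ge a_1+a_k+2\ge 2a+2$, the ratio $p^{a+1}/p^{\mathrm{rk}\Lambda/2}\cdot(\ldots)$ gives a convergent geometric series.

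Step 3: compare with the normalisation. Multiplying by the prefactor $\frac{p^{a+1}-1}{h_P}$ should produce an expression like
\[
\frac{p^{a+1}-1}{p^{a+1}}\sum_n p^{-n(\ldots)} = 1,
\]
exactly in the extremal configuration where the non-decaying lattice has rank $2(m-a)$. Otherwise it is strictly less than $1$, with a gap depending only on the data $h_i$, $a_i$, $d_1$ at $P$. Since there are only finitely many possible isometry classes to track, this yields a uniform $\alpha<1$.

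The main obstacle is Step 2/3: making the comparison sharp enough to land strictly below $1$. The rapidly decaying $\Lambda$ alone gives only crude bounds, so one must also track vectors outside $\Lambda$ whose $d_n$ exceed $\nu_n^{\min}$, and show they are compensated. I would first try the case $a_k=a$ (Remark~\ref{rem: ak is a}), where decay is fully explicit, to calibrate the constant. Then I would argue in general by an inequality showing that the extra indices $a_i>a$ only decrease $S'$.
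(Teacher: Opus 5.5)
You have located the difficulty correctly in Steps 2--3, but the proposal lacks the idea that resolves it, and the scheme as written cannot land below $1$.

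\textbf{The first-order window is uncontrolled.} The dominant contribution to $S'(M)$ comes from the range $r\le h_a$, i.e.\ from \emph{first-order} decay. Theorem~\ref{thm: superspecial rapid decay} and Corollary~\ref{cor: isotropic subspace first order decay} only tell you that $d_1(v)=v_t(x_v)$ is finite and at most $h_{a_k}$. They do not tell you \emph{when} in $[1,h_{a_k}]$ each vector drops out.

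Concretely, the $n=0$ term of your window series is $\nu^{\min}_1\cdot 1$, and for the $\cL_0$-part of $\Lambda$ one has $\nu^{\min}_1=h_{a_k}$. After multiplying by $\frac{p^{a+1}-1}{h_a}\cdot\frac1p$, this term alone is of size about $p^{a}\delta$ in the tight case, far above $1$. Your windows are also not uniform over $\Lambda$, since $\nu^{\min}_n(v)$ depends on $d_1(v)$.

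What the paper supplies here is the analysis of \S\ref{sub:lineconfig}--\S\ref{sub:patternfirstorderdecay}:
\begin{enumerate}
\item Pass to a representative of $(\underline{x},\underline{y})$ that is maximal in its $Q$-isometry class, so that the $x_i$, and likewise the $y_i$, are $\bF_p$-hyper-independent.
\item Encode $c_i=v_t(x_i)$ and $d_i=v_t(y_i)$ as a line configuration.
\item Use $Q_0=\cdots=Q_{a-1}=0$ along $C$, together with Moore-matrix arguments, to count critical points.
\end{enumerate}
This yields the explicit first-order pattern of Proposition~\ref{prop:L+thin}: $e_i$ leaves at time $c_i$, $f_i$ leaves at time $d_i$, and what remains projects isometrically into $\overline{\cL}_{\mathrm{brd}}$. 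It also yields the inequalities $h_{a_k}\ge p^{a_k}c_1+d_1$ and $d_i-d_{i+1}\ge p^{n-1-i}(c_{i+1}-c_i)$, which give $D_\delta\le p^{-a_k}h_{a_k}$ (Lemma~\ref{lm;Destimate}). This is needed even in the loose case for $r\le h_{a_k}$. Only the tail $r>h_{a_k}$ yields to your rank-of-$\Lambda$ argument.

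\textbf{A uniform density bound is too lossy, and the hypothesis is used differently.} Consider the tight ($a_k=a$), thin, borderline-$2$ configuration. There the computation with the \emph{exact} densities $\delta_i=1-p^{i-m}$ and $\delta'_j=1+p^{n-m-1}$ of the auxiliary lattices $L''_{P,r}$ produces the bound $S''(M)\le 1$ with right-hand side exactly $1$. Any constant-factor loss from bounding $\delta(p,L'_{P,r},M)$ uniformly is therefore fatal. Transferring the bound to $S'(M)$ requires the comparison $S'(M)\le S''(M)$, which rests on the isometric-embedding statement above: equal indices force equal local densities.

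The hypothesis enters only at this point, and not as you describe. The non-decaying lattice is not totally isotropic. In the extremal case it is $\cL_{\mathrm{brd}}$, an inert plane plus $m-n$ hyperbolic planes, of rank $2(m-n+1)=2(m-a)$. The argument runs as follows:
\begin{enumerate}
\item If $S'(M)=S''(M)$, every index inequality is an equality.
\item Then $\overline{L'_{P,r}\otimes\bZ_p}$ is constant for $r>h_a$ and projects isomorphically onto $\overline{\cL}_{\mathrm{brd}}$.
\item Since the complementary relevant lattice decays rapidly to all orders, a limit argument produces a saturated rank-$2(m-a)$ lattice of formal special endomorphisms over $C^{/P}$, contradicting the hypothesis.
\end{enumerate}
Uniformity in $M$ follows because each term depends on $M$ only through its Legendre symbol mod $p$.

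\textbf{The plan inverts the difficulty.} The case $a_k=a$ is the sharp one. Remark~\ref{rem: ak is a} only upgrades first-order rapid decay to all orders; it does not determine $d_1$. The case $a_k>a$ is the easy one: the extra factor $p^{a-a_k}\le p^{-1}$ and the larger rapidly decaying rank $a+a_k+2$ leave room for crude density bounds.
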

The theorem will be proved at the end of \S\ref{sub:easysitu}. There are two main inputs: the first input is the decay lemma established in   \S\ref{sec: decay superspecial}. The second is a thorough understanding of the first order decay, which is the main contributor to the local Eisenstein series. The study of the first order decay is carried out in \S\ref{sub:lineconfig}$\sim$\S\ref{sub:patternfirstorderdecay}, culminating in Proposition~\ref{prop:thincase!!!} and \ref{prop:L+thin}, which state that a decaying sequence of lattices must follow a strict pattern. 

\begin{remark}
    In this section, $m$ will be as in Sections \ref{sec: local structure at superspecial point} and \ref{sec: decay superspecial}. The symbol $M$ will play the role that $m$ plays in Section \ref{sec:arithmetic}. 
\end{remark}
\subsection{An example}\label{subsub:toyexamplem=2}Let's assume $m=2$. We will use the setup and notation in \S\ref{sub:setuplocalcurve}. Then the only case we care is that $C$ lies generically on the almost supersingular stratum. Suppose that the local coordinates for $C^{/P}$ is given by series  $x_1,y_1,x_2,y_2\in \bF[\![t]\!]$. Then 
\begin{align}
    \label{eq:m=2q1}
  &  Q_0 = x_1y_1+x_2y_2= 0, \\
      \label{eq:m=2q2}
   &Q_1= x_1^py_1+ x_1y_1^p+x_2^py_2+ x_2y_2^p\neq 0.
\end{align}
 In particular, we must have $a=a_k=1$ in this case. Let $c_i=v_t(x_i)$ and $d_i=v_t(y_i)$. Note that (\ref{eq:m=2q1}) implies that $c_1+d_1=c_2+d_2$. We also suppose that $\alpha_i,\beta_i$ are the leading coefficients of the $x_i,y_i$ respectively. Note that replacing the vectors $e_i,f_i \in \cL_1$ by an $\textrm{SO}(\bZ_p)$ coordinates where each matrix entry is the teichmüller-lift of its mod $p$ reduction induces changing the variables $(x_i,y_i)$ by an element of $\textrm{SO}(\bF_p)$. Note that doing this leaves the equations of the height strata unchanged. We shall refer to transformations of this form as orthogonal change of coordinates. Clearly, there are only finitely orthogonal change of coordinates. Note that swapping $x_i$ with $y_i$ is such change of coordinates. 
\begin{lemma}\label{lem:exampleassm=2}
    Consider the set $\Sigma = \{\alpha_1 t^{c_1}, \alpha_2 t^{c_2}, \beta_1 t^{d_1}, \beta_2 t^{d_2}\}$. After an orthogonal change of coordinates, we may always assume that $c_1\leq c_2\leq d_2\leq d_1$. Further, in addition to this holding, we belong to one of the following cases:
    \begin{enumerate}
        \item The set $\Sigma$ is linearly independent over $\bF_p$.

        \item There is exactly one $\bF_p$ linear relation in $\Sigma$, and which is of the form $\frac{\alpha_2 t^{c_2}}{\beta_2t^{d_2}}  = \gamma_2 \in \bF_p^{*} $.

        \item There are exactly two $\bF_p$ linear relations in $\Sigma$, and these have the form $\frac{\alpha_i t^{c_i}}{\beta_it^{d_i}} = \gamma_i \in \bF_p^{*}$. Further, the quadratic form $\gamma_1 X^2 + \gamma_2 Y^2$ is anisotropic over $\bF_p$.
    \end{enumerate}
    
\end{lemma}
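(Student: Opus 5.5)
We work with the leading terms of the four series. Since $Q_0=x_1y_1+x_2y_2=0$ identically, the lowest-order terms of $x_1y_1$ and $x_2y_2$ must cancel unless both vanish to higher order. Comparing valuations, $c_1+d_1=c_2+d_2$ and $\alpha_1\beta_1=-\alpha_2\beta_2$. The orthogonal changes of coordinates we use are finite, with $\bF_p$-entries: swapping $x_i\leftrightarrow y_i$, swapping the index pair $1\leftrightarrow 2$, and scalings $(x_i,y_i)\mapsto(\gamma x_i,\gamma^{-1}y_i)$. These preserve $Q_0$ and $Q_1$.

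\textbf{Ordering the valuations.} First swap $x_i\leftrightarrow y_i$ to get $c_i\le d_i$ for each $i$. Then swap the pairs so that $c_1\le c_2$. From $c_1+d_1=c_2+d_2$ we get $d_2\le d_1$, and so $c_1\le c_2\le d_2\le d_1$.

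\textbf{Finding the relations.} The key observation is that $\bF_p$-linear relations among elements of $\Sigma$ only involve monomials with the same exponent of $t$. The exponents satisfy $c_1\le c_2\le d_2\le d_1$ with $c_1+d_1=c_2+d_2$. Hence two exponents can coincide only in the following ways:
\begin{itemize}
\item[] $c_2=d_2$;
\item[] $c_1=d_1$, which forces all four exponents to be equal;
\item[] $c_1=c_2$, which forces $d_1=d_2$.
\end{itemize}
We then split into cases.

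\textbf{The case $c_1=c_2<d_2=d_1$.} Here a relation $\alpha_1=\gamma\alpha_2$ or $\beta_1=\gamma\beta_2$ with $\gamma\in\bF_p$ might occur. Suppose $\alpha_1=\gamma\alpha_2$. We apply an $\mathrm{SO}(\bF_p)$ transvection mixing $(e_1,f_1)$ and $(e_2,f_2)$, for instance $x_1\mapsto x_1-\gamma x_2$ with compensating changes $y_2\mapsto y_2+\gamma y_1$. This raises $c_1$, and afterwards we reorder. The quantity $c_1$ strictly increases, and there are only finitely many exponent patterns bounded by the sum $c_1+d_1$. So the process terminates in a configuration with no relation between exponents of this kind. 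The only surviving coincidences are then $c_i=d_i$.

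\textbf{The case $c_2=d_2$.} A relation $\alpha_2=\gamma_2\beta_2$ may hold. If it does not, we are in case (1). If it does and $c_1<d_1$, we are in case (2). If all exponents are equal, the relations live in the span of $\alpha_1,\alpha_2,\beta_1,\beta_2$ over $\bF_p$. Using $\alpha_1\beta_1=-\alpha_2\beta_2$ and the same transvection trick, we reduce to relations of the form $\alpha_i=\gamma_i\beta_i$, which gives case (3).

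\textbf{Anisotropy in case (3).} We must rule out $\gamma_1X^2+\gamma_2Y^2$ being isotropic. If it were isotropic, there would be an $\bF_p$-rational isotropic vector. An orthogonal change of coordinates along it would then produce coordinates in which some $x_i$ or $y_i$ vanishes to higher order. This either contradicts the termination step or would make $Q_1$ vanish to all orders, contradicting $Q_1\ne0$ in (\ref{eq:m=2q2}). Indeed, when $x_i=\gamma_iy_i$ exactly, $Q_1$ becomes $2\sum\gamma_iy_i^{p+1}$.

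\textbf{Main obstacle.} The main obstacle is making the termination and reduction argument precise: each transvection must genuinely increase valuations and preserve $Q_0=0$, and the count of independent relations must be exactly one or two, never three. Three relations would force $\Sigma$ to span a line, which contradicts $Q_1\neq0$.
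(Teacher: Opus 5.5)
Your reduction to $c_1\le c_2\le d_2\le d_1$ and your list of possible coincidences among the exponents are fine. Your overall strategy is also the paper's: iterate $\mathbb{F}_p$-orthogonal changes of coordinates until no valuation can be raised. The paper does this by starting from a collection maximal in its $Q$-isometry class and applying Lemmas~\ref{lm:hypermaxiaml} and~\ref{lm:quadmaximal}.

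However, your termination measure is wrong. In the case $c_1=c_2=c<d_2=d_1=d$, your transvection leaves $x_2$ and $y_1$ untouched. After reordering, the new $c_1$ is therefore still $c$, while the new $c_1+d_1$ equals $c+v_t(y_2+\gamma y_1)>c+d$. So $c_1$ does not increase, and $c_1+d_1$ is not a fixed bound. What does work: the $\mathrm{O}(\mathbb{F}_p)$-orbit of $(\underline{x},\underline{y})$ is finite, and each move strictly raises the sorted tuple of valuations (equivalently, the line configuration). So you may simply start from a maximal element.

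More seriously, the case $c_1=c_2=d_2=d_1=d$ is where all the content lies, and there you only assert the conclusion. The missing idea is to treat the space $V^+\subseteq V_{\mathrm{frame}}$ of $\mathbb{F}_p$-relations among $\alpha_1,\alpha_2,\beta_1,\beta_2$ as a quadratic subspace.

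\begin{itemize}
\item Suppose $V^+$ contains a nonzero isotropic vector. Witt's theorem gives an isometry carrying it to $X_1$. This raises $v_t(x_1)$ above $d$ and keeps every other valuation $\ge d$, contradicting maximality.
\item Hence $V^+$ is anisotropic, so $\dim V^+\le 2$, and $V^+$ is an inert plane when $\dim V^+=2$.
\item Frame vectors are isotropic, so they avoid $V^+$. A further isometry therefore changes no valuation. Using Witt again, and the fact that all anisotropic binary forms over $\mathbb{F}_p$ are isometric, you can move $V^+$ to $\mathrm{Span}_{\mathbb{F}_p}\{X_2-\gamma_2Y_2\}$ or to $\mathrm{Span}_{\mathbb{F}_p}\{X_1-\gamma_1Y_1,X_2-\gamma_2Y_2\}$. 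This gives case (2) or case (3).
\end{itemize}

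Your anisotropy paragraph has the right instinct, but it must be applied to all of $V^+$ before any normal form exists. Your specific transvection only kills relations among the $\alpha$'s (or among the $\beta$'s). It does not handle isotropic relations that mix them, nor anisotropic ones.

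Your argument against three relations is false. Suppose all four leading coefficients lie in $\mathbb{F}_p\theta$. Then $\alpha_1\beta_1+\alpha_2\beta_2=0$ only kills the coefficient $2\theta^{p-1}(\alpha_1\beta_1+\alpha_2\beta_2)$ of $t^{(p+1)d}$ in $Q_1$; it does not force $Q_1=0$. The correct reason is that every $3$-dimensional quadratic space over $\mathbb{F}_p$ is isotropic, which is excluded at a maximal configuration. Likewise, the remark about $x_i=\gamma_iy_i$ holding \emph{exactly} is irrelevant, since the relations only concern leading coefficients.
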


The proof consists of a fairly straightforward process of iteratively applying orthogonal change of coordinates. We prove (with full detail) a far more general statement in a far more general setting later in this section (Lemmas \ref{lm:hypermaxiaml} and \ref{lm:quadmaximal}).

We will explain how to deduce Theorem \ref{lm:localglobalEisensteinSSP} for $m=2$. This amounts to upper-bounding the series $S'(M)$. Following Remark~\ref{rmk:explicit q_L'}, we can explicate each term of $S'(M)$ as 
$$\frac{q_{L_{P,r}'}(M)}{g_P(M)}=\frac{p^{2}-1}{h_P}\frac{1}{p(1+p^{-3})}\frac{\delta(p,L'_{P,r},M)}{[L'_{P,1}\otimes\bZ_p:L'_{P,r}\otimes\bZ_p]}. $$
Note that in the setting of Lemma \ref{lem:exampleassm=2}, we have that \begin{equation}\label{eq:estimateh_Pm=2}
    h_P = v_t(Q_1) \geq pc_1 + d_1,  
\end{equation}
so the term $\frac{p^2-1}{h_P}$ can be bonded from above accordingly. Therefore, in theory we can bound $S'(M)$ by bounding the local density $\delta(p,L'_{P,r},M)$ and covolume ${[L'_{P,1}\otimes\bZ_p:L'_{P,r}\otimes\bZ_p]}$. However, in practice, the shape of $L'_{P,r}$ is somewhat evasive. Instead of directly bounding the local density and covolume for $L'_{P,r}$, we will construct a sequence of auxiliary lattices $L''_{P,r} $ which we can describe exactly. For each lattice $L''_{P,r}$ and each integer $M$ not a multiple of $p$, we can also precisely compute $\delta(p,L''_{P,r},M)$ and $[L''_{P,1}\otimes\bZ_p:L''_{P,r}\otimes\bZ_p]$. This allows us to explicitly upper-bound the auxiliary series $$S''(M):=\sum_{r=1}^{\infty}\frac{q_{L_{P,r}''}(M)}{g_P(M)}=\frac{p^{2}-1}{h_P}\frac{1}{p(1+p^{-3})}\sum_{r=1}^\infty\frac{\delta(p,L''_{P,r},M)}{[L''_{P,1}\otimes\bZ_p:L''_{P,r}\otimes\bZ_p]}.$$ We will then show that $S'(M)\leq S''(M)$. 
Upon computing $S''(M)$ explicitly, we obtain that $S'(M)<1$ for all $M$ coprime to $p$ \emph{unless} $\cA_{C^{/P}}$ admits a rank-2 saturated lattice of formal special endomorphisms. Since $\delta(p,L'_{P,r},M)$ depends only on $\left(\frac{M}{p}\right)$  and ${[L'_{P,1}\otimes\bZ_p:L'_{P,r}\otimes\bZ_p]}$ is independent of $M$, there exists a constant $\alpha<1$ independent of $M$, such that $S'(M)<\alpha$.  This yields the theorem. 

We will now carry out this procedure in case 3 of Lemma \ref{lem:exampleassm=2} (the hardest case). In other words, we will describe $L''_{P,r}$ in case 3 of Lemma \ref{lem:exampleassm=2}, and will explicitly compute $S''(M)$. Note that in case 3 we must have $c_1=c_2=d_2=d_1$, and we will use $d$ to denote this number. In the following, we construct $L''_{P,r}$. First, let $\mathcal{L}_{\mathrm{brd}}:=\mathrm{Span}_{\bZ_p}\{e_1-\gamma_1f_1, e_2-\gamma_2f_2\}$, where by abuse of notation $\gamma_i\in \bZ_p^*$ denotes the Teichmüller lift of $\gamma_i$ in Lemma~\ref{lem:exampleassm=2}(3). Second, for each $j\geq 1$, define $H_{j}:=h_P(1+p^2+...+p^{2(j-1)})$ and $H_{0}:=0$. Finally, define $ L''_{P,r}$ to be the $\bZ$-lattice such that $L''_{P,r}\otimes \bZ_l=L'_{P,r}\otimes \bZ_l$ for $l\neq p$, and 
\begin{equation*}
 L''_{P,r}\otimes\bZ_p= \left\{\begin{aligned}
    & \mathcal{L}_{\mathrm{brd}}+p^j\cL ,\,\,\,\, &r\in (H_{j},H_{j}+p^{2j}d],\\
     & \mathcal{L}_{\mathrm{brd}}+ p^{j}\mathcal{L}_0 +p^{j+1}\mathcal{L}_1,\,\,\,\, &r\in (H_{j}+p^{2j}d,H_{j+1}].
 \end{aligned}\right.    
\end{equation*}
The lattice $L''_{P,r}$ can be thought of as $L'_{P,r}$ in the special case where all vectors in $\cL_{\mathrm{brd}}$ never decay. For $M$ coprime to $p$, one computes that 
$$\delta(p,L''_{P,r},M)=\left\{\begin{aligned}
    & 1-p^{-2},\,\,\,\, &r\in (0,a],\\
     & 1+p^{-1},\,\,\,\, &r\in (a,\infty).
 \end{aligned}\right.    $$
 $$[L''_{P,1}\otimes\bZ_p:L''_{P,r}\otimes\bZ_p]=\left\{\begin{aligned}
    & p^{4j},\,\,\,\, &r\in (H_{j},H_{j}+p^{2j}d],\\
     & p^{4j+2},\,\,\,\, &r\in (H_{j}+p^{2j}d,H_{j+1}].
 \end{aligned}\right.   $$
It follows that \begin{align*}
     S''(M)&\leq  \frac{p^2-1}{d(p+1)}\frac{1}{p(1+ p^{-3})}\left[d(1-p^{-2})+ (1+p^{-1})\sum_{j=1}^\infty \frac{dp^{2j}}{p^{4j}} +(1+p^{-1})\sum_{j=0}^\infty\frac{(h_P-d)p^{2j}}{p^{4j+2}} \right]\\
   & \leq \frac{p^2-1}{d(p+1)}\frac{d}{p(1+ p^{-3})}\left[(1-p^{-2})+\frac{(1+p)^2}{p(p^2-1)}\right] =1.
\end{align*} 
Each inequality in the above formula comes from (\ref{eq:estimateh_Pm=2}).
\begin{lemma}\label{lm:isometric embedding in the m=2 case}
    Notation as above.  For all $M$ coprime to $p$ we have $S'(M)\leq S''(M)$. If $\cA_{C^{/P}}$ does not admit a rank-2 saturated lattice of formal special endomorphisms, then $S'(M)< S''(M)$. 
\end{lemma}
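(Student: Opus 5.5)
The plan is to compare $L'_{P,r}\otimes\bZ_p$ with $L''_{P,r}\otimes\bZ_p$ term by term, and to use a monotonicity property of the summand $\delta(p,\Lambda,M)/[L'_{P,1}\otimes\bZ_p:\Lambda]$ (with $M$ prime to $p$) under suitable enlargements of $\Lambda$. Away from $p$ the two lattices agree, so only the $p$-part of each summand in $S'(M)$ matters. We work in case 3 of Lemma~\ref{lem:exampleassm=2}, with $c_1=c_2=d_2=d_1=d$; cases 1 and 2 go the same way, with simpler auxiliary lattices.

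\textbf{Step 1: $L'_{P,r}\otimes\bZ_p\subseteq L''_{P,r}\otimes\bZ_p$.} This amounts to showing that, outside $\cL_{\mathrm{brd}}$, vectors decay at least as fast as the pattern defining $L''$.
\begin{enumerate}
\item Since $m=2$ and $a=a_k=1$, Remark~\ref{rem: ak is a} applies. Any primitive $v$ with $v\bmod p\in\cL_1/p$ and $v_t(x_v)<h_{a_k}$ decays rapidly to every order, and $\bI^{\min}_n(v)$ is the single word $Q_1^{n-1}x_v$.
\item By Lemma~\ref{lm:specialendoprepair}, $d_n(v)=h_P(1+p^2+\cdots+p^{2(n-2)})+p^{2(n-1)}v_t(x_v)$.
\item By Remark~\ref{rmk:exlicitx_v}, $x_v$ is the $\bF_p$-linear combination of $x_1,y_1,x_2,y_2$ (with the twists coming from $D_1$) determined by $v\bmod p$. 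In case 3 the only $\bF_p$-relations among the leading terms are those that cut out $\cL_{\mathrm{brd}}$. Hence for $v\notin\cL_{\mathrm{brd}}+p\cL$ we get $v_t(x_v)=d$ when $v\in\cL_1$, which is exactly the threshold $H_j+p^{2j}d$.
\item For $v\bmod p\in\cL_0/p$, Proposition~\ref{prop: lower bound for d_r} gives $d_1(v)\ge h_{a_k}=h_P$. Proposition~\ref{prop: prelim decay results} and Corollary~\ref{cor: decay linear combination L1 L1} handle mixed combinations, yielding decay by $r=H_{j+1}$.
\item Together these show that $p^{j}v\notin L'_{P,r}$ for $r$ past the corresponding $H$-thresholds, which gives the inclusion.
\end{enumerate}

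\textbf{Step 2: monotonicity.} For $M$ prime to $p$, the quantity $\delta(p,\Lambda,M)/[L'_{P,1}\otimes\bZ_p:\Lambda]$ counts solutions of $Q(v)\equiv M$ in $\Lambda$, normalized by the fixed ambient $L'_{P,1}\otimes\bZ_p$. Concretely it equals $\lim_a p^{a(1-4)}\#\{v\in \Lambda/p^a L'_{P,1}: Q(v)\equiv M\}$ up to a factor independent of $\Lambda$. This is monotone under inclusion $\Lambda\subseteq\Lambda'$. Combined with Step 1 and Remark~\ref{rmk:explicit q_L'}, applied with a common normalization, this gives $S'(M)\le S''(M)$.

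\textbf{Step 3: strictness.} For strict inequality it suffices to find a single $r$ with $L'_{P,r}\subsetneq L''_{P,r}$ in a way that strictly changes the count of representations of $M$. Suppose equality held for all $r$. Then $\cL_{\mathrm{brd}}$, a rank-2 saturated lattice, would lie in $\bigcap_r L'_{P,r}\otimes\bZ_p$. So every element of $\cL_{\mathrm{brd}}$ deforms over all of $C^{/P}$, giving a rank-2 saturated lattice of formal special endomorphisms of $\cA_{C^{/P}}$, which contradicts the hypothesis. Otherwise some vector of $\cL_{\mathrm{brd}}$, possibly after scaling by $p^j$, drops out at a finite $r$. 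Then the strict inclusion removes vectors of every unit norm class, since $\cL_{\mathrm{brd}}\otimes\bF_p$ is anisotropic by case 3 and so represents both square classes. This makes the corresponding term strictly smaller.

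\textbf{Main obstacle.} I expect Step 1 to be the hardest part, specifically the exact bookkeeping of when $p^j v$ decays for $v$ in the $\cL_0$-part and for mixed vectors. Step 2 also needs care: the monotonicity must be stated with a common ambient normalization, because the paper's formula divides by the index in $L'_{P,1}$ rather than in $L''_{P,1}$. These two lattices must be checked to agree at $r=1$ (both are $\cL$).
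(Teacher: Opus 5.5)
Your Step~1 is where the argument breaks. The inclusion $L'_{P,r}\otimes\bZ_p\subseteq L''_{P,r}\otimes\bZ_p$ is false in general once $r>h_P$, and Steps~2 and~3 both rest on it. (The monotonicity in Step~2 is correct as stated, but without the inclusion it has nothing to apply to.)

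The results you cite for mixed vectors, Proposition~\ref{prop: prelim decay results}(b) and Corollary~\ref{cor: decay linear combination L1 L1}, only combine two \emph{rapidly decaying} vectors with distinct first-order decay. A vector $b\in\cL_{\mathrm{brd}}$ does not decay rapidly: $v_t(x_b)>d$ is otherwise unconstrained. Since $x_{b+v_0}=x_b+x_{v_0}$ for Teichm\"uller coordinates, a vector $b+v_0$ with $v_0\in\cL_0$ can outlive both $b$ and $v_0$, through cancellation of leading terms in degree $h_P$.

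For a concrete instance, let $-\gamma_1/\gamma_2$ be a nonsquare in $\bF_p$ and $s^2=-\gamma_1/\gamma_2$. Take $y_1=\beta_1t^d$, $y_2=s\beta_1t^d$, and $x_i=\gamma_iy_i+z_i$ with $z_1=ct^{(p+1)d}$, $z_2=-z_1/s$. Then:
\begin{itemize}
\item $Q_0=0$, you are in case~3, and $h_P=(p+1)d$;
\item the leading term of $x_{e'}$ is that of $Q_1$, in degree $(p+1)d$;
\item $x_{e_1-\gamma_1f_1}$ has leading term $ct^{(p+1)d}$.
\end{itemize}
With $c$ equal to minus the leading coefficient of $Q_1$, the vector $e_1-\gamma_1f_1+e'$ lies in $L'_{P,h_P+1}\otimes\bZ_p$. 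It does not lie in $\cL_{\mathrm{brd}}+p\cL=L''_{P,h_P+1}\otimes\bZ_p$. This is why the paper only asserts that, for $r>h_P$, $\overline{L'_{P,r}\otimes\bZ_p}$ is a subspace of $\overline{\cL_{\mathrm{brd}}}\oplus\overline{\cL_0}$ projecting \emph{injectively} to $\overline{\cL_{\mathrm{brd}}}$, that is, possibly a graph.

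The missing idea is to compare indices and local densities separately, rather than the lattices themselves.
\begin{itemize}
\item Rapid decay of the complement $\cL_{\mathrm{rv}}=\mathrm{Span}_{\bZ_p}\{e',f',f_1,f_2\}$ gives $L'_{P,r}\cap\cL_{\mathrm{rv}}\subseteq L''_{P,r}\cap\cL_{\mathrm{rv}}$. Hence $[L'_{P,1}:L'_{P,r}]\ge[L''_{P,1}:L''_{P,r}]$.
\item $\overline{\cL_0}$ lies in the radical of $\cL/p\cL$, so a graph over a subspace of $\overline{\cL_{\mathrm{brd}}}$ is isometric to that subspace.
\item For $M$ prime to $p$, the density only depends on the reduction mod $p$. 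So equal indices give equal densities.
\item A strictly larger index costs a factor of at least $p$, which outweighs the bounded densities.
\end{itemize}

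Your Step~3 fails for the same reason. If a vector of $\cL_{\mathrm{brd}}$ drops out but a translate $b+v_0$ survives, the corresponding term does \emph{not} become smaller, so your dichotomy yields no strictness. In the paper, strictness comes instead from the fact that $S'(M)=S''(M)$ forces equality of all indices. Then for $r>h_P$ the graph $\overline{L'_{P,r}\otimes\bZ_p}$ is $2$-dimensional and independent of $r$. Lifts $E_i+R_{r,i}$ with $R_{r,i}\in p\cL_{\mathrm{rv}}$ converge, by rapid decay of $\cL_{\mathrm{rv}}$, to a rank-$2$ saturated lattice of formal special endomorphisms. That lattice is congruent to the graph, not in general to $\cL_{\mathrm{brd}}$.
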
\begin{proof}
For $\Lambda\subseteq\cL$ a sublattice, we let $\overline{\Lambda}$ denote its mod $p\cL$ reduction. We immediately see that for $d<r\leq h_P$, we have $\overline{\cL_0}\subseteq \overline{L'_{P,r}\otimes\bZ_p}\subseteq\overline{\cL_{\mathrm{brd}}}\oplus\overline{\cL_0}=\overline{L''_{P,r}\otimes\bZ_p}$. For $d>h_P$, every primitive vector in the lattice $\cL_0$ decays. It follows that $\overline{L'_{P,r}\otimes\bZ_p}$ is a sublattice of $\overline{\cL_{\mathrm{brd}}}\oplus\overline{\cL_0}$ whose projection to $\overline{\cL_{\mathrm{brd}}}$ is an embedding, which is moreover an isometrical embedding since $\overline{\cL_0}$ carries the trivial quadratic form. In the following, we show that for all $r\geq 1$,
\begin{equation}\label{eq:localdensitycomparison}
\frac{\delta(p,L'_{P,r},M)}{[L'_{P,1}\otimes\bZ_p:L'_{P,r}\otimes\bZ_p]}\leq \frac{\delta(p,L''_{P,r},M)}{[L''_{P,1}\otimes\bZ_p:L''_{P,r}\otimes\bZ_p]}.   
\end{equation}Since $L''_{P,r}=L'_{P,r}$ when $r\leq d$, it suffices to consider the case where $r>d$. By the Decay Lemma \ref{thm: superspecial rapid decay} (also see Remark \ref{rem: ak is a}), the lattice $\cL_{\mathrm{rv}}:=\mathrm{Span}_{\bZ_p}\{e',f',f_1,f_2\}$ decays rapidly to arbitrary order. So \begin{equation}\label{eq:Lgeq L''}
[L'_{P,1}\otimes\bZ_p:L'_{P,r}\otimes\bZ_p]\geq [L''_{P,1}\otimes\bZ_p:L''_{P,r}\otimes\bZ_p].\end{equation} Suppose that (\ref{eq:Lgeq L''}) is a strict inequality, then (\ref{eq:localdensitycomparison}) holds and is a strict inequality. Suppose that (\ref{eq:Lgeq L''}) is an equality, then $\overline{L'_{P,r}\otimes\bZ_p}$ and $\overline{L''_{P,r}\otimes\bZ_p}$ need to have the same dimension. If $d<r<h_P$, this means
$\overline{L'_{P,r}\otimes\bZ_p}=\overline{\cL_{\mathrm{brd}}}\oplus\overline{\cL_0}$; if $r>h_P$, then the projection of $\overline{L'_{P,r}\otimes\bZ_p}$ to  $\overline{\cL_{\mathrm{brd}}}$ is an isomorphism. It follows that $\delta(p,L'_{P,r},M)=\delta(p,L''_{P,r},M)$. So (\ref{eq:localdensitycomparison}) holds and is an equality. Therefore (\ref{eq:localdensitycomparison}) holds for all $r\geq 1$, and $S'(M)\leq S''(M)$. 

To show $S'(M)<S''(M)$ under the extra condition, we proceed by contradiction. If $S'(M)=S''(M)$, then (\ref{eq:localdensitycomparison}) need to be an equality for all $r\geq 1$, hence  (\ref{eq:Lgeq L''}) must be an equality for all $r\geq 1$. In the following let $r>h_P$. The projection of $\overline{L'_{P,r}\otimes\bZ_p}$ to  $\overline{\cL_{\mathrm{brd}}}$ is an isomorphism. In particular, $\overline{L'_{P,r}\otimes\bZ_p}$ does not depend on $r$. Pick $\alpha_{1},\beta_{1},\alpha_{2},\beta_{2} \in \bZ_p$, such that $E_1:=e_1-\gamma_1f_1+\alpha_{1} e'+\beta_{1}f'$ and $E_2:=e_2-\gamma_2f_2+\alpha_{2} e'+\beta_{2}f'$ reduce mod $p$ to a basis of $\overline{L'_{P,r}\otimes\bZ_p}$. Then for each $r$ there exit $R_{r,1}, R_{r,2} \in p\cL$ with the property that $E_1+R_{r,1}, E_2+R_{r,2} \in {L'_{P,r}\otimes\bZ_p}$. We can modify $R_{r,i}$ by $E_i$ as follows: 
there exits $u_{r,i}\in p\bZ_p$ such that $R'_{r,i}:=R_{r,i}-u_{r,i}E_i\in p\cL_{\mathrm{rv}}$. It follows that $E_i+(1+u_{r_i})^{-1}R'_{r,i}\in {L'_{P,r}\otimes\bZ_p}$. Therefore, we can assume at the beginning that $R_{r,i}\in p\cL_{\mathrm{rv}}$.  
Since $\cL_{\mathrm{rv}}$ decays rapidly to arbitrary order, the sequence $\{E_i+R_{r,i}\}_{r=h_P+1}^\infty$ converges to a vector that is congruent to $E_i$ mod $p$ and lies in every $L_{P,r}'\otimes\bZ_p$. As a result, $\cA_{C^{/P}}$ admits a rank-2 saturated lattice of formal special endomorphisms.
\end{proof}

Since $\cA_{C^{/P}}$ does not admit a rank-2 saturated lattice of formal special endomorphisms, follows from the lemma that $S'(M) < S'(M)\leq 1$. We have seen that this implies Theorem \ref{lm:localglobalEisensteinSSP}. 


\subsection{$Q$-degeneracy and line configurations}\label{sub:lineconfig}
We introduce the notions of $Q$-degeneracy and line configuration that effectively captures the combinatorics of the Frobenius-twisted quadratic forms $\{Q_i(t)\}_{i\geq 0}$ associated to a collection of power series $\{\underline{x}(t),\underline{y}(t)\}$ that arise in the context of \S\ref{sub:setuplocalcurve}. This can be thought of as an analogue of the Newton polygon. For maximal flexibility, we will build our tools separately from the setup in \S\ref{sub:setuplocalcurve}. 

The following notion will be extremely important through out the section: 
\begin{definition}
  Let $s_1,s_2,...,s_n\in \mathbb{F}[\![t]\!]$. We say that $s_1,s_2,...,s_n$ are \textit{$\mathbb{F}_p$-hyper-independent} if for any $\alpha_1,\alpha_2,...,\alpha_n\in \mathbb{F}_p$, we have $v_t(\sum_{i=1}^n\alpha_i s_i)= \min_{1\leq i\leq n}\{v_t(\alpha_is_i)\}$. Equivalently, $s_1,s_2,...,s_n$ are $\mathbb{F}_p$-hyper-independent if for any $d\in \mathbb{N}$, the series in the subset $\{s_i: v_t(s_i)=d\}$ have $\mathbb{F}_p$-linear independent leading coefficients.  
\end{definition}
\begin{example}
    If $s_1,...,s_n$ are $\mathbb{F}_p$-hyper-independent, then so is $s_1,...,s_n, 0$. 
\end{example}

\subsubsection{Basic definitions}
\begin{definition}\label{def:Q-degen}
Let $\{x_1,x_2,...,x_n,y_1,y_2,...,y_n\}$ be a  collection of power series in $\mathbb{F}[\![t]\!]$. For $r\in \mathbb{Z}_{\geq 0}$, the $r$th Frobenius-twisted quadratic form $Q_r$ is defined by:  $$Q_r(\underline{x}, \underline{y})= \sum_{i=1}^n (x_i^{p^r}y_i+ x_iy_i^{p^r}).$$
If $s$ is a non-negative integer, we say that the collection $\{\underline{x}, \underline{y}\}$ is \textit{$Q_s$-degenerate}, if for all $0\leq r\leq s$, we have 
    \begin{equation}\label{eq:degenerate}
    v_t\left(Q_r(\underline{x}, \underline{y})\right)> \min_{1\leq i\leq n}\{v_t(x_i^{p^r}y_i), \;v_t(x_iy_i^{p^r})\}.
    \end{equation}
If both sides of (\ref{eq:degenerate}) are $\infty$, the inequality holds by default.
\end{definition}

\begin{definition}\label{def:lineconfig}
 The set $\mathbf{E}$ of \textit{admissible lines} (in short, \textit{lines}) is the set of symbols of the form ``$y=ax+b$'' where $a,b\in \bR_{\geq 0}\cup\{\infty\}$. When $a,b\in \bR_{\geq 0}$, we say the line is \textit{geometric}: it is a genuine line in $\mathbb{R}^2$. When $a$ or $b$ equals $\infty$, we say the line is \textit{degenerate}. Let $n$ be a non-negative integer. A \textit{line configuration} of $n$ lines is an element $\{\Pi_1,...,\Pi_n\}\in\mathbf{E}^n$. The following are some notions related to line configurations: 
\begin{enumerate}
\item   Let $\{x_1,x_2,...,x_n,y_1,y_2,...,y_n\}$ be a collection of power series in $\mathbb{F}[\![t]\!]$. To each $1\leq i\leq n$, we attach a line $\Pi_i\subseteq \mathbb{R}^2$, with equation $$\Pi_i:y=\min\{v_t(x_i),v_t(y_i)\} x  + \max\{v_t(x_i),v_t(y_i)\} .$$ 
This gives rise to a line configuration $\{\Pi_i\}_{1\leq i\leq n}$ as per Definition~\ref{def:lineconfig}, which will be called the \textit{line configuration associated to $\{\underline{x},\underline{y}\}$}. In this paper,
we will mainly be concerned with line configurations that arise this way. 
\item\label{def:multiset} When there is no need to keep track of indices, we will write a line configuration as a multiset: \begin{equation}
    \{\Pi_i\}_{1\leq i\leq n}= \{m(\ell_j)\cdot\ell_j\}_{1\leq j\leq w},
\end{equation}
where $m(\ell_j)$ is the multiplicity of the line $\ell_j$. \item  Given a line configuration $\{\Pi_{i}:y=a_ix+b_i\}_{{1\leq i\leq n}}$, let $\Omega_i=\{(x,y)\in \bR^2| y\leq a_ix+b_i\}$ (if the line is degenerate, then $\Omega_i$ is defined to be $\mathbb{R}^2$). The region $$\Omega:= \bigcap_{i=1}^n\Omega_i$$ 
will be called the \textit{$Q$-polygon} attached to the line configuration. The \textit{boundary} $\partial\Omega$ makes sense (for example, if all lines are degenerate, then $\Omega=\mathbb{R}^2$ and $\partial\Omega=\varnothing$). 
    \item\label{def:critpoints} A point on $\partial\Omega$ with $x$-coordinates $p^r, r\in \mathbb{Z}_{\geq  0}$ will be called a \textit{critical point} of $\Omega$. A critical point that lies on two lines of different slopes will be called \textit{vertex-like}, otherwise it will be called \textit{edge-like}. By our definition, a degenerate line can not contain any critical point. 
\item\label{def:redundant} A line in $\{\Pi_i\}_{1\leq i\leq n}$ is said to be \textit{redundant} over an interval $I\subseteq [1,\infty)$, if it does not contain any critical point in the region $I\times \mathbb{R}$. It is called \textit{totally redundant}, if it is redundant over $[1,\infty)$. For example, degenerate lines are always totally redundant. 
\item\label{def:quasi-redundant} A line in $\{\Pi_i\}_{1\leq i\leq n}$ is said to be \textit{quasi-redundant} over an interval $I\subseteq [1,\infty)$, if it is redundant, or its intersection with $\Omega\cap (I\times \bR)$ consists of only one point. For example, a quasi-redundant line which is not redundant over $I$ intersects $\Omega$ at exactly one point, which is necessarily a vertex-like critical point.  \end{enumerate}
\end{definition}

\begin{definition}\label{def:lineconfig2}Notation as in Definition~\ref{def:lineconfig}.
    \begin{enumerate}    
\item Let $\Pi_1:y=a_1x+b_1$, $\Pi_2:y=a_2x+b_2$ be two lines in $\mathbf{E}$, we say that $\Pi_1=\Pi_2$, if $a_1=a_2,b_1=b_2$. We say that $\Pi_1$ \textit{lies above} $\Pi_2$, denoted by $\Pi_1\succeq \Pi_2$, if one of the following holds: \begin{enumerate}
    \item $\Pi_1,\Pi_2$ are geometric, and $a_1\geq a_2$, $a_1+b_1\geq a_2+b_2$. Geometrically, this means $\Pi_1$ lies above $\Pi_2$ in the interval $[1,\infty)$.
    \item $\Pi_1$ is degenerate and $\Pi_2$ is geometric. 
    \item $a_1=a_2=\infty$ and $b_1\geq b_2$.
    \item $b_1=b_2=\infty$ and $a_1\geq a_2$.
\end{enumerate}
This defines a partial order on $\mathbf{E}$. 
\item\label{def:linconfig it 7} Two line configurations $\{\Pi_i\}_{1\leq i\leq n}$ and $\{\Pi_i'\}_{1\leq i\leq n}$ are said to be \textit{equal}, if $\Pi_i=\Pi_i'$ for every $i$. A line configuration $\{\Pi_i\}_{1\leq i\leq n}$ is said to \textit{lie above} another line configuration $\{\Pi_i'\}_{1\leq i\leq n}$, if for every $i$, $\Pi_i\succeq \Pi_i'$. We denote this by $\{\Pi_i\}_{1\leq i\leq n}\succeq \{\Pi_i'\}_{1\leq i\leq n}$. This is a partial order on the set of line configurations of $n$ lines. 
\end{enumerate}
\end{definition}

\begin{lemma}\label{lm:lineconfig}Suppose that $\{x_1,x_2,...,x_n,y_1,y_2,...,y_n\}$ is a collection of power series in $\mathbb{F}[\![t]\!]$ and let $\{\Pi_i\}$ be the associated {line configuration}.
If $\{\underline{x},\underline{y}\}$ is $Q_s$-degenerate, then any critical point 
       of the $Q$-{polygon} within $[1,p^s]\times \mathbb{R}$ must lie on either at least two lines (counting multiplicity), or a line of form $y=dx+d$. 
\end{lemma}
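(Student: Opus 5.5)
The plan is to translate the $Q$-polygon into valuations of the monomials in $Q_r$. For each $i$ write $c_i=\min\{v_t(x_i),v_t(y_i)\}$ and $d_i=\max\{v_t(x_i),v_t(y_i)\}$, so that $\Pi_i: y=c_ix+d_i$. The key observation is that for $r\ge 0$,
\[
\min\{v_t(x_i^{p^r}y_i),\,v_t(x_iy_i^{p^r})\}=p^r c_i+d_i ,
\]
which is the value of $\Pi_i$ at $x=p^r$. Consequently the boundary point of $\Omega$ at abscissa $p^r$ has height
\[
\mu_r:=\min_i\{p^rc_i+d_i\}=\min_i\min\{v_t(x_i^{p^r}y_i),v_t(x_iy_i^{p^r})\},
\]
the right-hand side of (\ref{eq:degenerate}). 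Degenerate lines correspond to an infinite monomial valuation and never contribute, consistent with the convention that they contain no critical point. The lines through the critical point at $x=p^r$ are exactly the indices $i$ attaining $\mu_r$.

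Fix $0\le r\le s$ and suppose the critical point at $p^r$ lies on exactly one line $\Pi_{i_0}$, counted with multiplicity. Then every monomial $x_j^{p^r}y_j$ and $x_jy_j^{p^r}$ with $j\neq i_0$ has valuation strictly greater than $\mu_r$. The leading part of $Q_r$ in degree $\mu_r$ therefore comes only from $x_{i_0}^{p^r}y_{i_0}+x_{i_0}y_{i_0}^{p^r}$. Since $\{\underline x,\underline y\}$ is $Q_s$-degenerate, $v_t(Q_r)>\mu_r$, so these two terms must cancel in degree $\mu_r$. This forces $v_t(x_{i_0}^{p^r}y_{i_0})=v_t(x_{i_0}y_{i_0}^{p^r})=\mu_r$.

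We now split into two cases according to $r$.

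If $r\ge 1$, the equality $p^rv_t(x_{i_0})+v_t(y_{i_0})=v_t(x_{i_0})+p^rv_t(y_{i_0})$ gives $(p^r-1)(v_t(x_{i_0})-v_t(y_{i_0}))=0$. Hence $v_t(x_{i_0})=v_t(y_{i_0})=:d$, and $\Pi_{i_0}$ is the line $y=dx+d$.

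If $r=0$, the two monomials coincide and $Q_0$ contributes $2x_{i_0}y_{i_0}$ in the leading degree. Since $p>2$, the coefficient $2\alpha\beta$ (with $\alpha,\beta$ the leading coefficients) is nonzero, so no cancellation can occur. This contradicts degeneracy, so at $r=0$ the critical point must lie on at least two lines.

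The main care needed is bookkeeping rather than any deep step. First, the critical point must be identified with the minimum over all lines, including multiplicity: coincident lines from different indices count as two lines, which is exactly the permitted alternative. Second, one must handle the case $\mu_r=\infty$, where there is no critical point. I would also double-check that the vertex-like/edge-like distinction plays no role here, since the argument uses only the count of indices attaining $\mu_r$.
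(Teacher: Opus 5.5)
Your proposal is correct and takes the same approach as the paper, whose one-line proof observes that degeneracy forces at least two of the monomials $x_i^{p^r}y_i$, $x_iy_i^{p^r}$ to attain the minimal valuation; your identification of that minimum with the height of $\partial\Omega$ at $x=p^r$, followed by the case analysis on a single index $i_0$, is exactly this argument spelled out. Your separate treatment of $r=0$ makes precise a point the paper's one-liner leaves implicit: there the two monomials coincide, so attaining the minimum twice says nothing about $v_t(x_{i_0})$ versus $v_t(y_{i_0})$, and one uses $p>2$ to rule out cancellation of $2x_{i_0}y_{i_0}$.
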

\begin{proof}
If (\ref{eq:degenerate}) holds, then at least two elements in $\{x_i^{p^r}y_i, x_iy_i^{p^r}\}_{i=1}^n$ achieve the smallest valuation.
\end{proof}
\begin{remark}[Redundant line principle]
    Let $\{x_1,x_2,...,x_n,y_1,y_2,...,y_n\}$ be a  $Q_s$-degenerate collection of power series in $\mathbb{F}[\![t]\!]$. Let $\{\Pi_i\}$ be the associated line configuration. If $\{\Pi_{i}\}_{i\in \Delta\subseteq\{1,2,...,n\}}$ are redundant over $[1,p^s]$, then the collection $\{x_i,y_i\}_{i\in \{1,2,...,n\}\setminus \Delta}$ is again $Q_s$-degenerate.
\end{remark}
\subsubsection{$Q$-isometry and maximal elements} 
\begin{definition}\label{def:isometricshuffle}
Let $\{x_1,...,x_n,y_1,...,y_n\}$ be a collection of power series in $\mathbb{F}[\![t]\!]$. The \textit{$Q$-isometry class} of $\{\underline{x},\underline{y}\}$ is the set of all collections $\{x_1',...,x_n',y_1',...,y_n'\}$ such that $$(\underline{x}',\underline{y}')=(\underline{x},\underline{y})\cdot R,$$
where $R\in \mathrm{SO}(Q_0)(\mathbb{F}_p)$, and $Q_0= \begin{bmatrix}
    O & I_n\\
    I_n & O
\end{bmatrix}$.  For example, the following three operations produce elements in the $Q$-isometry class of $\{\underline{x},\underline{y}\}$:  
\begin{enumerate}
    \item re-indexing: $(x_1',...,x_n', y_1',...,y_n')=(x_{\sigma(1)},...,x_{\sigma(n)}, y_{\sigma(1)},...,y_{\sigma(n)})$ for some $\sigma\in S_n$.
    \item swapping: $x'_i,y'_i \leftrightarrow y_i,x_i$.
    \item quantum re-indexing: $(x_1',...,x_n', y_1',...,y_n')=((x_1,...,x_n)B,(y_1,...,y_n)B^{-1,T})$, where $B\in \mathrm{GL}_n(\mathbb{F}_p)$.  
\end{enumerate}    
It is clear that the $Q$-isometry class of a given collection is a finite set.
\end{definition}

The $Q$-isometric class can be best understood in the following context: Fix $n$. We can form a \textit{frame space} $$V_{\mathrm{frame}}=\mathrm{Span}_{\mathbb{F}_p}\{X_1,...,X_n,Y_1,...,Y_n\},$$ where $X_i,Y_i$ are merely symbols. This is a quadratic $\mathbb{F}_p$-space with pairing $$Q\left(\sum_{i=1}^na_iX_i+b_iY_i\right)= \sum_{i=1}^n a_ib_i.$$
We can then say that a collection $\{\underline{x},\underline{y}\}$ of power series  in $\mathbb{F}[\![t]\!]$ is an  \textit{$\mathbb{F}[\![t]\!]$-realization} of $\{\underline{X},\underline{Y}\}$, written as $\{\underline{X},\underline{Y}\}\rightarrowtail \{\underline{x},\underline{y}\}$. Then any element in the $Q$-isometry class of $\{\underline{x},\underline{y}\}$ is induced from an isometry of $V_{\mathrm{frame}}$: 
$$\{\underline{X},\underline{Y}\} \xrightarrow{R} \{\underline{X},\underline{Y}\}\rightarrowtail \{\underline{x},\underline{y}\}.$$
 \begin{definition}
 Let $\{x_1,...,x_n,y_1,...,y_n\}$ be a collection of power series in $\mathbb{F}[\![t]\!]$ and let $\{\Pi_i\}_{1\leq i\leq n}$ be the associated line configuration. The \textit{$Q$-isometry class} of $\{\Pi_i\}_{1\leq i\leq n}$ is the finite set of line configurations associated to the elements in the $Q$-isometry class of $\{\underline{x},\underline{y}\}$. A \textit{maximal configuration} in the $Q$-isometry class of $\{\Pi_i\}_{1\leq i\leq n}$ is a maximal element in the partial order $\succeq$, see Definition~\ref{def:lineconfig2}(\ref{def:linconfig it 7}). A collection $\{\underline{x},\underline{y}\}$ is said to be \textit{maximal} in its $Q$-isometry class, if the associated line configuration is maximal in its $Q$-isometry class.
 \end{definition}
\begin{remark}\label{rmk:maximal_sub}
  The following observations are clear: 
\begin{enumerate}
    \item If $\{x_1,...,x_n,y_1,...,y_n\}$ be a collection of power series in $\mathbb{F}[\![t]\!]$, maximal in its $Q$-isometry class. Then for any $\Sigma \subseteq \{1,2,...,n\}$, the subcollection $\{x_i,y_i\}_{i\in \Sigma}$ is also maximal in its own $Q$-isometry class. 
    \item The property of being maximal is preserved under re-indexing and swapping, but not under a general quantum re-indexing.
     \item The multiset of lines in the line configuration is also preserved under re-indexing and swapping, but not under a general quantum re-indexing.
\end{enumerate}
\end{remark}
  Maximal collections have nice properties. The following lemmas will be extremely useful:
\begin{lemma}\label{lm:hypermaxiaml}
 Suppose that $\{x_1,...,x_n,y_1,...,y_n\}$ is a collection of power series in $\mathbb{F}[\![t]\!]$, which is maximal in its $Q$-isometry class. Then $x_1,...,x_n$ are $\mathbb{F}_p$-hyper-independent. The same holds for $y_1,...,y_n$.
\end{lemma}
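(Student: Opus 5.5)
We argue by contradiction, using a quantum re-indexing (item (3) of Definition~\ref{def:isometricshuffle}) to produce a strictly larger configuration. Suppose $x_1,\dots,x_n$ are not $\mathbb{F}_p$-hyper-independent. By the equivalent formulation of hyper-independence, there is some $d\in\mathbb{N}$ such that the $x_i$ with $v_t(x_i)=d$ have $\mathbb{F}_p$-linearly dependent leading coefficients. So there are indices $S=\{i: v_t(x_i)=d\}$ and $\alpha_i\in\mathbb{F}_p$ ($i\in S$), not all zero, with $v_t(\sum_{i\in S}\alpha_i x_i)>d$. After re-indexing (which preserves maximality, Remark~\ref{rmk:maximal_sub}), we may assume $1\in S$ and $\alpha_1=1$. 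Among the indices $i\in S$ with $\alpha_i\neq 0$, we choose the index $1$ to be one for which $v_t(y_1)$ is maximal among $\{v_t(y_i): i\in S,\ \alpha_i\neq 0\}$. This choice is what makes the dual transformation harmless.

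Next we take $B\in\mathrm{GL}_n(\mathbb{F}_p)$ to be the elementary matrix for which $x_1'=\sum_{i\in S}\alpha_i x_i$ and $x_j'=x_j$ for $j\ne1$. The companion transformation $B^{-1,T}$ on the $y$'s is then $y_1'=y_1$ and $y_i'=y_i-\alpha_i y_1$ for $i\in S\setminus\{1\}$, with the other $y_j$ unchanged. (We will double-check this by verifying that $\sum x_i'y_i'=\sum x_iy_i$.) The resulting collection lies in the $Q$-isometry class of the original one.

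We then compare line configurations line by line. For index $1$, we have $v_t(x_1')>d=v_t(x_1)$ and $y_1'=y_1$. Both $\min$ and $\max$ of $\{v_t(x_1'),v_t(y_1')\}$ are therefore $\ge$ the old ones, and their sum strictly increases. Hence $\Pi_1'\succeq\Pi_1$ with $\Pi_1'\neq\Pi_1$; this includes the degenerate case where $x_1'=0$, which gives slope or intercept $\infty$. For $i\in S\setminus\{1\}$ with $\alpha_i\neq0$, we have $v_t(y_i')\ge\min(v_t(y_i),v_t(y_1))$. This is the main obstacle: we need $v_t(y_i')\ge v_t(y_i)$, and a cancellation-free bound is not automatic. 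It is exactly the reason for choosing index $1$ with $v_t(y_1)$ maximal, which gives $v_t(y_1)\ge v_t(y_i)$ and hence $v_t(y_i')\ge v_t(y_i)$. Since $x_i'=x_i$, this yields $\Pi_i'\succeq\Pi_i$, because both the minimum and the maximum valuations weakly increase. For all other indices nothing changes.

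Thus the new configuration lies strictly above the old one in the partial order of Definition~\ref{def:lineconfig2}, which contradicts maximality. The statement for $y_1,\dots,y_n$ follows by applying swapping ($x_i\leftrightarrow y_i$ for all $i$), which preserves maximality and the multiset of lines, and then running the same argument.
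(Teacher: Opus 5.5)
Your proof is correct and is essentially the paper's argument: both perform the quantum re-indexing $x_1'=\sum_i \alpha_i x_i$ with the dual change $y_i'=y_i-\alpha_i y_1$, and both choose index $1$ to maximize $v_t(y_1)$ among the indices involved, so that no $y$-valuation drops and the new line configuration lies strictly above the old one, contradicting maximality. Your treatment of the $y$'s, by swapping or equivalently by running the same argument with the roles of $B$ and $B^{-1,T}$ exchanged, matches the paper's remark that ``the same holds''.
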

\begin{proof} Let $v\in \mathbb{N}$ and $D_v=\{i\in\{1,2,...,n\}: v_t(x_i)=v\}$. Let $\alpha_i, i\in D_v$ be the leading coefficient of $x_i$. It suffices to show that  
    $\{\alpha_i\}_{i\in D_v}$ are $\mathbb{F}_p$-independent. Suppose this is not the case. Up to re-indexing, we can take $D_v=\{1,2,...,l\}$ and there is an $r\leq l$ and $c_1,c_2,...,c_r\in \mathbb{F}_p^*$, such that $c_1\alpha_1+ ...+c_r\alpha_r=0$. Possibly re-indexing again, let  $v_t(y_1)=\max_{1\leq i\leq r}\{v_t(y_i)\}$. Without loss of generality, let $c_1=1$. We can do a quantum re-indexing (Definition \ref{def:isometricshuffle}) by $x_1'=c_1x_1+ ...+c_rx_r$, and $x_i'=x_i$ if $i\geq 2$. The effect of this quantum re-indexing on $y_i$ is  $y_1'=y_1$, $y_i'=y_i-c_iy_1$ when $2\leq i\leq r$ and $y_i'=y_i$ when $i>r$. Now $v_t(x_1')> v_t(x_1)$ and $v_t(y_1')= v_t(y_1)$, and when $2\leq i\leq r$ we have $v_t(y_i')\geq v_t(y_i)$. As a result, while the new collection $\{\underline{x}',\underline{y}'\}$ lies in the same $Q$-isometry class, its line configuration lies strictly above the original one, contradicting the maximality of $\{\underline{x},\underline{y}\}$. 
\end{proof}

\begin{lemma}\label{lm:quadmaximal}
  Suppose that $\{x_1,...,x_n,y_1,...,y_n\}$ is a collection of power series in $\mathbb{F}[\![t]\!]$, which is maximal in its $Q$-isometry class. Let $d\in \mathbb{N}$ and let $C_d=\{i\in \{1,2,...,n\}:v_t(x_i)=v_t(y_i)=d\}$. Then $$V^+_d:=\left\{\sum_{i\in C_d}c_iX_i+d_iY_i: v_t\left(\sum_{i\in C_d}c_ix_i+d_iy_i\right)>d\right\}\subseteq V_{\mathrm{frame}}$$
   has dimension at most 2. In addition, as a quadratic $\mathbb{F}_p$-space, $V^+_d$ is nondegenerate of inert type when $\dim V^+_d =2$, and is nondegenerate when $\dim  V^+_d=1$.
\end{lemma}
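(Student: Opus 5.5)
The plan is to mimic the proof of \cref{lm:hypermaxiaml}: any violation of the claimed structure of $V^+_d$ should produce an element of $\mathrm{SO}(Q_0)(\mathbb{F}_p)$ whose effect on the collection raises some line in the configuration while lowering none. That contradicts maximality.

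First I reduce to the subcollection indexed by $C_d$. By \cref{rmk:maximal_sub}(1) it is again maximal. For any isometry of the frame subspace $V_d:=\mathrm{Span}_{\mathbb{F}_p}\{X_i,Y_i\}_{i\in C_d}$, extended by the identity on the remaining coordinates, only the lines $\Pi_i$ with $i\in C_d$ can move. Each such line is $y=dx+d$. If an isometry of $V_d$ produces new series $x_i',y_i'$, all of valuation $\ge d$ (automatic, since they are $\mathbb{F}_p$-combinations of series of valuation exactly $d$), then every new line lies weakly above $y=dx+d$. Maximality therefore says: no isometry of $V_d$ makes some $x_i'$ or $y_i'$ have valuation $>d$. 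Note also that $V^+_d$ is exactly the kernel of the $\mathbb{F}_p$-linear map $V_d\to\mathbb{F}$ sending $\sum c_iX_i+d_iY_i$ to the $t^d$-coefficient of $\sum c_ix_i+d_iy_i$. So it is a well-defined subspace.

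The key step is to show that $V^+_d$ contains no nonzero isotropic vector. Given an isotropic $w\in V^+_d$, the space $V_d$ is a nondegenerate split space of even dimension $2|C_d|$. By Witt's theorem some $R\in\mathrm{O}(V_d)(\mathbb{F}_p)$ carries $X_1$ to $w$. If $R$ has determinant $-1$, I compose it with a reflection fixing $X_1$, for instance the swap of $X_2,Y_2$, or, if $|C_d|=1$, replace $X_1$ by $Y_1$ using the swap. Then $R\in\mathrm{SO}$, and after this change of coordinates the new $x_1'$ is the realization of $w$, which has valuation $>d$. This contradicts maximality. One should double check here that the realization map intertwines the frame isometry with the transformation $(\underline{x},\underline{y})\mapsto(\underline{x},\underline{y})R$ as in \cref{def:isometricshuffle}; this is just bookkeeping.

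With no isotropic vectors, $V^+_d$ is anisotropic. An anisotropic quadratic space over $\mathbb{F}_p$ with $p$ odd has dimension at most $2$, since every form in $\ge 3$ variables over a finite field is isotropic (Chevalley--Warning). Anisotropy also forces the induced form to be nondegenerate, because a nonzero radical vector would be isotropic. In dimension $1$ this gives nondegeneracy. In dimension $2$, a nondegenerate anisotropic plane is precisely the inert (non-split) one, since the split plane contains isotropic lines.

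I expect the main obstacle to be the Witt step, for two reasons. The first is ensuring the resulting isometry lies in $\mathrm{SO}$ rather than just $\mathrm{O}$; the swap trick handles this. The second is checking that the moved line strictly rises in the partial order $\succeq$. Because $x_1'$ gets valuation $>d$ while $y_1'$ keeps valuation $\ge d$, the new line is $y=d'x+d''$ with $d'\ge d$, $d''>d$ (or degenerate). Hence it lies strictly above, and no other line drops.
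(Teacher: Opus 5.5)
Your proof is correct and follows essentially the paper's argument: the paper also applies an isometry of $V_d$ that moves an isotropic vector of $V_d^+$ onto $X_1$ (first a radical vector, then a vector in a hyperbolic plane), derives a contradiction with maximality, and finishes with the classification of quadratic spaces over $\mathbb{F}_p$. Your version merges the paper's two cases into the single claim that $V_d^+$ is anisotropic, and it spells out the $\mathrm{SO}$-versus-$\mathrm{O}$ adjustment, which the paper leaves implicit.
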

\begin{proof}
Let $V_d=\left\{\sum_{i\in C_d}c_iX_i+d_iY_i\right\}\subseteq V_{\mathrm{frame}}$. Up to re-indexing, we can assume that $C_d=\{1,2,...,r\}$. We first show that the radical of $V^+_d$ is $\mathbf{0}$. Suppose this is not the case, then up to an isometry of $V_d$, we have $X_1\in V^+_d, Y_1\notin V_d^+$. This means that there is a collection $\{\underline{x}',\underline{y}'\}$ in the $Q$-isometric class of $\{\underline{x},\underline{y}\}$, 
whose line configuration lies strictly above\footnote{We note that any isometry applied to the set $\{x_1\hdots x_r, y_1\hdots y_r\}$ must necessarily produce lines $\succeq$ the original lines, as we have the conditions $v_t(x_i) = v_t(y_i) = d$.} that of $\{\underline{x},\underline{y}\}$. A contradiction to the maximality. 

Now we show that $V^+_d$ can not contain a hyperbolic plane (i.e., a nondegenerate dimension 2 subspace of split type). Suppose this is not the case, then up to an isometry of $V_d$, we have $X_1,Y_1\in V_d^+$. This again means that there is a collection $\{\underline{x}',\underline{y}'\}$ in the $Q$-isometric class of $\{\underline{x},\underline{y}\}$, 
whose line configuration lies strictly above that of $\{\underline{x},\underline{y}\}$. A contradiction to the maximality. 

Since $V_d^+$ has trivial radical, and has no hyperbolic subspace, it can only happen that $V_d^+$ is inert of dimension 2, nondegenerate of dimension 1, or trivial.   
\end{proof}
\begin{lemma}\label{lm:quadmaximal333}
  Suppose that $\{x_1,...,x_n,y_1,...,y_n\}$ is a collection of power series in $\mathbb{F}[\![t]\!]$, which is maximal in its $Q$-isometry class. Suppose that $d=v_t(x_1)=...=v_t(x_n)=v_t(y_1)\leq ...\leq v_t(y_n)$ and $d<v_t(y_n)$. Then for all $a_i,b_i\in \mathbb{F}_p$, the $t$-valuation of $x_n + \sum_{1\leq i\leq n-1} (a_ix_i+ b_iy_i)$ equals $d$.
\end{lemma}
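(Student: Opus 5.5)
We argue by contradiction: assuming the combination has large valuation, we build an explicit element of $\mathrm{SO}(Q_0)(\mathbb{F}_p)$ whose induced collection has a strictly higher line configuration, which contradicts maximality. So suppose there are $a_i,b_i\in\mathbb{F}_p$ with
$$w(t):=x_n+\sum_{1\le i\le n-1}(a_ix_i+b_iy_i)$$
satisfying $v_t(w)>d$. In the frame space $V_{\mathrm{frame}}$ this corresponds to the vector $U:=X_n+\sum_{i<n}(a_iX_i+b_iY_i)$. Two features of $U$ matter: its $Y_n$-coefficient is zero, and $\langle U,Y_n\rangle=1$. The vector $U$ need not be isotropic, so we correct it to $U':=U-Q(U)\,Y_n$. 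This $U'$ is isotropic, and its realization $w-Q(U)y_n$ still has valuation $>d$, because $v_t(y_n)>d$.

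Next we produce the isometry. Put $W:=U'-X_n=\sum_{i<n}(a_iX_i+b_iY_i)-Q(U)Y_n$. It is orthogonal to $Y_n$, since it has no $X_n$-component. Take the Eichler transvection attached to the isotropic vector $Y_n$ and $W\perp Y_n$:
$$E(z)=z+\langle z,Y_n\rangle W-\langle z,W\rangle Y_n-Q(W)\langle z,Y_n\rangle Y_n .$$
This is a standard element of $\mathrm{SO}(Q_0)(\mathbb{F}_p)$, and it sends $X_n\mapsto U'$ and $Y_n\mapsto Y_n$. On the remaining basis vectors $X_i,Y_i$ with $i<n$ we have $\langle X_i,Y_n\rangle=\langle Y_i,Y_n\rangle=0$, so $E(X_i)=X_i-\langle X_i,W\rangle Y_n$, and similarly for $Y_i$. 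Realizing through $E$ (that is, acting by the corresponding $R$ on the right) gives a new collection $\{\underline{x}',\underline{y}'\}$ in the same $Q$-isometry class, namely
$$x_n'=w-Q(U)y_n,\qquad y_n'=y_n,\qquad x_i'=x_i-c_iy_n,\qquad y_i'=y_i-c_i'y_n\quad (i<n),$$
with $c_i,c_i'\in\mathbb{F}_p$.

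Now we compare line configurations. For $i<n$ we have $v_t(x_i')=d$, because $v_t(y_n)>d=v_t(x_i)$. Also $v_t(y_i')\ge v_t(y_i)$, because $v_t(y_n)$ is the largest among the $v_t(y_j)$, so subtracting $c_i'y_n$ cannot lower the valuation. Hence $\Pi_i'\succeq\Pi_i$ for all $i<n$. For the index $n$, the old line is $y=dx+v_t(y_n)$. The new line has slope $\min\{v_t(x_n'),v_t(y_n)\}>d$ and intercept-sum $v_t(x_n')+v_t(y_n)>d+v_t(y_n)$, so $\Pi_n'\succ\Pi_n$ strictly. Thus the new configuration lies strictly above the original one, contradicting maximality. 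Hence $v_t(w)=d$.

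The main obstacle is bookkeeping. One must check that the Eichler map has determinant $1$, so that it lies in $\mathrm{SO}$ and not merely $\mathrm{O}$; if it does not, compose with a swap $X_j\leftrightarrow Y_j$ for some $j$ with $v_t(x_j)=v_t(y_j)=d$, for instance $j=1$, which does not change the lines. One must also confirm that the direction of the action (right multiplication by $R$ on the row vector $(\underline{x},\underline{y})$) matches the frame-space description. Both checks are routine once the formula for $E$ is fixed.
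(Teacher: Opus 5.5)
Your proof is correct and is essentially the paper's argument: the paper writes down exactly the collection your Eichler transvection produces, namely $x_n' = x_n + \sum_{i<n}(a_ix_i+b_iy_i) - \bigl(\sum_{i<n}a_ib_i\bigr)y_n$, $y_n'=y_n$, $x_i'=x_i-b_iy_n$, $y_i'=y_i-a_iy_n$, and then compares line configurations exactly as you do. Your determinant check is unnecessary: an Eichler transformation is unipotent, so it already lies in $\mathrm{SO}(Q_0)(\mathbb{F}_p)$ and no compensating swap is needed.
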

\begin{proof}
Suppose that the $t$-valuation the expression is larger than $d$. Let  
$x'_n= x_n-(\sum_{1\leq i\leq n-1} a_ib_i)y_n+\sum_{1\leq i\leq n-1} (a_ix_i+ b_iy_i)$, $y'_n=y_n$. For $1\leq i<n$, let $x'_i= x_i-b_iy_n$, $y'_i= y_i-a_iy_n$.  Then the new collection $\{\underline{x}',\underline{y}'\}$ is in the same $Q$-isometry class but with strictly larger line configuration. Contradiction. 
\end{proof}
\subsection{Critical points and shape of the $Q$-polygon}
In the following, we understand the behavior of critical points (Definition~\ref{def:lineconfig}(\ref{def:critpoints})) of a line configuration.  The main result is Corollary~\ref{cor:maximalanddegenerate}.
\subsubsection{Dependence results} 
If $s_1,...,s_n\in \mathbb{F}$,  the Moore matrix $(s_j^{(i-1)})_{1\leq i,j\leq n}$ will be denoted by $M(s_1,...,s_n)$. 
\begin{lemma}\label{lm:some_shit}
    Let $\alpha_1,...,\alpha_n,\beta_1,...,\beta_n\in{\mathbb{F}}^*$, $N\in \mathbb{N}$, and consider the quantity $\gamma_r=\sum_{i=1}^n\alpha_i^{p^{N+r}}\beta_i$.
    Suppose that $\gamma_r=0$ for all $0\leq r\leq n-2$.
    \begin{enumerate}[label=\upshape{(\alph*)}]
        \item If $\gamma_r=0$ for $r=n-1$, then $\dim\mathrm{Span}_{\mathbb{F}_p}\{\alpha_1,...,\alpha_n\}<n$. 
        \item If $\gamma_r\neq 0$ for $r=n-1$ , then $\dim\mathrm{Span}_{\mathbb{F}_p}\{\alpha_1,...,\alpha_n\}=\dim\mathrm{Span}_{\mathbb{F}_p}\{\beta_1,...,\beta_n\}=n$.
    \end{enumerate}
\end{lemma}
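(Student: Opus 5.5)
Setting $u_i := \alpha_i^{p^N}$, we have $\gamma_r = \sum_i u_i^{p^r}\beta_i$. Since $\mathbb{F}$ is perfect, the map $x\mapsto x^{p^N}$ is an injective $\mathbb{F}_p$-linear map, so $\dim\mathrm{Span}_{\mathbb{F}_p}\{\alpha_i\}=\dim\mathrm{Span}_{\mathbb{F}_p}\{u_i\}$. We may therefore assume $N=0$. The hypothesis then reads $M(u_1,\dots,u_n)\,\beta = (0,\dots,0,\gamma_{n-1})^t$, where $\beta=(\beta_1,\dots,\beta_n)^t$ and $M$ is the Moore matrix. The whole argument rests on the classical fact that $\det M(u_1,\dots,u_n)\neq 0$ if and only if $u_1,\dots,u_n$ are $\mathbb{F}_p$-linearly independent.

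\textbf{Part (a).} If $\gamma_{n-1}=0$, then $M\beta=0$ with $\beta\neq 0$ (indeed every $\beta_i\in\mathbb{F}^*$). Hence $\det M=0$, so the $u_i$, and therefore the $\alpha_i$, are $\mathbb{F}_p$-dependent.

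\textbf{Part (b), the $\alpha$ span.} If $\gamma_{n-1}\ne0$, suppose the $u_i$ were dependent. Then there are $c\in\mathbb{F}_p^n\setminus\{0\}$ with $\sum_j c_j u_j=0$. Since $c_j^{p^r}=c_j$, applying Frobenius gives $\sum_j c_j u_j^{p^r}=0$ for all $r$, i.e. $c$ lies in the kernel of every row map. Working with the $n\times n$ system alone does not immediately yield a contradiction. Instead, I use the quantity $\sum_i u_i^{p^r}\beta_i$ as a semilinear pairing. Reorder so that $u_n=\sum_{j<n}c_ju_j$ (possible, after dropping zero coefficients, with some $c_j\neq 0$). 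Then
\[
\gamma_r=\sum_{j<n}u_j^{p^r}\bigl(\beta_j+c_j\beta_n\bigr)=\sum_{j<n}u_j^{p^r}\beta'_j .
\]
So $\gamma_0=\dots=\gamma_{n-2}=0$ says $M(u_1,\dots,u_{n-1})\beta'=0$ for a square $(n-1)\times(n-1)$ Moore matrix. Now I argue by induction on $n$ with the stronger statement: if $\sum_{j=1}^k u_j^{p^r}b_j=0$ for $0\le r\le k-1$ and the $b_j$ are arbitrary in $\mathbb{F}$, then $\sum_j u_j^{p^r}b_j=0$ for all $r$. This holds because the span of $u_1,\dots,u_k$ has some dimension $d\le k$. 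Expressing the $u_j$ in terms of a basis $w_1,\dots,w_d$ reduces the sum to $\sum_{l=1}^d w_l^{p^r}b''_l$ with $b''\in\mathbb{F}^d$. The vanishing of this for $r=0,\dots,d-1$, combined with the invertibility of $M(w_1,\dots,w_d)$, forces $b''=0$, so the sum vanishes for every $r$. Applying this with $k=n-1$ gives $\gamma_{n-1}=0$, a contradiction. Hence the $\alpha_i$ are independent. This strengthened lemma is the key step; in fact it reproves (a) cleanly as well.

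\textbf{Part (b), the $\beta$ span.} Since $M$ is now invertible, $\beta=\gamma_{n-1}M^{-1}e_n$. Suppose $\sum_i c_i\beta_i=0$ with $c\in\mathbb{F}_p^n\setminus\{0\}$. Choose $\theta$ with $\theta^{p^{r}}$ structure: by the duality of Moore matrices (the columns of $M^{-1}$ are given by the dual basis of $u$ under a trace-type pairing, via Elkies' formula), $M^{-1}e_n$ has entries that are $\mathbb{F}_p$-independent. More concretely, I would show that $\beta_i$ is, up to a common scalar, the $(n-1)$-st Frobenius twist of the Moore-determinant cofactor $\det M(u_1,\dots,\widehat{u_i},\dots,u_n)$ divided by $\det M$. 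Then I would verify independence by noting that an $\mathbb{F}_p$-relation among the $\beta_i$ gives $\sum_i \beta_i\bigl(\sum_j c_j' u_j\bigr)^{p^r}$-type identities that contradict the invertibility of the transposed system. Equivalently, and perhaps more cleanly, let $u'=\sum c_iu_i$, which lies in the $\mathbb{F}_p$-span and is nonzero. Then change basis in $\mathbb{F}_p^n$ by $g\in GL_n(\mathbb{F}_p)$, which transforms $u\mapsto g u$ and $\beta\mapsto g^{-t}\beta$ while preserving all $\gamma_r$. A relation on $\beta$ then means some transformed $\beta'_n=0$, which reduces us to $n-1$ terms with $\gamma_0=\dots=\gamma_{n-2}=0$. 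The strengthened lemma then forces $\gamma_{n-1}=0$, a contradiction. This $GL_n(\mathbb{F}_p)$-equivariance trick is the step I expect to require the most care.
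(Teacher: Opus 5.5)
Your argument is correct. Part (a) and the reduction to $N=0$ are exactly the paper's; part (b) takes a different route.

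The paper handles both halves of (b) at once. With $N=0$, the $(i,k)$-entry of $P=M(\alpha_1,\dots,\alpha_n)^{(n-1)}\cdot M(\beta_1,\dots,\beta_n)^T$ is $\gamma_{n-1+i-k}^{\,p^{k-1}}$. So $P$ is lower triangular with diagonal $\gamma_{n-1},\gamma_{n-1}^p,\dots,\gamma_{n-1}^{p^{n-1}}$, and invertibility of $P$ forces both Moore matrices to be invertible.

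You instead prove a stabilization statement: a $k$-term sum $\sum_j u_j^{p^r}b_j$ that vanishes for $0\le r\le k-1$ vanishes for all $r$. You get this by rewriting the sum in an $\mathbb{F}_p$-basis of the span of the $u_j$. You combine it with the symmetry $u\mapsto gu$, $\beta\mapsto g^{-T}\beta$ for $g\in GL_n(\mathbb{F}_p)$. This symmetry preserves every $\gamma_r$ because $g$ has $\mathbb{F}_p$-entries and hence commutes with Frobenius twisting. An $\mathbb{F}_p$-dependence among either the $u_i$ or the $\beta_i$ then lets you kill one term, and stabilization gives $\gamma_{n-1}=0$.

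The paper's trick is shorter and symmetric in $\alpha,\beta$ with no case split. Yours isolates a reusable fact, closely related to the Frobenius-stability of spans of twists used in the Claim inside Lemma~\ref{lm:leadingcoefogus2}.

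The cofactor/Elkies sketch in your last paragraph is not a proof as written, and you can simply delete it. The equivariance argument after it is already complete. The step you flagged is routine: take $g$ with the last row of $g^{-T}$ equal to the relation vector $c^T$, so that $\beta'_n=0$.
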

 \begin{proof} Note that it suffices to establish the case where $N=0$. The general case reduces to $N=0$ case by setting $\alpha_i'=\alpha^{p^N}_i$, and noting that $\alpha_1,...,\alpha_n$ are $\mathbb{F}_p$-linearly dependent if and only if $\alpha'_1,...,\alpha'_n$ are $\mathbb{F}_p$-linearly dependent. 
 \begin{enumerate}[label=\upshape{(\alph*)}]
     \item The condition is equivalent to $M(\alpha_1,...,\alpha_n)\cdot [\beta_1,...,\beta_n]^T=0$. If $\alpha_1,...,\alpha_n$ are $\mathbb{F}_p$-linearly independent, then $\det M(\alpha_1,...,\alpha_n)\neq 0$. This implies that $\beta_i=0$ for all $i$. A contradiction. 
     \item The matrix $P=M(\alpha_1,...,\alpha_n)^{(n-1)}\cdot M(\beta_1,...,\beta_n)^T$ is a lower triangular matrix with diagonal entries $\gamma_{n-1},\gamma_{n-1}^p,...,\gamma_{n-1}^{p^{n-1}}$. In particular,  $P$ is invertible. This shows that $M(\alpha_1,...,\alpha_n)$ and $M(\beta_1,...,\beta_n)$ are both invertible.
 \end{enumerate}
 \end{proof}

\begin{lemma}\label{lm:leadingcoefogus2}
Let $\alpha_1,...,\alpha_n,\beta_1,...,\beta_n\in{\mathbb{F}}^*$ and consider the quantity $q_r=\sum_{i=1}^n(\alpha_i^{p^{r}}\beta_i+\alpha_i\beta_i^{p^{r}})$. Suppose that $q_r=0$ for all $0\leq r\leq n-1$.
\begin{enumerate}[label=\upshape{(\alph*)}]
    \item  $\dim\mathrm{Span}_{\mathbb{F}_p}\{\alpha_1,...,\alpha_n,\beta_1,...\beta_n\}\leq n$. 
    \item If $\beta_i/\alpha_i\in \mathbb{F}_p$ for all $i$, then $\alpha_1,...,\alpha_n$ are $\mathbb{F}_p$-linearly dependent. 
\end{enumerate}
\end{lemma}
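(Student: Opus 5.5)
The plan is to reduce both parts to the Moore-matrix statement \cref{lm:some_shit}(a), after rewriting $q_r$ in terms of an $\mathbb{F}_p$-basis of the span of the $\alpha_i$ and $\beta_i$. Part (b) is immediate that way, so I treat it first. If $\beta_i=c_i\alpha_i$ with $c_i\in\mathbb{F}_p$, then $c_i^{p^r}=c_i$, hence
\[
q_r=\sum_{i=1}^n 2c_i\,\alpha_i^{p^r+1}=\sum_{i=1}^n \alpha_i^{p^r}\cdot(2c_i\alpha_i).
\]
This is the quantity $\gamma_r$ of \cref{lm:some_shit} with $N=0$ and second sequence $2c_i\alpha_i$. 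Suppose the $\alpha_i$ were $\mathbb{F}_p$-independent. Then the Moore matrix $M(\alpha_1,\dots,\alpha_n)$ is invertible, so $q_0=\dots=q_{n-1}=0$ forces $2c_i\alpha_i=0$ for all $i$. Since $p$ is odd and $\alpha_i\neq 0$, this gives $c_i=0$, i.e.\ $\beta_i=0$, contradicting $\beta_i\in\mathbb{F}^*$. (Formally one runs the argument of \cref{lm:some_shit}(a); it does not use nonvanishing of the second sequence until this last step.)

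For (a), I linearize. Let $V=\mathrm{Span}_{\mathbb{F}_p}\{\alpha_i,\beta_i\}$, $D=\dim V$, and fix a basis $s=(s_1,\dots,s_D)$. Write $\alpha_i=\sum_j A_{ij}s_j$ and $\beta_i=\sum_j B_{ij}s_j$ with $A,B\in M_{n\times D}(\mathbb{F}_p)$. Since the coefficients lie in $\mathbb{F}_p$, Frobenius acts only on the $s_j$. Put $c=A^TB$ and $S=c+c^T$, a symmetric $D\times D$ matrix over $\mathbb{F}_p$. Then
\[
q_r=\sum_{j,k} S_{jk}\, s_j^{p^r}s_k .
\]
Because $p$ is odd, I diagonalize: $S=P^T\Delta P$ with $P\in\mathrm{GL}_D(\mathbb{F}_p)$ and $\Delta=\mathrm{diag}(\delta_1,\dots,\delta_\rho,0,\dots,0)$, where $\delta_k\neq0$ and $\rho=\mathrm{rank}\,S$. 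Setting $u=Ps$, which is again an $\mathbb{F}_p$-basis of $V$, gives $q_r=\sum_{k\le\rho}\delta_k u_k^{p^r+1}$.

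Next I show $S=0$. We have $\rho\le\mathrm{rank}\,c\le n$, so the hypotheses include $q_0=\dots=q_{\rho-1}=0$. If $\rho\ge1$, the invertible Moore matrix $M(u_1,\dots,u_\rho)$ then forces $\delta_ku_k=0$, which is absurd. Hence $S=0$, i.e.\ $c=A^TB$ is antisymmetric.

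Finally I convert antisymmetry into the dimension bound via a totally isotropic subspace. Consider the split quadratic space $\mathbb{F}_p^{2n}$ with form $Q(x,y)=\sum_i x_iy_i$, and the linear map $\varphi:\mathbb{F}_p^D\to\mathbb{F}_p^{2n}$, $\lambda\mapsto(A\lambda,B\lambda)$.
\begin{itemize}
\item The image is totally isotropic: $Q(\varphi\lambda)=\lambda^TA^TB\lambda=0$ because $c$ is antisymmetric, and since $p$ is odd the associated bilinear form vanishes on the image as well.
\item The map $\varphi$ is injective. The rows of $A$ and $B$ are the coordinate vectors of the $\alpha_i,\beta_i$, which span $V$, so the stacked matrix $\begin{bmatrix}A\\B\end{bmatrix}$ has rank $D$.
\end{itemize}
A totally isotropic subspace of a nondegenerate $2n$-dimensional space has dimension at most $n$, so $D\le n$.

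The main care point is the reduction $S=0$. One must check that $\rho\le n$, so that the given vanishing range $r\le n-1$ covers the $\rho$ Moore rows needed, and that the diagonalizing basis change keeps $\mathbb{F}_p$-coefficients, so that Frobenius commutes with it. Both hold because $A$, $B$ and $P$ are defined over $\mathbb{F}_p$. The remaining steps are routine linear algebra.
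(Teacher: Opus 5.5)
Your part (b) is correct and is the paper's argument verbatim. Part (a) has a genuine gap at the step $\rho\le\mathrm{rank}\,c\le n$. The first inequality is false: in general one only has $\mathrm{rank}(c+c^T)\le 2\,\mathrm{rank}(c)$. For example, with $n=1$, $\alpha_1=s_1$, $\beta_1=s_2$, you get $c=\begin{bmatrix}0&1\\0&0\end{bmatrix}$ of rank $1$ but $S=\begin{bmatrix}0&1\\1&0\end{bmatrix}$ of rank $2$. Conceptually, $S=G^TJG$ with $G=\begin{bmatrix}A\\B\end{bmatrix}$ and $J=\begin{bmatrix}0&I_n\\I_n&0\end{bmatrix}$. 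So $S$ is the Gram matrix of the split form restricted to the $D$-dimensional space $\mathrm{im}(G)$, and its rank can be as large as $D\le 2n$. With only $q_0,\dots,q_{n-1}$ available, the Moore argument on $\rho>n$ unknowns does not run, so $S=0$ is not established. The fact that $A,B,P$ have $\mathbb{F}_p$-entries only lets Frobenius commute with the basis change; it says nothing about $\rho$.

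Part of this can be repaired. Raising $q_r=0$ to the powers $p^{n-1-r}$ and $p^{n-1}$ gives $\sum_k(\delta_k u_k^{p^{n-1}})\,u_k^{p^j}=0$ for all $0\le j\le 2n-2$, which disposes of $\rho\le 2n-1$. The remaining case $\rho=2n$ forces $D=2n$, i.e.\ all of $\alpha_1,\dots,\alpha_n,\beta_1,\dots,\beta_n$ are $\mathbb{F}_p$-independent, and ruling this out is the actual content of the lemma. No count of Frobenius twists can do it, because the analogous conclusion fails for non-split forms. For $\delta_1X_1^2+\delta_2X_2^2$ with $-\delta_1\delta_2$ a non-square, $u_1=\sqrt{-\delta_2/\delta_1}\in\mathbb{F}_{p^2}\smallsetminus\mathbb{F}_p$ and $u_2=1$ are $\mathbb{F}_p$-independent with $\delta_1u_1^2+\delta_2u_2^2=0$. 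So you must use that the form is split over $\mathbb{F}_p$ at this point, not only in your final isotropic-dimension bound.

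That is exactly the paper's key step. Set $\mathbf v=(\alpha_1,\dots,\alpha_n,\beta_1,\dots,\beta_n)$; then $V=\mathrm{Span}_{\mathbb{F}}\{\varphi^i\mathbf v\}_{0\le i\le n-1}$ is totally isotropic, and $\varphi V=V$. This is automatic if $\dim V<n$. If $\dim V=n$ and $V\neq\varphi V$, then $V$ and $\varphi V$ would be maximal isotropic with $(n-1)$-dimensional intersection, hence in different $\mathrm{SO}(Q_0)(\mathbb{F})$-orbits. But Frobenius preserves both components of the orthogonal Grassmannian because $Q_0$ is split over $\mathbb{F}_p$. Hence $V$ descends to a totally isotropic $V_0\subseteq\mathbb{F}_p^{2n}$ of dimension at most $n$. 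Writing $\mathbf v=\sum_i\theta_i\mathbf w_i$ with the $\mathbf w_i$ spanning $V_0$ gives $\mathrm{Span}_{\mathbb{F}_p}\{\alpha_i,\beta_i\}\subseteq\mathrm{Span}_{\mathbb{F}_p}\{\theta_i\}$, so the bound follows at once and your reduction to $S=0$ becomes unnecessary.
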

\begin{proof}
\begin{enumerate}[label=\upshape{(\alph*)}]
    \item Consider the vector space $\mathbb{F}^{2n}_p$ with standard coordinates, with quadratic form $\langle-,-\rangle$ given by $Q_0$ as in Definition~\ref{def:isometricshuffle}. We now base change to $\mathbb{F}$.  Consider the vector 
$\mathbf{v}=(\alpha_1,...,\alpha_n,\beta_1,...\beta_n)\in \mathbb{F}^{2n}$. We use $\varphi^i\mathbf{v}$ to denote the $i$-th Frobenius twist of $\mathbf{v}$. Let  $V=\mathrm{Span}\{\varphi^{i}\mathbf{v}\}_{0\leq i\leq n-1}$. Then $V$ is totally isotropic. 
\begin{claim}
    $\varphi V=V$. In particular, $V$ descends to an $\mathbb{F}_p$-subspace of $\mathbb{F}^{2n}_p$. 
\end{claim}
The claim follows almost immediately from Proposition~\ref{NPstrata}(c) by first proving $q_r=0$ for all $r\geq n$. However, we give a more elementary proof.\footnote{This proof was found with the help of Gemini 3.0.} Suppose that $\dim V<n$. Let $d< n$ be the smallest integer such that $\varphi^d\mathbf{v}\in \mathrm{Span}_{\mathbb{F}}\{\varphi^i\mathbf{v}\}_{0\leq i\leq d-1}$. Iterating the Frobenius, we find that for all $r\geq d$, $\varphi^r\mathbf{v}\in \mathrm{Span}_{\mathbb{F}}\{\varphi^i\mathbf{v}\}_{0\leq i\leq d-1}$. It follows that $\varphi {V}=V$. Now suppose that $\dim V=n$. Then $V$ and $\varphi V$ are maximal isotropical subspaces of $\mathbb{F}^{2n}$. Clearly $\dim V\cap \varphi V\geq n-1$. It suffices to show that the inequality is strict. Suppose for the sake of contradiction that $\dim V\cap \varphi V= n-1$. Then there is a reflection in $\mathrm{O}(Q_0)(\mathbb{F})\setminus \mathrm{SO}(Q_0)(\mathbb{F})$ that takes $V$ to $\varphi V$. In particular, $V$ and $\varphi V$ are not in the same $\mathrm{SO}(Q_0)(\mathbb{F})$-orbit. On the other hand, since $Q_0$ is split over $\mathbb{F}_p$, the orthogonal Grassmanian $\mathrm{OG}(n,\mathbb{F}^{2n})$ parametrizing maximal isotropic subspaces is an $\mathbb{F}_p$-subscheme of $\mathrm{Gr}(n,\mathbb{F}^{2n}_p)$ with two components, each being an $\mathrm{SO}(Q_0)$-orbit. Since $\varphi$ preserves each component,  $V$ and $\varphi V$ must lie in the same component, hence in the same 
$\mathrm{SO}(Q_0)(\mathbb{F})$-orbit. We immediately arrive at a contradiction. So the claim holds.

Now let $V_0\subseteq\mathbb{F}^{2n}_p$ be the subspace with $V_0\otimes \mathbb{F}= V$. Pick $n$ vectors $\mathbf{w}_1,...,\mathbf{w}_n$ that spans $V_0$. Let $\theta_1,...,\theta_n\in \mathbb{F}$ be such that $\mathbf{v}=\sum_{i=1}^n\theta_i\mathbf{w}_i$. Then $\mathrm{Span}_{\mathbb{F}_p}\{\alpha_1,...,\alpha_n,\beta_1,...\beta_n\}\subseteq \mathrm{Span}_{\mathbb{F}_p}\{\theta_1,...,\theta_n\}$, which has dimension no more than $n$. 
    \item Let $\beta_i=\eta_i \alpha_i$, where $\eta_i\in \mathbb{F}_p^*$. We find that for $0\leq r\leq n-1$, 
$$\sum_{i=1}^n \eta_i\alpha_i^{p^r+1}=0\Rightarrow M(\alpha_1,...,\alpha_n)\begin{bmatrix}
   \alpha_1 \eta_1\\
    \alpha_2\eta_2\\
    \vdots\\
     \alpha_n\eta_n
\end{bmatrix}=0.$$
If $\alpha_1,...,\alpha_n$ are $\mathbb{F}_p$-linearly independent, then $\det M(\alpha_1,...,\alpha_n)\neq 0$. This implies that $\alpha_i\eta_i=0$ for all $i$, a contradiction.  
\end{enumerate}
\end{proof}

\subsubsection{Application to critical points}
\begin{corollary}\label{cor:po}
     Let $\{x_1,x_2,...,x_n,y_1,y_2,...,y_n\}$ be a  $Q_s$-degenerate collection of nonzero power series in $\mathbb{F}[\![t]\!]$, with leading coefficients $\alpha_1,...,\alpha_n,\beta_1,...\beta_n$, respectively. Suppose that $\max\{v_t(x_1), ...,v_t(x_n)\}\leq \min\{v_t(y_1),...,v_t(y_n)\}$. Let $\{\Pi_i\}_{1\leq i\leq n}$ be the associated line configuration.
     \begin{enumerate}[label=\upshape{(\alph*)}]
         \item\label{cor:po1} Suppose that $\ell\in \{\Pi_i\}$ is a non-redundant line \textbf{not} of the form $y=dx+d$, and let
         $\Sigma=\{i\in[1,n]:  \Pi_{i}=\ell\}$. 
         Suppose moreover that $\dim\mathrm{Span}_{\mathbb{F}_p}\{\alpha_{i}\}_{i\in \Sigma}=|\Sigma|$. Then $\ell$ contains at most $\min\{s+1,|\Sigma|-1\}$ edge-like critical points within $[1,p^s]\times \mathbb{R}$. If $\ell$ contains exactly $|\Sigma|-1$ edge-like critical points within $[1,p^s]\times \mathbb{R}$, then $\dim\mathrm{Span}_{\mathbb{F}_p}\{\beta_{i}\}_{i\in \Sigma}=|\Sigma|$.
         \item\label{cor:po2} Suppose that $\ell\in \{\Pi_i\}$ is a non-redundant line of the form $y=dx+d$, and let $\Sigma=\{i\in[1,n]: \Pi_{i}=\ell\}$. Then $\ell$ contains an edge-like critical point with $x$-coordinate 1. Suppose moreover that at least one of the following conditions holds \begin{itemize}
    \item $\dim\mathrm{Span}_{\mathbb{F}_p}\{\alpha_{i},\beta_i\}_{i\in \Sigma}>|\Sigma|$, 
    \item $\dim\mathrm{Span}_{\mathbb{F}_p}\{\alpha_{i}\}_{i\in \Sigma}=|\Sigma|$ and $\alpha_i/\beta_i\in \mathbb{F}_p$ for all $i\in \Sigma$. 
\end{itemize} Then $\ell$ contains at most $\min\{s+1,|\Sigma|-1\}$ edge-like critical points within $[1,p^s]\times \mathbb{R}$.
     \end{enumerate}
\end{corollary}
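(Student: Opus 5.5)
The plan is to convert each edge-like critical point on $\ell$ into the vanishing of one leading-coefficient sum, and then to invoke the Moore-matrix dependence results \cref{lm:some_shit} and \cref{lm:leadingcoefogus2}.

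\textbf{Setup.} Since $\max_i v_t(x_i)\le \min_i v_t(y_i)$, we have $v_t(x_i)\le v_t(y_i)$ for every $i$. Hence $\Pi_i$ is $y=v_t(x_i)x+v_t(y_i)$, and the value of $\Pi_i$ at $x=p^r$ is $v_t(x_i^{p^r}y_i)$. Also $v_t(x_iy_i^{p^r})=v_t(x_i)+p^rv_t(y_i)\ge v_t(x_i^{p^r}y_i)$, with equality for some $r\ge1$ only when $v_t(x_i)=v_t(y_i)$. Consequently, the minimum of the valuations of the terms of $Q_r$ is the height of $\partial\Omega$ at $x=p^r$.

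Suppose $(p^r,c)$ is an edge-like critical point on $\ell$ with $r\le s$. Then every line through it has the slope of $\ell$. A geometric line with the same slope passing through the same point equals $\ell$. So the terms of $Q_r$ of minimal valuation come exactly from the indices in $\Sigma$. By $Q_s$-degeneracy, the coefficient of $t^c$ in $Q_r$ vanishes.

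\textbf{Consecutiveness.} Next I observe that the edge-like critical points on $\ell$ have consecutive exponents. The set $\ell\cap\partial\Omega$ is a segment because $\Omega$ is convex. Its interior points are edge-like, and only its endpoints can be vertex-like. So the edge-like critical points of $\ell$ in $[1,p^s]\times\bR$ have $x$-coordinates $p^N,\dots,p^{N+k-1}$ with $N+k-1\le s$, and in particular $k\le s+1$.

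\textbf{Part (a).} Since $\ell$ is not of the form $y=dx+d$, only the terms $x_i^{p^r}y_i$, $i\in\Sigma$, contribute at each of these points. The vanishing conditions therefore read
\[
\gamma_r=\sum_{i\in\Sigma}\alpha_i^{p^{N+r}}\beta_i=0,\qquad 0\le r\le k-1 .
\]
If $k\ge|\Sigma|$, then \cref{lm:some_shit}(a), applied with $n=|\Sigma|$, makes the $\alpha_i$ dependent, which contradicts the hypothesis. Hence $k\le\min\{s+1,|\Sigma|-1\}$.

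If $k=|\Sigma|-1$, we have $\gamma_r=0$ for $r\le n-2$. By the contrapositive of \cref{lm:some_shit}(a), the independence of the $\alpha_i$ forces $\gamma_{n-1}\neq0$. Then \cref{lm:some_shit}(b) gives that the $\beta_i$, $i\in\Sigma$, are $\bF_p$-independent. The case $|\Sigma|=1$ is trivial.

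\textbf{Part (b).} I first show that $(1,2d)$ lies on $\partial\Omega$. Non-redundancy gives a critical point on $\ell$ at some $p^r$, so $(p^r+1)d\le p^ra'+b'$ for every line $y=a'x+b'$, where $a'\le b'$. If $a'\ge d$, then $a'+b'\ge2d$ trivially. If $a'<d$, then
\[
a'+b'\ge(p^r+1)d-(p^r-1)a'\ge 2d .
\]
So $\ell$ attains the lowest value at $x=1$. To see that this point is edge-like, I would rule out a line of slope $a'<d$ through $(1,2d)$: such a line lies strictly below $\ell$ for $x>1$, so $\ell$ would then have no critical point to the right of $x=1$. I expect this degenerate situation to be the main point needing care, and I may have to treat it separately, or read ``edge-like'' as allowing this endpoint.

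Given this, the edge-like points on $\ell$ have exponents $r=0,\dots,k-1$. At each of them both $x_i^{p^r}y_i$ and $x_iy_i^{p^r}$, for $i\in\Sigma$, have minimal valuation $(p^r+1)d$. Lines of other slopes are excluded by the edge-like property, and only $\ell$ has slope $d$ through that point. Degeneracy then yields
\[
q_r=\sum_{i\in\Sigma}\bigl(\alpha_i^{p^r}\beta_i+\alpha_i\beta_i^{p^r}\bigr)=0,\qquad 0\le r\le k-1 .
\]
If $k\ge|\Sigma|$, then \cref{lm:leadingcoefogus2}(a) gives $\dim\mathrm{Span}_{\bF_p}\{\alpha_i,\beta_i\}_{i\in\Sigma}\le|\Sigma|$, contradicting the first bullet. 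Likewise \cref{lm:leadingcoefogus2}(b) makes the $\alpha_i$ dependent, contradicting the second bullet. Hence $k\le\min\{s+1,|\Sigma|-1\}$.
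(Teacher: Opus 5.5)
Your argument is the paper's: each edge-like critical point on $\ell$ gives a vanishing leading-coefficient sum over $\Sigma$. Then \cref{lm:some_shit} finishes (a) and \cref{lm:leadingcoefogus2} finishes (b). Your consecutiveness argument and your proof that $(1,2d)\in\partial\Omega$ make explicit what the paper leaves implicit. There is one small slip in (a): if $N=0$, then at $r=0$ both terms of $Q_0$ coincide, so what vanishes is $2\gamma_0$, and you need $p\neq 2$ to conclude $\gamma_0=0$.

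The case you left open in (b) is real, and it is a defect of the statement rather than of your method. Take $s=0$, $x_1=\alpha_1t$, $y_1=\beta_1t$, $x_2=\alpha_2$, $y_2=\beta_2t^2$, with $\alpha_1\beta_1+\alpha_2\beta_2=0$. Then $Q_0=0$ and $\max_i v_t(x_i)=1=\min_i v_t(y_i)$. The lines are $\ell\colon y=x+1$ and $y=2$, and the only critical point on $\ell$ is $(1,2)$, which is vertex-like. So the first assertion of (b) fails. The paper's proof overlooks this: it claims the competing line at $x=1$ must have larger slope and smaller intercept, which ignores a line of smaller slope through $(1,2d)$ itself.

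You can close the case with your own observation. Such a line lies strictly below $\ell$ for $x>1$, so $(1,2d)$ is the only critical point of $\ell$. Hence $\ell$ has no edge-like critical points, and the bound $\min\{s+1,|\Sigma|-1\}$ holds vacuously. What is true, and all that is needed, is this: if $\ell$ has an edge-like critical point, then $(1,2d)$ is one. That makes the exponents start at $r=0$, as \cref{lm:leadingcoefogus2} (which has no Frobenius shift) requires. Downstream, \cref{cor:maximalanddegenerate} uses only the upper bound.

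Do not take your other option of counting $(1,2d)$ as edge-like. At $x=1$ the term $2x_jy_j$ of the smaller-slope line also has minimal valuation, so degeneracy no longer gives $q_0=0$ over $\Sigma$. In the example, $\ell$ would then carry one such point although $|\Sigma|-1=0$.
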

\begin{proof}Note that the line $\Pi_i$ has equation $y=v_t(x_i)x+v_t(y_i)$, and it is of the form $y=dx+d$ if and only if $v_t(x_i)=v_t(y_i)$.
\begin{enumerate}[label=\upshape{(\alph*)}]
    \item     
    Up to re-index, we can assume that $\Sigma$ is the subset $\{1,2,...,|\Sigma|\}$. Without loss of generality, we can also assume that $|\Sigma|-1\leq s$. Suppose for the sake of contradiction that $\ell$ contains $|\Sigma|$ edge-like critical points within $[p^{b},p^{b+|\Sigma|-1}]\times \mathbb{R}\subseteq [1,p^s]\times \mathbb{R}$. Then over the interval $[p^{b},p^{b+|\Sigma|-1}]$, every line other than $\cL$ is redundant.  Since for any $i\in \Sigma$ we must have $v_t(x_i)<v_t(y_i)$, we find that, for all $0\leq r\leq |\Sigma|-1$, we have $\sum_{i=1}^{|\Sigma|}\alpha_i^{p^{b+r}}\beta_i =0$.
    Lemma~\ref{lm:some_shit}(a) implies that  $\{\alpha_i\}_{i\in \Sigma}$ are $\mathbb{F}_p$-linearly dependent. Contradictory to our assumption. This shows that $\ell$ contains no more than $|\Sigma|-1$ edge-like critical points within $[1,p^s]\times \mathbb{R}$. 

If $\ell$ contains exactly $|\Sigma|-1$ edge-like critical points, say, they lie within $[p^{b},p^{b+|\Sigma|-2}]\times \mathbb{R}\subseteq [1,p^s]\times \mathbb{R}$. Then for all $0\leq r\leq |\Sigma|-2$, we have $\sum_{i=1}^{|\Sigma|}\alpha_i^{p^{b+r}}\beta_i =0$. The same reason as in the last paragraph implies that the collection is not $Q_{|\Sigma|-1}$-degenerate. Now we can apply Lemma~\ref{lm:some_shit}(b) to conclude that $\{\beta_{i}\}_{i\in \Sigma}$ are also $\mathbb{F}_p$-linearly independent.  
\item Suppose that $\ell$ does not contain an edge-like critical point with $x$-coordinate 1, then there exists a different non-redundant line, say $\Pi_n$, that contains a critical point with $x$-coordinate 1. In order for $\Pi_n$ and $\ell$ to be non-redundant, $\Pi_n$ must have bigger slope and smaller $y$-intercept than $\ell$. It follows that $v_t(x_n)>d>v_t(y_n)$, violating our assumption on valuations. So $\ell$ contains an edge-like critical point with $x$-coordinate 1. The rest of the  proof is similar to (a): instead of using Lemma~\ref{lm:some_shit}, we use Lemma~\ref{lm:leadingcoefogus2}(a) for the first bullet point, and Lemma~\ref{lm:leadingcoefogus2}(b) for the second bullet point.
\end{enumerate}
\end{proof}
\begin{corollary}\label{cor:maximalanddegenerate}
     Let $\{x_1,x_2,...,x_n,y_1,y_2,...,y_n\}$ be a $Q_s$-degenerate collection of nonzero power series in $\mathbb{F}[\![t]\!]$, which is maximal in its $Q$-isometry class. Suppose that $\max\{v_t(x_1), ...,v_t(x_n)\}\leq \min\{v_t(y_1),...,v_t(y_n)\}$. Write the line configuration $\{\Pi_i\}_{1\leq i\leq n}$ as a multiset $\{n_j\cdot \ell_j'\}_{1\leq j\leq u}$; cf.  Definition~\ref{def:lineconfig}(\ref{def:multiset}).
     \begin{enumerate}[label=\upshape{(\alph*)}]
         \item  Each $\ell'_i$ contains at most $\min\{s+1,n_j-1\}$ edge-like critical points within $[1,p^s]\times \bR$.
         \item   Let $\mathcal{R}$ be the set of indices $i$ such that $\Pi_i$ is not quasi-redundant over $[1,p^s]$; cf. Definition~\ref{def:lineconfig}(\ref{def:quasi-redundant}), then $|\mathcal{R}|\geq s+2$. 
         \item Suppose that $|\mathcal{R}|= s+2$. Write $\{\Pi_{i}\}_{i\in \mathcal{R}}$ as a multiset $\{m_j\cdot \ell_j\}_{1\leq j\leq w}$. Then \begin{enumerate}[label=\upshape{(\roman*)}]
             \item $\forall 1\leq j,j'\leq w$, $\ell_j\cap \ell_{j'}$ is a vertex-like critical point within $[1,p^s]\times \bR$. 
             \item Every $\ell_j$ contains exactly $m_j-1$ edge-like critical points within $[1,p^s]\times \bR$.
         \end{enumerate}
        (See Figure~\ref{figure2} for a graph of the line configuration when the slope of $\ell_j$'s  are strictly increasing.)
     \end{enumerate} 
\end{corollary}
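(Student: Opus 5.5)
Part (a) reduces to \cref{cor:po}. Since the collection is maximal in its $Q$-isometry class, \cref{lm:hypermaxiaml} makes $x_1,\dots,x_n$ $\mathbb{F}_p$-hyper-independent. All $i\in\Sigma$ with $\Pi_i=\ell'_j$ share $v_t(x_i)$, so their leading coefficients $\{\alpha_i\}_{i\in\Sigma}$ are $\mathbb{F}_p$-linearly independent. For a line not of the form $y=dx+d$ this is exactly the hypothesis of \cref{cor:po}\ref{cor:po1}. For a line $y=dx+d$, the indices in $\Sigma$ lie in the set $C_d$ of \cref{lm:quadmaximal}, so the leading coefficients of $\{x_i,y_i\}_{i\in\Sigma}$ satisfy one of two things. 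Either their $\mathbb{F}_p$-span has dimension $>|\Sigma|$, or the kernel of the leading-coefficient map is a nondegenerate space $V_d^+$ of dimension $\le2$ of inert or anisotropic type. The plan is to check that in the latter case one of the two bullet hypotheses of \cref{cor:po}\ref{cor:po2} holds. For instance, when the span has dimension exactly $|\Sigma|$, a suitable diagonalization by an isometry (as in \cref{lem:exampleassm=2}, case 3) gives $\alpha_i/\beta_i\in\mathbb{F}_p$. Redundant lines contain no critical points, so they are trivial. This gives (a), after observing that the bound $s+1$ is automatic because there are only $s+1$ critical $x$-coordinates $1,p,\dots,p^s$.

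For (b), count critical points. There are exactly $s+1$ critical points in $[1,p^s]\times\bR$, one at each $x=p^r$ for $0\le r\le s$, and each lies on some non-redundant line. By \cref{lm:lineconfig} and $Q_s$-degeneracy, each is either vertex-like or edge-like, and an edge-like one lies on a line of multiplicity $\ge 2$ or on a line $y=dx+d$. Group the lines that are not quasi-redundant into distinct geometric lines $\ell_1,\dots,\ell_w$ with multiplicities $m_j$. Along $\partial\Omega$ restricted to $[1,p^s]$ these appear in order of increasing slope, since $\Omega$ is an intersection of lower half-planes. This gives at most $w-1$ vertex-like critical points, one per consecutive pair, and by (a) at most $\sum_j(m_j-1)$ edge-like ones. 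Quasi-redundant lines contribute only vertex-like points already counted as junctions of these $\ell_j$. Hence
\[ s+1 \le (w-1)+\sum_{j}(m_j-1) \le |\mathcal{R}|-1, \]
using $w\le\sum_j m_j=|\mathcal{R}|$ in the last step. The main care here is to verify that a quasi-redundant line touching $\partial\Omega$ at a single vertex adds no new critical point beyond a junction of two non-quasi-redundant lines.

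For (c), equality $|\mathcal{R}|=s+2$ forces every inequality in the chain above to be an equality. Each $\ell_j$ then carries exactly $m_j-1$ edge-like critical points, which is (ii). Also, every consecutive junction $\ell_j\cap\ell_{j+1}$ must be a critical point inside $[1,p^s]\times\bR$ rather than falling between integer powers of $p$ or outside the interval. This gives (i) for consecutive pairs. For non-consecutive pairs, the plan is to argue that the equality count forbids any $\ell_j$ from having a nondegenerate segment of $\partial\Omega$ without its critical points, so that the configuration is as in Figure~\ref{figure2}. I expect this last step, interpreting (i) for all pairs, to be the most delicate part. The junction bookkeeping in (b) is the other point where care is needed.
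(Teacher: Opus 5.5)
\textbf{The step you flagged in (b) fails at the endpoints of $[1,p^s]$, and (b) is false as stated.}

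Your treatment of (a) is the paper's argument. For lines not of the form $y=dx+d$, hyper-independence from \cref{lm:hypermaxiaml} gives the hypothesis of \cref{cor:po}. For $y=dx+d$, \cref{lm:quadmaximal} gives either $\dim\mathrm{Span}_{\mathbb{F}_p}\{\alpha_i,\beta_i\}_{i\in\Sigma}>|\Sigma|$ or $|\Sigma|\le\dim V_d^+\le 2$. In the latter case an isometry of $V_d$, which maximality prevents from moving the line configuration, makes $\alpha_i/\beta_i\in\mathbb{F}_p$. Your counting in (b) and (c) is also the paper's.

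The gap is the step you flagged: that a quasi-redundant line meeting $\partial\Omega$ in one point creates no critical point beyond a junction of two non-quasi-redundant lines. This holds at interior critical points $x=p^r$ with $0<r<s$. There the lines active immediately to the left and right must differ, otherwise a line through the point with another slope would dip below $\partial\Omega$. It fails at the endpoints $x=1$ and $x=p^s$. A line can be the active one just outside $[1,p^s]$ and meet $\Omega\cap([1,p^s]\times\bR)$ only in the endpoint critical point. That point is then vertex-like but lies on a single $\ell_j$. The paper's sentence ``$w$ distinct lines can have at most $w-1$ intersections, so there are at most $w-1$ vertex-like critical points'' makes the same jump.

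Here is a counterexample. Take $p$ odd, $s=1$, and $(x_1,y_1)=(t^2,t^5)$, $(x_2,y_2)=(t^3,-t^4)$, $(x_3,y_3)=(t,-t^{p+5})$.
\begin{itemize}
\item $Q_0=-2t^{p+6}$. In $Q_1$ the two terms of valuation $2p+5$ cancel, leaving $v_t(Q_1)=3p+4$. So the collection is $Q_1$-degenerate, and $\max_i v_t(x_i)=3\le 4=\min_i v_t(y_i)$.
\item It is maximal. The frame vectors $X_3,X_1,X_2,Y_2,Y_1,Y_3$ have distinct valuations $1,2,3,4,5,p+5$. A pair of slope $\ge 3$ lies in $V_{\ge 3}=\langle X_2,Y_2,Y_1,Y_3\rangle$, whose nondegenerate quotient is only $\langle X_2,Y_2\rangle$. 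Also $V_{\ge 2}=Y_3^{\perp}$ is $5$-dimensional with radical $\langle Y_3\rangle$. Hence no isometric image has a configuration strictly above this one.
\item The lines are $y=2x+5$, $y=3x+4$, $y=x+p+5$. On $[1,p]$ the boundary of $\Omega$ is $y=2x+5$. The second line touches it only at $(1,7)$, the third only at $(p,2p+5)$.
\end{itemize}
Both critical points are vertex-like, $\mathcal{R}=\{1\}$, and $|\mathcal{R}|=1<s+2$. For $s=0$, (b) fails trivially, since every line is quasi-redundant over $[1,1]$.

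\textbf{What the counting does prove.} It proves (b) and (c) with $\mathcal{R}$ replaced by the set $\mathcal{R}'$ of indices $i$ with $\Pi_i$ non-redundant over $[1,p^s]$.
\begin{itemize}
\item Since $\Omega$ lies below the concave function $\min_i\Pi_i$, the active slope decreases as $x$ grows, not increases as you wrote.
\item The lines of minimal slope through the different vertex-like critical points are pairwise distinct. The line of maximal slope through the leftmost vertex-like point is one more.
\item So there are at most $w'-1$ vertex-like critical points, where $w'$ is the number of distinct non-redundant lines. With (a) this gives $s+1\le|\mathcal{R}'|-1$, and the equality case can then be analysed as in (c).
\end{itemize}
Also, $(w-1)+\sum_j(m_j-1)=|\mathcal{R}|-1$ is an identity, so no appeal to $w\le\sum_j m_j$ is needed.

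\textbf{On (c)(i).} You are right to be wary of non-consecutive pairs, but no argument will produce that case. If $\ell_1,\ell_2,\ell_3$ occupy consecutive pieces of $\partial\Omega$ meeting at two distinct vertices, then $\ell_1\cap\ell_3$ lies strictly above $\ell_2$, so it is not on $\partial\Omega$. Part (i) can only mean that lines adjacent along $\partial\Omega$ meet at a vertex-like critical point in $[1,p^s]\times\bR$. That is also the only form in which it is used later.
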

\begin{proof}Let $\alpha_i,\beta_i$ be the leading coefficients of $x_i,y_i$.\begin{enumerate}[label=\upshape{(\alph*)}]
    \item Since the collection is maximal, by Lemma~\ref{lm:hypermaxiaml}, the condition that ``$\{\alpha_{i}\}_{i\in \Sigma}$ are $\mathbb{F}_p$-linearly independent'' in Corollary~\ref{cor:po}\ref{cor:po1} is met. Similarly, the condition of 
Corollary~\ref{cor:po}\ref{cor:po2} is also met up to a $Q$-isometry that does not change the line configuration: By Lemma~\ref{lm:hypermaxiaml}, we know that $$\dim\mathrm{Span}_{\mathbb{F}_p}\{\alpha_{i}\}_{i\in \Sigma}=\dim\mathrm{Span}_{\mathbb{F}_p}\{\beta_{i}\}_{i\in \Sigma}= |\Sigma|.$$ By 
Lemma~\ref{lm:quadmaximal}, we know that $$\dim\mathrm{Span}_{\mathbb{F}_p}\{\alpha_{i},\beta_i\}_{i\in \Sigma}\geq 2|\Sigma|-2.$$ So if the first condition is not met, i.e., $\dim\mathrm{Span}_{\mathbb{F}_p}\{\alpha_{i},\beta_i\}_{i\in \Sigma}\leq |\Sigma|$, then we must have $|\Sigma|\leq 2$. If $|\Sigma|=1$,  then we must have $\alpha_i/\beta_i\in \mathbb{F}_p$ for the unique $i\in \Sigma$, so the second condition is met. If $|\Sigma|=2$, then the space $V_d^+$ in Lemma~\ref{lm:quadmaximal} is inert. It is clear that by a $Q$-isometry that does not change the line configuration, we can assume that $\alpha_i/\beta_i\in \mathbb{F}_p$, so the second condition is met.
\item Let $\{\Pi_{i}\}_{i\in \mathcal{R}}=\{m_j\cdot \ell_j\}_{1\leq j\leq w}$. Let $N$ be the number of critical points in $[1,p^s]\times \bR$ that lie on $\ell_1,...,\ell_w$. Since every critical point in $[1,p^s]\times \bR$ lies on some $\ell_j$, we have $$N=s+1.$$ On the other hand, each $\ell_j$ contains at most $m_j-1$ edge-like critical points within $[1,p^s]\times \bR$ by (a). In addition, $w$ distinct lines can have at most $w-1$ intersections, so there are at most $w-1$ vertex like critical points within $[1,p^s]\times \bR$. As a result,
\begin{equation}\label{eq:criticalpointcounting}
   N\leq w-1 +\sum_{j=1}^w (m_j-1)=  -1+\sum_{j=1}^w m_j. 
\end{equation}
It follows that $$  -1+\sum_{j=1}^w m_j  \geq N= s+1.$$
So $|\mathcal{R}|=  \sum_{j=1}^w m_j \geq s+2$.
\item If $|\mathcal{R}|=s+2$, then (\ref{eq:criticalpointcounting}) is an equality. This means that any intersection $\ell_j\cap \ell_{j'}$ is a vertex-like critical point within $[1,p^s]\times \bR$, and every $\ell_j$ contains exactly $m_j-1$ edge-like critical points within $[1,p^s]\times \bR$.\end{enumerate}
\end{proof}
 \begin{center}\label{Figure1}

\tikzset{every picture/.style={line width=0.75pt}} 

\begin{tikzpicture}[x=0.75pt,y=0.75pt,yscale=-1,xscale=1, scale = 0.8]

\draw  [dash pattern={on 0.84pt off 2.51pt}]  (211.41,247.86) -- (204.93,258.56) -- (199,269) -- (196,274) -- (199,269) ;
\draw  [dash pattern={on 0.84pt off 2.51pt}]  (300,163.5) -- (266,191.5) ;
\draw  [dash pattern={on 0.84pt off 2.51pt}]  (411,116) -- (366,126) ;
\draw    (486,104) -- (182,132) ;
\draw [line width=0.75]    (512,93) -- (411,116) ;
\draw    (176,265.5) -- (266,191.5) ;
\draw    (428,61.5) -- (300,163.5) ;
\draw    (316,75.5) -- (211.41,247.86) ;
\draw    (158,336.5) -- (199,269) ;
\draw  [fill={rgb, 255:red, 7; green, 1; blue, 1 }  ,fill opacity=1 ] (222,225.5) .. controls (222,223.84) and (223.34,222.5) .. (225,222.5) .. controls (226.66,222.5) and (228,223.84) .. (228,225.5) .. controls (228,227.16) and (226.66,228.5) .. (225,228.5) .. controls (223.34,228.5) and (222,227.16) .. (222,225.5) -- cycle ;
\draw  [fill={rgb, 255:red, 7; green, 1; blue, 1 }  ,fill opacity=1 ] (447,107) .. controls (447,105.34) and (448.34,104) .. (450,104) .. controls (451.66,104) and (453,105.34) .. (453,107) .. controls (453,108.66) and (451.66,110) .. (450,110) .. controls (448.34,110) and (447,108.66) .. (447,107) -- cycle ;
\draw  [fill={rgb, 255:red, 7; green, 1; blue, 1 }  ,fill opacity=1 ] (211,243) .. controls (211,241.34) and (212.34,240) .. (214,240) .. controls (215.66,240) and (217,241.34) .. (217,243) .. controls (217,244.66) and (215.66,246) .. (214,246) .. controls (212.34,246) and (211,244.66) .. (211,243) -- cycle ;
\draw  [fill={rgb, 255:red, 7; green, 1; blue, 1 }  ,fill opacity=1 ] (193,274) .. controls (193,272.34) and (194.34,271) .. (196,271) .. controls (197.66,271) and (199,272.34) .. (199,274) .. controls (199,275.66) and (197.66,277) .. (196,277) .. controls (194.34,277) and (193,275.66) .. (193,274) -- cycle ;
\draw  [fill={rgb, 255:red, 7; green, 1; blue, 1 }  ,fill opacity=1 ] (183,290) .. controls (183,288.34) and (184.34,287) .. (186,287) .. controls (187.66,287) and (189,288.34) .. (189,290) .. controls (189,291.66) and (187.66,293) .. (186,293) .. controls (184.34,293) and (183,291.66) .. (183,290) -- cycle ;
\draw  [fill={rgb, 255:red, 7; green, 1; blue, 1 }  ,fill opacity=1 ] (239,211) .. controls (239,209.34) and (240.34,208) .. (242,208) .. controls (243.66,208) and (245,209.34) .. (245,211) .. controls (245,212.66) and (243.66,214) .. (242,214) .. controls (240.34,214) and (239,212.66) .. (239,211) -- cycle ;
\draw  [fill={rgb, 255:red, 7; green, 1; blue, 1 }  ,fill opacity=1 ] (174,306) .. controls (174,304.34) and (175.34,303) .. (177,303) .. controls (178.66,303) and (180,304.34) .. (180,306) .. controls (180,307.66) and (178.66,309) .. (177,309) .. controls (175.34,309) and (174,307.66) .. (174,306) -- cycle ;
\draw  [fill={rgb, 255:red, 7; green, 1; blue, 1 }  ,fill opacity=1 ] (256,197) .. controls (256,195.34) and (257.34,194) .. (259,194) .. controls (260.66,194) and (262,195.34) .. (262,197) .. controls (262,198.66) and (260.66,200) .. (259,200) .. controls (257.34,200) and (256,198.66) .. (256,197) -- cycle ;
\draw  [fill={rgb, 255:red, 7; green, 1; blue, 1 }  ,fill opacity=1 ] (318,147) .. controls (318,145.34) and (319.34,144) .. (321,144) .. controls (322.66,144) and (324,145.34) .. (324,147) .. controls (324,148.66) and (322.66,150) .. (321,150) .. controls (319.34,150) and (318,148.66) .. (318,147) -- cycle ;
\draw  [fill={rgb, 255:red, 7; green, 1; blue, 1 }  ,fill opacity=1 ] (423,113) .. controls (423,111.34) and (424.34,110) .. (426,110) .. controls (427.66,110) and (429,111.34) .. (429,113) .. controls (429,114.66) and (427.66,116) .. (426,116) .. controls (424.34,116) and (423,114.66) .. (423,113) -- cycle ;
\draw  [fill={rgb, 255:red, 7; green, 1; blue, 1 }  ,fill opacity=1 ] (472,105) .. controls (472,103.34) and (473.34,102) .. (475,102) .. controls (476.66,102) and (478,103.34) .. (478,105) .. controls (478,106.66) and (476.66,108) .. (475,108) .. controls (473.34,108) and (472,106.66) .. (472,105) -- cycle ;
\draw  [dash pattern={on 0.84pt off 2.51pt}]  (321,147) -- (336,140) -- (381,120) ;
\draw  [fill={rgb, 255:red, 7; green, 1; blue, 1 }  ,fill opacity=1 ] (301,160.5) .. controls (301,158.84) and (302.34,157.5) .. (304,157.5) .. controls (305.66,157.5) and (307,158.84) .. (307,160.5) .. controls (307,162.16) and (305.66,163.5) .. (304,163.5) .. controls (302.34,163.5) and (301,162.16) .. (301,160.5) -- cycle ;
\draw    (151,223) -- (336,140) ;
\draw    (381,120) -- (471,79) ;
\draw    (117,374) -- (581,373) ;

\draw (140.16,309.28) node [anchor=north west][inner sep=0.75pt]  [rotate=-1.16]  {$\ell_{w}$};
\draw (144,263) node [anchor=north west][inner sep=0.75pt]   [align=left] {$\displaystyle \ell_{w-1}$};
\draw (157,212) node [anchor=north west][inner sep=0.75pt]   [align=left] {$\displaystyle \ell_{w-2}$};
\draw (163,121) node [anchor=north west][inner sep=0.75pt]   [align=left] {$\displaystyle \ell_{1}$};
\draw (428,282) node [anchor=north west][inner sep=0.75pt]   [align=left] {$\displaystyle \Omega $};
\draw (152,162) node [anchor=north west][inner sep=0.75pt]   [align=left] {$\displaystyle \vdots$};
\draw (174,380) node [anchor=north west][inner sep=0.75pt]   [align=left] {1};
\draw (204,374) node [anchor=north west][inner sep=0.75pt]   [align=left] {$\displaystyle p^{2}$};
\draw (189,380) node [anchor=north west][inner sep=0.75pt]   [align=left] {$\displaystyle p$};
\draw (223,380) node [anchor=north west][inner sep=0.75pt]   [align=left] {$\displaystyle \cdots $};

\end{tikzpicture}

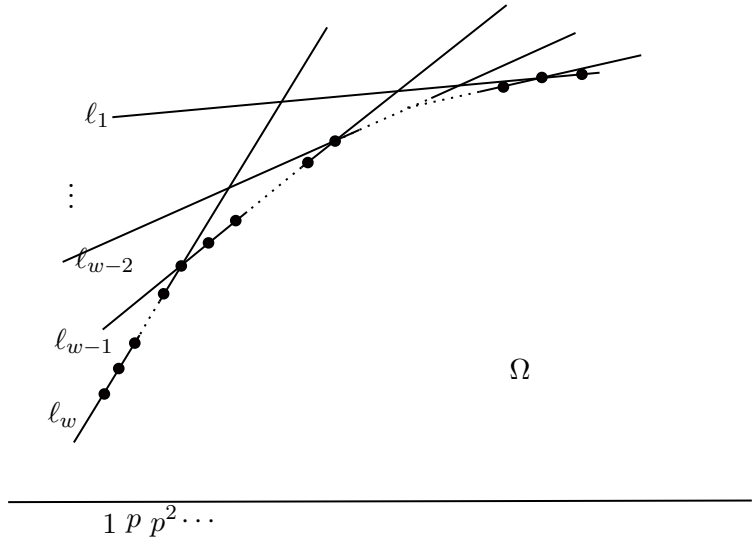
\captionof{figure}{Line configuration of $\{m_j\cdot \ell_j\}_{1\leq j\leq w}$ with critical points.}\label{figure2}

\end{center}
\subsection{Pattern of the first order decay}\label{sub:patternfirstorderdecay}
Let recall the setup in \S\ref{sub:setuplocalcurve}: we have a collection \begin{equation}\label{eq:collection}
\{x_1,...,x_m, y_1,...,y_m\}
\end{equation}of nonzero series in $\mathbb{F}[\![t]\!]$ given by the equation of the formal curve $C$. We also have $Q_0(t)=...=Q_{a-1}(t) = 0$ and $ Q_a(t)\neq 0$. We have defined  $h_i=v_t(Q_i(t))$. Furthermore,  $a_1 = a < a_2 \hdots < a_k$ is the maximal sequence of integers such that $a_k \leq m-1$, and $h_{a_1} \geq h_{a_2} \hdots \geq h_{a_k}$. Note that any collection in the $Q$-isometry class of (\ref{eq:collection}) is $Q_{a_k-1}$-degenerate. 
\begin{setup}
\label{setup: pattern of first order decay}
We will always work with a collection \eqref{eq:collection} which is maximal in its $Q$-isometry class. 
\end{setup} 
\begin{definition}
Let $\Lambda\subseteq\cL$ denote a sublattice of $\cL$. We will put a bar on $\Lambda$ to denote the mod $p\cL$ reduction of $\Lambda$, regraded as a sublattice of $\overline{\cL}$. For example, $\overline{\mathcal{L}_1}=\mathcal{L}_1/p\cL=\mathrm{Span}_{\bZ_p}\{e_1,...,e_m, f_1,...,f_m\}\otimes \mathbb{F}_p$. We denote by $\overline{e'},\overline{f'}, \overline{e_i}, \overline{f_i}$ the image of $e',f',e_i,f_i$ in $\overline{\cL}$. We will also naturally identify $ \overline{\mathcal{L}_1}$ with $V_{\mathrm{frame}}$; cf. \S\ref{def:isometricshuffle}, by $\overline{e_i}\rightarrow X_i$ and $\overline{f_i}\rightarrow Y_i$. 
\end{definition}
\begin{definition}\label{def:somebadass}~\begin{enumerate}
     \item Let $\{\Pi_i\}_{1\leq i\leq m}$ be the line configuration associated to the collection (\ref{eq:collection}). Let $r$ be the number of quasi-redundant lines over $[1,p^{a_k}]$, and let $r_0\leq r$ be the number of redundant lines over $[1,p^{a_k}]$. Let $n=m-r$, and $n_0=m-r_0$. 
     \item  If $n\leq a_k+1$, we say that the collection $\{\underline{x},\underline{y}\}$ is \textit{thin}. Otherwise, it is \textit{thick}.
     \item A \textit{borderline} is a line $\Pi_i$ which is not quasi-redundant over $[1,p^{a_k}]$, and has equation $y=dx+d$. If a borderline exists, then it is unique. 
     \item Define the \textit{redundant subspace} $\overline{\mathcal{L}}_{\text{rd}}\subseteq\overline{\mathcal{L}_{1}}$ to be the subspace spanned by all $\overline{e_i},\overline{f_i}$ for those $i$ such that $\Pi_i$ is quasi-redundant over $[1,p^{a_k}]$.

\item\label{def:boarderllinemodp} Define the \textit{borderline subspace} $\overline{\mathcal{L}}_{\text{bd}}\subseteq \overline{\mathcal{L}}_{\mathrm{rv}}$ as follows: if a borderline exists, and let $d$ be the border, then $\overline{\mathcal{L}}_{\text{bd}}$ is the subspace $V^+_d \subseteq V_{\mathrm{frame}}=\overline{\cL_1}$ in Lemma~\ref{lm:quadmaximal}, which is either $\mathbf{0}$, or nondegenerate of dimension 1, or inert of dimension 2. If a borderline does not exist, then we set $\overline{\mathcal{L}}_{\text{bd}}=\mathbf{0}$.
\item Define $\overline{\mathcal{L}}_{\text{brd}}:=\overline{\mathcal{L}}_{\text{bd}}\oplus \overline{\mathcal{L}}_{\text{rd}}$. Here we are using the notation $\overline{\mathcal{L}}_{\text{brd}}$ instead of $\overline{\mathcal{L}_{\text{brd}}}$ since there is no apriori defined $\bZ_p$-lattice  ${\mathcal{L}_{\text{brd}}}$. The reader is encouraged to ignore this notational nuance: later on, we will define a $\bZ_p$-lattice  ${\mathcal{L}_{\text{brd}}}$ whose mod $p\cL$ reduction is $\overline{\mathcal{L}}_{\text{brd}}$, and then $\overline{\mathcal{L}_{\text{brd}}}=\overline{\mathcal{L}}_{\text{brd}}$. 

  \end{enumerate} 
 \end{definition}
\begin{assumption}\label{ass:redundantlines}
Starting from a maximal collection $\{\underline{x},\underline{y}\}$, we may  perform finitely many steps of re-indexing and swapping so that $\{\underline{x},\underline{y}\}$ satisfies one of the two following two assumptions (the two assumptions \textit{may not} be mutually compatible):  
    \begin{enumerate}[label=({A}{{\arabic*}})]
        \item \label{ass:red1} $\Pi_m,\Pi_{m-1},...,\Pi_{n_0+1}$ are redundant over $[1,p^{a_k}]$,
        
\noindent        $\forall i\in [1,m]$, $v_t(x_i)\leq v_t(y_i)$, 
        
       \noindent  $v_t(x_1)\leq v_t(x_2)\leq ...\leq v_t(x_{n_0})$.
        \item \label{ass:red2} $\Pi_m,\Pi_{m-1},...,\Pi_{n_0+1}$ are redundant over $[1,p^{a_k}]$,
        
    \noindent $\Pi_{n_0},\Pi_{n_0-1},...,\Pi_{n+1}$ are quasi-redundant but not redundant over $[1,p^{a_k}]$,
    
    \noindent $\forall i\in [1,m]$, $v_t(x_i)\leq v_t(y_i)$,
    
    \noindent $v_t(x_1)\leq v_t(x_2)\leq ...\leq v_t(x_{n})$.
    \end{enumerate}

\end{assumption}
\begin{remark}
As the property of being maximal is preserved under re-indexing and swapping, imposing one of the above assumptions on $\{\underline{x},\underline{y} \}$ is consistent maximality. 
\end{remark}

\begin{lemma}\label{lm:beforelemma}Under Assumption~\ref{ass:redundantlines}\ref{ass:red1}, we have $v_t(x_{n_0}) \leq v_t(y_{i})$ for all $1\leq i\leq m$. Furthermore, we have 
\begin{equation}\label{eq:reindexthin}
        v_t(x_1)\leq v_t(x_2)\leq ...\leq v_t(x_{n_0}) \leq v_t(y_{n_0})\leq v_t(y_{n_0-1})\leq ...\leq v_t(y_1).\end{equation}
\end{lemma}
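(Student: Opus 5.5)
Once we show $v_t(x_{n_0}) \le v_t(y_i)$ for every $i$, the chain (\ref{eq:reindexthin}) follows by re-indexing. So the plan is to prove that inequality first and then permute indices. Under \ref{ass:red1} we already know $v_t(x_i)\le v_t(y_i)$ for all $i$ and $v_t(x_1)\le \cdots\le v_t(x_{n_0})$.

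Suppose for contradiction that $v_t(y_j) < v_t(x_{n_0})$ for some $j$. Then
\[ v_t(x_j)\le v_t(y_j)<v_t(x_{n_0})\le v_t(y_{n_0}). \]
So the line $\Pi_j$ has both slope and intercept strictly smaller than those of $\Pi_{n_0}$, that is, $\Pi_j$ lies strictly below $\Pi_{n_0}$ on all of $[1,\infty)$. This forces $\Pi_{n_0}$ to lie strictly above the boundary of the $Q$-polygon everywhere, so it carries no critical point and is totally redundant.

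If $n_0\ge 1$ is defined as the number of non-redundant lines, this is a contradiction, because \ref{ass:red1} places exactly the indices $n_0+1,\dots,m$ among the redundant lines. Hence $\Pi_1,\dots,\Pi_{n_0}$ are non-redundant over $[1,p^{a_k}]$, and $\Pi_{n_0}$ in particular must contain a critical point in $[1,p^{a_k}]\times\bR$. The step that needs care is the case where $\Pi_j$ is itself degenerate, or where $j>n_0$. Degenerate lines only occur when $x_j$ or $y_j$ is zero, and the collection in \S\ref{sub:patternfirstorderdecay} consists of nonzero series, so this case does not arise. For $j>n_0$, the comparison above uses only the valuations of $x_j,y_j$, so the argument goes through unchanged.

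For the full chain, we apply a permutation to the indices $1,\dots,n_0$. Such a permutation might break the ordering of the $v_t(x_i)$ when there are ties, so the plan is to use the freedom to permute within blocks of equal $v_t(x_i)$. If $v_t(x_i)=v_t(x_{i+1})$ but $v_t(y_i)<v_t(y_{i+1})$, we swap the two indices. This keeps the $x$-ordering intact and also preserves maximality and the redundancy pattern. If $v_t(x_i)<v_t(x_{i+1})$ and $v_t(y_i)<v_t(y_{i+1})$, then $\Pi_i$ lies strictly below $\Pi_{i+1}$, so $\Pi_{i+1}$ is redundant. That contradicts $i+1\le n_0$ exactly as in the first step.

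Hence the $y$-valuations are non-increasing on $1,\dots,n_0$, which gives (\ref{eq:reindexthin}). The main obstacle is making precise the claim that "lying strictly below on $[1,\infty)$ implies redundancy", in the sense of Definition~\ref{def:lineconfig}(\ref{def:redundant}). This should hold because a critical point lies on $\partial\Omega$, while $\Omega$ is contained in $\Omega_j$, which lies strictly under $\Pi_{n_0}$ for every $x\ge1$.
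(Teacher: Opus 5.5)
Your first step is the paper's argument. If $v_t(y_j)<v_t(x_{n_0})$, then $\Pi_j$ has strictly smaller slope and intercept than $\Pi_{n_0}$, so $\Pi_{n_0}$ lies strictly above $\Omega$ over $[1,\infty)$ and is totally redundant. This contradicts the fact that under \ref{ass:red1} exactly $\Pi_{n_0+1},\dots,\Pi_m$ are redundant over $[1,p^{a_k}]$. Your justification via $\partial\Omega\subseteq\Omega\subseteq\Omega_j$ is correct. A degenerate $\Pi_j$ could never satisfy $v_t(y_j)<v_t(x_{n_0})$ anyway.

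The second part has a small but real defect. The chain (\ref{eq:reindexthin}) is asserted for the indexing fixed by \ref{ass:red1}, not for some further re-indexing of it. Your swap inside blocks of equal $v_t(x_i)$ only gives the chain after a permutation, so it proves a weaker statement.

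The swap is in fact never triggered. If $v_t(x_i)=v_t(x_{i+1})$ and $v_t(y_i)<v_t(y_{i+1})$, then $\Pi_{i+1}$ is parallel to $\Pi_i$ with strictly larger intercept. So it lies strictly above $\Pi_i$ on $[1,\infty)$ and is totally redundant, which is the same contradiction as before. More generally, slope $\ge$ together with intercept $>$ already forces strict domination. So for $i<i'\le n_0$, the ordering $v_t(x_i)\le v_t(x_{i'})$ rules out $v_t(y_i)<v_t(y_{i'})$. This is exactly how the paper gets monotonicity of the $v_t(y_i)$ without any re-indexing.

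Relatedly, your opening sentence is inaccurate. The inequality $v_t(x_{n_0})\le v_t(y_i)$ plus re-indexing does not yield the chain; the domination argument is what does.
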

\begin{proof} We first prove that $v_t(x_{n_0}) \leq v_t(y_{i})$ for all $i$. Suppose that this is not the case, let $j$ be such that $v_t(y_j)=\min\{v_t(y_1),...,v_t(y_{m})\}$. Then $v_t(y_j)<v_t(x_{n_0})$. Then $j\neq n_0$ and $v_t(x_j)\leq v_t(y_j)<v_t(x_{n_0})\leq v_t(y_{n_0})$. This means that $\Pi_{n_0}\succ \Pi_j$, rendering $\Pi_{n_0}$ totally redundant. A contradiction. To show that (\ref{eq:reindexthin}) holds, note that if there is some $v_t(y_i)>v_t(y_{i-1})$, then $\Pi_i\succ \Pi_{i-1}$, rendering $\Pi_i$ totally redundant. A contradiction.

\end{proof}

  We have the following two key propositions.
\begin{proposition}\label{prop:thincase!!!}Assume \ref{ass:redundantlines}\ref{ass:red2}.\begin{enumerate}[label=\upshape{(\alph*)}]        
        \item\label{it:prop:thincase!!!1} We have $v_t(x_{n}) \leq v_t(y_{i})$ for all $1\leq i\leq m$, and $v_t(x_{n}) < v_t(y_{i})$ if $n<i\leq m$. Furthermore, 
\begin{equation}\label{eq:reindexthin3}
        v_t(x_1)\leq v_t(x_2)\leq ...\leq v_t(x_{n}) \leq v_t(y_{n})\leq v_t(y_{n-1})\leq ...\leq v_t(y_1).
     \end{equation}
     
     \item\label{thineqs} Suppose that the collection is thin. Then $n=a_k+1$. Moreover, we have \begin{equation}\label{eq:thinv}
         v_t(y_{n-i-1})-v_t(y_{n-i})= p^{i}[v_t(x_{n-i})-v_t(x_{n-i-1})],\;\;0\leq i\leq n-2.
     \end{equation}
 \begin{equation}\label{eq:thinh}
 h_{a_k}\geq v_t(y_1)+ p^{a_k}v_t(x_1).
 \end{equation}
\item\label{fateqs} Suppose that the collection is thick. Then \begin{equation}\label{eq:fatv}
     p^{i}[v_t(x_{n-i})-v_t(x_{n-i-1})]\leq v_t(y_{n-i-1})-v_t(y_{n-i})\leq p^{i+1}[v_t(x_{n-i})-v_t(x_{n-i-1})],\;\;0\leq i\leq n-2
\end{equation}
\begin{equation}\label{eq:fath}
 h_{a_k}\geq v_t(y_1)+ p^{a_k}v_t(x_1)
\end{equation}
 \end{enumerate}
\end{proposition}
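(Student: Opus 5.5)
The plan is to work entirely with the geometry of the line configuration $\{\Pi_i\}$ attached to the maximal collection, using that the collection is $Q_{a_k-1}$-degenerate (indeed $Q_s$-degenerate for all $s<a_k$) while $v_t(Q_{a_k})=h_{a_k}$ is finite. Set $s=a_k-1$.

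Part \ref{it:prop:thincase!!!1} follows the argument of Lemma~\ref{lm:beforelemma}. Suppose $v_t(y_j)<v_t(x_n)$ for some $j$. Then $\Pi_j$ has slope at most $v_t(y_j)<v_t(x_n)$ and intercept $v_t(y_j)\le v_t(y_n)$, so $\Pi_n\succ\Pi_j$ strictly. This would make $\Pi_n$ totally redundant, contradicting that $\Pi_n$ is not quasi-redundant. For $i>n$ we need the strict inequality. Here $\Pi_i$ is quasi-redundant, so it meets $\Omega\cap([1,p^{a_k}]\times\bR)$ in at most one point. Equality $v_t(y_i)=v_t(x_n)$ together with $v_t(x_i)\le v_t(y_i)$ would make $\Pi_i$ lie weakly below $\Pi_n$ with a common point or segment. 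I would rule this out by hand, by checking that it contradicts either the non-quasi-redundancy of $\Pi_n$ or the ordering. The monotone chain \eqref{eq:reindexthin3} comes from the same domination argument: if $v_t(y_i)>v_t(y_{i-1})$ for $i\le n$, then $\Pi_i\succ\Pi_{i-1}$, so $\Pi_i$ is redundant, which is a contradiction.

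For \ref{thineqs} and \ref{fateqs}, restrict to the non-quasi-redundant lines $\Pi_1,\dots,\Pi_n$. By the redundant line principle this subcollection is still $Q_s$-degenerate and maximal (Remark~\ref{rmk:maximal_sub}), so Corollary~\ref{cor:maximalanddegenerate}(b) gives $n\ge s+2=a_k+1$. Thinness then forces $n=a_k+1$, so we are in the equality case (c): every pair of distinct lines meets at a vertex-like critical point, and each line of multiplicity $m_j$ carries exactly $m_j-1$ edge-like critical points. By \eqref{eq:reindexthin3} the slopes increase and the intercepts decrease along the index. Consecutive lines $\Pi_{n-i-1},\Pi_{n-i}$ therefore meet at the $x$-coordinate $(v_t(y_{n-i-1})-v_t(y_{n-i}))/(v_t(x_{n-i})-v_t(x_{n-i-1}))$. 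Counting critical points from $x=1$ upward, lines of equal type are treated as occupying consecutive edge-like positions. The fact that the critical points fill $\{1,p,\dots,p^{s}\}$ exactly, with no room left over, then forces this coordinate to equal $p^{i}$, which gives \eqref{eq:thinv}.

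In the thick case the count is no longer tight. Still, every critical point in $[1,p^s]$ must lie on at least two lines or on a borderline (Lemma~\ref{lm:lineconfig}), and the lines are non-redundant. Together these bound each breakpoint of $\partial\Omega$ between $p^{i}$ and $p^{i+1}$, giving \eqref{eq:fatv}. The key point is that two consecutive breakpoints at $x$-values $p^j$ cannot both be skipped without making a line quasi-redundant.

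Finally, for \eqref{eq:thinh} and \eqref{eq:fath}, compute $Q_{a_k}$ at $x=p^{a_k}$. Each term $x_i^{p^{a_k}}y_i$ has valuation $\Pi_i(p^{a_k})$, and $\Pi_1$ is the lowest line there. The valuation of $Q_{a_k}$ is at least the minimum over all terms, and that minimum is $\min_i \Pi_i(p^{a_k})$. The terms of the form $x_iy_i^{p^{a_k}}$ are larger because $v_t(x_i)\le v_t(y_i)$. I would then check that this minimum is attained by $\Pi_1$, giving $h_{a_k}\ge v_t(y_1)+p^{a_k}v_t(x_1)$.

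I expect the main obstacle to be the exact bookkeeping that matches critical points to the powers $p^i$ in \eqref{eq:thinv}. This is especially delicate in the presence of a borderline, which contributes an edge-like critical point at $x=1$ (Corollary~\ref{cor:po}\ref{cor:po2}), and in the presence of repeated lines. I would first treat the case where all $\Pi_i$ are distinct and then absorb the multiplicities.
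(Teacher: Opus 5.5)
Your treatment of (a), of (b), and of the bounds \eqref{eq:thinh}, \eqref{eq:fath} follows the paper's route:
\begin{itemize}
\item domination of lines for (a);
\item Corollary~\ref{cor:maximalanddegenerate}(b),(c), plus counting critical points upward from $x=1$, for $n=a_k+1$ and \eqref{eq:thinv};
\item evaluation at $x=p^{a_k}$ for the bound on $h_{a_k}$.
\end{itemize}
For the strict inequality in (a), the clean argument is that $\Pi_n\succeq\Pi_i$ forces $\Pi_n\cap\Omega\subseteq\Pi_i\cap\Omega$, so $\Pi_n$ would inherit quasi-redundancy.

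One step is misapplied, however. The redundant line principle only lets you delete \emph{redundant} lines. A line $\Pi_i$ with $n<i\le n_0$ is quasi-redundant but still passes through a vertex-like critical point, and its leading term can be one of the terms that cancel there. So $\{x_i,y_i\}_{i\le n}$ need not be $Q_{a_k-1}$-degenerate. The paper keeps these lines:
\begin{itemize}
\item it discards only the redundant ones;
\item it passes to Assumption~\ref{ass:redundantlines}\ref{ass:red1} (harmless, since $n$ depends only on the multiset of lines), so that Lemma~\ref{lm:beforelemma} gives the valuation hypothesis;
\item it applies Corollary~\ref{cor:maximalanddegenerate} to the $n_0$ non-redundant lines, whose set $\mathcal{R}$ is exactly $\{1,\dots,n\}$.
\end{itemize}

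The genuine gap is (c). You claim that Lemma~\ref{lm:lineconfig} and non-redundancy place each breakpoint in $[p^i,p^{i+1}]$. This overlooks that $i$ counts lines with multiplicity, while breakpoints of $\partial\Omega$ separate distinct lines. In the thin case every extra copy of a line is paid for by an edge-like critical point, but in the thick case it need not be.

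In fact the left inequality of \eqref{eq:fatv} fails. Take $p\ge 3$, $m=3$, $\theta\in\mathbb{F}\setminus\mathbb{F}_p$, and
\[ x_1=t,\quad y_1=t^{12},\quad x_2=t^2,\quad y_2=-\theta t^{10}-t^{11},\quad x_3=\theta t^2,\quad y_3=t^{10}. \]
\begin{itemize}
\item Then $\sum_i x_iy_i=0$, so $Q_0\equiv 0$. The unique minimal terms give $h_1=p+12<h_2=p^2+12$, so $a=a_k=1$.
\item The lines are $\Pi_1\colon y=x+12$ and $\Pi_2=\Pi_3\colon y=2x+10$. Over $[1,p]$, the boundary of $\Omega$ is the double line on $[1,2]$ and $\Pi_1$ on $[2,p]$. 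The double line carries an edge-like critical point at $x=1$, which Corollary~\ref{cor:po} permits: $1,\theta$ and $-\theta,1$ are $\mathbb{F}_p$-independent and $-\theta+\theta=0$.
\item No line is quasi-redundant, so $n=3>a_k+1$ and the collection is thick.
\item The collection is maximal. In $V_{\mathrm{frame}}$, the vectors whose realizations have valuation $\ge 2$ form $Y_1^{\perp}$. Those of valuation $\ge 3$ span the isotropic space $\langle Y_1,Y_2,Y_3\rangle$. Only multiples of $Y_1$ have valuation $\ge 11$, and nothing has valuation $\ge 13$. So raising any line would need a hyperbolic plane in $Y_1^{\perp}$ containing $Y_1$, or one inside $\langle Y_1,Y_2,Y_3\rangle$, or three orthogonal hyperbolic planes in the $5$-dimensional $Y_1^{\perp}$. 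None of these exists.
\end{itemize}
Yet \eqref{eq:fatv} with $i=1$ asserts $p\,(2-1)\le 12-10$, which is false for $p\ge 3$.

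The paper's own proof of (c) rests on the same unsupported assertion, that $\Pi_{n-i}\cap\Pi_{n-i-1}$ has $x$-coordinate in $[p^i,p^{i+1}]$; here that intersection is at $x=2$. Moreover, Lemma~\ref{lm:D_0} uses exactly this inequality, so the paper gives you nothing to fall back on.

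Only the right inequality of \eqref{eq:fatv} survives, and not via Lemma~\ref{lm:lineconfig}, which would let a double line carry arbitrarily many edge-like critical points. You need Corollary~\ref{cor:maximalanddegenerate}(a), i.e.\ maximality entering through the Moore-determinant bound of at most (multiplicity minus one) edge-like critical points per line. That bound shows at most $i+1$ critical points lie to the left of the vertex between $\Pi_{n-i}$ and $\Pi_{n-i-1}$, which gives the right inequality.
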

\begin{proof}\begin{enumerate}[label=\upshape{(\alph*)}] 
\item Ignore the assertion $v_t(x_n)< v_t(y_i)$ for $n<i\leq m$, the statement follows directly from Lemma~\ref{lm:beforelemma} by throwing away quasi-redundant indices. Now we treat the case $n<i\leq m$ separately. It suffices to show that $v_t(x_n)\neq v_t(y_i)$ for all $n<i\leq m$. Suppose this is not the case, i.e., there exists some such $i$ such that $v_t(x_i)\leq v_t(y_i)= v_t(x_n)\leq v_t(y_n)$. Then $\Pi_{n}\succeq \Pi_i$. Since $\Pi_i$ is quasi-redundant over $[1,p^{a_k}]$, $\Pi_{n}$ must also be quasi-redundant. Contradiction.
\item Note that the claim ``$n=a_{k}+1$'' only depends on the multiset of lines, so it suffices to prove $n\geq a_k+1$ under Assumption~\ref{ass:redundantlines}\ref{ass:red1}. Note that the subcollection $\{x_i,y_i\}_{1\leq i\leq n_0}$ is again maximal and $Q_{a_k-1}$-degenerate. Apply Corollary~\ref{cor:maximalanddegenerate} to $\{x_i,y_i\}_{1\leq i\leq n_0}$ with $s=a_k-1$. Note that all conditions of the corollary are met (the condition $\max\{v_t(x_1), ...,v_t(x_{n_0})\}\leq \min\{v_t(y_1),...,v_t(y_{n_0})\}$ is satisfied because of Lemma~\ref{lm:beforelemma}). It follows that $n\geq a_k+1$ by the second part Corollary~\ref{cor:maximalanddegenerate}. Since being ``thin'' means $n\leq a_k+1$, we find that $n=a_k+1$. 

For the formulas (\ref{eq:thinv}) and (\ref{eq:thinh}), we use the third part of Corollary~\ref{cor:maximalanddegenerate}. To help visualize it, we note that the critical points and the non-quasi-redundant lines are arranged in Figure~\ref{figure2}. Now switch back to Assumption~\ref{ass:redundantlines}\ref{ass:red2}. It is clear that $\Pi_{n-i-1}\cap\Pi_{n-i}$ contains a point with $x$-coordinate $p^i$. As a result, 
$$p^iv_t(x_{n-i-1})+v_t(y_{n-i-1}) =p^iv_t(x_{n-i})+v_t(y_{n-i}).$$
This proves (\ref{eq:thinv}). 

Note that $\Pi_1$ lies below other lines in the collection $\{\Pi_i\}_{1\leq i\leq n}$ over the interval $[p^{a_k},\infty)$. If $\Pi_i$ is quasi-redundant, then it surely lies above $\Pi_1$, at least over the point $x=p^{a_k}$. It follows that \begin{equation}\label{eq:hV1}
h_{a_k}\geq \Pi_1(p^{a_k})=v_t(x_1)p^{a_k}+ v_t(y_1).
\end{equation}
\item The thick case can be deduced in a manner similar to the thin case. In fact, the graph of $\{\Pi_i\}_{1\leq i\leq n}$ again looks like Figure~\ref{figure2}, except that the intersection of two lines may no longer contain a vertex-like critical point. So the outcome is weaker: $\Pi_{n-i}\cap\Pi_{n-i-1}$ contains an intersection point with $x$-coordinate in $[p^i,p^{i+1}]$. Using this, we get (\ref{eq:fatv}). By a similar reason as in the thin case, we have $h_{a_k}\geq v_t(x_1)p^{a_k}+ v_t(y_1)$.
\end{enumerate}
\end{proof}
\begin{proposition}\label{prop:L+thin} 
Assume \ref{ass:redundantlines}\ref{ass:red2}. Let $d=v_t(x_n)= \max\{v_t(x_i)\}_{1\leq i\leq n}$. We have the following description of $\overline{L'_{P,r}\otimes \bZ_p}$:
        \begin{enumerate}[label=\upshape{(\alph*)}]
            \item If $r\leq d$, then $$\overline{L'_{P,r}\otimes \bZ_p}=\mathrm{Span}_{\mathbb{F}_p}\{\overline{e'},\overline{f'},\overline{f_1},\overline{f_2},...,\overline{f_m}\}\oplus \mathrm{Span}_{\mathbb{F}_p} \{\overline{e_i}: v_t(x_i)\geq r\}_{1\leq i\leq m}.$$ 
        \item\label{it:prop:L+thin2} If $d<r\leq h_{a_k}$, then $$\overline{L'_{P,r}\otimes \bZ_p}=\mathrm{Span}_{\mathbb{F}_p}\{\overline{e'},\overline{f'},\overline{f_i}:v_t(y_i)\geq r\}_{1\leq i \leq n}\oplus \overline{\Delta}_r,$$ 
    where $$\overline{\Delta}_r\subseteq \overline{\mathcal{L}}_{\mathrm{brd}}\oplus\mathrm{Span}_{\mathbb{F}_p}\{\overline{f_i}:v_t(y_i)< r\}_{1\leq i \leq n}$$ is a subspace whose projection to $\overline{\mathcal{L}}_{\mathrm{brd}}$ is an isometrical embedding. 
    
 \item If $r> h_{a_k}$ and $a_k=a$, then 
   $$\overline{L'_{P,r}\otimes \bZ_p} \subseteq \overline{\mathcal{L}}_{\mathrm{brd}}\oplus\mathrm{Span}_{\mathbb{F}_p}\{\overline{e'},\overline{f'},\overline{f_1},\overline{f_2},...,\overline{f_n}\}$$ is a subspace whose projection to $\overline{\mathcal{L}}_{\mathrm{brd}}$ is an isometrical embedding. 
        \end{enumerate}
\end{proposition}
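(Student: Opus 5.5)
The plan is to translate the membership $\bar v\in\overline{L'_{P,r}\otimes\bZ_p}$ into a condition on the $t$-adic valuation of $x_v=\overline{D}_1(v)$, and then read off the answer from the valuation pattern of Proposition~\ref{prop:thincase!!!}\ref{it:prop:thincase!!!1}.

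\emph{Step 1: reduction to $v_t(x_v)$.} Away from $p$, $L'_{P,r}$ and $L'_{P,1}$ agree, so only the $p$-adic part matters. By Lemma~\ref{lm:specialendoprepair}, a primitive $v\in\cL$ lies in $L'_{P,r}\otimes\bZ_p$ if and only if $d_1(v)=v_t(x_v)\ge r$. For the statements about reductions mod $p\cL$ I would rely on Proposition~\ref{prop: prelim decay results}\ref{prop: prelim decay results 1}. It says that for vectors decaying rapidly to first order, $v_t(x_v)$ depends only on $\bar v$. For the remaining vectors (those in $\cL_0$ and the $\overline{e_i}$ with large valuation) it suffices to produce explicit lifts.

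Next I expand $x_v$ via Notation~\ref{not: v tilde +} and Lemma~\ref{lm:Finftyexplicit}. For $v$ with residue $\sum c_i\overline{e_i}+d_i\overline{f_i}+c'\overline{e'}+c''\overline{f'}$, the $\cL_1$-part contributes the linear form
\[
\ell(v)=\sum_i (c_i x_i+d_i y_i)
\]
together with its even Frobenius twists, which have strictly larger valuation. The $\cL_0$-part contributes only terms built from the $Q_j$. As in the proof of Proposition~\ref{prop: lower bound for d_r}, these terms have valuation $\ge h_{a_k}$. Consequently, whenever $r\le h_{a_k}$ one has
\[
v_t(x_v)\ge r \iff v_t(\ell(v))\ge r .
\]

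\emph{Step 2: part (a), $r\le d$.} By \eqref{eq:reindexthin3} and Proposition~\ref{prop:thincase!!!}\ref{it:prop:thincase!!!1}, every $y_i$ has valuation $\ge d\ge r$, and by Step~1 the vectors $e',f'$ also survive. So the span of $\overline{e'},\overline{f'},\overline{f_i}$ lies in $\overline{L'_{P,r}\otimes\bZ_p}$, and the problem reduces to the $e$-part. Since the collection is maximal, Lemma~\ref{lm:hypermaxiaml} says the $x_i$ are $\bF_p$-hyper-independent. Hence $v_t(\sum c_ix_i)\ge r$ forces $c_i=0$ whenever $v_t(x_i)<r$. (The leftover $y$-terms cannot cancel anything, because their valuation is already $\ge r$.) This gives exactly the stated span.

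\emph{Step 3: part (b), $d<r\le h_{a_k}$.} Now every $x_i$ with $i\le n$ has valuation $\le d<r$, so these leading terms must cancel. By Proposition~\ref{prop:thincase!!!}\ref{it:prop:thincase!!!1}, the only terms that can reach valuation $\le d$ are:
\begin{itemize}
\item the $x_i$ with $i\le n$;
\item $y_n$, and only when $\Pi_n$ is the borderline $y=dx+d$;
\item the $x_i,y_i$ coming from quasi-redundant indices.
\end{itemize}
By hyper-independence together with Lemma~\ref{lm:quadmaximal} and Lemma~\ref{lm:quadmaximal333}, any cancellation involving non-redundant $e_i$'s is confined to the borderline space $V_d^+=\overline{\cL}_{\rm bd}$. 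The quasi-redundant indices feed into $\overline{\cL}_{\rm rd}$. Iterating the same argument on the $f$-part, in decreasing order of $v_t(y_i)$, shows:
\begin{itemize}
\item $\overline{f_i}$ lies in the lattice exactly when $v_t(y_i)\ge r$;
\item the remaining vectors form a subspace $\overline\Delta_r$ of $\overline{\cL}_{\rm brd}\oplus\mathrm{Span}\{\overline{f_i}:v_t(y_i)<r\}$ that meets this second summand trivially.
\end{itemize}
Isometry of the projection to $\overline{\cL}_{\rm brd}$ holds because the $\overline{f_i}$ with $i\le n$ are isotropic, are mutually orthogonal, and are orthogonal to $\overline{\cL}_{\rm brd}$ (their partners $\overline{e_i}$ do not occur).

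\emph{Step 4: part (c), $r>h_{a_k}$ with $a_k=a$.} Only a containment is claimed. Since $L'_{P,r}\subseteq L'_{P,h_{a_k}}$, part (b) already forces $\bar v$ into $\overline{\cL}_{\rm brd}\oplus\mathrm{Span}\{\overline{e'},\overline{f'},\overline{f_i}\}$. For the isometry statement I would use that the form on $\overline{\cL_0}$ is zero mod $p$ and that $\cL_0\perp\cL_1$. The hypothesis $a_k=a$ enters through Remark~\ref{rem: ak is a}: it guarantees that the decay of the $\cL_0$- and $f$-directions is governed by single minimal words, so that no new cancellations produce extra vectors in $\overline{\cL}_{\rm brd}$-independent directions. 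I expect this to be a short check.

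\emph{Main obstacle.} The crux is Step~3. One must show that cancellations of leading terms cannot occur outside $\overline{\cL}_{\rm brd}$. This requires combining the valuation inequalities of Proposition~\ref{prop:thincase!!!} with the maximality lemmas in the presence of simultaneous $x$/$y$ cancellations and of the quasi-redundant lines. I would try to organise it as a descending induction on valuation levels, applying hyper-independence at each level.
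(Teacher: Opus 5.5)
Your overall route is the paper's. Step~1 makes explicit the identification the paper uses tacitly, (a) is hyper-independence, and the core of (b) is showing that no $\overline{e_j}$ outside $\overline{\cL}_{\mathrm{brd}}$ survives, using hyper-independence below level $d$ and Lemma~\ref{lm:quadmaximal333} at level $d$. The genuine problem is your justification of the isometric embedding.

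You assert that the $\overline{f_i}$, $i\le n$, are orthogonal to $\overline{\cL}_{\mathrm{brd}}$ because their partners $\overline{e_i}$ do not occur there. This fails whenever $\overline{\cL}_{\mathrm{bd}}\neq 0$.
\begin{itemize}
\item After the normalization, $\overline{\cL}_{\mathrm{bd}}$ contains $\overline{e_n}-\gamma_n\overline{f_n}$, which pairs to $1$ with $\overline{f_n}$.
\item $\overline{f_n}$ lies in $\mathrm{Span}_{\bF_p}\{\overline{f_i}:v_t(y_i)<r\}$, because $v_t(y_n)=d<r$.
\item This is not an artifact of coordinates: every nonzero vector of $V_d^+$ is anisotropic, hence pairs nontrivially with some $\overline{f_i}$, $i\in C_d$.
\end{itemize}
So for $\beta+\phi\in\overline{\Delta}_r$ the cross term $\langle\beta,\phi\rangle$ need not vanish.

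What is missing is the statement that elements of $\overline{\Delta}_r$ have zero $\overline{f_j}$-coefficient for every $j$ with $v_t(y_j)=d$, not just $j=n$. This follows by examining the series up to level $d$.
\begin{itemize}
\item The borderline combinations and the $y_i$ with $i>n$ have valuation $>d$.
\item By the main step of (b), the $x_j$ with $j\le n$ enter only through $\overline{\cL}_{\mathrm{bd}}$.
\item So below level $d$ only $x_i$ with $i>n$ occur, and hyper-independence removes them.
\item At level $d$ what remains is $\sum_{v_t(y_j)=d}b_jy_j+\sum_{i>n,\,v_t(x_i)=d}c_ix_i$.
\item Lemma~\ref{lm:quadmaximal333}, applied to the (still maximal) subcollection on these indices, forces the $c_i$ to vanish. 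Hyper-independence of the $y$'s then forces the $b_j$ to vanish.
\end{itemize}
Only then are the surviving $\overline{f_i}$ orthogonal to $\overline{\cL}_{\mathrm{bd}}$. The paper says (b) follows immediately once the $\overline{e_j}$-coefficients vanish, so this step is implicit there too, but the reason you give for it is false.

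In (c) you point to the right remark, but the argument needs two specific facts that your sketch does not isolate.
\begin{itemize}
\item The $\overline{f_i}$ are disposed of without any hypothesis on $a_k$. By \eqref{eq:hv2}, $h_{a_k}\ge p^{a_k}v_t(x_1)+v_t(y_1)>v_t(y_i)$ for all $i\le n$. So (b) at $r=h_{a_k}$ already gives $\overline{L'_{P,h_{a_k}}\otimes\bZ_p}=\overline{\cL_0}\oplus\overline{\Delta}_{h_{a_k}}$.
\item The hypothesis $a_k=a$ is used exactly to know that all of $\cL_0$ decays rapidly (the decay lemma together with Remark~\ref{rem: ak is a}). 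Then by Proposition~\ref{prop: prelim decay results}(a) no vector with nonzero residue in $\overline{\cL_0}$ deforms past order $h_{a_k}$, i.e.\ $\overline{L'_{P,r}\otimes\bZ_p}\cap\overline{\cL_0}=0$ for $r>h_{a_k}$.
\end{itemize}
With $L'_{P,r}\subseteq L'_{P,h_{a_k}}$, this makes the projection to $\overline{\cL}_{\mathrm{brd}}$ injective. Isometry then follows from the corrected (b), together with $\overline{\cL_0}$ being totally isotropic mod $p$ and orthogonal to $\cL_1$.
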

\begin{proof} Recall that we are working in the setup that $\{\Pi_i\}_{1\leq i\leq m}$ is maximal in its $Q$-isometry class. By Lemma~\ref{lm:hypermaxiaml}, $x_1,...,x_m$ are $\mathbb{F}_p$-hyper-independent, the same holds for $y_1,...,y_m$.  By Proposition~\ref{prop:thincase!!!}\ref{it:prop:thincase!!!1}, we also have $d \leq v_t(y_{i})$ for all $1\leq i\leq m$ and $d<v_t(y_i)$ for $i>n$. In addition, $\overline{e'},\overline{f'}\in \overline{L'_{P,r}\otimes \bZ_p}$ for all $r\leq h_{a_k}$. These results will be freely used in the proof without mentioning. 
\begin{enumerate}[label=\upshape{(\alph*)}]
    \item The $r\leq d$ case follows easily from the above mentioned results. 
    \item Suppose now that $d<r\leq h_{a_k}$. Up to a $Q$-isometry that keeps Assumption~\ref{ass:redundantlines}\ref{ass:red2}, we can assume that $\overline{\cL}_{\mathrm{bd}}=\mathrm{Span}_{\mathbb{F}_p}\{\overline{e_n}-\gamma_n\overline{f_n}\}$ (when $\dim \overline{\cL}_{\mathrm{bd}}=1$) or $\mathrm{Span}_{\mathbb{F}_p}\{\overline{e_{n-1}}-\gamma_{n-1}\overline{f_{n-1}},\overline{e_n}-\gamma_n\overline{f_n}\}$ (when $\dim \overline{\cL}_{\mathrm{bd}}=2$). We then define the following \textit{relevant subspace} which is complementary to $\overline{\cL}_{\mathrm{bd}}$: $$\overline{\cL}_{\mathrm{rv}}:=\mathrm{Span}_{\mathbb{F}_p}\{\overline{e'},\overline{f'},\overline{e_j},\overline{f_i}\}_{1\leq i \leq n,1\leq j\leq n-\dim \overline{\cL}_{\mathrm{bd}}}.$$
Since $y_1,...,y_m$ are $\mathbb{F}_p$-hyper-independent, we see that $\overline{L'_{P,r}\otimes \bZ_p}\cap \overline{\cL}_{\mathrm{rv}}=\mathrm{Span}_{\mathbb{F}_p}\{\overline{e'},\overline{f'},\overline{f_i}:v_t(y_i)\geq r\}_{1\leq i \leq n}$. Suppose that $e\in \overline{L'_{P,r}\otimes \bZ_p}$ is an element that is not contained in $\overline{\cL}_{\mathrm{rv}}$. After removing its components in $\overline{e'},\overline{f'}$, we can assume that $e\in \mathrm{Span}_{\mathbb{F}_p}\{\overline{e_i},\overline{f_i}\}_{1\leq i \leq m}$. We can uniquely write $e=e_{\mathrm{brd}}+e_{\mathrm{rv}}$, where $0\neq e_{\mathrm{brd}}\in \overline{\cL}_{\mathrm{brd}}$ and $e_{\mathrm{rv}}\in \overline{\cL}_{\mathrm{rv}}$. More explicitly:
$$e_{\mathrm{rv}}=\sum_{j=1}^{n-\dim \overline{\cL}_{\mathrm{bd}}}  a_j\overline{e_j}+\sum_{j=1}^{n}b_j\overline{f_j},\;\;\;e_{\mathrm{brd}}= \heartsuit+\sum_{i=n+1}^{m}(c_i\overline{e_i}+d_i\overline{f_i}),$$
where $\heartsuit$ is 0, $c(\overline{e_{n}}-\gamma_{n}\overline{f_{n}})$ or a linear combination of $\overline{e_{n-1}}-\gamma_{n-1}\overline{f_{n-1}},\overline{e_{n}}-\gamma_{n}\overline{f_{n}}$, depending on what $\overline{\cL}_{\mathrm{bd}}$ is. Let $x\in \mathbb{F}[\![t]\!]$ be the power series corresponding to $e$, similar for $x_{\mathrm{brd}}$ and $x_{\mathrm{rv}}$. 

In the following, we claim that $e_{\mathrm{rv}}\in \mathrm{Span}_{\mathbb{F}_p}\{\overline{f_i}\}_{1\leq i \leq n}$. In other words, $a_j=0$ in the above expression. Note that (\ref{prop:L+thin}) follows immediately from the claim. 

We prove the claim by contradiction. Suppose the claim does not hold. Then there exists a smallest integer $j_0$ such that $a_{j_0}\neq 0$. Let $\lambda\leq j_{0}$ be the smallest integer that $v_t(x_{\lambda})=v_t(x_{j_0})$. Then $v_t(x_{\mathrm{rv}})=v_t(x_{\lambda})\leq d<r$.  On the other hand, we know that $v_t(x)\geq r$. It follows that $v_t(x_{\mathrm{brd}})=v_t(x_{\mathrm{rv}})$ and the leading terms of $x_{\mathrm{brd}}$ and $x_{\mathrm{rv}}$ must cancel out. Let $d'=\min_{1\leq i\leq m}\{v_t(y_i)\}$, this quantity is no smaller than $d$. Suppose that $v_t(x_{\lambda})<d'$. Then any $\mathbb{F}_p$-combination of $y_i$'s has degree bigger than $v_t(x_{\lambda})$. On the other hand, $\overline{e_{n+1-i}}-\gamma_{n+1-i}\overline{f_{n+1-i}}$ for $1\leq i\leq \dim\overline{\cL}_{\mathrm{bd}}$ all have degree no less than $d+1>v_t(x_{\lambda})$. As a result, if the leading terms of $x_{\mathrm{brd}}$ and $x_{\mathrm{rv}}$ cancel, then $x_1,...,x_m$ are not  $\mathbb{F}_p$-hyper-independent. So this case can not happen. Now suppose that $v_t(x_{\lambda})=d'$. Then $d=d'$. By Proposition~\ref{prop:thincase!!!}\ref{it:prop:thincase!!!1} and the non-redundancy of $\{\Pi_i\}_{1\leq i\leq n}$, we have  $d=v_t(x_{\lambda})=v_t(x_{\lambda+1})=...=v_t(x_{n})=v_t(y_{n})=v_t(y_{n-1})=...=v_t(y_{\lambda})>v_t(y_{\lambda-1})$. 
Since $v_t(y_i)> d$ for all $n<i\leq m$, we find that 
\begin{equation}\label{eq:cancellationxx}
    \sum_{j=\lambda}^{n-\dim \overline{\cL}_{\mathrm{bd}}}  a_jx_j+\sum_{j=\lambda}^{n}b_jy_j + \sum_{\substack{i>n\\v_t(x_i)=d}} c_ix_i 
\end{equation}
has degree bigger than $d$. Now up to a re-indexing we can assume that the set $\{i>n:v_t(x_i)=d\}$ is $\{n+1,n+2,...,n+l\}$, and furthermore $v_t(y_{n+1})\leq ...\leq v_t(y_{n+l})$. Applying Lemma~\ref{lm:quadmaximal333} to the collection $\{x_{\lambda},x_{\lambda+1},...,x_{n},...x_{n+l};  y_{\lambda},y_{\lambda+1},...,y_{n},...,y_{n+l}\}$, we find that (\ref{eq:cancellationxx}) has degree equal to $d$. This leads to a contradiction.
\item Take $r=h_{a_k}$ in \ref{it:prop:L+thin2} and use the fact that $h_{a_k}>v_t(y_1)$ from Proposition~\ref{prop:thincase!!!}, we see that $\overline{L'_{P,h_{a_k}}\otimes \bZ_p}=\overline{\cL_0}\oplus \overline{\Delta}_{h_{a_k}}$, where $\overline{\Delta}_{h_{a_k}}\subseteq \overline{\mathcal{L}}_{\mathrm{brd}}\oplus \mathrm{Span}_{\mathbb{F}_p}\{\overline{f_i}\}_{1\leq i \leq n}$ whose projection to $\overline{\mathcal{L}}_{\mathrm{brd}}$ is an isometrical embedding. 

Now suppose that $r>h_{a_k}$ and $a_k=a$. The lattice $\cL_0$ decays rapidly to arbitrary order. So $\overline{L'_{P,r}\otimes \bZ_p}\cap \overline{\cL_0}=\mathbf{0}$. Since $\overline{L'_{P,r}\otimes \bZ_p}\subseteq \overline{L'_{P,h_{a_k}}\otimes \bZ_p}$, the desired claim follows easily.  
\end{enumerate}
\end{proof}

\subsection{Estimating $S'(M)$: preparation}
We continue to use the setup in \S\ref{sub:patternfirstorderdecay}, and will assume that our maximal collection satisfies Assumption~\ref{ass:redundantlines}\ref{ass:red2}.

In this section, we make various preparations for computing $S'(M)$. First, we split out cases from three perspectives. Second, we define auxiliary lattices $L_{P,r}''$ generalizing the construction in the toy example \S\ref{subsub:toyexamplem=2}, based on a cases by case study (however, the explicit  computation of $L_{P,r}''$, and the relation between $S'(M)$ and $S''(M)$ will be carried out in the next two sections \S\ref{sub:hardsitu}$\sim$\S\ref{sub:easysitu}). Third, we compute local density for $L_{P,r}''$. Fourth, we estimate a quantity $D_0$ which will appear in various computation later on. 

\subsubsection{Split out cases and set up notations} We have three different rules to split out cases: depending on whether $a_k>a$, we have two cases; depending on the dimension of 
$\overline{\cL}_{\mathrm{bd}}$, we have three cases; depending on whether $n>a_k+1$, we have two cases. Let's spell them out: \begin{enumerate}
    \item If $a_k=a$ (resp. $a_k>a$), we say that we are in  \textit{tight} (resp. \textit{loose}) \textit{case}.
    \item If $\dim\overline{\cL}_{\mathrm{bd}}=i\in \{0,1,2\}$, we say that we are in \textit{borderline case} $i$. 
\item If $n> a_k+1$ (resp. $n= a_k+1$), we say that we are in \textit{thick} (resp. \textit{thin}) \textit{case}. This is already defined in Definition~\ref{def:somebadass}, and the fact that $n\leq a_k+1$ implies $n=a_k+1$ is proved in Proposition~\ref{prop:thincase!!!}.
\end{enumerate}
The three different rules do not depend on each other, so we have $2\times 3\times 2=12$ subcases in total. For example, when $a_k=a$, $\dim\overline{\cL}_{\mathrm{bd}}=2$ and $n>a_k+1$, we are in tight thick borderline case 2. 


\begin{notation}
\label{not: c_i's and d_i's}
    To easy notation, from now on we will write $c_i=v_t(x_i)$ and $d_i=v_t(y_i)$ for all $i\in [1,m]$. We rewrite the formulae (\ref{eq:reindexthin3}), (\ref{eq:thinv}), (\ref{eq:fatv}), (\ref{eq:thinh}) and (\ref{eq:fath}) from Proposition \ref{prop:thincase!!!} as follows:
\begin{equation}
       \text{thin \& thick cases:\;\;\;\;} c_1\leq c_2\leq ...\leq c_n\leq d_n\leq d_{n-1}\leq ...\leq d_1.
     \end{equation}
\begin{equation}\label{eq:thinv2}
      \text{thin case:\;\;\;\;}   d_{i}-d_{i+1}= p^{n-1-i}(c_{i+1}-c_{i}),\;\;1\leq i\leq n-1.
     \end{equation}
 \begin{equation}\label{eq:fatv2}
      \text{thick case:\;\;\;\;}  p^{n-1-i}(c_{i+1}-c_{i}) \leq  d_{i}-d_{i+1} \leq    p^{n-i}(c_{i+1}-c_{i}),\;\;1\leq i\leq n-1.
     \end{equation}
      \begin{equation}\label{eq:hv2}
      \text{thin \& thick cases:\;\;\;\;}  h_{a_k}\geq p^{a_k}c_1+d_1.
     \end{equation}
\end{notation}

\subsubsection{Auxiliary lattices: the tight case}\label{subsub:auxlatticetight} Assume that $a_k=a$. 
In this section, we define a sequence of auxiliary lattices $L_{P,r}''$, generalizing the construction in the toy example \S\ref{subsub:toyexamplem=2}.

Recall we have the borderline subspace $\overline{\mathcal{L}}_{\mathrm{bd}}$ and redundant subspace $ \overline{\mathcal{L}}_{\mathrm{rd}}$ as per Definition~\ref{def:somebadass}. We now define  $\bZ_p$-lattices $ \mathcal{L}_{\mathrm{bd}}$ and $\mathcal{L}_{\mathrm{rd}}$ as follows. We define $\mathcal{L}_{\mathrm{rd}}:=\mathrm{Span}_{\mathbb{F}_p}\{e_j,f_j\}_{n<j\leq m}$. If $\dim\overline{\mathcal{L}}_{\mathrm{bd}}=0$, we define ${\mathcal{L}}_{\mathrm{bd}}=\mathbf{0}$.  If $\dim\overline{\mathcal{L}}_{\mathrm{bd}}=1$ (resp. 2), then up to a $Q$-isometry that keeps Assumption~\ref{ass:redundantlines}\ref{ass:red2}, we can assume $\overline{\mathcal{L}}_{\mathrm{bd}}=\{\overline{e_n}-\gamma_n\overline{f_n}\}$ (resp. $\{\overline{e_{n-1}}-\gamma_{n-1}\overline{f_{n-1}}, \overline{e_n}-\gamma_n\overline{f_n}\}$), and we define ${\mathcal{L}}_{\mathrm{bd}}=\mathrm{Span}_{\bZ_p}\{e_{n}-\gamma_{n}f_{n}\}$ (resp. $\mathrm{Span}_{\bZ_p}\{e_{n-1}-\gamma_{n-1}f_{n-1}, e_{n}-\gamma_{n}f_{n}\}$), where by abuse of notation $\gamma_{n-1},\gamma_{n}\in \bZ_p$ are Teichmuller lifts of $\gamma_{n-1},\gamma_{n}\in \mathbb{F}_p$. Let $\mathcal{L}_{\mathrm{brd}}:=\mathcal{L}_{\mathrm{bd}}\oplus \mathcal{L}_{\mathrm{rd}}$. For $0\leq i\leq n$, we define  \begin{align*}
    &\mathcal{L}_{> i}=\mathrm{Span}_{\mathbb{Z}_p}\{e',f',{e}_{j},{f}_{j'}\}_{i<j\leq n, 1\leq j' \leq n }.\\&\mathcal{L}'_{\leq  i}=\mathrm{Span}_{\mathbb{Z}_p}\{e',f',{f}_{j'}\}_{1\leq j' \leq i}.
\end{align*}
Note that by definition, we have $\mathcal{L}_{> n}=\cL'_{\leq n}$. These lattices are closely related to the description in Proposition~\ref{prop:L+thin}. 

Moreover, for each $j\geq 1$, define $H_{j}:=h_P(1+p^{a_k+1}+...+p^{(a_k+1)(j-1)})$ and $H_{0}:=0$.

Finally, define $L''_{P,r}$ to be the $\bZ$-lattice such that $L''_{P,r}\otimes \bZ_l=L'_{P,r}\otimes \bZ_l$ for $l\neq p$, and 
\begin{equation*}
 L''_{P,r}\otimes\bZ_p= \left\{\begin{aligned}
    & \mathcal{L}_{\mathrm{brd}}+p^j\cL_{>0}+ p^{j+1}\mathcal{L},\,\,\,\, &r\in (H_{j},H_{j}+p^{(a_k+1)j}c_1],\\
     & \mathcal{L}_{\mathrm{brd}}+ p^{j}\mathcal{L}_{>1}+ p^{j+1}\mathcal{L},\,\,\,\, &r\in (H_{j}+p^{(a_k+1)j}c_1,H_{j}+p^{(a_k+1)j}c_2],\\
&\;\;\;\;\;\;\;\;\;\;\;\;\;\;\;\;\;\;\;\vdots\;\;\;\;\;\;\;\;\;\;\;\;\;\;,\;\;\;\;\;\;\;\;\;\;\;&\vdots\;\;\;\;\;\;\;\;\;\;\;\;\;\;\;\;\;\;\;\;\;\;\;\;\;,\\
&\mathcal{L}_{\mathrm{brd}}+ p^{j}\mathcal{L}_{>n-1}+ p^{j+1}\mathcal{L},\,\,\,\, &r\in (H_{j}+p^{(a_k+1)j}c_{n-1},H_{j}+p^{(a_k+1)j}c_n],\\
&\mathcal{L}_{\mathrm{brd}}+ p^{j}\mathcal{L}'_{\leq n}+ p^{j+1}\mathcal{L},\,\,\,\, &r\in (H_{j}+p^{(a_k+1)j}c_{n},H_{j}+p^{(a_k+1)j}{d_n}],\\
&\mathcal{L}_{\mathrm{brd}}+ p^{j}\mathcal{L}'_{\leq n-1}+ p^{j+1}\mathcal{L},\,\,\,\, &r\in (H_{j}+p^{(a_k+1)j}d_{n},H_{j}+p^{(a_k+1)j}{d_{n-1}}],\\
&\;\;\;\;\;\;\;\;\;\;\;\;\;\;\;\;\;\;\;\vdots\;\;\;\;\;\;\;\;\;\;\;\;\;\;,&\vdots\;\;\;\;\;\;\;\;\;\;\;\;\;\;\;\;\;\;\;\;\;\;\;\;\;,\\
&\mathcal{L}_{\mathrm{brd}}+ p^{j}\mathcal{L}'_{\leq 0}+ p^{j+1}\mathcal{L},\,\,\,\, &r\in (H_{j}+p^{(a_k+1)j}d_{1},H_{j+1}].\\
 \end{aligned}\right.    
\end{equation*} 
\begin{remark}\label{rmk:bigremarkonL'L''}
We have $L'_{P,r}=L''_{P,r}$ for $r\leq  c_n$. We also have $L'_{P,r}=L''_{P,r}$ in the special case where all vectors in $\mathcal{L}_{\mathrm{brd}}$ never decay. Note that $\mathcal{L}_{\mathrm{brd}}$ is saturated in $\mathcal{L}$ with rank $2(m-n+1)\leq 2(m-a)$. The equality holds only in the thin case.
\end{remark} 
\begin{remark} The chain of lattices  $L''_{P,r}\otimes\bZ_p$ decay in a pattern similar to Example~\ref{exp:Rondo}. 
\end{remark} 

\subsubsection{Auxiliary lattices: the loose case} Assume that $a_k>a$. We will again construct a sequence of auxiliary lattices $L_{P,r}''$. First, define  $\mathcal{L}_{\mathrm{brd}}$, $\mathcal{L}_{> i},\mathcal{L}'_{\leq i}$ and $H_j$ in the same way as \S\ref{subsub:auxlatticetight}. For $r\leq H_1=h_P$, we define $L''_{P,r}$ the same way as \S\ref{subsub:auxlatticetight}. We won't need auxiliary lattices when $r>h_{a_k}$. 

\subsubsection{Local density computation}\label{subsub:ldc} Below we state a local density formula, using the following notation. Let \(\mathsf{Q}\) be the set of isomorphism classes of self-dual quadratic forms over \(\mathbb{F}_p\). Define \(\chi : \mathsf{Q} \to \{-1,0,1\}\) by setting \(\chi = 0\) for odd-dimensional forms, \(1\) (resp. $-1$) for even-dimensional split (resp. nonsplit) forms. For \(\alpha \in \mathbb{F}_p^\times\), let \(\mathbb{F}_p(\alpha)\) denote \(\mathbb{F}_p\) equipped with the quadratic form \(q(x) = \alpha x^2\). For \(W \in \mathsf{Q}\), write \(W(\alpha) = W \otimes \mathbb{F}_p(\alpha)\).

\begin{lemma}\label{lm:computedelta}
    Let \(\Lambda\) be a self-dual quadratic lattice over \(\mathbb{Z}_p\) and \(M\) a positive integer coprime to \(p\). Write $\overline{\Lambda}$ for $\Lambda \tensor \bF_p$. Then
    \begin{equation}
        \delta(p, \Lambda, M) = 
        \bigl(1 - \chi(\overline{\Lambda})\, p^{-\mathrm{rk\,}(\Lambda)/2}\bigr)
        \bigl(1 + \chi\bigl(\mathbb{F}_p(M) \oplus \overline{\Lambda}(-1)\bigr)\, p^{(1 - \mathrm{rk\,}(\Lambda))/2}\bigr).
    \end{equation}
\end{lemma}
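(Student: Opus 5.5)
Since $\Lambda$ is self-dual over $\bZ_p$ and $p\nmid M$, Hensel's lemma reduces the local density to a point count over $\bF_p$. Write $n=\mathrm{rk}\,\Lambda$. Every solution of $Q(v)\equiv M \bmod p$ has $v\not\equiv 0$, and the gradient of $Q$ at a nonzero vector of a nondegenerate form is nonzero because $p$ is odd. So each such solution lifts to exactly $p^{(a-1)(n-1)}$ solutions modulo $p^a$. This gives
\[
\delta(p,\Lambda,M)=p^{1-n}\,N(\overline{\Lambda},M),
\]
where $N(\overline{\Lambda},M)=\#\{v\in\overline{\Lambda}: Q(v)=M\}$. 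The task is then to compute $N$ and match it with the claimed product.

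For the count I would use the classical formulas for representation numbers of a nondegenerate quadratic form over $\bF_p$. The cleanest route is to look at the form $\overline{\Lambda}(-1)\oplus\bF_p(M)$ of dimension $n+1$ and count its isotropic vectors. Its zero set is $\{(v,s): Q(v)=Ms^2\}$. The $s=0$ part contributes the number $Z$ of solutions of $Q(v)=0$. The $s\neq 0$ part contributes $(p-1)N(\overline{\Lambda},M)$, by scaling $v\mapsto s v$. Hence
\[
(p-1)N(\overline{\Lambda},M)=Z(\overline{\Lambda}(-1)\oplus\bF_p(M))-Z(\overline{\Lambda}).
\]
The standard fact about a nondegenerate $\bF_p$-form $W$ of dimension $k$ is that $Z(W)=p^{k-1}$ when $k$ is odd, and $Z(W)=p^{k-1}+\chi(W)(p-1)p^{k/2-1}$ when $k$ is even. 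I would prove this fact briefly by induction on hyperbolic planes, or just cite it.

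Next I substitute. Exactly one of $n$ and $n+1$ is even, so I split into the two parity cases.
\begin{itemize}
\item If $n$ is even, $\chi$ of the $(n+1)$-dimensional form vanishes. Then $(p-1)N=p^n-p^{n-1}-\chi(\overline{\Lambda})(p-1)p^{n/2-1}$, so $N=p^{n-1}-\chi(\overline{\Lambda})p^{n/2-1}$. Multiplying by $p^{1-n}$ gives $1-\chi(\overline{\Lambda})p^{-n/2}$. This matches the claimed formula, since the second factor there is $1$.
\item If $n$ is odd, $\chi(\overline{\Lambda})=0$. Then $(p-1)N=p^n+\chi'(p-1)p^{(n+1)/2-1}-p^{n-1}$, where $\chi'=\chi(\bF_p(M)\oplus\overline{\Lambda}(-1))$. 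This gives $\delta=1+\chi' p^{(1-n)/2}$, as claimed.
\end{itemize}

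The only delicate point is bookkeeping of sign and discriminant conventions. I must check that $\chi$ of $\overline{\Lambda}(-1)\oplus\bF_p(M)$ is exactly the quantity appearing in the isotropic count. Here "split" means that the discriminant $(-1)^{k/2}\det$ is a square. I must also check that rescaling by $-1$ does not change $Z$, which holds because $Z(W(-1))=Z(W)$. I would verify this on the rank-one and rank-two examples from the $m=2$ toy computation in \S\ref{subsub:toyexamplem=2}, where the values $1-p^{-2}$ and $1+p^{-1}$ come out as expected.
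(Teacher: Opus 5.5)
Your argument is correct and follows the paper's route. You use Hensel to reduce to $\delta(p,\Lambda,M)=p^{1-\mathrm{rk}\,\Lambda}\,\#\{x\in\overline{\Lambda}:\overline{Q}(x)=M\}$ and then evaluate this finite-field count; the paper gets the count by citing \cite[Thm.~1.3.2]{Ki93}, whereas you derive it from the isotropic-vector count of $\overline{\Lambda}(-1)\oplus\bF_p(M)$, which is self-contained and also explains why that auxiliary form appears in the statement. One small caveat on your sanity check: the lattices in the $m=2$ toy example are not self-dual, so the lemma applies only after splitting off their self-dual parts, which have rank $4$ (split, giving $1-p^{-2}$) and rank $2$ (inert, giving $1+p^{-1}$), not rank one and rank two.
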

\begin{proof}
    Use the fact that $\delta(p,\Lambda,M)=p^{1-\mathrm{rk} \overline{\Lambda}} \#\{x\in \overline{\Lambda}|\overline{Q}(x)= M\}$ and apply \cite[Thm.~1.3.2]{Ki93}.
\end{proof}

Now we are ready to compute the local density of $L''_{P,r}\otimes\bZ_p$ (in both tight and loose cases). 
\begin{definition}\label{def:localdensitydelta}
Let $M$ be coprime to $p$. For $0\leq i\leq n$, define $\delta_i=\delta(p, \cL_{\mathrm{brd}}+\cL_{>i}, M)$ and $\delta_i'=\delta(p, \cL_{\mathrm{brd}}+\cL_{\leq i}', M)$.  
\end{definition}
\begin{remark}\label{rmk:localdensityofL''_r!} It is clear from the definition that $\delta(p,L''_{P,r}\otimes\bZ_p,M)$ equals $ \delta_0$ when $r\in (0,c_1]$, equals $\delta_i$ when $r\in(c_i,c_{i+1}]$ and $i\in [1,n-1]$, equals $\delta_n'$ when $r\in(c_n,d_{n}]$, equals $\delta_{j}'$ when $r\in (d_{j+1},d_{j}]$ and $j\in [1,n-1]$, equals $\delta'_0$ when $r\in (d_1,h_{a_k}]$. In the tight case, we furthermore have $\delta(p,L''_{P,r}\otimes\bZ_p,M)=\delta'_0$ for all  $r\in (d_1,\infty)$. 
\end{remark}
\begin{corollary}\label{cor:localdensity}~\begin{enumerate}
    \item In borderline case 2,  $\delta_i= 1-p^{i-m}$ for $i\in [0,n-1]$ and $\delta_j'=1+p^{n-m-1}$ for $j\in [0,n]$.
    \item In borderline case 1, $\delta_i= 1-p^{i-m}$  for $i\in [0,n-1]$ and $\delta_j'\leq 1+ p^{n-m}$ for $j\in [0,n]$.
    \item In borderline case 0, $\delta_n=1-p^{i-m}$ for $i\in [0, n-1]$ and $\delta_{j}'= 1-p^{n-m}$ for $j\in [0,n]$.
\end{enumerate}
\end{corollary}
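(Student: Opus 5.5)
The plan is to apply \cref{lm:computedelta} to each lattice $\Lambda=\cL_{\mathrm{brd}}+\cL_{>i}$ and $\Lambda=\cL_{\mathrm{brd}}+\cL'_{\le i}$. To do this, I first identify $\overline{\Lambda}$ as a quadratic $\bF_p$-space, after suitably rescaling the lattice to be self-dual. The form on $\cL$ restricted to $\cL_0$ is divisible by $p$, so modulo $p$ the classes $\overline{e'},\overline{f'}$ lie in the radical. The relevant object is therefore the nondegenerate part, and the computation reduces to the $\cL_1$-part together with the $p$-divisibility of $\cL_0$. I expect the density of each lattice to equal, up to the standard correction for the radical, the density of its nondegenerate quotient. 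Concretely, I would use the formula $\delta(p,\Lambda,M)=p^{1-\mathrm{rk}}\#\{x\in\overline\Lambda : \overline Q(x)=M\}$ quoted in the proof of \cref{lm:computedelta}. Here a radical of dimension $\rho$ contributes a factor $p^{\rho}$ to the count, which cancels against the rank normalization. So one may count solutions on the nondegenerate part $W$ of $\overline{\Lambda}$ and apply the Kitaoka count to $W$.

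Next I compute $W$ in each case. For $\cL_{\mathrm{brd}}+\cL_{>i}$, the nondegenerate part contains the hyperbolic planes $\langle \bar e_j,\bar f_j\rangle$ for $i<j\le n-\dim\overline{\cL}_{\mathrm{bd}}$, together with $\overline{\cL}_{\mathrm{rd}}$ (which is hyperbolic of dimension $2(m-n)$) and $\overline{\cL}_{\mathrm{bd}}$. The vectors $\bar f_j$ with $j\le i$ pair only with $\bar e_j\notin\Lambda$, so they lie in the radical. Adding these up, $W$ is split of dimension $2(m-i)$ in borderline cases 0 and 2. In borderline case 2 this holds because the inert plane $\overline{\cL}_{\mathrm{bd}}$ together with the inert complement of $\bar e_n-\gamma_n \bar f_n$ inside $\langle \bar e_{n-1},\bar f_{n-1},\bar e_n,\bar f_n\rangle$ gives a split space. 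In borderline case 1, the dimension drops by one, but the hyperbolic planes still combine to give the stated $\chi$.

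The Kitaoka count for a nondegenerate split even-dimensional space of dimension $2k$ and $M\ne0$ gives the factor $1-p^{-k}$. This yields $\delta_i=1-p^{i-m}$. For $\cL_{\mathrm{brd}}+\cL'_{\le j}$, only $\overline{\cL}_{\mathrm{brd}}$ is nondegenerate. In borderline case 2 it is (split of dimension $2(m-n)$) $\oplus$ (inert of dimension 2), which is inert of dimension $2(m-n+1)$ and gives $1+p^{n-m-1}$. In borderline case 0 it is split of dimension $2(m-n)$ and gives $1-p^{n-m}$. In borderline case 1 it has odd dimension $2(m-n)+1$, where the count depends on $\chi(\bF_p(M)\oplus W(-1))$, which is $\pm1$. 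This gives a value of $1\pm p^{n-m}$, hence the bound $\le 1+p^{n-m}$.

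The main obstacle is the correct treatment of the degenerate $\cL_0$ part and of the odd-dimensional case in \cref{lm:computedelta}, which is stated only for self-dual lattices. I would handle this directly via the point-count formula, verifying that the radical factor cancels as described. I would also double-check the inert-plus-inert computation in borderline case 2.
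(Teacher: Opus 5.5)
Your strategy is the paper's own (its proof is just ``direct computation using \cref{lm:computedelta}''). Your reduction to the nondegenerate part $W$ of $\overline\Lambda$ is correct, and in fact necessary, since none of these lattices is self-dual. Add one line to justify it: for $p\nmid 2M$, any $x\in\overline\Lambda$ with $\overline Q(x)=M$ has $\langle x,x\rangle\neq 0$, so $x$ is not in the radical. Solutions therefore lift by Hensel, and $\delta(p,\Lambda,M)=p^{1-\mathrm{rk}\,\Lambda}\#\{x\in\overline\Lambda:\overline Q(x)=M\}$ holds for arbitrary $\Lambda$.

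The gap is in the identification of $W$. By definition $\cL_{>i}$ contains $e_j$ for \emph{all} $i<j\le n$ and $f_j$ for all $j\le n$. So in $\cL_{\mathrm{brd}}+\cL_{>i}$ the borderline vectors are simply absorbed into $\langle\bar e_n,\bar f_n\rangle$ (and $\langle\bar e_{n-1},\bar f_{n-1}\rangle$). Your list never tracks $\bar f_{n-1},\bar f_n$, which are neither radical nor in your nondegenerate list; the inert-complement patch only happens to repair this in case~2 for $i\le n-2$.

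\begin{itemize}
\item In borderline case~1 there is no ``dimension drop'', and your sentence there cannot be right. An odd-dimensional $W$ has $\chi=0$, and \cref{lm:computedelta} would then give $1\pm p^{i-m+1}$, not $1-p^{i-m}$.
\item The same omission breaks your claim that only $\overline{\cL}_{\mathrm{brd}}$ is nondegenerate in $\cL_{\mathrm{brd}}+\cL'_{\le j}$. Once $f_n$ (or $f_{n-1}$) lies in $\cL'_{\le j}$, it pairs nontrivially with $\bar e_n-\gamma_n\bar f_n$ (resp.\ $\bar e_{n-1}-\gamma_{n-1}\bar f_{n-1}$).
\end{itemize}

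Doing the bookkeeping correctly shows the statement itself is off at the edge indices of borderline case~2.
\begin{itemize}
\item The vector $\bar e_{n-1}=(\bar e_{n-1}-\gamma_{n-1}\bar f_{n-1})+\gamma_{n-1}\bar f_{n-1}$ lies in the reduction of $\cL_{\mathrm{brd}}+\cL_{>n-1}$. So $W$ is split of dimension $2(m-n+2)$ and $\delta_{n-1}=1-p^{n-2-m}$.
\item For $\cL_{\mathrm{brd}}+\cL'_{\le n-1}$ one gets $W=\overline{\cL}_{\mathrm{rd}}\oplus\langle\bar e_{n-1},\bar f_{n-1}\rangle\oplus\langle\bar e_n-\gamma_n\bar f_n\rangle$, which is odd-dimensional. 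Hence $\delta'_{n-1}=1\pm p^{n-1-m}$, with the sign depending on $M$.
\item Finally $\delta'_n=1-p^{n-2-m}$.
\end{itemize}
In case~1, $\delta'_n=1-p^{n-1-m}$; this still satisfies the stated bound, but not by your odd-dimension argument. In case~0, ``$\delta_n$'' is a typo for $\delta_i$.

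These exceptions are harmless for the paper's purposes. In borderline case~2 one has $c_{n-1}=c_n=d_n=d_{n-1}$, so $\delta_{n-1},\delta'_{n-1},\delta'_n$ occur only on the empty ranges $(c_{n-1},c_n]$, $(d_n,d_{n-1}]$, $(c_n,d_n]$ and carry weight zero in $D_\delta$. In case~1, $\delta'_n$ likewise has weight $d_n-c_n=0$.

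So what you should prove is the formulas for $i,j\le n-\dim\overline{\cL}_{\mathrm{bd}}$, together with the remark that the remaining values are never used. In that range $W$ is split of dimension $2(m-i)$ for $\delta_i$ and equals $\overline{\cL}_{\mathrm{brd}}$ for $\delta'_j$, and your Kitaoka counts go through as written.
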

\begin{proof}
    Direct computation using Lemma~\ref{lm:computedelta}.
\end{proof}

\subsubsection{The mysterious quantity $D_0$} \label{subsub: D_0}
Define the following quantity  
$$D_0=d_n+ p^{n-1}c_1+\sum_{i=1}^{n-1} p^{n-i-1}(c_{i+1}-c_{i}).$$
Later on, this quantity will appear mysteriously in various computations, but we don't have a very good geometric explanation of it.  
\begin{lemma}\label{lm:D_0}
  In thin case, we have $D_0\leq h_{a_k}$. In thick case, we have $$D_0+(p-1)(c_n-c_1)+(p-1)d_n\leq p^{n-(a_k+1)}h_{a_k}.$$
\end{lemma}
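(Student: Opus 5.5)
The plan is to reduce both inequalities to the relations collected in Notation~\ref{not: c_i's and d_i's}, by telescoping the differences $d_i-d_{i+1}$. Set
\[
S:=\sum_{i=1}^{n-1} p^{n-1-i}(c_{i+1}-c_i),
\]
so that $D_0=d_n+p^{n-1}c_1+S$. The key observation is that summing the relations (\ref{eq:thinv2}), resp. (\ref{eq:fatv2}), over $1\le i\le n-1$ telescopes their left-hand sides to $d_1-d_n$. The right-hand sides produce exactly $S$: in the thin case with equality, and in the thick case as a lower bound.

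In the thin case, Proposition~\ref{prop:thincase!!!}\ref{thineqs} gives $n=a_k+1$. Summing (\ref{eq:thinv2}) gives $d_1-d_n=S$, hence
\[
D_0=d_n+p^{a_k}c_1+(d_1-d_n)=p^{a_k}c_1+d_1.
\]
The bound (\ref{eq:hv2}) then yields $D_0\le h_{a_k}$ directly.

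In the thick case we have $n\ge a_k+2$, so $p^{n-(a_k+1)}\ge p$. Multiplying (\ref{eq:hv2}) by $p^{n-(a_k+1)}$ gives
\[
p^{n-(a_k+1)}h_{a_k}\ \ge\ p^{n-1}c_1+p^{n-(a_k+1)}d_1\ \ge\ p^{n-1}c_1+p\,d_1 .
\]
The second inequality uses only $d_1\ge 0$. After cancelling $p^{n-1}c_1$, it therefore suffices to show
\[
d_n+S+(p-1)(c_n-c_1)+(p-1)d_n\le p\,d_1,
\]
which is equivalent to $S+(p-1)(c_n-c_1)\le p(d_1-d_n)$.

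To prove this last inequality, first sum the left inequalities in (\ref{eq:fatv2}), which gives $d_1-d_n\ge S$, and hence $p(d_1-d_n)\ge S+(p-1)S$. Next, each weight satisfies $p^{n-1-i}\ge 1$ and each difference satisfies $c_{i+1}-c_i\ge 0$ by (\ref{eq:reindexthin3}), so $S\ge\sum_{i=1}^{n-1}(c_{i+1}-c_i)=c_n-c_1$. Combining the two gives $p(d_1-d_n)\ge S+(p-1)(c_n-c_1)$, as required.

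I expect no serious obstacle here. The only points needing care are that the thick case genuinely forces $n\ge a_k+2$, which is the definition of thick, and that only the lower bounds in (\ref{eq:fatv2}) are used, not the upper bounds.
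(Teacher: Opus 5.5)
Your proof is correct and is essentially the paper's argument: telescope (\ref{eq:thinv2}), or in the thick case the lower bounds in (\ref{eq:fatv2}), to get $d_1-d_n\ge \sum_{i=1}^{n-1}p^{n-1-i}(c_{i+1}-c_i)$, then combine this with (\ref{eq:hv2}); in the thick case also use $p^{n-(a_k+1)}\ge p$ and $\sum_{i}p^{n-1-i}(c_{i+1}-c_i)\ge c_n-c_1$. The only difference is cosmetic: you scale (\ref{eq:hv2}) by $p^{n-(a_k+1)}$ before substituting the telescoped bound for $d_1$, whereas the paper substitutes first and then scales.
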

\begin{proof}
Recall that $h_{a_k}\geq p^{a_k}c_1+d_1$ by (\ref{eq:hv2}). We can use (\ref{eq:thinv2}) or the first inequality of $(\ref{eq:fatv2})$ to get 
$$d_1= d_n+\sum_{i=1}^{n-1}(d_i-d_{i+1})\geq d_n+\sum_{i=1}^{n-1}p^{n-1-i}(c_{i+1}-c_i).$$
Using (\ref{eq:hv2}), we get 
$$h_{a_k}\geq  d_n+p^{a_k}c_1+\sum_{i=1}^{n-1} p^{n-i-1}(c_{i+1}-c_{i})$$
In the thin case, we have $a_k=n-1$. So $h\geq D_0$. 
In the thick case, we have 
\begin{alignat*}{3}
    p^{n-(a_k+1)}h_{a_k}&\geq  p^{n-(a_k+1)}d_n+p^{n-1}c_1+p^{n-(a_k+1)}\sum_{i=1}^{n-1} p^{n-i-1}(c_{i+1}-c_{i})\\
    &\geq (p-1)d_n+d_n+p^{n-1}c_1+\sum_{i=1}^{n-1} p^{n-i-1}(c_{i+1}-c_{i}) + (p-1)\sum_{i=1}^{n-1} (c_{i+1}-c_{i})\\
   & \geq (p-1)d_n+(p-1)(c_n-c_1)+D_0.
\end{alignat*}
\end{proof}

\subsection{Estimating $S'(M)$: the tight case}\label{sub:hardsitu} In this section we assume $a_k=a$. We will compare $S'(M)$ to $S''(M)$ and explicitly compute $S''(M)$, and use this to prove Theorem~\ref{lm:localglobalEisensteinSSP} in the tight case. Following Remark~\ref{rmk:explicit q_L'}, we can write $S'(M)$ as the following series (we will use $h_a$ instead of $h_P$, the two quantities are equal; cf. Notation~\ref{not: define h_i and a_i}):
$$S'(M)=\frac{p^{a+1}-1}{h_a}\frac{1}{p(1+p^{-(m+1)})}\sum_{r=1}^\infty\frac{\delta(p,L'_{P,r},M)}{|L'_{P,1}\otimes\bZ_p:L'_{P,r}\otimes\bZ_p|}.$$
We then define 
$$S''(M):=\frac{p^{a+1}-1}{h_a}\frac{1}{p(1+p^{-(m+1)})}\sum_{r=1}^\infty\frac{\delta(p,L''_{P,r},M)}{[L''_{P,1}\otimes\bZ_p:L''_{P,r}\otimes\bZ_p]},$$
where the auxiliary lattices $L''_{P,r}$ are as in \S\ref{subsub:auxlatticetight}. In the following we prove a partial generalization of Lemma~\ref{lm:isometric embedding in the m=2 case} that compares $S'(M)$ to $S''(M)$:
\begin{lemma}\label{lm:isometric embedding in the tight case partial 1}
Notation as above.  For all $M$ coprime to $p$ we have $S'(M)\leq S''(M)$. 
\end{lemma}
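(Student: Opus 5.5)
The plan is to compare the two series termwise: I will show that for every $r\ge 1$
\begin{equation*}
\frac{\delta(p,L'_{P,r},M)}{[L'_{P,1}\otimes\bZ_p:L'_{P,r}\otimes\bZ_p]}\le \frac{\delta(p,L''_{P,r},M)}{[L''_{P,1}\otimes\bZ_p:L''_{P,r}\otimes\bZ_p]},
\end{equation*}
exactly as in the proof of Lemma~\ref{lm:isometric embedding in the m=2 case}. By Remark~\ref{rmk:bigremarkonL'L''} the two lattices agree for $r\le c_n$, so only $r>c_n$ needs work. For those $r$ I would use Proposition~\ref{prop:L+thin} to describe $\overline{L'_{P,r}\otimes\bZ_p}$. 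For $c_n<r\le h_a$ it is $\mathrm{Span}\{\overline{e'},\overline{f'},\overline{f_i}:d_i\ge r\}\oplus\overline\Delta_r$. For $r>h_a$ (tight case) it is a subspace of $\overline{\cL}_{\mathrm{brd}}\oplus\mathrm{Span}\{\overline{e'},\overline{f'},\overline{f_i}\}$. In both cases the projection to $\overline{\cL}_{\mathrm{brd}}$ is an isometric embedding. This matches the mod $p$ reduction of $L''_{P,r}$ with $\overline{\Delta}_r$ replaced by the full $\overline{\cL}_{\mathrm{brd}}$ and the "$\cL_{\le i}'$" part chosen according to the same breakpoints $d_i$.

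\textbf{Index comparison.} I would prove $[L'_{P,1}:L'_{P,r}]\ge[L''_{P,1}:L''_{P,r}]$ at $p$. In $L''_{P,r}$, every vector outside $\cL_{\mathrm{brd}}$ decays exactly on the schedule given by the intervals $(H_j+p^{(a+1)j}c_i,\dots]$. So it suffices to show that in $L'_{P,r}$ the complementary "relevant" lattice $\cL_{\mathrm{rv}}$ (spanned by $e',f',f_i$ and the $e_j$ not absorbed into $\cL_{\mathrm{bd}}$) decays at least that fast, at every order. At first order this is the content of Proposition~\ref{prop:L+thin}. For higher orders I would use the decay lemma in the tight case. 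Remark~\ref{rem: ak is a} says $\bI_r^{\min}(v)$ is a singleton, so each primitive $v$ with first-order rapid decay has $d_r(v)=\nu_r^{\min}(v)$, which can be computed explicitly. Iterating $\nu(Q_a\cW)=h_a+p^{a+1}\nu(\cW)$ gives $d_{j+1}(v)=H_j+p^{(a+1)j}d_1(v)$, exactly the breakpoints defining $L''_{P,r}$. The vectors $e_i$ ($i\le n$ not in the borderline), $f_i$, $e',f'$ all decay rapidly to first order, with $d_1=c_i,d_i,h_a$ respectively. Proposition~\ref{prop: prelim decay results} and Corollary~\ref{cor: decay linear combination L1 L1} then extend this to the relevant primitive combinations with distinct first-order rates. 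This yields $p^{j}$-divisibility of the relevant part of $L'_{P,r}$ at least as large as in $L''_{P,r}$, and hence the index inequality.

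\textbf{Conclusion and main obstacle.} If the index inequality is strict, the termwise inequality follows, since the local densities for $M$ prime to $p$ are bounded uniformly and one extra factor of $p$ in the index dominates any change in $\delta$. A quick check against Corollary~\ref{cor:localdensity} should confirm this, which I expect to be the routine part. If the indices are equal, the reductions mod $p$ have equal dimension. Then the isometric-embedding statement forces the projection of $\overline{L'_{P,r}}$ onto $\overline{\cL}_{\mathrm{brd}}$ (plus the common relevant part) to agree with that of $\overline{L''_{P,r}}$ up to isometry. By Lemma~\ref{lm:computedelta} the density depends only on the isometry class of the reduction, so $\delta(p,L'_{P,r},M)=\delta(p,L''_{P,r},M)$. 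Summing over $r$ then gives $S'(M)\le S''(M)$.

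The main obstacle is the index comparison in the presence of non-unique minimal words at intermediate steps, and of mixed vectors $e_{\mathrm{brd}}+e_{\mathrm{rv}}$. I must ensure that adding a borderline component never slows the decay of the relevant part. I would handle this as in the $m=2$ proof: modify a representative by multiples of the $\cL_{\mathrm{brd}}$ generator so that its error term lies in $p\cL_{\mathrm{rv}}$, and then invoke rapid decay of $\cL_{\mathrm{rv}}$ to all orders.
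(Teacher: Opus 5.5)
Your overall strategy is the paper's. You compare the two series termwise, dispose of $r\le c_n$ by Remark~\ref{rmk:bigremarkonL'L''}, and use rapid decay of $\cL_{\mathrm{rv}}$ to all orders (first order from Proposition~\ref{prop:L+thin}, all orders from Remark~\ref{rem: ak is a}) to get the index inequality. You then split into strict inequality versus equality of indices.

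The gap is in the strict case. There you argue that one extra factor of $p$ in the index outweighs any change in density, i.e.\ that $\delta(p,L'_{P,r},M)\le p\,\delta(p,L''_{P,r},M)$. This fails when $\delta(p,L''_{P,r},M)=0$, and that case really occurs. For $r>c_n$ the non-radical part of $\overline{L''_{P,r}\otimes\bZ_p}$ is exactly $\overline{\cL}_{\mathrm{brd}}$, so the density vanishes whenever $\overline{\cL}_{\mathrm{brd}}$ does not represent $M$. Two instances:
\begin{itemize}
\item borderline case~$0$ with $n=m$, where $\cL_{\mathrm{brd}}=\mathbf{0}$ and $\delta'_j=1-p^{n-m}=0$;
\item borderline case~$1$ with $n=m$, where $\overline{\cL}_{\mathrm{brd}}$ is a line and $\delta'_j\in\{0,2\}$ depending on $M$.
\end{itemize}
In the second instance Corollary~\ref{cor:localdensity}(2) is only an upper bound, so your ``quick check'' cannot supply the lower bound you need. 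In this situation you must show separately that $\delta(p,L'_{P,r},M)=0$. This follows from the isometric-embedding part of Proposition~\ref{prop:L+thin}(b),(c): a vector $x\in\overline{L'_{P,r}\otimes\bZ_p}$ with $Q(x)\equiv M$ would project to a vector of $\overline{\cL}_{\mathrm{brd}}$ with the same value. The paper treats this case explicitly. When $\delta(p,L''_{P,r},M)\ne 0$ your domination argument is fine, but what makes it work is the lower bound $\delta(p,L''_{P,r},M)\ge 1-p^{-1}$, together with $\delta(p,L'_{P,r},M)\le 2$ and $p\ge 3$.

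A smaller point: in the equality case, ``equal index implies equal dimension of the reductions'' is not automatic. Do the bookkeeping with the projection $\pi$ from $\cL=\cL_{\mathrm{brd}}\oplus\cL_{\mathrm{rv}}$ onto $\cL_{\mathrm{brd}}$. One has $[\cL:L'_{P,r}\otimes\bZ_p]=[\cL_{\mathrm{brd}}:\pi(L'_{P,r}\otimes\bZ_p)]\cdot[\cL_{\mathrm{rv}}:(L'_{P,r}\otimes\bZ_p)\cap\cL_{\mathrm{rv}}]$, while $L''_{P,r}\otimes\bZ_p=\cL_{\mathrm{brd}}\oplus\bigl((L''_{P,r}\otimes\bZ_p)\cap\cL_{\mathrm{rv}}\bigr)$. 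Rapid decay gives $(L'_{P,r}\otimes\bZ_p)\cap\cL_{\mathrm{rv}}\subseteq(L''_{P,r}\otimes\bZ_p)\cap\cL_{\mathrm{rv}}$, hence the index inequality. Equality of indices then forces $\pi$ to be surjective, which is exactly what turns the isometric embedding into an isometry.

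This also settles your worry about mixed vectors $e_{\mathrm{brd}}+e_{\mathrm{rv}}$ without the limit argument of Lemma~\ref{lm:isometric embedding in the m=2 case}. That argument is only needed for the strict inequality $S'(M)<S''(M)$. Likewise, in the tight case the minimal words are unique, so non-uniqueness is not an obstacle here.
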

    \begin{proof}
The strategy is the same as Lemma~\ref{lm:isometric embedding in the m=2 case}, and we use Proposition~\ref{prop:L+thin} as the main input. In the following, we show that for all $r\geq 1$,
\begin{equation}\label{eq:localdensitycomparison2}
\frac{\delta(p,L'_{P,r},M)}{[L'_{P,1}\otimes\bZ_p:L'_{P,r}\otimes\bZ_p]}\leq \frac{\delta(p,L''_{P,r},M)}{[L''_{P,1}\otimes\bZ_p:L''_{P,r}\otimes\bZ_p]}.   
\end{equation}
By Remark~\ref{rmk:bigremarkonL'L''}, it suffices to treat the case where $r>c_n$. By the Decay Lemma \ref{thm: superspecial rapid decay} (also see Remark \ref{rem: ak is a}), the \textit{relevant lattice} $$\cL_{\mathrm{rv}}:=\mathrm{Span}_{\mathbb{F}_p}\{{e'},{f'},{e_j},{f_i}\}_{1\leq i \leq n,1\leq j\leq n-\dim \overline{\cL}_{\mathrm{bd}}}$$ decays rapidly to arbitrary order. So by construction we always have \begin{equation}\label{eq:Lgeq L''2}
[L'_{P,1}\otimes\bZ_p:L'_{P,r}\otimes\bZ_p]\geq [L''_{P,1}\otimes\bZ_p:L''_{P,r}\otimes\bZ_p].\end{equation} Suppose that (\ref{eq:Lgeq L''2}) is a strict inequality. If $\delta(p,L''_{P,r},M)\neq 0$, then (\ref{eq:localdensitycomparison2}) holds and is a strict inequality; if $\delta(p,L''_{P,r},M)= 0$ (this happens only when $\mathcal{L}_{\mathrm{brd}}=\mathbf{0}$), then the isometrical embedding results in  Proposition~\ref{prop:L+thin} imply that 
$\delta(p,L'_{P,r},M)=0$, hence   (\ref{eq:localdensitycomparison2}) holds and is an equality. Suppose that (\ref{eq:Lgeq L''2}) is an equality, then the isometrical embedding in Proposition~\ref{prop:L+thin} is an isomorphism. It follows that $\delta(p,L'_{P,r},M)=\delta(p,L''_{P,r},M)$. So (\ref{eq:localdensitycomparison2}) is an equality. As a result,  (\ref{eq:localdensitycomparison2}) always holds, and we have $S'(M)\leq S''(M)$.  
\end{proof}
In the rest of this section we conduct a case by case study. Even though $a_k=a$, we will still keep the notation $a_k$. This is because some computations hold even when $a_k>a$, and will be reused in the loose situation. To ease the computation of $S''(M)$, we decompose it to two parts: 
\begin{equation}\label{eq:decompositionofS''(M)}
    S''(M)=S''_1(M)+ S''_{>1}(M),
\end{equation}
where $S''_1(M)$ is the sub-sum from $r=1$ to $h_{a_k}$, while $S''_{>1}(M)$ is the sub-sum from $r=h_{a_k}+1$ to $\infty$.
\subsubsection{Borderline case 2}\label{subsub;Tinfat2}
The case where $\dim\overline{\cL}_{1,\mathrm{bd}}=2$ is harder than others, since there are potentially more special endomorphisms that resist decay. We will treat this case first. Once this is done, the remaining cases are nothing but easier variants. 

First we have $c_{n-1}=c_n=d_{n}=d_{n-1}$. We then define 
\begin{align*}
    &D_\delta:=c_1\delta_0+\sum_{i=1}^{n-1}(c_{i+1}-c_i){p}^{-i}
\delta_{i}+\sum_{i=1}^{n-1}  (d_i-d_{i+1})p^{2+i-2n}\delta_{i}'+ (h_{a_k}-d_1)p^{2-2n}\delta_{0}',\\
&D:=c_1+\sum_{i=1}^{n-1}(c_{i+1}-c_i){p}^{-i}+\sum_{i=1}^{n-1}  (d_{i}-d_{i+1})p^{2+i-2n}+ (h_{a_k}-d_1)p^{2-2n}.
\end{align*}
Here $\delta_i,\delta_i'$ are from Definition~\ref{def:localdensitydelta}. Then it is easy to see from Remark~\ref{rmk:localdensityofL''_r!} that 
\begin{equation}\label{eq:S1''S>1''border2}
    \begin{aligned}
&S_1''(M)= \frac{p^{a+1}-1}{h_{a}}\frac{ D_\delta}{p(1+p^{-{(m+1)}})}.\\
&S_{>1}''(M)=\frac{p^{a+1}-1}{h_{a}}\frac{\delta_0'D}{p(1+p^{-{(m+1)}})} \sum_{j=1}^\infty\frac{p^{(a_k+1)j}}{p^{2nj}}.
\end{aligned}
\end{equation}

\begin{lemma}\label{lm;Destimate}
We have $D\leq p^{-a_k}h_{a_k}$ and $D_\delta\leq p^{-a_k}h_{a_k}$.   
\end{lemma}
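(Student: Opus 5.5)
We first reduce the bound for $D_\delta$ to the bound for $D$. By \cref{cor:localdensity}(1), in borderline case 2 we have $\delta_i = 1-p^{i-m}\le 1$ for $i\in[0,n-1]$. We also have $\delta'_j = 1+p^{n-m-1}$ for all $j$. The coefficients of $\delta_i$ and $\delta'_j$ in $D_\delta$ are nonnegative, since $c_1\le\cdots\le c_n\le d_n\le\cdots\le d_1$ and $h_{a_k}\ge d_1$ by (\ref{eq:hv2}). So $D_\delta$ differs from $D$ only by the factor $\delta_i\le1$ on the first block and $\delta'_j>1$ on the second block.

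The plan is therefore to prove a slightly sharper bound on $D$, with enough slack to absorb the factor $1+p^{n-m-1}$ on the $d$-terms and on the $h$-term. Here the borderline-2 structure is used: $c_{n-1}=c_n=d_n=d_{n-1}$, so the $i=n-1$ term in the second sum vanishes. Hence every surviving $\delta'$-coefficient carries a factor $p^{2+i-2n}$ with $i\le n-2$, i.e. at most $p^{-n}$. Since $n\le m$, this makes $p^{n-m-1}\cdot(\text{second block})$ tiny compared to the unused room in the first block.

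For $D$ itself, we bound each block by the $p^{-a_k}$-weighted pieces of $h_{a_k}\ge p^{a_k}c_1+d_1$, working in the thin case first, where $a_k=n-1$.

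1. Write $d_1 = d_n+\sum_{i=1}^{n-1}(d_i-d_{i+1})$.

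2. Bound $p^{-a_k}h_{a_k} = p^{1-n}h_{a_k}$ below by splitting it as
\[
p^{1-n}(h_{a_k}-d_1)+p^{1-n}d_n+p^{1-n}\sum_{i}(d_i-d_{i+1})+c_1 .
\]

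3. Compare term by term with $D$, whose pieces are $c_1$, $(h_{a_k}-d_1)p^{2-2n}$, $(d_i-d_{i+1})p^{2+i-2n}$ and $(c_{i+1}-c_i)p^{-i}$.
   \begin{itemize}
   \item Each $d$-term of $D$ has exponent $2+i-2n\le 1-n$ for $i\le n-1$, so it is dominated by the corresponding term $p^{1-n}(d_i-d_{i+1})$.
   \item The $h$-term $(h_{a_k}-d_1)p^{2-2n}$ is likewise dominated by $p^{1-n}(h_{a_k}-d_1)$.
   \end{itemize}

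4. Absorb the $c$-increments $\sum(c_{i+1}-c_i)p^{-i}$ using (\ref{eq:thinv2}), which gives $c_{i+1}-c_i = p^{i+1-n}(d_i-d_{i+1})$. Then $(c_{i+1}-c_i)p^{-i} = p^{1-n}(d_i-d_{i+1})$, which exactly uses the $p^{1-n}d$-budget. This leaves the leftover $(p^{1-n}-p^{2+i-2n})(d_i-d_{i+1})$, which would be double-counted.

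5. Handle this double counting by instead using the budget $p^{1-n}d_n$ together with the saving from step 3. The fallback is to use $D_0\le h_{a_k}$ from \cref{lm:D_0}, multiplied by $p^{-a_k}$. Its expansion
\[
p^{1-n}d_n + c_1 + \sum_i p^{-i}(c_{i+1}-c_i)
\]
matches exactly the $c$-part of $D$ plus $p^{1-n}d_n$. The remaining $d$- and $h$-terms of $D$ carry weights $p^{2+i-2n}$ and $p^{2-2n}$. These must be paid from $p^{-a_k}(h_{a_k}-D_0)$, which is at least $p^{1-n}(h_{a_k}-d_1) + p^{1-n}\sum_i(1-p^{i+1-n})(d_i-d_{i+1})$ after re-using (\ref{eq:thinv2}).

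6. In the thick case, run the same argument using the stronger \cref{lm:D_0} inequality, which carries the extra slack $(p-1)(c_n-c_1)+(p-1)d_n$ and the factor $p^{n-(a_k+1)}$.

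The main obstacle is the bookkeeping in step 5: verifying that $(1-p^{i+1-n})\ge p^{1+i-n}$ holds for $i\le n-2$ and $p\ge3$. This requires the vanishing of the $i=n-1$ term, so it is exactly where borderline case 2 enters. If the slack there is too thin to cover the factor $\delta'_j$, we would instead exploit $d_n=c_n\ge c_1$ and the $(p-1)d_n$ slack.
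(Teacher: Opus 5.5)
Your proposal has a genuine gap. The reduction of $D_\delta$ to a sharpened bound on $D$ cannot work, because there is no slack. In the thin case ($a_k=n-1$) set $\epsilon:=h_{a_k}-d_1-p^{a_k}c_1\ge 0$. By (\ref{eq:thinv2}) one has $(d_i-d_{i+1})p^{2+i-2n}=p^{1-n}(c_{i+1}-c_i)$ and $D_0=d_1+p^{n-1}c_1$, so, using $c_n=d_n$,
\[
D=p^{1-n}D_0+\epsilon p^{2-2n},\qquad p^{-a_k}h_{a_k}=p^{1-n}D_0+\epsilon p^{1-n}.
\]
Hence $D=p^{-a_k}h_{a_k}$ whenever $\epsilon=0$, which (\ref{eq:hv2}) permits. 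For example, take $n=2$, $c_1=c_2=d_2=d_1=d$ and $h_{a_k}=(p+1)d$. Then $D=D_\delta=(1+p^{-1})d=p^{-1}h_{a_k}$, which is why (\ref{eq2case22final}) equals $1$ in the thin case. Your claim that $p^{n-m-1}\cdot(\text{second block})$ is tiny compared with the room left by $\delta_i\le 1$ is also false. The coefficient of the $h$-term is small, but $h_{a_k}-d_1\ge p^{a_k}c_1$ is large, so $(h_{a_k}-d_1)p^{2-2n}\ge p^{1-n}c_1$. For $\epsilon=0$ the excess caused by $\delta'_j$ is $p^{n-m-1}\cdot p^{1-n}c_n=p^{-m}c_n$. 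This is \emph{exactly} the deficit $p^{-m}c_1+\sum_i p^{-m}(c_{i+1}-c_i)$ caused by $\delta_0,\dots,\delta_{n-1}$. So $D_\delta$ has to be expanded directly, as the paper does. The $\mp p^{-m}(c_n-c_1)$ contributions cancel, and the $c_1$-terms combine to $c_1(p^{a_k+1-n-m}-p^{-m}+p^{2-2n+a_k}-p^{1-n})$, which vanishes since $n=a_k+1$. One is left with $D_\delta=p^{1-n}D_0+\epsilon(p^{2-2n}+p^{1-n-m})\le p^{1-n}h_{a_k}$.

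The bookkeeping in step 5 is also wrong. In the thin case $p^{-a_k}(h_{a_k}-D_0)=p^{1-n}(h_{a_k}-d_1)-c_1=p^{1-n}\epsilon$. This is not bounded below by $p^{1-n}(h_{a_k}-d_1)+p^{1-n}\sum_i(1-p^{i+1-n})(d_i-d_{i+1})$; it can be $0$. Meanwhile the $d$- and $h$-terms of $D$ add up to $p^{1-n}c_n+\epsilon p^{2-2n}>0$. What pays for them is the summand $p^{1-n}d_n$ of $p^{1-n}D_0$, via the identity $c_n=d_n$. That identity, used at (\ref{eq:thinD5}), is where the borderline structure enters, not the vanishing of the $i=n-1$ term or an inequality $1-p^{i+1-n}\ge p^{1+i-n}$. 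Step 2 is misstated as well: the displayed sum equals $p^{1-n}h_{a_k}+c_1$, so it is not a lower bound. Once these points are repaired, the thick case follows the same pattern, using the upper inequality in (\ref{eq:fatv2}) and the extra slack $(p-1)(c_n-c_1)+(p-1)d_n$ from \cref{lm:D_0}. Your step 6, however, does not actually supply that argument.
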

\begin{proof} 
We first treat the thin case. Using the computation in Corollary~\ref{cor:localdensity}.
\begin{alignat}{5}
\label{eq:thinD0}D&\stackrel{(\ref{eq:thinv2})}{=}c_1+c_1p^{2-2n+a_k}+\sum_{i=1}^{n-1}(c_{i+1}-c_i){p}^{-i}+p^{1-n}\sum_{i=1}^{n-1}(c_{i+1}-c_i)\\
 \label{eq:thinD1}  & = c_1+c_1p^{2-2n+a_k}+ p^{1-n}(c_n-c_1)+p^{1-n}\sum_{i=1}^{n-1}(c_{i+1}-c_i){p}^{n-i-1}\\
  \label{eq:thinD2}   & =p^{1-n}(c_1(p^{a_k+1-n}-1)+c_n+ p^{n-1}c_1+\sum_{i=1}^{n-1}(c_{i+1}-c_i){p}^{n-i-1})\\
\label{eq:thinD5}   &\stackrel{(c_n=d_n)}{=}p^{1-n}(c_1(p^{a_k+1-n}-1)+D_0)\\
 \label{eq:thinD6}  &\stackrel{(n=a_k+1)}{=} p^{1-n}D_0\stackrel{\text{Lemma~\ref{lm:D_0}}}{\leq}p^{-a_k}h_{a_k}.
\end{alignat}
\begin{alignat*}{4}
   D_{\delta}&\stackrel{(\ref{eq:thinv2})}{=} c_1(1-p^{-m})+c_1p^{2-2n+a_k}(1+p^{n-m-1})+ \sum_{i=1}^{n-1}(c_{i+1}-c_i)(p^{-i}+ p^{1-n})\\
   &=  c_1(1-p^{-m})+c_1p^{2-2n+a_k}(1+p^{n-m-1})+ p^{1-n}(c_n-c_1)+\sum_{i=1}^{n-1}p^{-i}(c_{i+1}-c_i)\\
   &=c_1(p^{a_k+1-n-m}-p^{-m}+p^{2-2n+a_k}-p^{1-n})+c_1+ p^{1-n}c_n+\sum_{i=1}^{n-1}p^{-i}(c_{i+1}-c_i)\\
   &\stackrel{(n=a_k+1)}{=}p^{1-n}D_0\stackrel{\text{Lemma~\ref{lm:D_0}}}{\leq} p^{-a_k}h_{a_k}.
\end{alignat*} 
Now we treat the thick case: the computation can be reduced to the thin case. To begin with, we have 
\begin{alignat}{2}
\label{2122}D&\stackrel{(\ref{eq:fatv2})}{\leq}c_1+c_1p^{2-2n+a_k}+\sum_{i=1}^{n-1}(c_{i+1}-c_i){p}^{-i}+p^{2-n}\sum_{i=1}^{n-1}(c_{i+1}-c_i)\\\label{2123}&= c_1+c_1p^{2-2n+a_k}+\sum_{i=1}^{n-1}(c_{i+1}-c_i){p}^{-i}+p^{1-n}\sum_{i=1}^{n-1}(c_{i+1}-c_i) + p^{1-n}(p-1)\sum_{i=1}^{n-1}(c_{i+1}-c_i).
\end{alignat}
As compared to (\ref{eq:thinD0}), we have $p^{2-n}$ rather than $p^{1-n}$ in the last summation of (\ref{2122}). But we can put it into the form as in (\ref{2123}), leaving the computation of the first half of (\ref{2123}) identical to (\ref{eq:thinD1})$\sim$(\ref{eq:thinD5}). As a result, we get

\begin{alignat}{4}
D&\leq 
   p^{1-n}(c_1(p^{a_k+1-n}-1)+D_0)+ p^{1-n}(p-1)(c_{n}-c_1)\\
   & \leq p^{1-n}(c_1(p^{a_k+1-n}-1)+(p-1)(c_{n}-c_1)+D_0)\\
   &\stackrel{(n>a_k+1)}{<} p^{1-n}((p-1)(c_{n}-c_1)+D_0)\\
   &\stackrel{\text{Lemma~\ref{lm:D_0}}}{\leq} p^{-a_k}h_{a_k}.
\end{alignat}
Similarly strategy works for $D_\delta$ in the thick case. We get 
\begin{alignat*}{5}
   D_{\delta}&= c_1(p^{a_k+1-n-m}-p^{-m}+p^{2-2n+a_k}-p^{1-n})+p^{1-n}\left[D_0 +  (p-1)(1+p^{n-m-1})(c_n-c_1)\right]\\
   &\stackrel{(n>a_k+1)}{<}p^{1-n}\left[D_0 +  (p-1)(1+p^{n-m-1})(c_n-c_1)\right]\\
   &\stackrel{(n\leq m)}{<}p^{1-n}\left[D_0 +  2(p-1)(c_n-c_1)\right]\stackrel{\text{Lemma~\ref{lm:D_0}}}{\leq} p^{-a_k}h_{a_k}.
\end{alignat*}
\end{proof}
\begin{remark}\label{rmk:hardtoeasy} 
Lemma~\ref{lm;Destimate} still holds when $a<a_k$, and will also be used in the easy situation.
\end{remark}
Now plug lemma~\ref{lm;Destimate} into (\ref{eq:S1''S>1''border2}), and equate $a_k=a$. We obtain 
\begin{align}
\label{eq1case22final}S''(M)=S_1''(M)+S_{>1}''(M)&\leq  \frac{1-p^{-a-1}}{1+p^{-{(m+1)}}}\left(1+ \frac{1+p^{n-m-1}}{p^{2n-a-1}-1}\right)\\
&\label{eq2case22final}=1-\frac{ (p^n - p^{a+1}) (p^{a + m + 1} + p^{a + n} + p^a + p^{m + n})}{p^a(p^{m + 1} + 1) (p^{2 n} - p^{a + 1})}.
\end{align}
In the thick case, (\ref{eq2case22final}) is strictly smaller than $1$. By Lemma~\ref{lm:isometric embedding in the tight case partial 1}, we get $S'(M)\leq S''(M)<1$. In the thin case,  (\ref{eq2case22final}) equals $1$. Therefore we need the following extra input (which is  a partial generalization of the second half of Lemma~\ref{lm:isometric embedding in the m=2 case}):
\begin{lemma}
Suppose that we are in the tight thin borderline case 2, and that $\cA_{C^{/P}}$ does not admit a rank $2(m-a)$ saturated lattice of formal special endomorphisms.  Then $S'(M)< S''(M)$. 
\end{lemma}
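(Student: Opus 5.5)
We argue by contradiction, mirroring the second half of Lemma~\ref{lm:isometric embedding in the m=2 case}. By the proof of Lemma~\ref{lm:isometric embedding in the tight case partial 1}, the termwise inequality (\ref{eq:localdensitycomparison2}) holds for every $r\ge 1$, so $S'(M)=S''(M)$ forces equality in (\ref{eq:localdensitycomparison2}) for every $r$. We first show this forces equality in the index comparison (\ref{eq:Lgeq L''2}) for all $r$. In borderline case 2 we have $\cL_{\mathrm{brd}}\neq \mathbf 0$ and, by Corollary~\ref{cor:localdensity}, $\delta(p,L''_{P,r},M)\neq 0$. So a strict inequality in (\ref{eq:Lgeq L''2}) would make (\ref{eq:localdensitycomparison2}) strict. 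Hence $[L'_{P,1}\otimes\bZ_p:L'_{P,r}\otimes\bZ_p]=[L''_{P,1}\otimes\bZ_p:L''_{P,r}\otimes\bZ_p]$ for all $r$. In particular $\overline{L'_{P,r}\otimes\bZ_p}$ and $\overline{L''_{P,r}\otimes\bZ_p}$ have the same dimension.

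Next take $r>h_{a_k}$ and use Proposition~\ref{prop:L+thin}(c), which applies because we are in the tight case $a_k=a$. It says $\overline{L'_{P,r}\otimes\bZ_p}$ embeds isometrically into $\overline{\cL}_{\mathrm{brd}}$ via projection. By construction, $\overline{L''_{P,r}\otimes\bZ_p}=\overline{\cL}_{\mathrm{brd}}$ in this range, since the $p^j$-terms vanish mod $p\cL$. The dimension equality then makes this projection an isomorphism. As the mod-$p$ reductions form a decreasing chain of equal dimension, $\overline{L'_{P,r}\otimes\bZ_p}$ is independent of $r$ for $r>h_{a_k}$. In the thin case $\cL_{\mathrm{brd}}$ has rank exactly $2(m-n+1)=2(m-a)$, by Remark~\ref{rmk:bigremarkonL'L''} and $n=a_k+1=a+1$.

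We then lift a basis and pass to a limit. Choose $E_1,\dots,E_{2(m-a)}\in\cL$ with $E_i\equiv b_i+u_i \pmod{p\cL}$, where $\{b_i\}$ is a basis of $\cL_{\mathrm{brd}}$, each $u_i$ lies in the relevant lattice $\cL_{\mathrm{rv}}$, and the $E_i$ reduce to a basis of $\overline{L'_{P,r}\otimes\bZ_p}$. For each $r$ pick corrections $R_{r,i}\in p\cL$ with $E_i+R_{r,i}\in L'_{P,r}\otimes\bZ_p$. We want to normalize so that $R_{r,i}\in p\cL_{\mathrm{rv}}$. Write $p\cL=p\cL_{\mathrm{brd}}\oplus p\cL_{\mathrm{rv}}$. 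The $p\cL_{\mathrm{brd}}$-component of $R_{r,i}$ is of the form $p\sum_j u_{r,ij}b_j$, and this can be absorbed by replacing the vectors with $(I+U_r)^{-1}$-combinations, where $U_r\in pM_{2(m-a)}(\bZ_p)$ and so $I+U_r$ is invertible. This step is a matrix version of the scalar trick used when $m=2$, and we expect it to be the main obstacle. The issue is that the $\cL_{\mathrm{rv}}$-parts $u_i$ get mixed as well. We handle this by noting that $\cL_{\mathrm{rv}}$ is preserved under the recombination, so the new corrections still lie in $p\cL_{\mathrm{rv}}$ after re-expressing in terms of the $E_i$.

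Finally, since $\cL_{\mathrm{rv}}$ decays rapidly to arbitrary order by Theorem~\ref{thm: superspecial rapid decay} and Remark~\ref{rem: ak is a}, the corrections $R_{r,i}\in p\cL_{\mathrm{rv}}$ are determined ever more precisely as $r$ grows. Two admissible corrections differ by an element of $p\cL_{\mathrm{rv}}\cap L'_{P,r}$, which lies in $p^{N(r)}\cL$ with $N(r)\to\infty$. Thus $E_i+R_{r,i}$ converges $p$-adically to vectors $E_i^\infty\in\bigcap_r L'_{P,r}\otimes\bZ_p$. These $E_i^\infty$ are formal special endomorphisms over all of $C^{/P}$, and they reduce to independent vectors mod $p$. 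They therefore span a saturated rank-$2(m-a)$ lattice of formal special endomorphisms of $\cA_{C^{/P}}$, contradicting the hypothesis. Hence $S'(M)<S''(M)$.
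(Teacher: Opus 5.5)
Your proposal is correct and follows the paper's proof essentially step for step. Equality $S'(M)=S''(M)$ forces equality in (\ref{eq:Lgeq L''2}) for all $r$, since $\delta(p,L''_{P,r},M)\neq 0$ in borderline case 2; Proposition~\ref{prop:L+thin}(c) then makes the projection $\overline{L'_{P,r}\otimes\bZ_p}\to\overline{\cL}_{\mathrm{brd}}$ an isomorphism independent of $r>h_{a_k}$, and a limit argument using the rapid decay of $\cL_{\mathrm{rv}}$ yields a saturated rank-$2(m-a)$ lattice of formal special endomorphisms, a contradiction. Your normalization via $(I+U_r)^{-1}$ is a more careful matrix version of the scalar trick in Lemma~\ref{lm:isometric embedding in the m=2 case}, which the paper invokes here only by reference.
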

\begin{proof}
    The proof is similar to that of Lemma~\ref{lm:isometric embedding in the m=2 case}. In the tight thin borderline case 2, $\delta(p,L''_{P,r},M)\neq 0$. So if (\ref{eq:Lgeq L''2}) is a strict inequality, then so is (\ref{eq:localdensitycomparison2}). Therefore $S'(M)= S''(M)$ implies that (\ref{eq:Lgeq L''2}) is an equality for all $r$. It then follows from Proposition~\ref{prop:L+thin} that for $r>h_{a_k}$, that the projection of $\overline{L'_{P,r}\otimes \bZ_p}\subseteq \overline{\mathcal{L}}_{\mathrm{brd}}\oplus \mathrm{Span}_{\mathbb{F}_p}\{\overline{e'},\overline{f'},\overline{f_1},\overline{f_2},...,\overline{f_n}\}$ to $ \overline{\mathcal{L}}_{\mathrm{brd}} $ is an isometric isomorphism. In particular, $\overline{L'_{P,r}\otimes \bZ_p}$ does not depend on $r$. We then pick $2(m-a)$ vectors in $\cL$ whose reductions form a basis of $\overline{L'_{P,r}\otimes \bZ_p}$. A limit argument as in Lemma~\ref{lm:isometric embedding in the m=2 case} yields a rank $2(m-a)$ saturated lattice that is contained every $\overline{L'_{P,r}\otimes \bZ_p}$. Therefore $\cA_{C^{/P}}$ admits a rank $2(m-a)$ saturated lattice of formal special endomorphisms. This contradiction shows that we must have  $S'(M)< S''(M)$. 
\end{proof}
The following consequence is immediate: 
\begin{corollary}
    \label{cor:main in tight borderline case 2}
Theorem~\ref{lm:localglobalEisensteinSSP} holds in the tight borderline case 2.

\end{corollary}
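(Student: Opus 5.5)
The corollary should follow by assembling the estimates already established in the tight borderline case 2, splitting by thickness. First, Lemma~\ref{lm:isometric embedding in the tight case partial 1} gives $S'(M)\le S''(M)$ for every $M$ coprime to $p$. We then use the decomposition (\ref{eq:decompositionofS''(M)}) together with the explicit formulas (\ref{eq:S1''S>1''border2}). Lemma~\ref{lm;Destimate} supplies $D,D_\delta\le p^{-a_k}h_{a_k}$. Setting $a_k=a$ and $h_a=h_{a_k}$, and using the local density values $\delta_i,\delta'_j$ from Corollary~\ref{cor:localdensity}(1), we obtain the bound (\ref{eq1case22final})--(\ref{eq2case22final}) on $S''(M)$. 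Since this bound depends only on $p,a,m,n$ and not on $M$, any resulting constant is automatically uniform in $M$.

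In the thick case we have $n>a+1$. Then the factor $(p^n-p^{a+1})$ in (\ref{eq2case22final}) is positive, so the right-hand side is some fixed number $\alpha_0<1$. We may therefore take $\alpha=\alpha_0$, which gives $S'(M)\le S''(M)\le\alpha<1$.

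In the thin case we have $n=a+1$, and the bound in (\ref{eq2case22final}) equals $1$, so we must use the hypothesis of Theorem~\ref{lm:localglobalEisensteinSSP}. The preceding lemma gives $S'(M)<S''(M)\le 1$ for each $M$. The remaining point is uniformity in $M$. For this we use Lemma~\ref{lm:computedelta}: for $M$ coprime to $p$, each local density $\delta(p,L'_{P,r},M)$ depends only on the square class $\left(\frac{M}{p}\right)$. The indices $[L'_{P,1}\otimes\bZ_p:L'_{P,r}\otimes\bZ_p]$ do not depend on $M$ at all. Hence $S'(M)$ takes at most two values as $M$ ranges over integers coprime to $p$, and both are strictly less than $1$. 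We take $\alpha$ to be the larger of the two, so $\alpha<1$.

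The main obstacle is the strictness in the thin case, and it is handled by the preceding lemma. The key input there is the limit argument: we show that equality $S'(M)=S''(M)$ would force a rank-$2(m-a)$ saturated lattice of formal special endomorphisms over $C^{/P}$, which the hypothesis rules out. Granting that lemma, the corollary is a bookkeeping combination of the estimates above.
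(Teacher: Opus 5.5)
Your proposal is correct and follows the paper's argument. You combine $S'(M)\le S''(M)$ with Lemma~\ref{lm;Destimate} to obtain the bound (\ref{eq2case22final}), which is $<1$ in the thick case. In the thin case you use the strict inequality $S'(M)<S''(M)$ from the limit-argument lemma, together with the fact that $S'(M)$ depends only on $\left(\frac{M}{p}\right)$.

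Two small fixes:
\begin{enumerate}
\item Theorem~\ref{lm:localglobalEisensteinSSP} asks for the strict inequality $S'(M)<\alpha$. So take $\alpha$ strictly between your maximum (resp.\ $\alpha_0$) and $1$, rather than equal to it.
\item Justify the square-class dependence of $\delta(p,L'_{P,r},M)$ by the formula $\delta(p,\Lambda,M)=p^{1-\mathrm{rk}\,\Lambda}\#\{x\in\overline{\Lambda}:\overline{Q}(x)=M\}$, which holds for any $\Lambda$ when $p\nmid M$. Do not cite Lemma~\ref{lm:computedelta}, which is stated only for self-dual $\Lambda$; $L'_{P,r}\otimes\bZ_p$ is generally not self-dual.
\end{enumerate}
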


\subsubsection{Borderline case 1} 
Suppose that $\dim\overline{\cL}_{1,\mathrm{bd}}=1$. We then have $c_n=d_{n}$. Define 
\begin{align*}
    &D_\delta=c_1\delta_0+\sum_{i=1}^{n-1}(c_{i+1}-c_i){p}^{-i}
\delta_{i}+\sum_{i=1}^{n-1}  (d_i-d_{i+1})p^{1+i-2n}\delta_{i}'+ (h_{a_k}-d_1)p^{1-2n}\delta_{0}',\\
  & D=c_1+\sum_{i=1}^{n-1}(c_{i+1}-c_i)p^{-i}+ \sum_{i=1}^{n-1} (d_i-d_{i+1})p^{1+i-2n}+ (h_{a_k}-d_1)p^{1-2n}. 
\end{align*}
Then $ S_1''(M)$ and $ S_{>1}''(M)$ can still be computed by the formula (\ref{eq:S1''S>1''border2}) (with the meaning of $D_\delta$ and $D$ replaced by the new definition given above). 
\begin{lemma}\label{lm;Destimate1}
$D< p^{-a_k}h_{a_k}$ and $D_{\delta}< p^{-a_k}h_{a_k}$.
\end{lemma}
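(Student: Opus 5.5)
I will follow the proof of Lemma~\ref{lm;Destimate} essentially verbatim, only changing the exponents that reflect $\dim\overline{\cL}_{\mathrm{bd}}=1$: the last two blocks of $D$ and $D_\delta$ now carry $p^{1+i-2n}$ and $p^{1-2n}$ instead of $p^{2+i-2n}$ and $p^{2-2n}$. Each such weight is exactly $p^{-1}$ times its borderline case 2 counterpart. I will therefore compare term by term with the case 2 expressions, while remembering that now only $c_n=d_n$ holds, not $c_{n-1}=c_n=d_{n-1}$. The local densities come from Corollary~\ref{cor:localdensity}(2): $\delta_i=1-p^{i-m}\le 1$ and $\delta_j'\le 1+p^{n-m}$.

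For $D$, I first treat the thin case, where $n=a_k+1$. Substituting (\ref{eq:thinv2}), that is $d_i-d_{i+1}=p^{n-1-i}(c_{i+1}-c_i)$, the middle sum becomes $p^{-n}\sum(c_{i+1}-c_i)=p^{-n}(c_n-c_1)$. Using $h_{a_k}-d_1\ge p^{a_k}c_1$ only as the slack needed to bound the final term, I will regroup exactly as in (\ref{eq:thinD0})--(\ref{eq:thinD6}). The result should be $D=p^{1-n}D_0-(1-p^{-1})\bigl[p^{1-n}(c_n-c_1)+p^{2-2n}(h_{a_k}-d_1)\bigr]$, which is the case-2 value minus a positive correction. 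Since $h_{a_k}-d_1\ge p^{a_k}c_1>0$ (here $c_1\ge 1$ because the curve passes through $P$), the correction is strictly positive. Then Lemma~\ref{lm:D_0} gives $D<p^{1-n}D_0\le p^{-a_k}h_{a_k}$.

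In the thick case I use the upper inequality in (\ref{eq:fatv2}). This bounds the $d$-sum by $p^{1-n}(c_n-c_1)$, which is the same quantity as in (\ref{2122}) but now carries an extra factor $p^{-1}$. The chain (\ref{2122})--(\ref{2123}) then goes through, with the strict inequality supplied by $n>a_k+1$, so that $c_1(p^{a_k+1-n}-1)<0$. Lemma~\ref{lm:D_0} then finishes this case.

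For $D_\delta$, I bound $\delta_i\le 1-p^{-m}$ in the $c_1$ term and $\delta_i\le 1$ elsewhere, and $\delta_j'\le 1+p^{n-m}$. The danger is that $\delta_j'$ here can exceed the case 2 value $1+p^{n-m-1}$. However, $(1+p^{n-m})p^{-1}\le (1+p^{n-m-1})$ whenever $n\le m$, which is exactly the amount we gain from the smaller exponent. So each $d$-term, and the $(h_{a_k}-d_1)$-term, of $D_\delta$ in case 1 is bounded by the corresponding term of case 2. The inequality is strict because $p^{-1}+p^{n-m-1}<1+p^{n-m-1}$. This reduces $D_\delta$ to the computation already carried out in Lemma~\ref{lm;Destimate}, now with strict inequality, and the bound $p^{-a_k}h_{a_k}$ follows by Lemma~\ref{lm:D_0}.

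The main obstacle is making the termwise comparison with case 2 legitimate. The case 2 computation used $c_{n-1}=c_n=d_{n-1}$ only to simplify, but I should check that it relies solely on (\ref{eq:thinv2}), (\ref{eq:fatv2}), (\ref{eq:hv2}) and $c_n=d_n$. If that check fails, I would instead redo the regrouping directly, aiming for the identity $D=p^{1-n}D_0-(\text{positive})$ displayed above.
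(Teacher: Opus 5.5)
Your plan is the paper's route: compare $D$ and $D_\delta$ term by term with their borderline case~2 counterparts (the weights are smaller by $p^{-1}$, and $p^{-1}(1+p^{n-m})<1+p^{n-m-1}$ handles the densities), then use the case-2 computation and Lemma~\ref{lm:D_0}. The gap is the step where you regroup ``exactly as in (\ref{eq:thinD0})--(\ref{eq:thinD6})'' and obtain an identity.

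That regrouping replaces $h_{a_k}-d_1$ by $p^{a_k}c_1$, but (\ref{eq:hv2}) only gives $h_{a_k}-d_1\ge p^{a_k}c_1$. Since $h_{a_k}-d_1$ enters $D$ with a positive coefficient, the substitution produces a \emph{lower} bound for $D$, which is the wrong direction. Set $\epsilon:=h_{a_k}-d_1-p^{a_k}c_1\ge 0$. Using $c_n=d_n$ and $a_k=n-1$, the correct thin-case identity is
\[
D=p^{1-n}D_0+p^{1-2n}\epsilon-(1-p^{-1})p^{1-n}c_n .
\]
So your displayed formula is missing a term $p^{2-2n}\epsilon$. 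Consequently your intermediate claim $D<p^{1-n}D_0$ fails as soon as $\epsilon\ge (p-1)p^{n-1}c_n$, and the paper provides only the lower bound (\ref{eq:hv2}), so $\epsilon$ is not controlled from above. The same hidden substitution sits in (\ref{2122}), which you invoke for the thick case, and in your reduction of $D_\delta$ to the computation of Lemma~\ref{lm;Destimate}. The paper's own display (\ref{eq:thinD0}) also writes this substitution as an equality, so the chain you are copying is literally valid only when $\epsilon=0$.

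The repair is to keep the slack on the right-hand side instead of discarding it. Fix the $c_i,d_i$. The target $p^{-a_k}h_{a_k}$ grows in $h_{a_k}$ with slope $p^{-a_k}\ge p^{1-n}$. By contrast, $D$ grows with slope $p^{1-2n}$ and $D_\delta$ with slope at most $p^{1-2n}(1+p^{n-m})$, and both are $<p^{1-n}$ because $p^{-n}+p^{-m}<1$. Hence $p^{-a_k}h_{a_k}-D$ and $p^{-a_k}h_{a_k}-D_\delta$ are nondecreasing in $\epsilon$. It therefore suffices to prove the strict inequalities at $\epsilon=0$. There your regrouping is an honest identity and the chains (\ref{eq:thinD0})--(\ref{eq:thinD6}) and (\ref{2122})--(\ref{2123}) are valid bounds, so your argument goes through unchanged.

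In the thin case this can be made explicit. There $d_1=d_n+\sum_i p^{n-1-i}(c_{i+1}-c_i)$, so $p^{-a_k}h_{a_k}=p^{1-n}(D_0+\epsilon)$, and hence
\[
p^{-a_k}h_{a_k}-D=(1-p^{-n})p^{1-n}\epsilon+(p-1)p^{-n}c_n>0 .
\]

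A minor point: for $D_\delta$, do not weaken $\delta_i$ to $1$ on the $c$-terms. By Corollary~\ref{cor:localdensity}, these densities equal $1-p^{i-m}$ in both borderline cases~1 and~2. So the $c$-terms coincide exactly, and that is what makes the term-by-term comparison with case~2 legitimate.
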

\begin{proof}
    The computation is completely similar to the proof of Lemma~\ref{lm;Destimate}. Of course one can directly do the dirty computation, but we can also use the computation already done in Lemma~\ref{lm;Destimate} to relive the burden. In fact, in the computation of Lemma~\ref{lm;Destimate}, we only treated $c_i$ and $d_i$ as symbols and didn't use specific properties of them, except in (\ref{eq:thinD5}) we have used the fact $c_n=d_n$ (which is also true in this case). Let's now do the thin case: note that 
\begin{align*}
    D < c_1+\sum_{i=1}^{n-1}(c_{i+1}-c_i){p}^{-i}+\sum_{i=1}^{n-1}  (d_{i}-d_{i+1})p^{2+i-2n}+ (h_{a_k}-d_1)p^{2-2n}. 
\end{align*}
  But according to the computation (\ref{eq:thinD0})$\sim$(\ref{eq:thinD6}), the RHS equals $p^{1-n}D_0$. It follows immediately from Lemma~\ref{lm:D_0} that $D<p^{-a_k}h_{a_k}$. Similarly, 
$$ D_\delta=c_1\delta_0+\sum_{i=1}^{n-1}(c_{i+1}-c_i){p}^{-i}\delta_i
+\sum_{i=1}^{n-1}  (d_i-d_{i+1})p^{2+i-2n}(p^{-1}+p^{n-m-1})+ (h_{a_k}-d_1)p^{2-2n}(p^{-1}+p^{n-m-1}),$$
and one verifies that the RHS is smaller than the $D_{\delta}$ computed in  Lemma~\ref{lm;Destimate}. So we immediately get $D_\delta<p^{-a_k}h_{a_k}$ as well. 
The thick case can be treated similarly.
\end{proof}
Use Lemma~\ref{lm;Destimate1} and equate $a_k=a$. We have 
\begin{align*}
S''(M)&= S_1''(M)+S_{>1}''(M)
\stackrel{(n\geq a+1)}{<} \frac{1-p^{-a-1}}{1+p^{-{(m+1)}}}\left(1+ \frac{2}{p^{a+2}-1}\right) <1.
\end{align*}
\begin{corollary}
    \label{cor:main in tight borderline case 1}
Theorem~\ref{lm:localglobalEisensteinSSP} holds in the tight borderline case 1.

\end{corollary}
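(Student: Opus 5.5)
The corollary follows by combining the comparison of Lemma~\ref{lm:isometric embedding in the tight case partial 1} with the explicit bound on $S''(M)$ obtained just above for the tight borderline case 1. First, by Lemma~\ref{lm:isometric embedding in the tight case partial 1}, for every $M$ coprime to $p$ we have $S'(M)\le S''(M)$. This needs no hypothesis on the rank of non-decaying formal special endomorphisms; the only input is Proposition~\ref{prop:L+thin} together with the Decay Lemma~\ref{thm: superspecial rapid decay} (through Remark~\ref{rem: ak is a}, since $a_k=a$).

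Second, we use the decomposition \eqref{eq:decompositionofS''(M)} of $S''(M)$, with $S''_1$ and $S''_{>1}$ given by \eqref{eq:S1''S>1''border2} for the borderline-case-1 values of $D$ and $D_\delta$. Lemma~\ref{lm;Destimate1} gives strict inequalities $D<p^{-a_k}h_{a_k}$ and $D_\delta<p^{-a_k}h_{a_k}$. Setting $a_k=a$ and $h_a=h_P$ and summing the geometric series $\sum_j p^{(a+1)j-2nj}$ with $n\ge a+1$, we get
\[
S''(M) < \frac{1-p^{-a-1}}{1+p^{-(m+1)}}\Bigl(1+\frac{2}{p^{a+2}-1}\Bigr).
\]
Here we also use the bound $\delta'_0\le 1+p^{n-m}\le 2$ from Corollary~\ref{cor:localdensity}. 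We then check that the right-hand side is $<1$ by elementary algebra: $(1-p^{-a-1})(1+2/(p^{a+2}-1))$ is already below $1$ for $p\ge 3$.

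Third, to pass from a strict inequality for each $M$ to a uniform constant $\alpha<1$, we observe that the quantities entering $S''(M)$ depend on $M$ only through the square class $\bigl(\frac{M}{p}\bigr)$. This is because the local densities $\delta_i,\delta'_j$ of Lemma~\ref{lm:computedelta} depend only on that class, and the indices do not depend on $M$ at all. So $S''(M)$ takes at most two values, both strictly below $1$, and we may take $\alpha$ to be the larger of them. Then $S'(M)\le S''(M)\le \alpha<1$.

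The only genuinely delicate point is making sure that no boundary case forces equality. In borderline case 2 the thin subcase gave exactly $1$, and there the hypothesis on non-decaying lattices was needed. Here the borderline space is one-dimensional, so $\mathcal{L}_{\mathrm{brd}}$ has rank strictly less than $2(m-a)$, and the strictness already built into Lemma~\ref{lm;Destimate1} removes the need for that hypothesis. I would double-check the step $\delta'_0 D/(p^{2n-a-1}-1)\le 2p^{-a}h_a/(p^{a+2}-1)$ in the thin case $n=a+1$.
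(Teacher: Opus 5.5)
Your route is the paper's: the comparison $S'(M)\le S''(M)$ from Lemma~\ref{lm:isometric embedding in the tight case partial 1}, the bounds of Lemma~\ref{lm;Destimate1}, a geometric series for $S''_{>1}(M)$, and uniformity in $M$ through its square class. However, the geometric-series step, which is the one you flagged, fails as you wrote it. With the borderline-case-2 exponent $p^{2nj}$, you get $\sum_{j\ge1}p^{(a+1)j-2nj}=1/(p^{2n-a-1}-1)$. In the thin case $n=a+1$ this is $1/(p^{a+1}-1)$, which is not $\le 1/(p^{a+2}-1)$. The best you can then conclude is
\[
S''(M)<\frac{1-p^{-a-1}}{1+p^{-(m+1)}}\Bigl(1+\frac{\delta'_0}{p^{a+1}-1}\Bigr).
\]
For the square class of $M$ with $\delta'_0=1+p^{n-m}$, the right-hand side equals $\frac{1+p^{-m}}{1+p^{-(m+1)}}>1$, so the argument does not close.

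The missing observation is a rank count. In borderline case 1, $\mathcal{L}_{\mathrm{brd}}=\mathcal{L}_{\mathrm{bd}}\oplus\mathcal{L}_{\mathrm{rd}}$ has odd rank $2(m-n)+1$, so $\mathcal{L}/\mathcal{L}_{\mathrm{brd}}$ is free of rank $2n+1$, not $2n$. Consequently, for $r$ in the $j$-th block $(H_j,H_{j+1}]$, the index $[L''_{P,1}\otimes\bZ_p:L''_{P,r}\otimes\bZ_p]$ is $p^{(2n+1)j}$ times the index at the corresponding $r$ in the block $j=0$. This is consistent with the exponents $p^{1+i-2n}$ and $p^{1-2n}$ in the borderline-case-1 definition of $D$. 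The local density on these blocks is $\delta'_0$. Hence
\[
S''_{>1}(M)=\frac{p^{a+1}-1}{h_a}\cdot\frac{\delta'_0D}{p(1+p^{-(m+1)})}\sum_{j\ge1}\frac{p^{(a+1)j}}{p^{(2n+1)j}}<\frac{1-p^{-a-1}}{1+p^{-(m+1)}}\cdot\frac{2}{p^{2n-a}-1}\le\frac{1-p^{-a-1}}{1+p^{-(m+1)}}\cdot\frac{2}{p^{a+2}-1},
\]
using $D<p^{-a}h_a$, $\delta'_0\le 2$ and $n\ge a+1$.

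This is exactly the bound you (and the paper) display. The paper's sentence that $S''_{>1}$ is ``still computed by'' the borderline-case-2 formula has to be read with $p^{2nj}$ replaced by $p^{(2n+1)j}$. With this correction, the rest of your argument goes through unchanged, including the elementary check that $(1-p^{-a-1})(1+2/(p^{a+2}-1))<1$ and the choice of $\alpha$.
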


\subsubsection{Borderline case 0}
Define \begin{align*}
    & D_\delta=c_1\delta_0+\sum_{i=1}^{n-1}(c_{i+1}-c_i){p}^{-i}
\delta_{i}+(d_n-c_n)p^{-n}\delta_n+\sum_{i=1}^{n-1}  (d_i-d_{i+1})p^{i-2n}\delta_{i}'+ (h_{a_k}-d_1)p^{-2n}\delta_{0}',\\
& D=c_1+\sum_{i=1}^{n-1}(c_{i+1}-c_i)p^{-i}+ (d_n-c_n)p^{-n}+\sum_{i=1}^{n-1} (d_i-d_{i+1})p^{i-2n}+ (h_{a_k}-d_1)p^{-2n}. 
\end{align*}
Then $ S_1''(M)$ and $ S_{>1}''(M)$ can still be computed by the formula (\ref{eq:S1''S>1''border2}) (with the meaning of $D_\delta$ and $D$ replaced by the new definition given above). 
\begin{lemma}\label{lm;Destimate2}
$D< p^{-a_k}h_{a_k}$ and $D_{\delta}< p^{-a_k}h_{a_k}$.
\end{lemma}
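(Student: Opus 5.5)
We follow the same strategy as in Lemma~\ref{lm;Destimate1}: compare $D$ and $D_\delta$ in borderline case 0 term by term with their counterparts in Lemma~\ref{lm;Destimate}, which were computed for symbols $c_i,d_i$ satisfying the thin or thick relations. The new feature is the extra interval $(c_n,d_n]$, which contributes $(d_n-c_n)p^{-n}$; here we no longer have $c_n=d_n$. The step (\ref{eq:thinD5}) therefore has to be redone, since it used $c_n=d_n$ to absorb $p^{1-n}c_n$ into $D_0$.

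\textbf{Thin case, bound on $D$.} First we use (\ref{eq:thinv2}) and $n=a_k+1$ to rewrite the tail terms:
\[
\sum_{i=1}^{n-1}(d_i-d_{i+1})p^{i-2n}=p^{-1-n}(c_n-c_1),\qquad (h_{a_k}-d_1)p^{-2n}.
\]
Next we bound $h_{a_k}-d_1$ only from above, by $h_{a_k}-p^{a_k}c_1-d_n-\sum_i p^{n-1-i}(c_{i+1}-c_i)$. This is not quite what we need. The cleaner route is to group $D=A+B$, where
\[
A=c_1+\sum_{i=1}^{n-1}(c_{i+1}-c_i)p^{-i}+(d_n-c_n)p^{-n}
\]
and $B$ collects the $d$-tail and $h$-tail. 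We then write
\[
p^{-a_k}h_{a_k}=p^{1-n}\bigl[(h_{a_k}-D_0)+D_0\bigr],
\]
and expand
\[
p^{1-n}D_0=p^{1-n}d_n+c_1+\sum_{i=1}^{n-1}p^{-i}(c_{i+1}-c_i).
\]
So $p^{1-n}D_0-A=p^{1-n}d_n-p^{-n}(d_n-c_n)>0$. It then remains to check that $B\le p^{1-n}(h_{a_k}-D_0)+\bigl[p^{1-n}d_n-p^{-n}(d_n-c_n)\bigr]$. Two facts make this plausible: $B$ carries weights $p^{-2n}$ and $p^{i-2n}$, much smaller than $p^{1-n}$, and $h_{a_k}-D_0\ge 0$ by Lemma~\ref{lm:D_0}. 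Since $h_{a_k}-d_1\le h_{a_k}$, a crude bound using $d_1\ge d_n$ suffices to dominate $B$ by $p^{-n}(h_{a_k}-D_0)$ plus a small multiple of $d_n$, absorbed by $p^{1-n}d_n(1-p^{-1})$. Strictness comes from the positive slack $p^{1-n}d_n-p^{-n}(d_n-c_n)>0$, using $d_n\ge 1$.

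\textbf{Thick case and $D_\delta$.} In the thick case we replace (\ref{eq:thinv2}) by the upper inequality in (\ref{eq:fatv2}), as in (\ref{2122})--(\ref{2123}). We split off the excess $p^{1-n}(p-1)(c_n-c_1)$ and use the thick form of Lemma~\ref{lm:D_0}, which has spare terms $(p-1)(c_n-c_1)+(p-1)d_n$ exactly suited to absorb both this excess and $(d_n-c_n)p^{-n}$. For $D_\delta$, Corollary~\ref{cor:localdensity}(3) gives $\delta_i,\delta'_j\le 1$, and in fact $<1$, so $D_\delta\le D$ termwise and the bound follows from that for $D$.

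\textbf{Main obstacle.} The delicate point is the thin-case bookkeeping. Without $c_n=d_n$, the identity chain (\ref{eq:thinD0})--(\ref{eq:thinD6}) no longer collapses exactly to $p^{1-n}D_0$. I would first verify the claimed inequality $p^{1-n}D_0-A>0$ together with the domination of the tails $B$. If the crude tail estimate fails, I would fall back on the substitution $d_1=d_n+\sum_i p^{n-1-i}(c_{i+1}-c_i)$ and compute directly.
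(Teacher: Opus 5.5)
Your proposal is correct, but you organize the estimate differently from the paper.

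\textbf{The paper's route.} The paper never works with the case-0 expression directly. It bounds the tail weights $p^{i-2n},p^{-2n}$ by the borderline-case-2 weights $p^{2+i-2n},p^{2-2n}$. It then recycles (\ref{eq:thinD0})--(\ref{eq:thinD2}) and closes with $p^{1-n}c_n+(d_n-c_n)p^{-n}\le p^{1-n}d_n$ and Lemma~\ref{lm:D_0}. For $D_\delta$ and for the thick case it only says ``the same way''.

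\textbf{Your route.} You split $D=A+B$, compare $A$ with $p^{1-n}D_0$, and compare $B$ with $p^{1-n}(h_{a_k}-D_0)$. You handle $D_\delta$ by noting that every density in borderline case~0 is at most $1$: this is Corollary~\ref{cor:localdensity}(3), together with $\delta_n=\delta'_n$ since $\cL_{>n}=\cL'_{\le n}$. Since all coefficients $c_1,\ c_{i+1}-c_i,\ d_n-c_n,\ d_i-d_{i+1},\ h_{a_k}-d_1$ are nonnegative, $D_\delta\le D$ termwise. This shortcut is cleaner than redoing the computation, though it is specific to case~0 (in case~2, $\delta'_j>1$).

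\textbf{Comparison.} The paper gains economy by reusing algebra. However, (\ref{eq:thinD0}) silently replaces $h_{a_k}-d_1$ by $p^{a_k}c_1$. That step is legitimate only because $D-p^{-a_k}h_{a_k}$ is non-increasing in $h_{a_k}$. Your decomposition keeps $h_{a_k}$ explicit.

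\textbf{Your fallback is the best finish.} In the thin case, $h_{a_k}-d_1=(h_{a_k}-D_0)+p^{n-1}c_1$ exactly, so $B=p^{-n-1}c_n+p^{-2n}(h_{a_k}-D_0)$. Hence
\[
p^{1-n}h_{a_k}-D=d_n(p^{1-n}-p^{-n})+c_n(p^{-n}-p^{-n-1})+(h_{a_k}-D_0)(p^{1-n}-p^{-2n})>0,
\]
and no crude estimate is needed.

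\textbf{Minor corrections.}
\begin{itemize}
\item The discarded ``upper bound'' for $h_{a_k}-d_1$ is actually a lower bound.
\item In the thick case you cannot write $p^{-a_k}h_{a_k}=p^{1-n}[(h_{a_k}-D_0)+D_0]$. Use $p^{-a_k}h_{a_k}=p^{1-n}\cdot p^{n-1-a_k}h_{a_k}$ before invoking the thick half of Lemma~\ref{lm:D_0}.
\item With case-0 weights the thick excess is only $p^{-n-1}(p-1)(c_n-c_1)$. Your figure $p^{1-n}(p-1)(c_n-c_1)$ is the case-2 value, a harmless overestimate.
\end{itemize}
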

\begin{proof}
   This can be dealt with the strategy explained in Lemma~\ref{lm;Destimate1}, i.e., reduce to the computation already done in Lemma~\ref{lm;Destimate}. For the thin case, we have 
\begin{align*}
    D < c_1+\sum_{i=1}^{n-1}(c_{i+1}-c_i){p}^{-i}+\sum_{i=1}^{n-1}  (d_{i}-d_{i+1})p^{2+i-2n}+ (h_{a_k}-d_1)p^{2-2n} + (d_n-c_n)p^{-n}.
\end{align*}
According to the computation (\ref{eq:thinD0})$\sim$(\ref{eq:thinD2}) (note that we cannot use (\ref{eq:thinD5}) since $c_n$ may not be equal to $d_n$ in this case), the RHS equals
  \begin{alignat*}{2}
      &p^{1-n}(c_n+ p^{n-1}c_1+\sum_{i=1}^{n-1}(c_{i+1}-c_i){p}^{n-i-1})+ (d_n-c_n)p^{-n}\\
      &< p^{1-n}(d_n+ p^{n-1}c_1+\sum_{i=1}^{n-1}(c_{i+1}-c_i){p}^{n-i-1})=p^{1-n} D_0\stackrel{\text{Lemma~\ref{lm:D_0}}}{\leq} p^{1-n}h_{a_k}.
  \end{alignat*}  
The computation for $D_\delta$, as well as the thick case can be dealt in the same way. 
\end{proof}
Use Lemma~\ref{lm;Destimate2} and equate $a_k=a$, we find that 
\begin{align*}
S''(M)=S_1''(M)+S_{>1}''(M)&
\stackrel{(n\geq a+1)}{<}\frac{1-p^{-a-1}}{1+p^{-(m+1)}}\left(1+\frac{2}{p^{a+3}-1}\right)<1.
\end{align*}
\begin{corollary}
    \label{cor:main in tight borderline case 0}
Theorem~\ref{lm:localglobalEisensteinSSP} holds in the tight borderline case 0.
\end{corollary}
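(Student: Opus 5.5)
This follows the two tight borderline cases treated just above, Corollaries~\ref{cor:main in tight borderline case 1} and \ref{cor:main in tight borderline case 2}. The only change is in the shape of the auxiliary lattices. In borderline case 0 we have $\overline{\cL}_{\mathrm{bd}}=\mathbf{0}$, so $\cL_{\mathrm{brd}}=\cL_{\mathrm{rd}}$. The line $\Pi_n$ is no longer a borderline, which means $c_n$ may be strictly smaller than $d_n$. The chain $L''_{P,r}\otimes\bZ_p$ therefore acquires a genuine extra segment $r\in(c_n,d_n]$, on which the lattice is $\cL_{\mathrm{brd}}+p^j\cL'_{\le n}+p^{j+1}\cL$. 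This accounts for the additional term $(d_n-c_n)p^{-n}$ in $D$ and in $D_\delta$.

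The first step is to invoke Lemma~\ref{lm:isometric embedding in the tight case partial 1}, which gives $S'(M)\le S''(M)$. Its proof never uses the dimension of $\overline{\cL}_{\mathrm{bd}}$. It uses only Proposition~\ref{prop:L+thin} and the Decay Lemma~\ref{thm: superspecial rapid decay}, together with Remark~\ref{rem: ak is a} in the tight case. Note that $\delta(p,L''_{P,r},M)$ may vanish here when $\cL_{\mathrm{rd}}=\mathbf{0}$, but this situation is already handled inside that proof. It therefore suffices to show $S''(M)<1$ with a margin independent of $M$.

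The second step is to decompose $S''=S''_1+S''_{>1}$ as in (\ref{eq:decompositionofS''(M)}). Using Remark~\ref{rmk:localdensityofL''_r!} and the indices $[L''_{P,1}:L''_{P,r}]$, I would check that (\ref{eq:S1''S>1''border2}) still holds with the new $D_\delta$ and $D$. On each segment the index equals $p^{2nj+i}$, with $i$ counting the extra $e$'s and $f$'s that have decayed; the factor $p^{-2n}$ in front of $h_{a_k}-d_1$ reflects that all $2n$ relevant vectors have decayed once. I would then apply Lemma~\ref{lm;Destimate2} to get $D,D_\delta<p^{-a_k}h_{a_k}$. Here the only extra input over Lemma~\ref{lm;Destimate} is that the additional term $(d_n-c_n)p^{-n}$ is absorbed by replacing $c_n$ with $d_n$ in $D_0$; Lemma~\ref{lm:D_0} then concludes. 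The local densities from Corollary~\ref{cor:localdensity}(3) are all at most $1+p^{n-m}\le 2$.

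Substituting into (\ref{eq:S1''S>1''border2}), and using $a_k=a$, $h_P=h_a$ and the geometric series $\sum_j p^{(a+1)j-2nj}=1/(p^{2n-a-1}-1)\le 1/(p^{a+1}-1)$ (because $n\ge a+1$), gives
\[
S''(M)<\frac{1-p^{-a-1}}{1+p^{-(m+1)}}\Bigl(1+\frac{2}{p^{a+3}-1}\Bigr)<1 .
\]
Both bounds are independent of $M$, because the local densities depend only on the class $\left(\frac{M}{p}\right)$. This yields a uniform $\alpha<1$.

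The main obstacle is the bookkeeping in the second step. I need to make sure the index exponents on the new segment $(c_n,d_n]$ really equal $p^{-n}$ relative to the first segment, and that the strict inequality survives in the thick subcase. For the thick subcase I would first try to mimic the thin computation, using the slack term $(p-1)(c_n-c_1)+(p-1)d_n$ provided by Lemma~\ref{lm:D_0}.
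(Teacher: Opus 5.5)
Your overall route is the paper's: $S'(M)\le S''(M)$ from Lemma~\ref{lm:isometric embedding in the tight case partial 1}, the split $S''=S''_1+S''_{>1}$, and Lemma~\ref{lm;Destimate2} for $D,D_\delta$. However, the final numerical step does not go through as you have written it.

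The problem is the rank of the relevant lattice. In borderline case~0, $\cL_{\mathrm{rv}}=\mathrm{Span}_{\bZ_p}\{e',f',e_1,\dots,e_n,f_1,\dots,f_n\}$ has rank $2n+2$, not $2n$. On the last segment $(d_1,h_{a_k}]$ only $e_1,\dots,e_n,f_1,\dots,f_n$ have decayed, which is why that segment carries $p^{-2n}$. The vectors $e',f'$ decay only after $h_{a_k}$. Consequently $[\cL:\cL_{\mathrm{brd}}+p^j\cL]=p^{(2n+2)j}$, and on the segments of the $j$-th period the index is $p^{(2n+2)j+i}$, not $p^{2nj+i}$. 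So when you transport (\ref{eq:S1''S>1''border2}) to this case, the exponent $2nj$ must become $(2n+2)j$.

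Your own ingredients do not reach your conclusion. With your exponent, your geometric-series bound $1/(p^{2n-a-1}-1)\le 1/(p^{a+1}-1)$, and your density bound $\delta'_0\le 2$, what you actually get is
\[
S''(M)<\frac{1-p^{-a-1}}{1+p^{-(m+1)}}\Bigl(1+\frac{2}{p^{a+1}-1}\Bigr)=\frac{1+p^{-a-1}}{1+p^{-(m+1)}} .
\]
This exceeds $1$, since $a=a_k\le m-1$. So the displayed bound with $p^{a+3}$ does not follow from the steps that precede it.

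Either of two repairs works.

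\emph{Correct the index.} With the right exponent, $\sum_{j\ge 1}p^{(a+1)j-(2n+2)j}=1/(p^{2n+1-a}-1)\le 1/(p^{a+3}-1)$, because $n\ge a+1$. This gives exactly your displayed bound, and that bound is $<1$ because $2(p^{a+1}-1)<p^{a+3}-1$.

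\emph{Correct the density.} Read Corollary~\ref{cor:localdensity}(3) as stated: in borderline case~0, $\delta'_j=1-p^{n-m}\le 1$. The bound $1+p^{n-m}$ you quoted belongs to borderline case~1. With this value, even the cruder exponent gives
\[
S''(M)<\frac{1-p^{-a-1}+(1-p^{n-m})p^{-a-1}}{1+p^{-(m+1)}}=\frac{1-p^{n-m-a-1}}{1+p^{-(m+1)}}<1 .
\]

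In both cases the bound depends only on $p,a,m,n$, so the uniform $\alpha<1$ follows as you say.
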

\subsection{Estimating $S'(M)$: the loose case}\label{sub:easysitu} Suppose that $a_k>a$. As compared to the tight situation, this is much easier, and we can treat all borderline cases all at once. 

We will make the decomposition $ S'(M)=S'_1(M)+ S'_{>1}(M)$ similar to (\ref{eq:decompositionofS''(M)}), where $S'_1(M)$ is the sub-sum of $S'(M)$ from $r=1$ to $h_{a_k}$, while $S'_{>1}(M)$ is the sub-sum from $r=h_{a_k}+1$ to $\infty$. On the other hand, since $L''_{P,r}$ is only defined for $r\leq h_{a_k}$, we can only make sense of $S''_1(M)$. 

\begin{lemma}\label{lm:isometric embedding in the loose case partial 1}
Notation as above.  For all $M$ coprime to $p$ we have $S'_1(M)\leq S''_1(M)$. 
\end{lemma}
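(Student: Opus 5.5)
We follow the proof of Lemma~\ref{lm:isometric embedding in the tight case partial 1}. The goal is the termwise inequality
\begin{equation*}
\frac{\delta(p,L'_{P,r},M)}{[L'_{P,1}\otimes\bZ_p:L'_{P,r}\otimes\bZ_p]}\leq \frac{\delta(p,L''_{P,r},M)}{[L''_{P,1}\otimes\bZ_p:L''_{P,r}\otimes\bZ_p]}
\end{equation*}
for every $1\le r\le h_{a_k}$. Summing it over $r$ gives $S'_1(M)\le S''_1(M)$. In the loose case the auxiliary lattices on this range are defined exactly as in the tight case. Since $H_1=h_P=h_a\ge h_{a_k}$, only the $j=0$ portion of the definition is relevant. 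So for $r\le h_{a_k}$, $L''_{P,r}\otimes\bZ_p$ is $\cL_{\mathrm{brd}}+\cL_{>i}+p\cL$ or $\cL_{\mathrm{brd}}+\cL'_{\le i}+p\cL$, according to the position of $r$ among $c_1\le\dots\le c_n\le d_n\le\dots\le d_1$. Each such lattice contains $p\cL$. Hence the comparison only involves the mod $p$ reductions, where Proposition~\ref{prop:L+thin}(a),(b) applies. Those parts were proved without assuming $a_k=a$: only part (c) used tightness.

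First, for $r\le c_n$, Proposition~\ref{prop:L+thin}(a) identifies $\overline{L'_{P,r}\otimes\bZ_p}$ with the span of $\overline{e'},\overline{f'}$, all $\overline{f_i}$, and those $\overline{e_i}$ with $v_t(x_i)\ge r$. This is exactly $\overline{L''_{P,r}\otimes\bZ_p}$ after adjoining $\overline{\cL}_{\mathrm{brd}}$, since the redundant and borderline indices have $v_t(x_i)\ge c_n\ge r$. We still need $L'_{P,r}\supseteq p\cL$ here, i.e.\ that $p\cL$ has not yet decayed. This holds because the second-order decay rates are at least $\nu^{\min}_2\ge h_{a_k}\ge r$ by Proposition~\ref{prop: lower bound for d_r}. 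So $L'_{P,r}=L''_{P,r}$ for $r\le c_n$, as in Remark~\ref{rmk:bigremarkonL'L''}, and equality holds there.

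For $c_n<r\le h_{a_k}$ we use Proposition~\ref{prop:L+thin}(b). It gives
$$\overline{L'_{P,r}\otimes\bZ_p}=\mathrm{Span}\{\overline{e'},\overline{f'},\overline{f_i}:v_t(y_i)\ge r\}\oplus\overline{\Delta}_r,$$
where $\overline{\Delta}_r$ projects isometrically into $\overline{\cL}_{\mathrm{brd}}$. Again $L'_{P,r}\supseteq p\cL$ in this range by the same lower bound on second-order decay. Therefore
$$\dim \overline{L'_{P,r}\otimes\bZ_p}\le \dim \overline{L''_{P,r}\otimes\bZ_p}.$$
Since both lattices contain $p\cL$, this gives $[L'_{P,1}:L'_{P,r}]\ge[L''_{P,1}:L''_{P,r}]$ at $p$, and the two lattices agree away from $p$ by construction.

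The comparison then splits into two cases. If the index inequality is strict, $L'_{P,r}$ has index larger by a factor of at least $p$. We then need $\delta(p,L'_{P,r},M)\le p\,\delta(p,L''_{P,r},M)$, or that $\delta(p,L'_{P,r},M)=0$ whenever $\delta(p,L''_{P,r},M)=0$. The local density $\delta(p,\cdot,M)$ of a lattice containing $p\cL$ depends only on the quadratic space $\overline{L}$ modulo its radical. For both lattices this nondegenerate quotient is, respectively, the image of $\overline{\Delta}_r$ and $\overline{\cL}_{\mathrm{brd}}$: the vectors $\overline{e'},\overline{f'},\overline{f_i}$ span an isotropic radical there. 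Because of the isometric embedding, the densities then satisfy the required bound, by the computation behind Lemma~\ref{lm:computedelta} and Corollary~\ref{cor:localdensity}. If the indices are equal, $\overline{\Delta}_r$ maps isomorphically onto $\overline{\cL}_{\mathrm{brd}}$, and the densities coincide.

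The step I expect to be the main obstacle is justifying the density comparison in the strict-index case when $\overline{\Delta}_r$ is a proper subspace. A degenerate reduction requires the local density formula for lattices that are not self-dual. I would handle it as the tight-case proof implicitly does, by bounding $\delta$ crudely. The ratio $\delta(p,L',M)/\delta(p,L'',M)$ is at most $1+O(p^{-1})$ times a power of $p$ that is controlled by the codimension. That codimension also appears in the index, so the inequality should persist.
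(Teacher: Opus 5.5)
Your proposal is correct and is essentially the paper's argument: the paper only says the proof is the same as that of Lemma~\ref{lm:isometric embedding in the tight case partial 1}, i.e.\ a termwise comparison for $r\le h_{a_k}$ via Proposition~\ref{prop:L+thin}(a),(b), the index inequality, and the density comparison through the isometric embedding into $\overline{\cL}_{\mathrm{brd}}$, exactly as you set it up. The step you flag as the main obstacle is not actually one: by your own observation, for $p\nmid M$ the density depends only on the nondegenerate quotient of $\overline{L}$, so Lemma~\ref{lm:computedelta} (applied to a self-dual lift of that quotient) puts it in $\{0\}\cup[1-p^{-1},2]$ with no powers of $p$ at all, and when the index of $L'_{P,r}$ is strictly larger it is larger by a factor $\ge p\ge 3$, which suffices since $2\le p(1-p^{-1})$; the same reasoning also handles $r\le c_n$, where you only need $\overline{L'_{P,r}\otimes\bZ_p}\subseteq\overline{L''_{P,r}\otimes\bZ_p}$ rather than the equality you derive from the assertion $v_t(x_j)\ge c_n$ for redundant $j$, which the paper never actually proves.
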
 
\begin{proof}
    The proof is similar to Lemma~\ref{lm:isometric embedding in the tight case partial 1}. 
\end{proof}
Define the quantity $D_\delta$ exactly the same way as in the tight situation. As noted in Remark~\ref{rmk:hardtoeasy}, we still have the bound $D_\delta\leq p^{-a_{k}}h_{a_k}$ (in all thin and tight and all borderline cases). Note that $h_a\geq h_{a_k}$. The same computation as in \S\ref{sub:hardsitu} shows that in all cases we have 
$$S'_1(M)\leq S_1''(M)\leq \frac{p^{a+1}-1}{h_a}\frac{D_\delta}{p(1+p^{-{(m+1)}})}  \leq \frac{p^{a-a_k}-p^{-a_k-1}}{1+p^{-{(m+1)}}}\frac{h_{a_k}}{h_a}<\frac{p^{-1}}{1+p^{-{(m+1)}}}.$$
On the other hand, it is easy to estimate $S_{>1}'(M)$: one can just use the coarsest bounds. By Theorem~\ref{thm: superspecial rapid decay}, we have a rank $a+a_k+2$ saturated sublattice of rapid decay. Each primitive vector in this sublattice can at most lift to $h_{a_k}$. Upon using the coarsest upper bound for the local density (which is $2$), we find that  
$$S_{> 1}'(M)\leq \frac{p^{a+1}-1}{h_a}\frac{2}{p(1+p^{-(m+1)})} \sum_{j=1}^\infty\frac{p^{(a_k+1)j}h_{a_k}}{p^{(a+a_k+2)j}}\leq \frac{2p^{-1}}{1+p^{-(m+1)}}.$$
As a result, we have 
$$S'(M)=S_1'(M)+S_{>1}'(M) < \frac{3p^{-1}}{1+p^{-(m+1)}} \stackrel{(p\geq 3)}{<} 1. $$
\begin{proof}[Proof of Theorem~\ref{lm:localglobalEisensteinSSP}]    
The above paragraph treats the loose case. The tight case is established in Corollaries~\ref{cor:main in tight borderline case 2}, \ref{cor:main in tight borderline case 1} and \ref{cor:main in tight borderline case 0}. 
\end{proof}
\section{Decay of special endomorphisms: supersingular but not superspecial}\label{sec: supersingular}
We closely follow the exposition and notation in \cite[Sections 4 and 5]{MST}. The computations are similar to the ones in \emph{loc. cit.} and the ones carried out in the superspecial case above, and so we will be content with only providing an outline of the calculations.

We work at a supersingular point $P$. Set $n = \frac{t_P}{2}$ and $m = (b-2n)/2$. To ease exposition, we set $n' = n-1$. Note that the case $n' =0$ is precisely the superspecial case, and so we have that $n' >0$. As recalled in \cref{subsec: loc supersingular}, we have a decomposition of $W$-lattices $\bL = \bL_0 \oplus \bL_1$ and hence a decomposition of their Tate modules $\cL = \cL_0 \oplus \cL_1$. Recall that $\cL_1$ is self-dual over $\bZ_p$, so $\mathrm{rank}_W \bL_0 = \mathrm{rank}_{\bZ_p} \cL_0 = 2n$ and $\mathrm{rank}_W \bL_1 = \mathrm{rank}_{\bZ_p} \cL_1 = 2m$. In order to keep notation consistent within our paper, we diverge from the notation in \cite{MST} and make the following changes.

\begin{table}[H]
\centering
\caption{Changes in notation between current paper and \cite{MST}}
\label{tab:notation-changes}
\renewcommand{\arraystretch}{1.3}
\begin{tabular}{|c|c|c|}
\hline
\textbf{Object} & \textbf{Notation in \cite{MST}} & \textbf{Notation in current paper} \\
\hline
Basis for $\mathcal{L}_0$ & $\{e_i, f_i : 1 \le i \le n \}$ & $\{e'_i, f'_i : 1 \le i \le n \}$\\
\hline
Basis for $\mathcal{L}_1$ & $\{e'_j, f'_j : 1 \le j   \le m \}$ & $\{e_j, f_j : 1 \le j \le m \}$\\
\hline
Basis for $\bL_0$ & $\{v_i, w_i : 1 \le i \le n \}$ & $\{v'_i, w'_i : 1 \le i \le n \}$ \\
\hline
Coordinates (1st set) & $\{x_i, y_i : 1 \le i \le n' \}$ & $\{x'_i, y'_i : 1 \le i \le n' \}$ \\
\hline
Coordinates (2nd set) & $\{x'_j, y'_j : 1 \le j \le m \}$ & $\{x_j, y_j : 1 \le j \le m \}$ \\
\hline
Function $Q_i, 0 \le i \le n' - 1$ & $y_{i + 1}$ & $y'_{i + 1} = Q_i$ \\
\hline
Function $Q_{n'}$ & $y_n$ & $Q = \sum_{i=1}^{n'} x'_i y'_i + \sum_{i=1}^m x_i y_i = Q_{n'}$ \\
\hline
Function $Q_{n'+a}$ $(a > 0)$ & $y_{n+1}, y_{n+2}, \dots$ & $Q_{n'+a} = \sum_{i=1}^m (x_i y_i^{p^{a}} + x_i^{p^{a}}y_i)$ \\
\hline
\end{tabular}
\end{table}

Note that $\{e_j, f_j : 1 \le j \le m \}$ is a basis for $\bL_1$ as well, and the basis $\{v'_i, w'_i : 1 \le i \le n \}$ for $\bL_0$ is provided by \cite[Lem.~4.5]{MST}. Again let $R := W[\![x'_i, y'_i, x_j, y_j]\!]_{1 \le i \le n', 1 \le j \le m}$ be the structure ring of $\shS^{/P}_W$. The Frobenius operator $\Frob$ on $\bL_{\cris}(R) \simeq \bL \tensor R$ is given by $(uB) \circ \sigma$ with respect to $\{ v'_i, w'_i, e_j, f_j \}$, where $u$ is a matrix with entries in $R$ and $B$ is a matrix with entries in $\frac{1}{p} W \subset w[1/p]$. The general formulas for $u$ and $B$ for a supersingular $P$ that extends \cref{eqn: u and B} in \S\ref{subsub:explicitcoordinatheonL} for the superspecial case is spelled out in \cite[\S4.8]{MST}.

Combining these formulae with the argument of \cref{NPstrata}, we find that the local equations of the newton strata through $P$ are defined by vanishing of the $Q_i$'s. 
\begin{proposition}[Ogus]
    \begin{enumerate}[label=\upshape{(\alph*)}]
        \item For $0\leq i < m+n' -1$, the Newton stratum of height $i+2$ is defined by the vanishing of the equations $Q_0, \hdots Q_i$. 

        \item The supersingular stratum is defined by the vanishing of the equations $Q_0, Q_1, \hdots Q_{n'+m-1}$. 

        \item For $i \geq n'+m$, we have $Q_i $ is in the ideal generated by $Q_0\hdots Q_{n'+m-1}$.
    \end{enumerate}
\end{proposition}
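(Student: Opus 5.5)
The plan is to reduce the statement to the superspecial computation of \cref{NPstrata}, following the recipe summarized in \cref{rmk: summarize loc eqn}. First I would fix the basis $\{v'_i, w'_i, e_j, f_j\}$ of $\bL$, ordered so that modulo $p$ every basis vector except the first lies in $\Fil^0\bL_{\dR,P}$. Writing $\Frob=(uB)\circ\sigma$ with the explicit $u$ and $B$ of \cite[\S4.8]{MST}, I would then compute the first column of $p\Frob^i$, i.e.\ $p\Frob^i$ applied to the first basis vector. The reason this suffices is the argument already given in the proof of \cref{NPstrata}: by \cref{lem: Frob and E}, $\sF_{h+1}$ is cut out from $\sF_h$ by the vanishing of $\pi(p\Frob^h)$ on $\gr^{-1}\bL_\dR$. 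This is exactly the vanishing modulo $p$ of the first entry of $p\Frob^h(v'_1)$ over $\wt{\sF}_h$.

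The key computation is an induction in the spirit of the formula for $\Frob^i(e_1)$ in the proof of \cref{NPstrata}. The $\bL_0$ block of $B$ acts as a cyclic shift of length $n$ with powers of $p$, as in \cite[Lem.~4.5]{MST}. So for $i\le n'-1$ the first entry of $p\Frob^{i+1}(v'_1)$ should be, modulo $p$ and modulo the ideal $(Q_0,\dots,Q_{i-1})$, equal to the coordinate $y'_{i+1}=Q_i$. For $i=n'$ it should be the full quadratic form $Q$, and for $i=n'+a$ it should be the twisted form $\sum_j(x_jy_j^{p^a}+x_j^{p^a}y_j)$.

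To make this induction precise, I would define classes $\Delta_i(d)$ of words in the $Q_j^{(\beta)}$, $j\le i$, weighted by $p^{-r}$, exactly as in that proof. I would then show that $\Frob^{i}(v'_1)$ has first entry of the form $a+p^{-1}Q_{i-1}$ with $a\in\Delta_{i-2}(0)$, and other entries in the appropriate $\Delta_{i-2}(-1)$. On $\wt{\sF}_i$ we have $p\mid Q_j$ for $j\le i-2$, so these correction terms become integral and vanish modulo $p$. This yields (a) and (b); for (b) I would also check, using \cref{prop: Shen-Zhang} together with the remark matching the trichotomy to that proposition, that the supersingular locus is $\sF_{n'+m+1}$.

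For (c) I would argue as in \cref{NPstrata}(c). In the split case, \cref{cor:gen-nonteducdc}(2) gives $(\sF_h,E_h)=(\sF_{m+n'+1},E_{m+n'+1})$ for all $h\ge m+n'+1$. By the inductive description, the further generators $Q_i$ for $i\ge n'+m$ therefore lie in the ideal $(Q_0,\dots,Q_{n'+m-1})$.

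The main obstacle is the bookkeeping in the inductive step across the transition at $i=n'$. There the cyclic $\bL_0$-shift has exhausted the coordinates $y'_k$, and the first entry must pick up the genuinely quadratic term $Q$, and then the twisted terms. I would handle this by tracking separately the $\bL_0$-part (a companion-matrix recursion) and the $\bL_1$-part, which behaves exactly as in the superspecial case. I would then verify that all cross terms lie in the $\Delta$ classes, so that they die modulo $p$ on $\wt{\sF}_i$.
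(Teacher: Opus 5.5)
Your proposal follows the paper's route: the paper proves this by combining the explicit formulas for $u$ and $B$ from \cite[\S4.8]{MST} with the argument of \cref{NPstrata} (the procedure of \cref{rmk: summarize loc eqn}), and obtains (c) from the stabilization $(\sF_h,E_h)=(\sF_{m+n'+1},E_{m+n'+1})$ of \cref{cor:gen-nonteducdc}(2), exactly as you propose, with your sketch being more detailed than the paper's one-line justification. One small point: for (b), the scheme-theoretic equality $\sF_{m+n'+1}=\shS_\infty$ (i.e.\ reducedness of $\sF_{m+n'+1}$) comes from \cref{cor:gen-nonteducdc}(2), since \cref{prop: Shen-Zhang} only identifies the underlying closed sets.
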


We will recall notation for future use. Let $A$ (resp. $B$,$D$) denote the top left $2n \times 2n$ (resp. top right $2n \times 2m$, bottom left $2m \times 2n$) block of $u' - I$ where $u'$ is as in \cite[Section 4.11]{MST}. As in \cite[Section 4.6]{MST}, we let $S_0$ be the change-of-coordinates matrix from $\{e'_i,f'_i \}$ to $\{ v'_i,w'_i\}$, i.e. the first (resp. last) $n$ columns of $S_0$ are the coordinates of the $v'_i$ (resp. $w'_i$) expressed in terms of the $e'_i,f'_i$. As in \emph{loc. cit.}, there is a unique matrix $S'_0 \in \textrm{GL}_{2n}(W)$ such that\[
S_0 = S'_0 
\left[
\begin{array}{c|c}
p^{-1} I & 0 \\ \hline
0 & I
\end{array}
\right].
\] 

\subsection{Setup with local curve}\label{sec: local setup supersingular}
We will assume that $Q_0(t), \hdots Q_{a-1}(t) = 0$ and that $Q_a(t)$ is the first non-zero function of $t$. We refer to \cite[Section 5.7]{MST} for the definition of the matrices $K_i$. The matrices $K_i$ are defined for $i = 1, \hdots n+1$ in \emph{loc. cit.}. We define $K_i$ for every $i$. Indeed, set $A_{n+i} = CD^{(i)}$. This is the $2n\times 2n$ matrix which is zero everywhere except for the $(n,2n)$-entry, and this entry is $p^{-1}Q_{n'+i}$. Define $K_{n+i} = S'_0 A_{n+i} ({S'}_0^{-1})^{(i)}$. We note that \cite[Lemma 5.9 (2)]{MST} is true for $K_i$ with values of $i$ greater than $n+1$ as well, and the same proof goes through. The discussion immediately after \emph{loc. cit.} also holds for arbitrary $i$. We set $h_i := v_t(Q_i(t))$. 
\begin{remark}
    The valuations of these functions were denoted by $a_i$ in \cite{MST}. 
\end{remark}
Note that we have $h_i = \infty$ for $i< a$. Let $a_1 = a < a_2 \hdots < a_k$ be the maximal sequence of integers such that $a_k \leq n+m-1$, and $h_{a_1} \geq h_{a_2} \hdots \geq h_{a_k}$. There are three main cases. 

\begin{enumerate}
    \item $a_1<n', a_k \leq n'$. 

    \item $n' \leq  a_1$. 

    \item $a_1 < n' < a_k$. 
\end{enumerate}

Cases 2 and 3 are treated identically to establish rapid decay of a subspace having rank at least $2a+2$, and Cases 1 and 3 are treated identically to prove Theorem \ref{thm: main local version}. We will therefore only analyze Cases 1 and 2, and only briefly comment on Case 3. 
\subsection{Rapid decay}
\subsection*{Case 1}
In this case, we will show that a sufficiently large-rank sublattice of $\cL_0$ decays, and this will be sufficient for our purposes.
\begin{definition}
    We say that a primitive vector $v \in \cL_0$ decays rapidly if $p^r v$ does not lift to an endomorphism modulo $t^{1+ h_a (1 + p^{a+1} + p^{2(a + 1)} + \hdots p^{r(a + 1)})}$. We say that a saturated sublattice of $\cL_0$ decays rapidly if every vector in that sublattice decays rapidly. 
\end{definition}

\begin{theorem}\label{thm: supersingular rapid decay}
    There is a saturated sublattice of $\cL_0$ having rank $2a_1 + 2$ that decays rapidly. 
\end{theorem}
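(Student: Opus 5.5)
We propose to follow the strategy of \cite[Section 5]{MST} for the supersingular case, adapted to higher-order decay. First we would express the horizontal extension $\tilde v$ of a vector $v \in \cL_0$ through the Frobenius product $F_\infty=\prod_i (I+F^{(i)})$. Here $F$ is written in the basis $\{e'_i,f'_i,e_j,f_j\}$ via the conjugated matrices $K_i = S'_0 A_i (S_0'^{-1})^{(i)}$, exactly as in \cite[\S5.7--5.9]{MST}. We would then extend \cite[Lemma 5.9]{MST} to all indices $i$, using the matrices $K_{n+i}$ defined above. As in Lemma~\ref{lm:Finftyexplicit}, this expresses the $p^{-r}$-coefficient of $F_\infty$ on $\cL_0$ as a sum of words $K_{i_1}K_{i_2}^{(\cdot)}\cdots$. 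In Case 1 we have $a_k\le n'$, and the relevant $Q_{i}$ for $i<n'$ are just the coordinates $y'_{i+1}$. The resulting words should then be governed by the valuations $h_{a_1},\dots,h_{a_k}$ in the same way as Lemma~\ref{lm:wordtruncate}. In particular, the minimal $t$-adic valuation of the $p^{-r-1}$ part of $p^r\tilde v$ should be at most $h_a(1+p^{a+1}+\dots+p^{r(a+1)})$. This bound is attained by the word obtained by iterating $Q_a$ $r+1$ times.

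Next we would determine when this minimal term survives. By Lemma~\ref{T:PDlemma} and the analogue of Corollary~\ref{cor:defloci}, $p^rv$ fails to lift modulo $t^{1+h_a(1+\cdots+p^{r(a+1)})}$ as soon as the coefficient of this minimal word, applied to $v$, is a unit. By the explicit form of $K_{a+1}$, which is essentially $S'_0$ twisted by $\sigma^{a+1}$, this coefficient is a $\sigma$-semilinear expression in the coordinates of $v\bmod p$. The key structural input is that $\cL_0\otimes W$ sits between $\bL_0$ and $p\bL_0$, with $\bL_0$ having a basis $v'_i,w'_i$ that is cyclic under $\Frob$ (the Artin-invariant structure of \cite[Lem.~4.5]{MST}). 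So the iterated $K$'s realize $\Frob^{a+1}$ restricted to $\bar{\cL}_0$. The vectors for which the leading coefficient vanishes would then form the kernel of an explicit $\bF_p$-linear condition. We expect this condition to be the vanishing of pairing against a $\Frob$-orbit of length $a+1$, with codimension $2a_1+2$ in a suitable sense. Concretely, we would aim to show that the subspace $\{v\in\bar\cL_0 : \langle v,\Frob^{j}(w'_n)\rangle \ne 0 \text{ for some } 0\le j\le a\}$ contains a saturated rank-$(2a+2)$ sublattice on which every primitive vector has a unit leading coefficient. This parallels the role of the Moore matrix in Claim~\ref{claim 1}.

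Finally, the rapid-decay condition must hold for every $r$ simultaneously. Here we would argue as at the end of Theorem~\ref{thm: superspecial rapid decay}. Rapid decay for each fixed $r$ depends only on the reduction modulo $p$, and it is monotone in $r$ by the analogue of Lemma~\ref{lem: order s < r}. So, by passing to a subsequence of $r$, a single mod-$p$ subspace works for all $r$.

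We expect the main obstacle to be the second step. There one must check that distinct minimal words cannot cancel, when $a_k>a$ allows several words of equal minimal valuation. This requires the interval and Moore-determinant argument of Proposition~\ref{prop: intervals} and Claim~\ref{claim 1}, now run inside $\cL_0$, where the quadratic form is divisible by $p$ rather than split. To handle it, we would first try to reduce to the $\bF_{p^2}$-structure on $\cL_0/p$ coming from $\lambda$ with $\sigma(\lambda)=-\lambda$. In that structure, the leading coefficients become Moore-type determinants in $n$ variables, which are nonvanishing on an $\bF_p$-independent set of size $2a+2$.
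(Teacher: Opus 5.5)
Your overall framework matches the route of \cite[\S5]{MST} that the paper follows: expanding $\tilde v$ in words in the $K_i$, reading off the leading coefficient of the obstruction for $p^r v$, a Moore-type nonvanishing argument, and a final passage to a subsequence of $r$. But the decisive step is missing. You never bound the $\bF_p$-dimension of the set of $v\in\bar\cL_0$ whose leading obstruction coefficient vanishes, and the mechanisms you suggest for this do not work.

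\begin{itemize}
\item The Frobenius-orbit condition collapses. Since $v\in\cL_0$ is $\Frob$-invariant and $\langle \Frob x,\Frob y\rangle=\sigma(\langle x,y\rangle)$, we have $\langle v,\Frob^j(w)\rangle=\sigma^j(\langle v,w\rangle)$. So pairing against a Frobenius orbit of length $a+1$ imposes \emph{one} condition, not a condition of codimension $2a_1+2$.
\item The $\bF_{p^2}$-structure coming from $\lambda$ exists only at superspecial points. Here $n\ge 2$, so it is not available.
\item Your Step~2 asserts non-lifting once the coefficient of the $Q_a$-iterated word is a unit. When $a_k>a$ there are other words of equal or smaller valuation that may cancel it. Controlling exactly this cancellation is the content of the theorem.
\end{itemize}

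The missing input is the strengthened form of \cite[Lemma~5.13]{MST}. At each order, once all minimal words are collected, the leading coefficient is $\bar Rv$ with $\bar R=\sum_{i=a_1}^{a_k}\alpha_i\,\sigma^i(\overline{R_{n+1}})$, where the $\alpha_i\in\bF$ are not all zero.

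For $v\in\bF_p^{2n}$ one has $\sigma^i(\overline{R_{n+1}})v=\sigma^i(\overline{R_{n+1}}v)$. So $\bar Rv$ is an additive polynomial with exponents $p^{a_1},\dots,p^{a_k}$, evaluated at $\overline{R_{n+1}}v$. Its roots form an $\bF_p$-space of dimension at most $a_k-a_1$. Combined with the nondegeneracy of $\overline{R_{n+1}}$ on $\bF_p^{2n}$ (the input already used in \cite[Lemma~5.13]{MST}), this gives $\dim_{\bF_p}\{v:\bar Rv=0\}\le a_k-a_1$.

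Running the proof of \cite[Theorem~5.2]{MST} with this bound yields a saturated, rapidly decaying sublattice of $\cL_0$ of rank at least $2n-(a_k-a_1)$. The Case~1 hypothesis $a_k\le n'=n-1$ is exactly what gives $2n-(a_k-a_1)\ge 2a_1+2$. In your proposal that hypothesis is used only to identify $Q_i$ with $y'_{i+1}$, and nothing in the argument produces the rank $2a_1+2$.
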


\subsubsection{Proof of rapid decay in Case 1}
The treatment of rapid decay for Case 1 is almost identical to the analysis carried out in \cite[Section 5]{MST}. Indeed, it suffices to solely work with $K_i$ with $a_{1}+1 \leq i \leq a_{k}+1 $. The results in \emph{loc. cit.} through Lemma 5.12 are just general and therefore hold. Lemma 5.13 of \emph{loc. cit.} has the following strengthening. 
\begin{lemma}\label{lem: stronger lemma 5.13 MST}
    Let $\alpha_{a_1} \hdots \alpha_{a_k} \in \mathbb{F}$ be not all zero. Consider the linear combination $\bar{R}  = \sum_{i=a_1}^{a_k} \alpha_i \sigma^{i} (\overline{{R}_{n+1}})$. Then $\dim_{\bF_p} \{ v \in \bF_p^{2n}: \bar{R}v = 0\} \leq a_k - a_1$.
\end{lemma}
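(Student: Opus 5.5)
We need to bound the $\bF_p$-dimension of the kernel of $\bar R=\sum_{i=a_1}^{a_k}\alpha_i\sigma^i(\overline{R_{n+1}})$ acting on $\bF_p^{2n}$ by $a_k-a_1$. The approach mirrors the proof of \cite[Lem.~5.13]{MST}, which treats a single term, combined with the Moore-determinant argument already used in Claim~\ref{claim 1} and Lemma~\ref{lm:some_shit}.

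First I would recall from \cite[\S5]{MST} the shape of $\overline{R_{n+1}}$. Since $K_{n+1}=S_0'A_{n+1}(S_0'^{-1})^{(1)}$ and $A_{n+1}$ has a single nonzero entry, the reduction $\overline{R_{n+1}}$ should be (up to units) a rank-one matrix $\bar u\,\bar w^{t}$. Here $\bar u,\bar w\in\bF^{2n}$ are read off from a column of $\bar S_0'$ and a row of $\overline{S_0'^{-1}}$. The point of \cite[Lem.~4.5]{MST} is that the row $\bar w$ has entries whose $\bF_p$-span is maximal: no nonzero $v\in\bF_p^{2n}$ satisfies $\sigma^j(\bar w)^t v=0$ simultaneously for $j$ in a window of length $n$, essentially because $\bar w$ is built from $\lambda$ with $\bF_{p^{2n}}$-conjugates. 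This is exactly what gives \cite[Lem.~5.13]{MST}.

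For $v\in\bF_p^{2n}$ we have $\sigma^i(v)=v$. Hence
\[
\bar R v=\sum_{i=a_1}^{a_k}\alpha_i\,\sigma^i(\bar u)\,\big(\sigma^i(\bar w)^t v\big).
\]
Set $\ell_i(v):=\sigma^i(\bar w)^t v=\sigma^i\big(\bar w^t v\big)$, so that $\ell_i(v)=\ell_0(v)^{p^i}$. I would first show that the vectors $\sigma^i(\bar u)$ for $a_1\le i\le a_k$ are $\bF$-linearly independent. This holds because $a_k-a_1<n$ in Case 1 (as $a_k\le n'=n-1$), and the conjugates of $\bar u$ span a space of dimension at least $n$, by the same structure used in \cite[Lem.~4.5]{MST}. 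Then $\bar Rv=0$ forces $\alpha_i\ell_0(v)^{p^i}=0$ for all $i$. Choose some $i_0$ with $\alpha_{i_0}\ne0$; this gives $\ell_0(v)=0$, so $\ker\bar R\cap\bF_p^{2n}\subseteq\ker(\ell_0|_{\bF_p^{2n}})$. That bound is too weak: it only gives codimension $1$. So the real argument must use more.

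The correct refinement, which is the main obstacle, is that $\overline{R_{n+1}}$ is not rank one over $\bF_p$-points in this crude sense. Instead the kernel of a single $\sigma^i(\overline{R_{n+1}})$ on $\bF_p^{2n}$ has dimension $0$ by \cite[Lem.~5.13]{MST}, and linear combinations can enlarge the kernel. I would therefore argue via a Moore-matrix count. Write $\bar R v=\sum_i\alpha_i\sigma^i(\overline{R_{n+1}}v)$ and set $\phi(v):=\overline{R_{n+1}}v\in\bF^{2n}$, which is $\bF_p$-linear in $v$. Suppose $V=\ker\bar R\cap\bF_p^{2n}$ has dimension $d>a_k-a_1$, with basis $v_1,\dots,v_d$. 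Take a coordinate $c$ with $\phi(v_j)_c$ not all zero, and set $\beta_j=\phi(v_j)_c$. The $\beta_j$ are $\bF_p$-independent by injectivity, which is \cite[Lem.~5.13]{MST}; I expect getting a single coordinate functional that is injective on $\phi(V)$ to need the conjugate structure. The relations $\sum_i\alpha_i\beta_j^{p^i}=0$ for $j=1,\dots,d$ say that the Moore matrix $(\beta_j^{p^{i}})$, of size $d\times(a_k-a_1+1)$, kills $(\alpha_i)\neq0$. Since $d\ge a_k-a_1+1$ and the $\beta_j$ are $\bF_p$-independent, any $(a_k-a_1+1)$ rows form an invertible Moore matrix (up to a Frobenius twist by $p^{a_1}$). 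This forces $\alpha=0$, a contradiction. The delicate step is producing that injective coordinate. I would try a generic $\bF$-linear functional $\lambda\circ\phi$: it is injective on the finite set $\phi(V)\setminus\{0\}$, and $\bF_p$-independence is preserved because $\lambda$ is $\bF$-linear, hence $\bF_p$-linear. This is compatible with Frobenius provided $\lambda$ is defined over $\bF_p$, which I can take, given that $\phi$ is injective on $V$.
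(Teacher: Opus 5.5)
Your overall plan (read off one coordinate of $\bar R v$, then apply a Moore-matrix count) is the right shape. But the step you flag as delicate does not go through, and no argument built only from the inputs you allow yourself can succeed.

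\textbf{The functional step.} A generic $\bF$-linear functional is injective on $\phi(V)$, but it is not defined over $\bF_p$, so it does not commute with $\sigma^i$. An $\bF_p$-rational functional, such as a coordinate, does commute with $\sigma^i$. However, it need not be injective on an $\bF_p$-subspace of $\bF^{2n}$; for instance, no rational functional is injective on $\bF_p^2\subset\bF^{2n}$. So ``generic'' and ``defined over $\bF_p$'' cannot both be arranged.

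\textbf{Why injectivity of $\phi$ is not enough.} Your inputs are that $\phi$ is $\bF_p$-linear and that each single $\sigma^i(\overline{R_{n+1}})$ is injective on $\bF_p^{2n}$. Both hold if $\overline{R_{n+1}}$ were the identity. Then take $a_k=a_1+1$, $\alpha_{a_1}=1$, $\alpha_{a_1+1}=-1$. This gives $\bar R=0$, whose kernel has dimension $2n>a_k-a_1$. So the specific shape of $\overline{R_{n+1}}$ has to enter the proof.

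\textbf{The missing structure is the rank-one form you set aside.} Since $A_{n+1}$ has a single nonzero entry, $\overline{R_{n+1}}=\bar u\,\bar w^{t}$ has rank one. For a rank-one matrix, the single-term injectivity you cite is exactly the statement that $v\mapsto \bar w^t v$ is injective on $\bF_p^{2n}$; this is the Ogus / \cite[Lem.~4.5]{MST} input. Your own window condition already gives it, because for rational $v$ one has $\sigma^j(\bar w)^t v=\sigma^j(\bar w^t v)$. Consequently your claim that $\ker(\ell_0|_{\bF_p^{2n}})$ ``only gives codimension $1$'' is wrong: that kernel is $0$.

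\textbf{The argument closes without the unproved independence of the $\sigma^i(\bar u)$.}
\begin{enumerate}
\item Choose a coordinate $c$ with $\bar u_c\neq 0$, and let $V=\ker\bar R\cap\bF_p^{2n}$.
\item For rational $v$,
$(\bar R v)_c=\sum_{i}\alpha_i\,\bar u_c^{p^i}\,(\bar w^t v)^{p^i}$.
\item So $\bar w^t(V)$ lies in the zero set of the nonzero additive polynomial $\sum_i \alpha_i\bar u_c^{p^i}X^{p^i}$. Since $v\mapsto\bar w^t v$ is injective on rational vectors, $\bar w^t(V)$ is an $\bF_p$-space of dimension $\dim V$.
\item That polynomial is the $p^{a_1}$-th power of a nonzero polynomial of degree at most $p^{a_k-a_1}$, so its zero set has at most $p^{a_k-a_1}$ elements. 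Hence $\dim V\le a_k-a_1$.
\end{enumerate}
Equivalently, your Moore-matrix count works verbatim with $\beta_j=\phi(v_j)_c$ for this particular rational coordinate $c$.

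This is the content the paper's one-line proof takes from \cite[Lem.~5.13]{MST}, with the window of Frobenius twists moved to $[a_1,a_k]$.
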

The proof is the same. 

Now, the proof of the decay lemma goes through exactly as in Theorem 5.2 and Section 5.14 of \cite{MST}. We note that 5.14 requires $a_k \leq n'$ -- indeed, without this assumption, one must a-priori work with $K_i$ for all values of $i$ bounded above by $n'+m-1$ (and this is precisely what we do in Cases 2 and 3). Therefore, \cite[Theorem 5.15]{MST} is only true in the setting of this case. However, the proof of \cite[Theorem 5.2]{MST} together with Lemma \ref{lem: stronger lemma 5.13 MST} in place of \cite[Lemma 5.13]{MST} actually shows that a saturated sub-lattice of $\cL_0$ having rank at least $2n - (a_k-a_1)$ decays rapidly. We have that $2n - (a_k - a_1) \geq 2a_1+2$ (with equality holding only when $a_k = a_1 = n'$). 

\subsection*{Cases 2 and 3}
We will work in the setting of Case 2. We again note that the argument for rapid decay that we outline holds verbatim for Case 3. 

The setup with words, valuations, etc is the same as in the superspecial case. Specifically, the definitions, lemmas, propositions, and proofs from Section \ref{Sec: word properties decay defn} all hold with no change. 

The results regarding isotropic vectors in $\cL_1$ also hold with little change in the statements and no change in the proofs. Indeed, we have the following results.

\begin{lemma}\label{lemma: nonsuperspecial height valuation}
    The number of distinct $i$ such that $\nu_t(x_i) \geq h_{a_k}$ is at most $m-1 - (a_k-n') = m+n-2-a_k$.
\end{lemma}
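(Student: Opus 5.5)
We would mirror the proof of \cref{lemma: height valuation}, replacing the superspecial Newton-strata equations by Ogus' description at a supersingular point of Artin invariant $n=n'+1$. The key point is the shift of indices: here the functions $Q_0,\dots,Q_{n'-1}$ are the coordinates $y'_1,\dots,y'_{n'}$, $Q_{n'}$ is the full quadratic form, and $Q_{n'+a}$ for $a>0$ involves only the $x_j,y_j$. Consequently a relation forced at index $a_k$ on the $\cL_1$-coordinates is ``really'' a relation at index $a_k-n'$ in an $m$-dimensional split picture. This explains the bound $m-1-(a_k-n')$.

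First, we choose a saturated sublattice $L'\subseteq L$, self-dual at $p$, whose special cycle through $P$ is the GSpin Shimura subvariety $\shS'$. Its formal neighbourhood at $P$ should be cut out by $x_j=y_j=0$ for $j>a_k-n'$; concretely, we drop the last $m-(a_k-n')$ hyperbolic planes of $\cL_1$. For $\shS'$ the point $P$ is still supersingular with the same $\cL_0$, and its $\cL_1$-part has rank $2(a_k-n')$. The proposition of Ogus above, applied to $\shS'$ (with $m$ replaced by $a_k-n'$), shows that its supersingular locus is cut out by $Q_0,\dots,Q_{a_k-1}$. Part (c) then gives $Q_{a_k}\in(Q_0,\dots,Q_{a_k-1})$ in the coordinate ring of $\shS'^{/P}$. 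Pulling back, we obtain in $\bF[\![x'_i,y'_i,x_j,y_j]\!]$ an identity
\begin{equation*}
Q_{a_k}=\sum_{i<a_k} f_iQ_i+\sum_{j>a_k-n'}(g_jx_j+g'_jy_j).
\end{equation*}

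Next we evaluate this identity along the curve. Since $Q_i(t)=0$ for $i<a$ and $v_t(Q_i(t))\ge h_{a_k}$ for the relevant $a\le i<a_k$, which follows from the minimality built into the sequence $a_1<\dots<a_k$, we get the following: if every $x_j,y_j$ with $j>a_k-n'$ had valuation $>h_{a_k}$, then $v_t(Q_{a_k}(t))>h_{a_k}$, a contradiction. Here one must check that the terms $f_iQ_i$ with $a\le i<a_k$ cannot produce valuation exactly $h_{a_k}$; we would handle this as in the superspecial proof by comparing monomials. Indeed, $Q_{a_k}$ is a Frobenius-twisted form whose monomials $x_j^{p^{a_k-n'}}y_j$ do not appear in the ideal generated by the lower $Q_i$ except through the linear terms. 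Hence some $x_j$ or $y_j$ with $j>a_k-n'$ has valuation $\le h_{a_k}$. After swapping $x_j\leftrightarrow y_j$ and reindexing (orthogonal changes of coordinates), we iterate on the remaining coordinates. This gives at most $m-1-(a_k-n')$ indices with $v_t(x_i)>h_{a_k}$.

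Finally, to upgrade $>$ to $\ge$, we show as in \cref{lemma: height valuation} that the coefficients $g_j$ have nonzero constant term. We do this by comparing the coefficient of the monomial $x_j^{p^{a_k-n'}}y_j$ (or its twist) on both sides. The main obstacle is this bookkeeping step. The lower $Q_i$ with $i<n'$ are now linear coordinates $y'_{i+1}$, so the ideal $(Q_0,\dots,Q_{a_k-1})$ absorbs more monomials than in the superspecial case. We would need to verify, using the explicit matrices $u',K_i$ of \cite[\S4]{MST}, that the linear coefficients still have unit constant term.
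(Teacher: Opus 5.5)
Your overall route is the paper's: it simply says the proof is identical to that of \cref{lemma: height valuation}. You pass to the sub-Shimura variety $\shS'$ with $m'=a_k-n'$, use part (c) of Ogus' proposition to get $Q_{a_k}\in(Q_0,\dots,Q_{a_k-1})$ on $\shS'$, and compare $t$-valuations along $C$. Two of your steps, however, fail as written.

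\textbf{The swap step.} The lemma is about the $x$-coordinates of a \emph{fixed} basis. In \cref{cor: nonsuperspecial isotropic subspace first order decay}(a) the vectors $e_1,\dots,e_{m-m'}$ are prescribed to span a given isotropic lattice, so you may permute hyperbolic planes but not swap $x_j\leftrightarrow y_j$. Your identity contains both $g_jx_j$ and $g'_jy_j$, so it only tells you that some $x_j$ \emph{or} $y_j$ is small. The swap then proves a statement about a different basis. The fix is to record what the pullback actually produces. The part of each $Q_i$ involving indices $j>m'$ consists of the monomials $x_jy_j$, $x_jy_j^{p^s}$, $x_j^{p^s}y_j$, each of which is divisible by $x_j$. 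Hence $Q_{a_k}=\sum_{i<a_k}f_iQ_i+\sum_{j>m'}g_jx_j$, with only the $x_j$ appearing; this is why the superspecial proof uses the ideal $(Q_0,\dots,Q_{a_k-1},x_{a_k+1},\dots,x_m)$.

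\textbf{The upgrade step.} Your final step aims at the wrong property. If $g_j$ had a unit constant term, then $v_t(g_jx_j)=v_t(x_j)$, and you would only get $v_t(x_j)\le h_{a_k}$. This also cannot happen: for $a_k>n'$ the series $Q_{a_k}$ has no linear part, and the only linear generators are $Q_i=y'_{i+1}$ ($i<n'$) and the $x_j$, whose linear monomials are independent. What you need is that \emph{all} coefficients lie in the maximal ideal $\mathfrak m$. (The phrase about constant terms in the proof of \cref{lemma: height valuation} must be read this way.) You can arrange this as follows.
\begin{itemize}
\item On $\shS'$, reduce modulo $(y'_1,\dots,y'_{n'})$. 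There $Q_{n'+s}$ becomes homogeneous of degree $p^s+1$ in the $x_j,y_j$, so you may replace $f_{n'+s}$ by its homogeneous component of degree $p^{m'}-p^s>0$.
\item Lift back and compare linear monomials. This forces the coefficients of $y'_{i+1}$ ($i<n'$) to have zero constant term.
\item The $g_j$ produced by pulling back are then visibly in $\mathfrak m$.
\end{itemize}
Since $C$ passes through $P$, every coefficient has positive $t$-valuation along $C$. Now suppose $v_t(x_j)\ge h_{a_k}$ for all $j>m'$. Using $h_i\ge h_{a_k}$ for $i<a_k$, every term on the right has valuation $>h_{a_k}$, contradicting $v_t(Q_{a_k}(t))=h_{a_k}$. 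This also settles the caveat you raised about $f_iQ_i$ when $h_{a_j}=h_{a_k}$ for some $j<k$. It gives the strict bound directly, so no separate passage from $>$ to $\ge$ and no appeal to the matrices $u',K_i$ is needed.
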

The proof is identical to the proof of Lemma \ref{lemma: height valuation}. 

\begin{corollary}\label{cor: nonsuperspecial isotropic subspace first order decay}
    \begin{enumerate}[label=\upshape{(\alph*)}]
        \item Let $\cL'\subset \cL $ be a saturated sub-lattice having rank $m-(a_k-n') = m+n-1-a_k$ such that $\cL' \bmod p \subset \cL_1 \bmod p$, and such that $\cL' \bmod p$ is isotropic. Then there is at least one primitive vector that decays rapidly to first order. In other words, there is a primitive $v\in \cL$ such that $v_t(x_v) < h_{a_k}$.


        \item There exists a saturated sublattice $\cL' \subset \cL_1$ having rank $2(a_k-n')$ such that every primitive vector decays rapidly to first order.
    \end{enumerate}
     
\end{corollary}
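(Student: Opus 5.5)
We follow the proofs of \cref{lemma: height valuation} and \cref{cor: isotropic subspace first order decay}, replacing the superspecial local equations by the supersingular ones in the Proposition (Ogus) above. The only change is the bookkeeping of indices: near $P$, the functions $Q_0,\dots,Q_{n'-1}$ are the coordinates $y'_1,\dots,y'_{n'}$, and $Q_{n'+j}$ for $j\ge 0$ is the $j$-th Frobenius-twisted quadratic form in the variables $x_i,y_i$ coming from $\cL_1$ (after the $x'_iy'_i$ terms are killed). So inside $\cL_1$ the index $a_k$ plays the role that $a_k-n'$ played in the superspecial case.

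For \cref{lemma: nonsuperspecial height valuation}, we will argue as follows.
\begin{enumerate}
\item Choose a sublattice $L'\subseteq L$ whose GSpin Shimura subvariety $\shS'\ni P$ is cut out formally by $x_{a_k-n'+1},\dots,x_m,y_{a_k-n'+1},\dots,y_m$. Such an $L'$ exists by Witt cancellation on $\cL_1$, as in the superspecial proof.
\item On $\shS'$ the $\cL_1$-part has rank $2(a_k-n')$. The analogue of \cref{NPstrata}(c), i.e.\ part (c) of the Proposition (Ogus) applied to $\shS'$ with $m$ replaced by $a_k-n'$, gives
\[Q_{a_k}\in(Q_0,\dots,Q_{a_k-1},x_{a_k-n'+1},\dots,x_m,y_{a_k-n'+1},\dots,y_m).\]
One may drop the $y$'s after a swap, exactly as in \cref{lemma: height valuation}.
\item Pull this relation back along $C$. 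Since $Q_j(t)=0$ for $j<a$, and $v_t(Q_j(t))\ge h_{a_k}$ for $a\le j<a_k$ by the choice of the sequence $a_i$, some $x_i$ with $i>a_k-n'$ must have $v_t(x_i)\le h_{a_k}$.
\item Upgrade $>$ to $\ge$ by comparing monomials to see that the coefficients $g_i$ have unit constant term.
\item Reindex. This shows that at most $m-1-(a_k-n')$ of the $x_i$ satisfy $v_t(x_i)\ge h_{a_k}$.
\end{enumerate}

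For the corollary, part (a) proceeds as follows. Replace $\cL'$ by a lattice with the same reduction mod $p$ lying in $\cL_1$ and isotropic; this is allowed by \cref{prop: prelim decay results}(a). By Witt cancellation, choose the basis so that $\cL'=\mathrm{Span}\{e_1,\dots,e_{m+n-1-a_k}\}$. Recall $x_{e_i}=x_i$, with the identification of $x_v=\overline{D}_1(v)$ from \cref{rmk:exlicitx_v}, whose supersingular analogue is in \cite[\S5]{MST}. The lemma then forces some $e_i$ with $v_t(x_i)<h_{a_k}$, so that $e_i$ decays rapidly to first order.

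Part (b) proceeds as follows. Let $\cL_1''\subseteq\cL_1$ be the saturated span of the primitive vectors that do not decay rapidly to first order. By (a), applied to any isotropic sublattice, $\cL_1''\bmod p$ contains no isotropic subspace of dimension $m-(a_k-n')$. Since $\cL_1\bmod p$ is a split quadratic space of dimension $2m$, the classification of quadratic forms over $\bF_p$ gives
\[\dim\cL_1''\le 2\bigl(m-(a_k-n')\bigr).\]
Take $\cL'$ to be a saturated complement of $\cL_1''$ of rank $2(a_k-n')$.

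The main obstacle is step 1 of the lemma: checking that the sub-Shimura variety $\shS'$ has local equations of the stated form in the supersingular coordinates of \cite[\S4.8]{MST}. The $\cL_0$-block and the coordinates $x'_i,y'_i$ must be retained unchanged. One must also confirm that its Newton equations are still given by the same $Q_j$ with the shifted index $n'$.
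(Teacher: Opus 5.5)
Your proposal is correct and follows the paper's own argument. The paper proves this corollary exactly as in the superspecial case: it uses \cref{lemma: nonsuperspecial height valuation}, which is Ogus's relation on $\shS'$ with $m$ replaced by $a_k-n'$. (The point you flag as the main obstacle is harmless: modulo $(Q_0,\dots,Q_{n'-1})=(y'_1,\dots,y'_{n'})$, the needed relation is just the superspecial one in the variables $x_j,y_j$, $j\le a_k-n'$.) It then uses Witt cancellation for (a), and for (b) the bound on the dimension of a subspace of $\cL_1\bmod p$ containing no isotropic subspace of dimension $m-(a_k-n')$.

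Two small fixes to your sketch of the lemma. First, the $y_i$ drop out not by a swap but because every summand of $Q_j$ involving $x_i,y_i$ is divisible by $x_i$; so modulo $(x_{a_k-n'+1},\dots,x_m)$ each $Q_j$ already equals its restriction to $\shS'$. Second, by homogeneity the $f_j$ and $g_i$ have \emph{zero} constant term. This is exactly what gives $v_t(f_jQ_j)>h_{a_k}$ and the strict inequality $v_t(x_i)<h_{a_k}$; a unit constant term would only give $\le$.
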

The proof is idential to the proof of Corollary \ref{cor: isotropic subspace first order decay}

We are now ready to outline the proof of the decay lemma in this setting. 
\begin{proof}
    The setup and notation is identical, and we have the exact analogues of Claims \ref{claim 1} and \ref{claim 2}. The proof of Claim \ref{claim 1} goes through in this setting. The setting of Claim \ref{claim 2} in the superspecial setting is somewhat degenerate. But the analysis in \cite[Proof of Theorem 5.2]{MST} goes through to show the existence of a saturated sublattice of $\cL_0$ with rank $2n - T_r(a_{h_k})$ that decays rapidly. Indeed, in the notation of \emph{loc. cit.}, $T_r(a_{h_k})$ is just $\mu_{J} - \mu_{J}$, which is the only thing required. 

    All of this proves the existence of a saturated sublattice of $\cL$ having rank $2n + 2(a_k - n') - (a_k-a_1)=  2a_k+2 - (a_k-a_1) \geq 2a_1 + 2$ that decays rapidly to $r$th order. An argument identical to the one used in the superspecial case applies to deduce rapid-decay to all orders. 
\end{proof}

\subsection{Local-global comparison}
\subsubsection{Cases 1 and 3}
The analysis in Cases 1 and 3 to compare the local and global Eisenstein series is identical. Indeed, there is no first-order decay analysis required. We are now ready to prove Theorem \ref{thm: main local version}, which we restate for the reader's convenience. 
\begin{theorem}\label{localglobal comparison: Case 3}
    Let $P\in C(\overline{\bF}_p)$ be a supersingular point. Suppose that we are in the setting of Case 1 or 3, i.e. that $a\leq n-1$. Then, there is a positive real number $\alpha < 1$ such that for every integer $M$ that is relatively prime to $p$ and is represented by $L$, we have $S'(M)=\sum_{r = 1}^{\infty} \frac{q_{L_r'}(M)}{g_P(M)} < \alpha$.
\end{theorem}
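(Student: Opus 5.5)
We follow the strategy of \cite[Section 7]{MST}, using the crude bounds from the loose superspecial case: Theorem \ref{thm: supersingular rapid decay} (or its Case 3 analogue) gives a large rapidly decaying sublattice. We split
\[
S'(M)=S'_1(M)+S'_{>1}(M),
\]
with $S'_1$ the sum over $1\le r\le h_a$ and $S'_{>1}$ the sum over $r>h_a$. By Remark \ref{rmk:explicit q_L'}, each term equals
\[
\frac{p^{a+1}-1}{h_a}\bigl(1+p^{-(m+n+1)}\bigr)^{-1}\,\frac{\delta(p,L'_{P,r},M)}{\sqrt{[L'^{\vee}_{P,r}\otimes\bZ_p : L'_{P,r}\otimes\bZ_p]}}.
\]
In contrast with the superspecial tight case, no analysis of first order decay is needed. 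We only need to show that the prefactor $(p^{a+1}-1)/h_a$ is beaten by the growth of the covolumes.

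For $S'_1$ we bound $\delta\le 2$ (or use the exact value for the first block) and use that $L'_{P,1}$ has covolume $p^{n}$ coming from $\cL_0$, exactly as in \cite[\S7]{MST}. This gives $S'_1(M)\le \frac{p^{a+1}-1}{p(1+p^{-(m+n+1)})}\cdot\frac{h_a\cdot c}{h_a}$, where $c$ is the covolume factor. In the non-superspecial case the factor $p^{-n}$ from $\cL_0$ makes this at most about $p^{a+1-n}\cdot p^{-1}$, which is small because $a\le n-1=n'$.

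For $S'_{>1}$, the rapidly decaying sublattice has rank at least $2a+2$. Hence for $r\in(H_j,H_{j+1}]$, where $H_j=h_a(1+p^{a+1}+\dots+p^{(a+1)(j-1)})$, the index $[L'_{P,1}:L'_{P,r}]$ is at least $p^{(2a+2)j}$. The interval has length $p^{(a+1)j}h_a$. Summing over $j$ gives
\[
S'_{>1}(M)\le \frac{p^{a+1}-1}{h_a}\cdot\frac{2}{p}\sum_{j\ge1}\frac{p^{(a+1)j}h_a}{p^{(2a+2)j}}\le \frac{2}{p}.
\]

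The main obstacle is $S'_1$. The naive bound $\delta\le2$ together with the trivial covolume on $[1,h_a]$ only yields roughly $2p^{a}/(1+\cdots)$, which is useless. We must exploit that $\cL_0$ is not self-dual: $p^{-1}\<-,-\>_0$ is self-dual, so $L'_{P,1}\otimes\bZ_p$ already has discriminant $p^{2n}$, which contributes $p^{-n}$. With $a\le n-1$ this gives $S'_1\le (1-p^{-a-1})p^{a+1-n}\delta/(1+\cdot)\le$ something like $\delta/(1+\cdot)$. We would then compute $\delta(p,L'_{P,1},M)$ via Lemma \ref{lm:computedelta} applied to the $\bZ_p$-unimodular part $\cL_1$ of rank $2m$ (split case), which is $(1-p^{-m})(1+\chi p^{-m})\le 1+p^{-m}$. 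Combining, we aim for $S'(M)\le \frac{(1-p^{-a-1})(1+p^{-m})p^{a+1-n}}{1+p^{-(m+n+1)}}+\frac{2}{p}\cdot p^{-n}\cdot(\dots)<\alpha<1$. The key check is that the extra $p^{-n}$ also enters $S'_{>1}$ and that the worst case $a=n-1$ still leaves a uniform gap. Since both quantities depend on $M$ only through $\left(\frac{M}{p}\right)$, $\alpha$ is independent of $M$.
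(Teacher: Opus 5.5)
Your plan matches the paper's. You bound $\sqrt{[L'^{\vee}_{P,r}\otimes\bZ_p:L'_{P,r}\otimes\bZ_p]}$ below by $p^{n}$ on $[1,h_a]$ (coming from $\cL_0$) and by $p^{n+(2a+2)j}$ on $(H_j,H_{j+1}]$ (coming from the rank-$(2a+2)$ rapidly decaying sublattice), bound the density crudely, and sum a geometric series. But the decisive numerical step is never carried out, and in the form you set it up it is false.

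Everything hinges on the \emph{strict} inequality $a+1<n$. Cases 1 and 3 both mean $a=a_1<n'=n-1$. The ``$a\le n-1$'' in the statement must be read this way: the paper's proof ends with $3(p^{a+1}-1)\le p^n-1$, which needs $a+1<n$. You instead take $a\le n-1$ and defer, as ``the key check'', that the worst case $a=n-1$ still leaves a uniform gap. It does not. At $a=n-1$, nothing in your argument excludes $L'_{P,r}=L'_{P,1}$ for all $r\le h_a$. In that case the first block alone equals $(1-p^{-n})(1-p^{-m})$ times the character factor $(1+p^{-(m+n)})^{-1}$. Adding your tail bound $2p^{-n}$ (with the same factor) gives a number $\ge 1$ as soon as $m\ge n$. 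That boundary case is really Case 2 ($a_1=n'$), which the paper handles only through the first-order analysis and the algebraization theorem. Your displayed target has the same defect: its first term is about $1$ at $a=n-1$, and ``$<\alpha<1$'' is asserted rather than derived.

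The bookkeeping also needs repair.
\begin{itemize}
\item In the superspecial formulas of \S\ref{sec: first step superspecial}, the factor $p^{-1}$ \emph{is} $\sqrt{\mathrm{disc}(\cL_0)}^{-1}$. Here it is replaced by $p^{-n}$, not multiplied by it, so your ``$p^{a+1-n}\cdot p^{-1}$'' and ``$\frac{2}{p}\cdot p^{-n}$'' double-count.
\item You use $\delta(p,L'_{P,1},M)$ on all of $[1,h_a]$, although $L'_{P,r}$ may shrink earlier. This is harmless: if $L'_{P,r}\subsetneq L'_{P,1}$, the square root of the discriminant is $\ge p^{n+1}$ while $\delta\le 2<p$. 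So every term on $[1,h_a]$ has $\delta/\sqrt{\mathrm{disc}}\le p^{-n}$.
\item The density value itself is off: $\delta(p,L'_{P,1},M)=\delta(p,\cL_1,M)=1-p^{-m}$, since the second factor in Lemma~\ref{lm:computedelta} is $1$ here.
\end{itemize}
With these corrections and $a+1\le n-1$, you get $S'_1(M)<p^{-1}$ and $S'_{>1}(M)\le 2p^{-n}$. Hence $S'(M)\le (p^{a+1}+1)p^{-n}\le p^{-1}+p^{-n}\le 4/9$ for $p\ge 3$, $n\ge 2$. This is essentially the paper's argument; it packages the density bound through \cite[Lemma 7.16]{MST} and lands on $3(p^{a+1}-1)\le p^n-1$.
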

\begin{remark}
    The local and global contribution at a supersingular point is effected both by the generic newton stratum of the curve and the Artin invariant of the supersingular point. The $p$-power part of the discriminant of the lattice $\cL$ grows as the Artin invariant grows. On the other hand, we need faster rates of decay when the generic Newton stratum is close to being supersingular. When $a < n'$, the saving we obtain from the Artin invariant is large enough that it suffices to prove that a sufficiently large-rank lattice decays rapidly without requiring a first-order analysis. We do not require an algebraicity input either, as the numerics dictate that the formal curve cannot sit inside a formal exceptional locus. Indeed, being contained inside a formal exceptional locus would require that $C$ admits formal special endomorphisms by a self-dual rank $2m + 2n - (2a-2)$ lattice. However, any such lattice must be contained in $\cL_1$, which has rank $2m$ which is strictly smaller than $2m + 2n - (2a-2)$ by our assumptions on $a$. 
\end{remark}

\begin{proof}
    By the decay lemma in this case, we have that $\sqrt{|(L_r'\otimes \bZ_p)^{\vee}/(L_r'\otimes \bZ_p)|}\geq p^{n}$ if $r\leq h_a$, $\geq p^{n + 2a+2}$ if $h_a< r\leq h_a(p^{a+1}+1)$, $\geq p^{r+4a}$ if $h_a(p^{a+1} + 1) < r \leq h_a(p^{2a+2} + p^{a+1} + 1)$, etc. Using the coarse estimate of \cite[Lemma 7.16]{MST}, we then obtain 
    \begin{align*}
        \frac{h_a}{p^{a+1}-1}\sum_{r = 1}^{\infty} \frac{q_{L_r'}(M)}{g_P(M)} &\leq (\frac{2}{1-p^{(-b-2)/2}}) (\frac{h_a(1+1/p)}{p^n} + \frac{h_ap^{a+1}}{p^{2a+2+n}} + \frac{h_ap^{2a+2}}{p^{4a+4+n}} + \cdots) \\ &= \frac{2h_a}{p^n(1-p^{(-b-2)/2})}(1/p + \frac{1}{1-p^{-(a+1)}}).    
    \end{align*}
    As we are in the non-ordinary locus, we have $a>0$. Substituting the coarse bound $3/2 > \dfrac{1}{1-3^{-2}} \geq \dfrac{1}{1-p^{-(a+1)}} $, we obtain a strict upper bound of $\dfrac{3h_a}{p^n(1-p^{-n})}$. It suffices to prove that
    $$\frac{3h_a}{p^n(1-p^{-n})} \leq \frac{h_a}{p^{a+1} -1} \Leftrightarrow  \frac{3}{p^n-1} \leq \frac{1}{p^{a+1}-1} \Leftrightarrow 3 p^{a+1}-3 \leq p^n-1. $$
    This follows directly from the fact that $a<n'$, or $a+1 < n$.  
\end{proof}

\subsubsection{Case 2}
The first order analysis in the superspecial case goes through verbatim in this setting. Indeed, the case that does not follow immediately from the arguments used in Cases 1 and 3 is when $a_k = a_1 \geq n'$. In this setting, the sublattice $\cL_0$ decays rapidly. The analysis of first-order decay for vectors in $\cL_1$ is identical to the superspecial case. The analysis in Section \ref{sec: first step superspecial} yields the following result. 

\begin{theorem}\label{lm:localglobalEisensteinsupersingular}
 Let $C$ be as in Section \ref{sec: local setup supersingular} and let $P$ be the closed point of $C$. If $\mathcal{A}_{C^{/P}}$ does not admit a saturated lattice of formal special endomorphisms of rank $2(m-a)$, then there exists a constant $\alpha< 1$, depending only on $C$ and $P$, such that for $M\in \bN$ coprime to $p$, we have  $S'(M)=\sum_{r=1}^\infty \frac{{q_{L_{r,P}'}(M)}}{g_P(M)}<\alpha .$
\end{theorem}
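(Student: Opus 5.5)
The plan is to reduce Theorem~\ref{lm:localglobalEisensteinsupersingular} to the superspecial argument of Section~\ref{sec: first step superspecial}, using the case division of \S\ref{sec: local setup supersingular}. If $a<n'$ (Cases 1 and 3), Theorem~\ref{localglobal comparison: Case 3} already gives $S'(M)<\alpha$ with no hypothesis on formal special endomorphisms. So I assume we are in Case 2, i.e. $n'\le a_1$. If moreover $a_k>a_1$ (the loose case), I would copy \S\ref{sub:easysitu}: bound $S'_1(M)$ by an auxiliary $S''_1(M)$, and bound $S'_{>1}(M)$ crudely using the rank-$(2a_k+2-(a_k-a_1))$ rapidly decaying lattice from the supersingular decay lemma. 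This should give $S'(M)<3p^{-1}/(1+p^{-(m+n)})<1$. The only change from the superspecial case is that $\cL_0$ now has rank $2n$ and discriminant $p^{2n}$, and this only improves the covolume factors.

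The main case is the tight one, $a_k=a_1=a\ge n'$. There $\cL_0$ decays rapidly to every order; this is the analogue of Claim~\ref{claim 2}, via \cite[Proof of Theorem 5.2]{MST}. Moreover, $Q_{n'},\dots$ involve only the $x_j,y_j$ coordinates, and the relevant indices $Q_{n'+i}$ are $Q$-degenerate exactly as in \S\ref{sub:patternfirstorderdecay} after shifting by $n'$. So I would apply the line-configuration machinery (Lemma~\ref{lm:hypermaxiaml}, Lemma~\ref{lm:quadmaximal}, Corollary~\ref{cor:maximalanddegenerate}) to the collection $\{x_j,y_j\}_{1\le j\le m}$, with $s=a-n'-1$ in place of $a_k-1$. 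This should give the analogues of Propositions~\ref{prop:thincase!!!} and~\ref{prop:L+thin}. Concretely, $\overline{L'_{P,r}\otimes\bZ_p}$ should equal $\overline{\cL_0}$ plus the relevant $\overline{f_i}$, plus a subspace embedding isometrically into $\overline{\cL}_{\mathrm{brd}}$ for $d<r\le h_a$; and for $r>h_a$ it should lie in $\overline{\cL}_{\mathrm{brd}}\oplus\mathrm{Span}\{\overline{f_i}\}$, again with isometric projection.

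Next I would define auxiliary lattices $L''_{P,r}$ as in \S\ref{subsub:auxlatticetight}, with $\cL_{>0}$ enlarged by the full $\cL_0$ and period $H_j=h_a(1+p^{a+1}+\cdots)$. Then I would prove $S'(M)\le S''(M)$ by the argument of Lemma~\ref{lm:isometric embedding in the tight case partial 1}, and compute $S''(M)$ with Lemma~\ref{lm:computedelta} and the bounds $D,D_\delta\le p^{-a_k}h_{a_k}$ of Lemma~\ref{lm;Destimate}. The inequality $h_a\ge p^{a}c_1+d_1$ should survive, since it only uses the configuration. The extra $p^{2n}$ discriminant from $\cL_0$ makes the geometric series smaller. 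I then expect $S''(M)<1$ in every subcase except thin borderline case~2, where $\cL_{\mathrm{brd}}$ has rank $2(m-a)$ after renormalizing the indices and the bound is exactly $1$.

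For that equality case, I would rerun the limit argument of Lemma~\ref{lm:isometric embedding in the m=2 case}. Equality forces $\overline{L'_{P,r}\otimes\bZ_p}$ to be constant for $r>h_a$ and to project isomorphically onto $\overline{\cL}_{\mathrm{brd}}$. Correcting the lifts by vectors in the rapidly decaying relevant lattice then gives convergent sequences, and hence a saturated rank-$2(m-a)$ lattice of formal special endomorphisms over $C^{/P}$, which the hypothesis excludes. I expect the main obstacle to be the bookkeeping in this tight case. One must check that the index shift by $n'$ and the rank $2n$ of $\cL_0$ enter the normalizations $p^{a+1}-1$ and $1+p^{-(m+n)}$ so that the thin borderline-2 bound is still exactly $1$ rather than exceeding it. I would verify this by redoing the computation of (\ref{eq2case22final}) with general $n$.
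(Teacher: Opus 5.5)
Your plan is the paper's own argument, written out in more detail. Cases 1 and 3 go through the coarse estimate of Theorem~\ref{localglobal comparison: Case 3}. In Case 2, rapid decay of $\cL_0$ is combined with the superspecial first-order analysis of the $\cL_1$-coordinates, with indices shifted by $n'$. The argument ends with the same limit construction when the thin borderline-2 bound is attained.

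One bookkeeping fix is needed. On $C$ we have $Q_a=\sum_j\bigl(x_jy_j^{p^{a-n'}}+x_j^{p^{a-n'}}y_j\bigr)$, so the configuration only gives $h_a\ge p^{a-n'}c_1+d_1$, not $h_a\ge p^{a}c_1+d_1$. Correspondingly, the bound is $D,D_\delta\le p^{n'-a}h_a$ rather than $p^{-a}h_a$. The factor $p^{-n}$ coming from $\mathrm{disc}(\cL)=p^{2n}$ exactly compensates for this weaker bound; it does not give extra savings. That exact compensation is why the thin borderline-2 bound comes out to exactly $1$, and the limit lattice produced there has rank $2(m-a+n')=b-2a$.
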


\section{The algebraization theorem}\label{sec: algebraization}
The main result of this section is to finish the proof of Theorem \ref{thm: main local version}. We recall the setting. We have a supersingular point $P\in C$, with local parameter $t$. Let $k[\![t]\!]$ denote the complete local ring of $C$ at $P$. We have shown the existence of $\alpha < 1$ such that the conclusion of Theorem \ref{thm: main local version} holds \emph{unless} the module of formal special endomorphisms at $k[\![t]\!]$ has rank $b-2a$. Recall that the Decay Lemma (Theorem \ref{thm: superspecial rapid decay}, Theorem \ref{thm: supersingular rapid decay}) implies that the module of formal special endomorphisms at $k[\![t]\!]$ has rank at most $b-2a$. We will now prove that under the assumptions of Theorem \ref{mainShimura}, the module of formal special endomorphisms at $k[\![t]\!]$ has rank strictly less than $b-2a$. 

\begin{remark}
    For the rest of the paper, $m$ will index special divisors.
\end{remark}

\begin{remark}
    We note that in the inert case (see \ref{trichotomy} for the definition of the inert case), the quantity $b-2a$ is zero if our Newton stratum contains the supersingular locus as a divisor. Indeed, in this setting, we necessarily have $$\sum_{r=1}^{\infty} \frac{q_{L'_{P,r}}(m)}{g_P(m)} = 1. $$
This equality is of course expected, as we know that the reduced locus of the intersection of every special divisors with this Newton stratum is contained in the supersingular locus. 

\end{remark}

We now introduce our setup before stating our result.
\begin{setup}\label{setup:splittingcurve}
Let $U$ be a connected smooth affine curve over $\mathbb{F}$ with a nonconstant morphism $\iota:U\rightarrow \mathscr{S}_{a+1,\mathbb{F}}\setminus \mathscr{S}_{a+2,\mathbb{F}}$, i.e., the image of $U$ lies in the height $a+1$ but not height $a+2$ Newton stratum. Let $\iota_0: U_0\rightarrow \mathscr{S}_{a+1,\mathbb{F}_q}\setminus\mathscr{S}_{a+2,\mathbb{F}_q}$ be a  model of $\iota$ over a sufficiently large finite field, i.e., a map defined 
over $\mathbb{F}_q$ whose base change to $\mathbb{F}$ is $\iota$. 
\end{setup}
Since $\mathbb{L}_{\cris,U_0}$ has constant Newton polygon,  $\mathbb{L}_{\cris,U_0}$ admits a slope filtration in the category of $F$-crystals over $U_0$:
\begin{equation}\label{eq:slopefiltration}
    0=\Fil_{-2}\subseteq \Fil_{-1}\subseteq \Fil_{0}\subseteq \Fil_{1}=\mathbb{L}_{\cris,U_0},
\end{equation}
where $\Fil_{-1}$ is the slope $-1/(a+1)$ part, $\Fil_{0}$ is the slope $\leq 0$ part. The rank of $\gr_{-1},\gr_0$ and $\gr_{1}$ are $a+1$, $b-2a$ and $a+1$, respectively. 

\subsection*{Formal Brauer groups} Note that the $F$-crystal $\mathbb{L}_{\textrm{EB}} : = \Fil_{0}/\Fil_{-1}$ is a Dieudonné crystal, the main result of \cite{DJ95} implies that $\mathbb{L}_{\textrm{EB}}=\mathbb{D}_{}(\mathbf{\Psi})$ where $\mathbb{D}$ is the contravariant crystalline Dieudonné functor and $\mathbf{\Psi}$ is a $p$-divisible group over $U_0$, called the \emph{enlarged formal Brauer group}. For a K3 surface over finite field, this recovers the enlarged formal Brauer group introduced in \cite[\S IV]{Artin-Mazur} and \cite[\S 3]{Nygaard-Ogus}, and the connected subgroup of this group is just the classical formal Brauer group associated to the K3 surface. 

\begin{theorem}
 \label{T: algebraicC}
Let $U$ be in Setup~\ref{setup:splittingcurve}. Let $D$ be the function field of $U$ and $E$ be the completion of $D$ at a place (not necessarily a place corresponding to a point in $U$). Suppose that 
$\mathbb{L}_{\cris,E}$ admits a rank $b-2a$ lattice of formal special endomorphisms. Then $\End (\mathcal{A}_D) \supsetneq \End(\mathcal{A})$, where $\mathcal{A}$ is the universal Kuga--Satake abelian scheme.

Further, if the quadratic form on this lattice of formal special endomorphisms is self-dual, then the slope filtration of the enlarged  formal Brauer group over $D$ (and in fact, over $U$) splits. 

 \end{theorem}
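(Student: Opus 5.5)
The plan is to turn the hypothesis into a statement about the slope filtration (\ref{eq:slopefiltration}) over $E$, and then to descend that statement from the local field $E$ to the global field $D$ using Frobenius and monodromy rigidity. Let $\Lambda\subseteq \cL(\cA_E)$ be the given rank $b-2a$ lattice of formal special endomorphisms. Its crystalline realizations are Frobenius-invariant horizontal sections of $\bL_{\cris,E}$, so they lie in the slope $0$ part. Since $\Fil_{-1}$ has slope $-1/(a+1)$, these sections map injectively into $\gr_0=\bL_{\mathrm{EB}}|_E$, which also has rank $b-2a$. The first step is therefore to show that $\Lambda\otimes W$ generates a full-rank sub-$F$-crystal of $\bL_{\mathrm{EB}}|_E$ that is isogenous to the constant unit-root crystal $\Lambda\otimes \mathcal{O}$. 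Using Dieudonné theory over $E$ (de Jong's full faithfulness, as in \cref{T:PDlemma}), this gives an isogeny from $\mathbf{\Psi}_E$ to a constant étale-by-connected object. More precisely, $\Lambda$ lifts to a subcrystal $\Lambda_{\cris}\subseteq \Fil_0|_E$ that maps isomorphically to $\gr_0$ up to isogeny. That is exactly a splitting of $0\to\Fil_{-1}\to\Fil_0\to\gr_0\to0$ over $E$ up to isogeny. If the pairing on $\Lambda$ is self-dual, then $\Lambda\otimes W\to\gr_0$ is an isometry onto the self-dual lattice $\gr_0$ (here I use the pairing between $\gr_{-1}$ and $\gr_1$, and the fact that $\gr_0$ is self-dual up to twist), so it is an isomorphism on the nose. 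In that case $\mathbf{\Psi}_E$ splits integrally.

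The second step is to globalize. The splittings of $\Fil_0|_{U_0}$ as an extension of $F$-crystals are governed by $\mathrm{Ext}^1(\gr_0,\Fil_{-1})$, and the slopes of these two pieces differ. The key input I would use is the rigidity theorem for extensions between isoclinic $F$-isocrystals of distinct slopes over a smooth curve: the restriction map from $U_0$ (or from the generic point) to the completed local field $E$ is injective on Frobenius-invariant splittings. One can also use the de Jong and Kedlaya/Tsuzuki full faithfulness of restriction of (overconvergent) $F$-isocrystals to $E$, applied to the Hom space $\mathrm{Hom}(\gr_0,\Fil_0)$. By this full faithfulness, the section $\gr_0|_E\to\Fil_0|_E$ comes from a section over $D$, and by purity over $U$. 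Integrality in the self-dual case follows because the splitting is an isometry and has no poles, as one checks on a dense set. Dieudonné theory then turns this into a splitting of the connected–étale sequence of $\mathbf{\Psi}$ over $D$ and over $U$, which is the second assertion.

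The third step is to obtain extra endomorphisms. The global splitting, even up to isogeny, produces a nontrivial Frobenius-equivariant endomorphism of $\bL_{\cris,D}[1/p]$, namely the idempotent projecting onto the slope $0$ part. It commutes with Frobenius and is not in the monodromy of $\mathscr{S}_L$. Its image in $\End(\bH_{\cris,D})$ under \cref{rmk:embwedge} (via $\wedge^2$ or via $\Cl$) is a crystalline endomorphism of $\cA_D[p^\infty]\otimes\bQ$. By the Tate conjecture for abelian varieties over function fields in characteristic $p$ (de Jong for $p$-divisible groups, Zarhin/Faltings style), $\End(\cA_D)\otimes\bQ_p\simeq\End(\cA_D[p^\infty])\otimes\bQ_p$. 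So this becomes an endomorphism of $\cA_D$ that does not lie in $\End(\cA)$, because the latter commutes with the full $\mathrm{GSpin}$ monodromy, which acts irreducibly on the slope pieces. I expect the main obstacle to be the globalization step: the full faithfulness from $D$ to $E$ holds for isocrystals, but for $F$-crystals with several slopes one must check that the relevant Hom space consists of overconvergent objects. I would first try to reduce to Kedlaya's full faithfulness theorem applied to $\underline{\mathrm{Hom}}(\gr_0,\Fil_0)$, with an argument by slopes to rule out poles.
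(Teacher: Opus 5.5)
Your first step matches the paper: the Frobenius-fixed sections spanned by $\Lambda$ lie in $\Fil_0$ and meet $\Fil_{-1}$ trivially for slope reasons. They therefore give a section of $\Fil_{0,E}[p^{-1}]\to\gr_{0,E}[p^{-1}]$, which is integral when $\Lambda$ is self-dual.

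The gap is in the globalization step. You assert that restriction of $F$-isocrystals from $D$ (or $U_0$) to the completion $E$ is fully faithful, but this is false. Unit-root $F$-isocrystals on $U_0$ are $p$-adic representations of $\pi_1(U_0)$, and restriction to $E$ is restriction to a decomposition group. So take a finite \'etale Galois cover $V\to U_0$ in which the place splits completely, e.g.\ $y^p-y=s$ over $\mathbb{A}^1$ at $s=0$. The pushforward of $\mathbbm{1}_V$ then has many more endomorphisms over $E$ than over $D$. Kedlaya's theorem compares overconvergent and convergent isocrystals on the same base, and de Jong's compares a normal base with its generic point; neither passes from $D$ to $E$. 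Injectivity of restriction, which you also invoke, is trivial and is not what is needed. Your ``argument by slopes to rule out poles'' addresses integrality, whereas the real issue is the \emph{existence} of a splitting over $D$.

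The missing idea is that the obstruction to splitting an \'etale-by-connected sequence is inseparable in nature, while $E/D$ is separable. Let $K\supseteq D$ be any field.
\begin{itemize}
\item Since $\mathbf{\Psi}_{\mathrm{con}}[p^n]$ is infinitesimal, the map $\mathbf{\Psi}[p^n](K^{\mathrm{sep}})\to\mathbf{\Psi}_{\text{\'et}}[p^n](K^{\mathrm{sep}})$ is injective.
\item The sequence splits over $K$ exactly when this map is also surjective: the separable points then form an \'etale subgroup scheme mapping isomorphically onto $\mathbf{\Psi}_{\text{\'et}}[p^n]$.
\item Since $E$ is separable over $D$, every $E^{\mathrm{sep}}$-point of the finite $D$-scheme $\mathbf{\Psi}[p^n]_D$ is already a $D^{\mathrm{sep}}$-point.
\end{itemize}
Hence a splitting over $E$ (or its up-to-isogeny variant) descends to $D$, and then extends over $U_0$ by de Jong's extension theorem for homomorphisms of $p$-divisible groups. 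Over the inseparable extension $D^{\mathrm{perf}}$ every such sequence splits, so this is not a monodromy-rigidity phenomenon. The paper packages exactly this descent as \cite[Prop.~5.2]{RJ23}, using that $\mathcal{O}(U_0)$ is $p$-integrally closed in $E$.

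A smaller point in your third step. The projection onto the slope-$0$ summand is self-adjoint, so it is not in $\mathfrak{so}(\bL)\cong\wedge^2\bL$. It therefore does not give an element of $\End(\bH_{\cris})$ via Remark~\ref{rmk:embwedge}. What works is the line $\det\gr_0\subseteq\wedge^{b-2a}\bL\hookrightarrow\End(\bH)$, which is trivial because $\gr_0$ is self-dual. This produces the endomorphism $\nu_{\mathrm{spl}}$ of Construction~\ref{const:vspl}.
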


Given the results of Sections \ref{sec: decay superspecial}, \ref{sec: first step superspecial} and \ref{sec: supersingular}, Theorem \ref{thm: main local version} (and therefore Theorem \ref{mainShimura}) follows directly from Theorem \ref{T: algebraicC}. 

\subsection{Splittings}
Let $X$ be a $U_0$-scheme, we say that $\mathbb{L}_{\cris,X}$ \textit{splits in the middle}, if 
\begin{equation}\label{eq:splittingK31}
\mathbb{L}_{\cris,X}\simeq \gr_{0,X}\oplus \mathscr{E}_X,
\end{equation}
where $\mathscr{E}_X$ is an extension of $\Fil_{-1,X}$ by $\gr_{1,X}$. Rationally, we also say that $\mathbb{L}_{\cris,X}[p^{-1}]$ \textit{splits in the middle}, if (\ref{eq:splittingK31}) holds after up to inverting $p$.     

 Note that the $p$-divisible group $\mathbf{\Psi}$ sits inside the classical connected-étale sequence \begin{equation}\label{ecs}
    1\rightarrow \mathbf{\Psi}_{\text{con}}\rightarrow \mathbf{\Psi}\rightarrow \mathbf{\Psi}_{\text{ét}} \rightarrow 1.
\end{equation}
Applying $\mathbb{D}(-)$ to (\ref{ecs}), we recover \begin{equation}\label{ecs2}
    0\rightarrow \gr_0 \rightarrow \bL_{\textrm{EB}}\rightarrow \gr_{1} \rightarrow 0.
\end{equation}
\begin{lemma}\label{lm:splitinthemiddle} Suppose that $X$ is a regular excellent scheme. The following are equivalent: \begin{enumerate}
    \item $\bL_{\cris,X}$ splits in the middle, 
   \item the projection $\Fil_{0,X}\rightarrow \gr_{0,X}$ admits a section, 
    \item the embedding $\gr_{0,X} \rightarrow \bL_{\mathrm{EB},X}$ admits a retraction, 
    \item $\mathbf{\Psi}_X$ splits as  $\mathbf{\Psi}_{\mathrm{con},X}\times \mathbf{\Psi}_{\text{é}\mathrm{t},X}$.
\end{enumerate}
\end{lemma}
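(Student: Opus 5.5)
The plan is to prove the cycle of implications (1)$\Rightarrow$(2)$\Rightarrow$(3)$\Rightarrow$(4)$\Rightarrow$(1), working in the category of $F$-crystals over $X$. Two tools do the work. First, morphisms between $F$-crystals of different constant slopes vanish. Second, over a regular excellent base, de Jong's theorem \cite{DJ95} (full faithfulness of $\mathbb{D}$ on $p$-divisible groups) lets us pass between $p$-divisible groups and Dieudonné crystals. All filtration pieces $\Fil_{-1},\Fil_0$ and graded pieces are locally free and isoclinic with slopes $-1/(a+1)$, $0$ and $1/(a+1)$ respectively, so we may argue with them freely.

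For (1)$\Rightarrow$(2): given $\mathbb{L}_{\cris,X}\simeq \gr_{0,X}\oplus\mathscr{E}_X$, the summand $\gr_{0,X}$ has slope $0$. Its image under $\mathbb{L}_{\cris,X}\to \gr_{1,X}$ is a morphism from a slope-$0$ crystal to a slope-$1/(a+1)$ crystal, hence zero. So $\gr_{0,X}$ maps into $\Fil_{0,X}$, and composing with $\Fil_{0,X}\to\gr_{0,X}$ gives an endomorphism of $\gr_{0,X}$. Since the slope filtration is canonical, this endomorphism is an isomorphism, and after correcting by its inverse we obtain a section.

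For (2)$\Rightarrow$(3): a section $s\colon \gr_0\to\Fil_0$ gives $\Fil_0=\Fil_{-1}\oplus s(\gr_0)$. Passing to the quotient by $\Fil_{-1}$ makes $\gr_0\to \mathbb{L}_{\mathrm{EB}}$ the inclusion of a direct summand. Then $\mathbb{L}_{\mathrm{EB}}=\Fil_0/\Fil_{-1}$ should be read correctly: the extension (\ref{ecs2}) is $\gr_0\to\mathbb{L}_{\mathrm{EB}}\to\gr_1$. This means the filtration indexing must be matched carefully, with $\mathbb{L}_{\mathrm{EB}}$ being the quotient $\mathbb{L}_{\cris}/\Fil_{-1}$, an extension of $\gr_1$ by $\gr_0$. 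I will fix this convention first. Under it, (2) is instead naturally paired with the dual statement via the perfect pairing on $\mathbb{L}_{\cris}$, which identifies $\Fil_{-1}^\perp=\Fil_0$ and $\gr_1\simeq\gr_{-1}^\vee$ up to twist. Duality then converts a section of $\Fil_0\to\gr_0$ into a retraction of $\gr_0\to\mathbb{L}_{\cris}/\Fil_{-1}$, which is (3). The self-duality of $\gr_0$ is what makes this identification work.

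For (3)$\Leftrightarrow$(4): by \cite{DJ95} applied over the regular excellent $X$, $\mathbb{D}$ is fully faithful, and (\ref{ecs2}) is the image of (\ref{ecs}). A retraction of $\mathbb{D}(\mathbf{\Psi}_{\text{ét}})\to\mathbb{D}(\mathbf{\Psi})$ (contravariance swaps sub and quotient) is a morphism of Dieudonné crystals, so it comes from a homomorphism $\mathbf{\Psi}_{\text{ét}}\to\mathbf{\Psi}$ splitting (\ref{ecs}). The converse direction is just applying $\mathbb{D}$.

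For (3)$\Rightarrow$(1): from the retraction $\rho\colon \mathbb{L}_{\cris}/\Fil_{-1}\to\gr_0$, compose with the projection to get $\tilde\rho\colon\mathbb{L}_{\cris}\to\gr_0$. Set $\mathscr{E}=\ker\tilde\rho$; it is an extension of $\gr_1$ by $\Fil_{-1}$. Using duality, the orthogonal complement of $\mathscr{E}$ gives the complementary copy of $\gr_0$, and the two are complementary because the pairing on $\gr_0$ is perfect.

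I expect the main obstacle to be the bookkeeping of duality and conventions: which quotient is $\mathbb{L}_{\mathrm{EB}}$, and checking that the perfect pairing together with the slope vanishing makes the orthogonal complements behave integrally rather than only after inverting $p$. The appeal to de Jong requires only that $X$ be regular excellent, which is why that hypothesis appears.
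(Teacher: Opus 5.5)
Your proof is correct and follows the paper's approach: the paper derives (1)$\Leftrightarrow$(2)$\Leftrightarrow$(3) from the self-dual quadratic pairing on $\bL_{\cris,X}$, which is where you get $\Fil_0=\Fil_{-1}^\perp$ and the self-duality of $\gr_0$, and it derives (3)$\Leftrightarrow$(4) from full faithfulness of the crystalline Dieudonn\'e functor. You also rightly read $\bL_{\mathrm{EB}}$ as $\bL_{\cris}/\Fil_{-1}$; the opening sentence of your (2)$\Rightarrow$(3) paragraph proves nothing, but the duality argument that replaces it does, and the full-faithfulness result over a regular excellent base is the one the paper cites as \cite[Theorem 4.6]{DJ99} rather than \cite{DJ95}.
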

\begin{proof}
The equivalence of (1), (2), (3) follows easily from the existence of quadratic pairing on $\bL_{\cris, X}$. The equivalence of (3) and (4) follows from the fully-faithfulness of the crystalline Dieudonné functor; cf. \cite[Theorem 4.6]{DJ99}.
\end{proof}
\begin{remark}\label{rmk:rationalsplittingversion}
    The above lemma has an obvious rational version, i.e., $\bL_{\cris,X}[p^{-1}]$ splits in the middle if and only if $\Fil_{0,X}[p^{-1}]\rightarrow \gr_{0,X}[p^{-1}]$ admits a section, if and only if $\gr_{0,X}[p^{-1}] \rightarrow \bL_{\mathrm{EB},X}[p^{-1}]$ admits a retraction, if and only if $\mathbf{\Psi}_X$ splits as  $\mathbf{\Psi}_{\mathrm{con},X}\times \mathbf{\Psi}_{\text{é}\mathrm{t},X}$ up to isogeny. The proof is similar. 
\end{remark}
\begin{construction}\label{const:vspl}
  Suppose that $ \bL_{\cris,X}[p^{-1}]$ splits in the middle, we can construct an endomorphism of $\bH_{\cris,X}[p^{-1}]$ which is canonical up to scaling. Let's recall how to do this: since $\bL_{\cris,X}[p^{-1}]$ splits, we have $$\det\gr_{0,X}[p^{-1}]= \bigwedge^{b-2a}\gr_{0,X}[p^{-1}]\hookrightarrow\bigwedge^{b-2a}\bL_{\cris,   X}[p^{-1}].$$
Now $\gr_{0,X}[p^{-1}]$ is self-dual (due to the quadratic pairing), we have $\det\gr_{0,X}[p^{-1}]=\mathbbm{1}_X$, the constant object. By Remark~\ref{rmk:embwedge}, we have $$\mathbbm{1}_X\hookrightarrow \bigwedge^{b-2a}\bL_{\cris,   X}[p^{-1}]\hookrightarrow\End(\bH_{\cris, X}[p^{-1}]).$$ 
So any global section of $\mathbbm{1}_X$ gives rise to a global section of $\End(\bH_{\cris, X}[p^{-1}])$. This is equivalent to saying that $\bH_{\cris,X}[p^{-1}]$ admits an endomorphism corresponding to that global section, canonical up to scaling. We will call this endomorphism $\nu_{\mathrm{spl},X}$.   
\end{construction} 



\begin{proof}[Proof of Theorem \ref{T: algebraicC}]
  The fact that $\mathbb{L}_{\cris,  E}$ admits a rank $b-2a$ lattice of formal special endomorphisms implies that the slope 0 constant $F$-isocrystal 
$\mathbbm{1}^{\oplus (b-2a)}_{E}$ embeds into $\Fil_{0,E}[p^{-1}]\subseteq  \mathbb{L}_{\cris,  E}[p^{-1}]$. This implies that $\gr_{0,E}[p^{-1}]\simeq \mathbbm{1}^{ \oplus (b-2a)}_{E}$, and the quotient map $\Fil_{0,E}[p^{-1}]\rightarrow \gr_{0,E}[p^{-1}]$ admits a section. So by Lemma \ref{lm:splitinthemiddle} and the remark following that, $\mathbf{\Psi}_E\simeq \mathbf{\Psi}_{\mathrm{con},E}\times \mathbf{\Psi}_{\text{é}\mathrm{t},E}$ up to isogeny. Since the coordinate ring of $U_0$ is $p$-integrally closed in $E$ in the sense of \cite[\S 5.1]{RJ23}, we can apply Proposition 5.2 of \textit{loc.cit} to deduce that $\mathbf{\Psi}_{U_0}\simeq \mathbf{\Psi}_{\mathrm{con},U_0}\times \mathbf{\Psi}_{\text{é}\mathrm{t},U_0}$ up to isogeny. 

Let $D_0$ be the function field of $U_0$, which is a finitely generated field over $\mathbb{F}_q$. By Lemma \ref{lm:splitinthemiddle} again, we find that $\bL_{\cris,D_0}[p^{-1}]$ splits in the middle. Therefore Construction~\ref{const:vspl} gives rise to an endomorphism $\nu_{\mathrm{spl},D_0}\in \End(\bH_{\cris,D_0})\otimes{\bQ_p}$. By crystalline isogeny theorem; cf. \cite[Theorem 2.6]{DJ98}, we have $\End(\bH_{\cris,D_0})\otimes{\bQ_p}= \End(\mathcal{A}_{D_0})\otimes\bQ_p$. On the other hand, the existence of $\nu_{\mathrm{spl},D_0}$ accounts for the splitting of $\bL_{\cris,D_0}[p^{-1}]$ in the middle, since $\bL_{\cris}[p^{-1}]$ does not split in the middle over the generic point of $\mathscr{S}_{a+1,\mathbb{F}_q}$, we have 
$\nu_{\mathrm{spl},D_0}\notin\End(\mathcal{A})\otimes\bQ_p$. It follows that $\End(\mathcal{A})\otimes\bQ_p\subsetneq \End(\mathcal{A}_{D_0})\otimes\bQ_p$. Therefore $\End(\mathcal{A})\subsetneq \End(\mathcal{A}_{D_0})$.

Furthremore, suppose that the quadratic form on this rank $b-2a$ lattice of formal special endomorphisms is self-dual, then it is saturated. It follows that  
$\gr_{0,E}$ is a trivial $F$-crystal, and the quotient map $\Fil_{0,E}\rightarrow \gr_{0,E}$ admits a section (integrally). A similar argument as in the first paragraph implies that 
$\mathbf{\Psi}_{U_0}\simeq \mathbf{\Psi}_{\mathrm{con},U_0}\times \mathbf{\Psi}_{\text{é}\mathrm{t},U_0}$. 
\end{proof}

\appendix

\section{Index of notations}\label{sec: appendix}

\begin{supertabular}{c|c|l}
    notation & appearance & biref explanation \\ \hline
    $\shS$ & \S\ref{subsec: set up SV} & integral model of an orthogonal Shimura variety \\
    $\cA$  & \S\ref{subsec: set up SV} & Kuga-Satake abelian scheme over $\shS$ \\
    $b$ & \S\ref{subsec: set up SV} & dimension of $\shS$, or $\mathrm{rk\,} L - 2$ \\
    $\bH_?, \bL_?$ & \S\ref{subsec: set up SV} & $\bH_? = H^1_?(\cA)$ and $\bL_?$ is a direct sumamnd of $\underline{\End}(\bH_?)$ \\
    $L(\cA_T)$  & Def.~\ref{def: special end} & special endomorphisms on $\cA_T$ for a $\shS$-scheme $T$ \\
    $\cZ(m), \overline{\cZ(m)}$ & Def.~\ref{def: special end} & special divisor indexed by $m$ and its closure \\ 
    $\gamma_{-1}, \gamma_0$ & Eq.~\ref{eqn: gamma -1} & Frobenius isomorphisms in F-zip structures \\
    $(\cF_h, E_h), \shS_h$ & Def.~\ref{def: F_h and E_h} & Newton stratifications on $\shS_\bF$   \\  
    $E_0$ & \S\ref{subsub:gEs} & an Eisenstein series \\ 
    $q_L(m)$ & Eq.~\ref{eqn: Fourier expansion} & $q_L(m) = q_L(m, 0)$, Fourier cofficient of $E_0$ \\
    $i_P$ & Def.~\ref{def: intersect at P} & intersection multiplicity at $P$ \\
    $\delta(l, L, m)$ & Def.~\ref{def: local density} & local density of lattice $L$ representing $m$ over $\bZ_\ell$ \\
    $Z(m), \overline{Z(m)}$ & \S\ref{subsub: arithmetic intersection theory} & mod $p$ reductions of $\cZ(m), \overline{\cZ(m)}$ \\
    $\shS^{\mathrm{tor}}$ & \S\ref{subsub: arithmetic intersection theory} & a toroidal compactification of $\shS$ \\
    $\omega$ & Lem.~\ref{lem: Frob and E} & Hodge bundle of $\shS$, i.e., $\Fil^1 \bL_\dR$ \\
    $a$ & \S\ref{sub: intersection numbers} & index of the smallest Newton stratum $\cF_{a+1}$ containing $C$ \\
    $h_P$ & \S\ref{sub: intersection numbers} & $i_P(C \cdot \sF_{a + 2})$ \\
    $g_P(m)$ & Def.~\ref{def: global int num} & global intersection number at $P$ \\
    $L_{P, r}$ & Def.~\ref{def: L_P, r} & sublattice of $L(\cA_P)$ of those that deform to $r$th order \\
    $L'_{P, r}$ & Def.~\ref{def: L_P, r} & a modification of $L_{P, r}$ at prime-to-$p$ places \\
    $S'(m)$ or $S'(M)$ & Thm.~\ref{thm: main local version}  & a series comparing the local and global intersection numbers\\
    $\bL$ & \S\ref{subsub:explicitcoordinatheonL} & a shorthand for $\bL_{\cris, P}$ \\
    $\cL, \cL_0, \cL_1$ &\S\ref{subsub:explicitcoordinatheonL} & $\cL = \bL^{\Frob = 1}$, its non-self-dual part and self-dual part \\
    $\lambda$ &\S\ref{subsub:explicitcoordinatheonL} & a certain number in $W(\bF_{p^2})^\times$ such that $\sigma(\lambda) = - \lambda$ \\
    $t_P, a_P$ & \S\ref{subsub:explicitcoordinatheonL}&  $t_P = \mathrm{rank\,}\cL_0$ and $a_P = t_P/2$ (the Artin invariant of $P$)  \\
    $\{v', w'\}, \{e', f' \}$ & \S\ref{subsub:explicitcoordinatheonL} & (respectively) basis of $\bL_0$ and $\cL_0$ for $P$ superspecial \\
    $\{e_i, f_i\}_{1 \le i \le m}$ & \S\ref{subsub:explicitcoordinatheonL} & basis for both $\cL_1$ and $\bL_1$ \\
    $Q, Q_j$ & \S\ref{subsub:explicitcoordinatheonL} & power series on $\shS^{/P}_W \simeq \Spf(W[\![x_1, \cdots, x_m, y_1, \cdots, y_m ]\!])$ \\
    $x_v \in W[\![x_i, y_i]\!]$ & Def.~\ref{eq:v_defspace} & obstruction to deforming $v$.  \\
    $F_\infty$ & \S\ref{subsec: F infty} & matrix for infinite iterate of Frobenius \\
    $\tilde{v}$ & Eq.~\ref{eqn: def tilde v} & $\tilde{v} = F_\infty v$.  \\ 
    $\mathbf{c}, \wt{\mathbf{c}} \in \bZ_p^{2m + 2}$  & Eq.~(\ref{eqn: define c and c tilde}) & the coordinates of $v, \wt{v}$ in basis $\{ e', f', e_i, f_i \}$. \\
    $Q_{l_1 < l_2 < \cdots < l_{2n}}$  & Eq.~\ref{eqn: define Q <} & a product of twists of $Q_j$'s. \\
    $\delta_{l_1 < l_2 < \cdots < l_{2n}}$ & Eq.~\ref{eqn: define delta <} &  a sign, used together with $Q_{l_1 < l_2 < \cdots < l_{2n}}$.\\ 
    $F_n$ & Lem.~\ref{lm:Finftyexplicit} & the coefficient matrix of $p^{-n}$ in $F_\infty$\\
    $X_n^+, Y_n^+$ & Lem.~\ref{lm:Finftyrecursive} &  the parts of $X_n$ and $Y_n$ with even leading term. \\
    $\mathbf{U}_n$ & Lem.~\ref{lm:Finftyrecursive} &  the first row of $\begin{bmatrix}
        X_n^+ & Y_n^+
    \end{bmatrix}$. \\
    $D_n(v), \overline{D}_n(v)$ & Eq.~(\ref{eqn: defined D_n's}) & power series controlling the deformation of $v$ \\
    $d_n(v)$ & \S\ref{sub:setuplocalcurve} & the $n$th order rate of decay of $v$ with respect to $C$  \\ 
    $h_i$ & Not.~\ref{not: define h_i and a_i} & $t$-adic valuation $v_t(Q_i)$ \\
    $a_1,\cdots, a_k$ & Not.~\ref{not: define h_i and a_i} & index of a subsequence of $h_i$'s \\
    $\bI_r, \bI_r(v)$ & Def.~\ref{def: bI_r's} & certain words in $Q_i$'s and $x_v$ of length $r$ \\
    $\nu(-)$ & Def.~\ref{def: bI_r's} & $t$-adic valuation of words in $\bI_r$ or $\bI_r(v)$ \\
    $\nu^{\min}_r, \nu^{\min}(v)$ & Def.~\ref{def: bI_r's} &  the minimal $t$-adic valuation of words in $\bI_r, \bI_r(v)$) \\
    $\bI^{\min}_r, \bI^{\min}(v)$ & Def.~\ref{def: bI_r's} & words with minimal $t$-adic valuation in $\bI_r, \bI_r(v)$ \\
    $I_r^{\min}(v)$ & Def.~\ref{def: intervals I_r(v)} &  a certain interval in $[1, k]$ defined by $ \bI^{\min}_r(v)$ \\
    $\overline{\cL}_1$  & \S\ref{setup: pattern of first order decay} & mod $p$ reduction of the $\bZ_p$ lattice $\cL_1$ \\
    $\overline{\cL}_{\mathrm{rd}}$ & Def.~\ref{def:somebadass} & redundant subspace of $\overline{\cL_1}$ \\
    $\overline{\cL}_{\mathrm{bd}}$ & Def.~\ref{def:somebadass} & borderline subspace of $\overline{\cL_1}$ \\
     $\overline{\cL}_{\mathrm{brd}}$ & Def.~\ref{def:somebadass} & $\overline{\cL}_{\mathrm{bd}}\oplus \overline{\cL}_{\mathrm{rd}}$ \\
    $c_i, d_i$ & Not.~\ref{not: c_i's and d_i's} & $v_t(x_i), v_t(y_i)$ \\
    $\cL_{> i}, \cL'_{\le i}$ & \S\ref{subsub:auxlatticetight} & some auxilliary lattices \\
    $H_j$ & \S\ref{subsub:auxlatticetight} & $h_P(1 + p^{a_k + 1} + \cdots + p^{(a_k + 1)(j - 1)})$ \\
    $L''_{P, r}$ & \S\ref{subsub:auxlatticetight} & some auxilliary lattices modified from $L'_{P, r}$ \\
    $\delta_i, \delta'_i$ & Def.~\ref{def:localdensitydelta} & local densities $\delta(p, \cL_{\mathrm{brd}} + \cL_{> i}, M)$ and $\delta(p, \cL_{\mathrm{brd}} + \cL'_{\le i}, M)$\\
    $D_0$ & \S\ref{subsub: D_0} & a mysterious quantity \\
    $S''(M)$ & \S\ref{sub:hardsitu} & a variant of $S'(M)$ with $L'_{P, r}$ replaced by $L''_{P, r}$ \\
    $n'$ & \S\ref{sec: supersingular} & $n - 1$, where $n = t_P / 2$. \\
    $\Psi, \bL_{\mathrm{EB}}$ & S\ref{sec: algebraization} & enlarged formal Brauer group and its Dieudonn\'e module \\
    $\nu_{\mathrm{spl, -}}$ & Const.~\ref{const:vspl} & extra endomorphism on $\bH_{\cris, -}$ \\
\end{supertabular}

\bibliography{newref}
\bibliographystyle{alpha}

\noindent Ruofan Jiang {\footnotesize\,\,\, University of California, Berkeley, Department of Mathematics, Evans Hall,
Berkeley, CA 94720-3840, USA \,\,\,  Email: \url{ruofanjiang@berkeley.edu}}\\

\noindent Ananth Shankar {\footnotesize\,\,\, Northwestern University, Department of Mathematics, Lunt Hall,
Evanston, Il 60208, USA \,\,\,  Email: \url{ananth@northwestern.edu}}\\

\noindent Ziquan Yang {\footnotesize \,\,\, The Institute of Mathematical Sciences and Department of Mathematics, The Chinese University of Hong Kong, Shatin, N.T., Hong Kong.  \,\,\,  Email: \url{zqyang@cuhk.edu.hk}}

\end{document}